\patchcmd{\@maketitle}{\huge}{\Large}{}{}
\newcommand{\embeds}{\hookrightarrow}
\newcommand{\mednegskip}{\vspace{-1ex}}
\newcommand{\bignegskip}{\vspace{-2ex}}
\newcommand{\BoundedLinOps}{L}
\newcommand{\HSpace}{\Bessel[s][][\Domain][\AmbSpace]}
\newcommand{\NumExp}{q}
\newcommand{\DenomExp}{p}
\newcommand{\Energy}{E}
\newcommand{\Dirichlet}{\cE}
\newcommand{\Curve}{\gamma}
\DeclareDocumentCommand{\Path}{ o }{
	\IfValueTF{#1}{%
		\varGamma_{\!#1}%
	}{%
		\varGamma%
	}%
}
\DeclareDocumentCommand{\dotPath}{ o }{
	\IfValueTF{#1}{%
		\dot{\varGamma}_{\!#1}%
	}{%
		{\dot{\varGamma}}%
	}%
}
\newcommand{\Riesz}{J}
\newcommand{\Mfld}{\mathcal{M}}
\newcommand{\BoundedSet}{\mathcal{U}}
\newcommand{\ArcLength}{L}
\newcommand{\PathLength}{\mathcal{L}}
\newcommand{\PathEnergy}{\mathcal{E}}
\newcommand{\ConstraintMap}{\mathcal{\varPhi}}
\newcommand{\ConstraintMfld}{\cN}
\newcommand{\MorreyConst}{C_{\mathrm{Morrey}}}
\newcommand{\Morrey}{\mathrm{M}}
\newcommand{\UnitInterval}{\intervalcc{0,1}}
\newcommand{\SmashedSqrt}[1]{\sqrt{\smash[b]{#1}}}
\newcommand{\wrt}{w.r.t.\xspace}
\DeclareMathOperator*{\LimInf}{\lim\,\inf}
\DeclareMathOperator{\BiLip}{BiLip}
\DeclareMathOperator{\distor}{distor}
\DeclareDocumentCommand{\dist}{ o o o }{
	\IfValueTF{#1}{
		\varrho_{#1}\IfValueTF{#2}{(#2,#3)}{}
	}{
		\varrho\IfValueTF{#2}{(#2,#3)}{}
	}
}
\DeclareDocumentCommand{\distC}{ o o }{
	\varrho_{\gamma}\IfValueTF{#1}{(#1,#2)}{}
}
\DeclareDocumentCommand{\LineEl}{ o o }{
	\IfValueTF{#1}{
	  \omega_{#1}\IfValueTF{#2}{(#2)}{}
	}{
	  \omega\IfValueTF{#2}{(#2)}{}
	}
}
\DeclareDocumentCommand{\dLineEl}{ o o }{
	\IfValueTF{#1}{
	  \dd \omega_{#1}\IfValueTF{#2}{(#2)}{}
	}{
    \dd \omega\IfValueTF{#2}{(#2)}{}
	}
}
\DeclareDocumentCommand{\LineElC}{ o }{
	\IfValueTF{#1}{
        \omega_\Curve(#1)
	}{
        \omega_\Curve
	}
}
\DeclareDocumentCommand{\dLineElC}{ o }{
	\IfValueTF{#1}{
	  \dd \omega_\Curve(#1)
	}{
        \dd \omega_\Curve
	}
}
\DeclareDocumentCommand{\LebesgueM}{ o }{
	\IfValueTF{#1}{
        \lambda(#1)
	}{
        \lambda
	}
}
\DeclareDocumentCommand{\dLebesgueM}{ o }{
	\IfValueTF{#1}{
	  \dd \lambda(#1)
	}{
        \dd \lambda
	}
}
\DeclareDocumentCommand{\Speed}{ o }{
	\IfValueTF{#1}{
	  	h_{#1}
	}{
      	h
	}
}
\DeclareDocumentCommand{\InvSpeed}{ o }{
	\IfValueTF{#1}{
	  	H_{#1}
	}{
      	H
	}
}
\newcommand{\Imm}{\mathrm{Imm}}
\newcommand{\Diff}{\mathrm{Diff}}
\newcommand{\Op}[2]{\mathcal{R}_{#1}^{#2}}
\newcommand{\diffop}[2]{\delta_{#1}^{#2}}
\newcommand{\Circle}{\mathbb{T}}
\newcommand{\TP}{\mathrm{TP}}
\newcommand{\II}{I\!I}
\newcommand{\loc}{\mathrm{loc}}
\newcommand{\Hsd}{{\mathscr{H}}}
\newcommand{\AmbDim}{m} 
\newcommand{\AmbSpace}{{\R^\AmbDim}}
\newcommand{\Domain}{{\Circle}}
\newcommand{\FunDomain}{{\varSigma}}
\newcommand{\Sphere}{\mathbb{S}}
\DeclareDocumentCommand{\Hess}{ O{} }{\operatorname{Hess}_{#1}}
\newcommand{\nospaceperiod}{\makebox[0pt][l]{\,.}}
\newcommand{\qand}{\quad \text{and} \quad}
\newcommand{\qor}{\quad \text{or} \quad}
\newcommand{\qwhere}{\quad \text{where} \quad}
\newcommand{\qqand}{\qquad \text{and} \qquad}
\DeclareMathOperator*{\esssup}{ess\,sup}
\DeclareMathOperator{\Lip}{Lip}
\DeclareDocumentCommand{\converges}{ o o }{
	\mathbin{%
		\IfValueTF{#1}{%
			\IfValueTF{#2}{%
				{\xlongrightarrow[\smash{#2}]{\;\;\phantom{w}\;\;}}
			}{
				{\xlongrightarrow{\;\;\phantom{w}\;\;}}
			}
		}{%
			\longrightarrow
		}%
	}%
}
\DeclareDocumentCommand{\wconverges}{ o o }{
	\mathbin{%
		\IfValueTF{#1}{%
			\IfValueTF{#2}{%
				{\xrightharpoonup[\smash{#2}]{\;\;\mathrm{w}\;\;}}
			}{
				{\xrightharpoonup{\;\;\mathrm{w}\;\;}}
			}
		}{%
			\longrightharpoonup
		}%
	}%
}
\newcommand{\trans}{^\transp}
\newcommand{\dual}{'^{\!}}
\newcommand{\pinv}{^{\dagger\!}}
\newcommand{\mymathcal}{\mathcal}
\newcommand{\cB}{{\mymathcal{B}}}
\newcommand{\cD}{{\mymathcal{D}}}
\newcommand{\cE}{{\mymathcal{E}}}
\newcommand{\cF}{{\mymathcal{F}}}
\newcommand{\cG}{{\mymathcal{G}}}
\newcommand{\cL}{{\mymathcal{L}}}
\newcommand{\cM}{{\mymathcal{M}}}
\newcommand{\cN}{{\mymathcal{N}}}
\newcommand{\cX}{{\mymathcal{X}}}
\newcommand{\cY}{{\mymathcal{Y}}}
\newcommand{\cZ}{{\mymathcal{Z}}}
\DeclareMathOperator{\Laplacian}{\Delta}
\newcommand{\dd}{\mathop{}\!\mathrm{d}}
\newcommand{\ee}{{\mathrm{e}}}
\newcommand{\ceq}{\coloneqq}
\newcommand{\R}{{\mathbb{R}}}
\newcommand{\N}{\mathbb{N}}
\newcommand{\Z}{{\mathbb{Z}}}
\newcommand{\Ninfty}{\N \cup \braces{\infty}}
\DeclareMathOperator{\id}{id}
\DeclarePairedDelimiterXPP{\pars}[1]{\mathop{}}{\lparen}{\rparen}{}{#1}
\DeclarePairedDelimiterXPP{\abs}[1]{\mathop{}}{\lvert}{\rvert}{}{#1}
\DeclarePairedDelimiterXPP{\norm}[1]{\mathop{}}{\lVert}{\rVert}{}{#1}
\DeclarePairedDelimiterXPP{\seminorm}[1]{\mathop{}}{\lbrack}{\rbrack}{}{#1}
\DeclarePairedDelimiterXPP{\inner}[1]{\mathop{}}{\langle}{\rangle}{}{#1}
\DeclarePairedDelimiterXPP{\iinner}[1]{\mathop{}}{\langle\!\langle}{\rangle\!\rangle}{}{#1}
\DeclarePairedDelimiterXPP{\brackets}[1]{\mathop{}}{\lbrack}{\rbrack}{}{#1}
\DeclarePairedDelimiterXPP{\braces}[1]{\mathop{}}{\lbrace}{\rbrace}{}{#1}
\DeclarePairedDelimiterXPP{\floor}[1]{\mathop{}}{\lfloor}{\rfloor}{}{#1}
\DeclarePairedDelimiterXPP{\ceil}[1]{\mathop{}}{\lceil}{\rceil}{}{#1}
\DeclarePairedDelimiterXPP{\intervalcc}[1]{\mathop{}}{\lbrack}{\rbrack}{}{#1}
\DeclarePairedDelimiterXPP{\intervalco}[1]{\mathop{}}{\lbrack}{\rparen}{}{#1}
\DeclarePairedDelimiterXPP{\intervaloc}[1]{\mathop{}}{\lparen}{\rbrack}{}{#1}
\DeclarePairedDelimiterXPP{\intervaloo}[1]{\mathop{}}{\lparen}{\rparen}{}{#1}
\DeclarePairedDelimiterXPP{\myset}[2]{\mathop{}}{\lbrace}{\rbrace}{}{#1\,\delimsize\vert\,\mathopen{}#2}
\DeclareMathOperator{\Hom}{Hom}    
\DeclareMathOperator{\pr}{pr}
\DeclareMathOperator{\sgn}{sgn}
\newcommand{\vol}{{\omega}}
\newcommand{\dvol}{\dd\vol}
\newcommand{\sdfrac}[2]{\mbox{\small$\displaystyle\frac{#1}{#2}$}}
\newcommand{\fdfrac}[2]{\mbox{\footnotesize$\displaystyle\frac{#1}{#2}$}}
\newcommand{\SoboSlobo}{Sobolev--Slobodeckij\xspace}
\newcommand{\spacephantom}{\vphantom{a}}
\DeclareDocumentCommand{\Emb}{ O{} O{\spacephantom} o o}{
	\IfValueTF{#3}{
	  \IfValueTF{#4}{
	  	\mathrm{Emb}^{#1}_{#2}(#3;#4)
	  }{
		\mathrm{Emb}^{#1}_{#2}(#3)
	  }
	}{
		\mathrm{Emb}^{#1}_{#2}
	}
}
\DeclareDocumentCommand{\Sobo}{ O{} O{\spacephantom} o o}{
	\IfValueTF{#3}{
	  \IfValueTF{#4}{
	  	W^{#1}_{#2}(#3;#4)
	  }{
	  	W^{#1}_{#2}(#3)
	  }
	}{
	  W^{#1}_{#2}
	}
}
\DeclareDocumentCommand{\Bessel}{ O{} O{\spacephantom} o o}{
	\IfValueTF{#3}{
	  \IfValueTF{#4}{
	  	H^{#1}_{#2}(#3;#4)
	  }{
	  	H^{#1}_{#2}(#3)
	  }
	}{
	  H^{#1}_{#2}
	}
}
\DeclareDocumentCommand{\Holder}{ O{} O{\spacephantom} o o}{
	\IfValueTF{#3}{
	  \IfValueTF{#4}{
	  	C^{#1}_{#2}(#3;#4)
	  }{
	  	C^{#1}_{#2}(#3)
	  }
	}{
	  C^{#1}_{#2}
	}
}
\DeclareDocumentCommand{\HolderC}{ O{} O{\spacephantom} }{\Holder[#1][#2][\Circle][\AmbSpace]}
\DeclareDocumentCommand{\Lebesgue}{ O{} O{\spacephantom} o o}{
	\IfValueTF{#3}{
	  \IfValueTF{#4}{
	  	L^{#1}_{#2}(#3;#4)
	  }{
	  	L^{#1}_{#2}(#3)
	  }
	}{
	  L^{#1}_{#2}
	}
}
\DeclareDocumentCommand{\LebesgueC}{ O{} O{\spacephantom} }{\Lebesgue[#1][#2][\Circle][\AmbSpace]}
\crefname{equation}{}{}
\newlist{claims}{enumerate}{10}
\setlist[claims]{label*=\arabic*.,ref=\arabic*}
\crefname{claimsi}{Claim}{Claim}
\Crefname{claimsi}{Claim}{Claims}
\newlist{conditions}{enumerate}{10}
\setlist[conditions]{label*=\arabic*.,ref=\arabic*}
\crefname{conditionsi}{condition}{conditions}
\Crefname{conditionsi}{Condition}{Conditions}
\newcommand{\mynewtheorem}[4] 
{%
\newaliascnt{#1}{#2}%
\newtheorem{#1}[#1]{#3}%
\aliascntresetthe{#1}%
\crefname{#1}{#3}{#4}%
\Crefname{#1}{#3}{#4}%
}
\theoremstyle{break}
\theoremstyle{plain}
\theoremstyle{break}
\theoremstyle{nonumberplain}
\newtheorem{proof}{Proof}
\theoremstyle{empty}
\title{On a Complete Riemannian Metric \\ on the Space of Embedded Curves}
\author[1]{Elias Döhrer}
\author[1]{Philipp Reiter}
\author[1,2]{Henrik Schumacher}
\affil[1]{Chemnitz University of Technology, Chemnitz, Germany}
\affil[2]{RWTH Aachen University, Aachen, Germany}
\begin{document}

\maketitle

\begin{abstract}
	\begin{small}
		\noindent
		We propose a new strong Riemannian metric on the manifold of (parametrized) embedded curves of regularity $H^s$, $s\in(3/2,2)$.
We highlight its close relationship to the (generalized) tangent-point energies and employ it to show that this metric is \emph{complete} in the following senses:
(i)~bounded sets are relatively compact with respect to the weak $H^s$ topology;
(ii)~every Cauchy sequence with respect to the induced geodesic distance converges;
(iii)~solutions of the geodesic initial-value problem exist for all times; and
(iv)~there are length-minimizing geodesics between every pair of curves in the same path component (i.e., in the same knot class).
As a by-product, we show $C^\infty$-smoothness of the tangent-point energies in the Hilbert case.

		\medskip
		
		\noindent
		\textbf{MSC-2020 classification:} 
		58B20; 
		58D10; 
		58E10 
		\end{small}
\end{abstract}


\section{Introduction}

Maintaining the topology of objects under the influence of certain forces or deformations is a crucial aspect in modeling that occurs in all fields of engineering science and in particular in computer graphics.
For instance, in elasticity theory it is essential to avoid interpenetration of matter~\cite{MR616782,MR4160797}. 
This can be realized via regularization by nonlocal self-repulsive terms, see for example~\cite{Kroemer_Reiter_2023}.
Similar approaches have been proposed in \cite{IPC} and  \cite{10.1145/3450626.3459757}; these operate on already discretized domains and surfaces in $\R^3$ and their barrier functions are based on logarithmized distances between mesh elements.

These approaches are aimed at obtaining static equilibrium configurations or at simulating the dynamics of elastic bodies.
In the present work we are more interested in finding continuous families of deformations that are in some sense ``optimal''.
One motivation for this is key frame animation, where one seeks a smooth interpolation between several given poses.
For aesthetic reasons and for the sake of plausibility it is desirable to avoid ``singular'' phenomena such as pinching, breaking, and self-intersection.
Moreover, this type of motion should appear ``natural'' in the sense that it avoids any detour while connecting the two configurations in an appealing and plausible way.

In order to illustrate this idea, we consider a curve belonging to the figure-eight knot class (see \cref{fig:FigureEight}).
It is well-known that it is ambient isotopic to its mirror image, i.e., both curves can be continuously deformed into each other without self-intersections or pulling-tight of knotted loops.
However, it is not quite obvious a priori how this could be done, let alone in a ``cost-efficient'' way. 
It is not even clear what ``cost'' should actually mean in this context.

\begin{figure}[t]%
    \centering%
    \newcommand{\incl}[1]{%
        \includegraphics[
            width=0.125\textwidth,
            trim = 100 60 100 30, 
            clip = true, 
            angle = 0
        ]{#1}%
    }%
    \incl{FigureEight_Is_Achiral_000512E_000129Steps_Colored3_Frame_000000}%
    \incl{FigureEight_Is_Achiral_000512E_000129Steps_Colored3_Frame_000016}%
    \incl{FigureEight_Is_Achiral_000512E_000129Steps_Colored3_Frame_000032}%
    \incl{FigureEight_Is_Achiral_000512E_000129Steps_Colored3_Frame_000057}%
    \incl{FigureEight_Is_Achiral_000512E_000129Steps_Colored3_Frame_000075}%
    \incl{FigureEight_Is_Achiral_000512E_000129Steps_Colored3_Frame_000092}%
    \incl{FigureEight_Is_Achiral_000512E_000129Steps_Colored3_Frame_000112}%
    \incl{FigureEight_Is_Achiral_000512E_000129Steps_Colored3_Frame_000128}%
    \caption{A shortest path between a figure-eight knot (outmost left) and its mirror image (outmost right)  with respect to the metric $G$ for $s = 7/4$. This illustrates that this knot is \emph{amphichiral}, i.e., it is isotopic to its mirror image. The color coding indicates the parameterization; the markings are equidistant in the coordinate domain.}
    \label{fig:FigureEight}
\end{figure}

The purpose of this paper is to provide such a concept of ``cost'' and ``natural motions'' by proposing a suitable Riemannian metric $G$ on the (infinite dimensional) manifold~$\Mfld$ of embeddings into the Euclidean space~$\AmbSpace$.
Throughout this paper, the term \emph{embedding} will denote an immersion that is also a homeomorphism onto its image.
Requiring embeddedness is our operationalization of impermeability of the parameterized geometric objects.

For the sake of simplicity we limit ourselves to one-dimensional submanifolds without boundary, i.e., closed curves embedded in~$\AmbSpace$.
These can be represented by parameterizations~$\gamma \colon \Circle \to \AmbSpace$, where $\Circle = \R \slash \Z$.
The higher-dimensional case will be addressed in a forthcoming paper~\cite{DRS2}.
The metric $G$ assigns a ``cost'' to any path in the space $\Mfld$; the so-called \emph{path length}.
Among other properties, we will show that there is a path of minimal path length between any two given configurations within the same isotopy class.
Finite element discretization and  numerical optimization of the variational problem then yield  motions as shown in \cref{fig:FigureEight,,fig:Trefoil,,fig:PerkoPair}.

In order to realize the shortest distances, we study paths $\Path$ mapping an interval $\intervalco{0,T}$ to the space~$\Mfld$ of closed embedded curves.
Generally speaking, there are plenty of failure modes for extending $\Path$ to $\intervalcc{0,T}$:
As $t \nearrow T$ the curves $\Path(t)$ could
    (i)~lose regularity and fail to be tame (e.g., develop kinks);
    (ii)~blow up locally, i.e., $\abs{\partial_x \Path(t)(x)} \to \infty$;
    (iii)~collapse locally, eventually failing to be immersed, i.e., $\abs{\partial_x \Path(t)(x)} \to 0$;
    (iv)~travel wildly through the ambient space $\AmbSpace$, e.g., the center of mass of $\Path(t)$ might diverge;
    (v)~develop pull-tights (a small knotted arc shrinking to a point, see \cref{fig:PullTight}); or
    (vi)~develop self-intersections.
In particular, in $\R^3$ the last two failure modes may cause the isotopy type of $\Path(t)$ to change as $t$ varies.

\begin{figure}[ht]%
    \centering%
    \newcommand{\incl}[2]{\begin{tikzpicture}%
        \node[inner sep=0pt] (fig) at (0,0) {\includegraphics[	trim = 0 0 0 0, 
        clip = true,  
        angle = 0,
        width = 0.12\textwidth]{#1}};
    \end{tikzpicture}}
    \incl{PullTight_000002}{a}
    \incl{PullTight_000004}{b}
    \incl{PullTight_000006}{b}
    \incl{PullTight_000008}{c}
    \caption{Pull-tight of the trefoil knot. The limiting curve is an unknot. As we will show, this cannot happen for a path of finite length with respect to the metric $G$.}
    \label{fig:PullTight}
\end{figure}

According to an impressive series of papers authored, in different collaborations, by Bauer, Bruveris, Harms, Heslin, Kolev, Maor, Michor, Mumford, Preston, and many others
(see~\cite{MR3745697,bauerharmsmichor2,zbMATH07845668,MR3264258,MR2148075} and references therein)
all of these failure modes except the last one (formation of self-intersections) can be addressed by choosing a Riemannian metric that builds on geometrized versions of inner products on the Sobolev space $\Bessel[s]$, $s > 3/2$.
They investigate the space $\Imm(\Sphere^1;\AmbSpace)$ of immersions which actually includes
all embeddings by the above definition.
It also contains the group of diffeomorphisms of the circle $\Diff(\Sphere^1)$, which in some sense serves as a model case for $\Imm(\Sphere^1;\AmbSpace)$.
Especially,
$\Lebesgue[2]$- and $\Bessel[1]$-metrics on the aforementioned spaces have been studied intensively, see~\cite{MR3055801,MR3411680,MR4142275,MR1668586,MR3265830,MR2900792,MR3995369}.

The $\Bessel[1]$-metrics are also referred to as ``elastic metrics''; they are particularly appealing for applications because their geodesics can be written in closed form via their \emph{square-root velocity representation}, see \cite{MR4737374,Mio:2007im,5601739}.

\begin{figure}[t]%
    \centering%
    \newcommand{\incl}[1]{%
        \includegraphics[
            width=0.165\textwidth,
            trim = 80 60 30 30, 
            clip = true, 
            angle = 0
        ]{#1}%
    }%
    \incl{TorusKnot_2_3__TorusKnot_3_2_000512E_000129Steps_Colored_Frame_000000}%
    \incl{TorusKnot_2_3__TorusKnot_3_2_000512E_000129Steps_Colored_Frame_000030}%
    \incl{TorusKnot_2_3__TorusKnot_3_2_000512E_000129Steps_Colored_Frame_000052}%
    \incl{TorusKnot_2_3__TorusKnot_3_2_000512E_000129Steps_Colored_Frame_000080}%
    \incl{TorusKnot_2_3__TorusKnot_3_2_000512E_000129Steps_Colored_Frame_000096}%
    \incl{TorusKnot_2_3__TorusKnot_3_2_000512E_000129Steps_Colored_Frame_000128}%
    \caption{A shortest path between a $(-3,2)$ torus knot (outmost left) and a $(2,-3)$ torus knot (outmost right) with respect to the metric $G$ for $s = 7/4$. The color coding indicates the parameterization; the markings are equidistant in the coordinate domain.}
    \label{fig:Trefoil}
\end{figure}

Higher-order Sobolev metrics and their geodesic properties have also been studied for immersed manifolds, see~\cite{MR2888014,bauerharmsmichor2,2512.01566}.
In \cite{HeRuSc14} a metric $G_{\mathrm{bend}}$ of flavor $\Bessel[2]$ on the space of immersed surfaces is deduced from membrane and bending energies of thin elastic shells.
The associated path length is the integral over the elastic strain rates. 
So this path energy is akin to the amount of work one has to invest to plastically deform the surface as prescribed by the path. As a result, geodesics with respect to this metric look particularly plausible and efficient.

\begin{figure}[t]%
    \centering%
    \newcommand{\incl}[1]{%
        \includegraphics[
            width=0.165\textwidth,
            trim = 120 80 120 40, 
            clip = true, 
            angle = 0
        ]{#1}%
    }%
    \incl{PerkoPair_Geodesic_001200E_000129Steps_Frame_000000}%
    \incl{PerkoPair_Geodesic_001200E_000129Steps_Frame_000035}%
    \incl{PerkoPair_Geodesic_001200E_000129Steps_Frame_000055}%
    \incl{PerkoPair_Geodesic_001200E_000129Steps_Frame_000068}%
    \incl{PerkoPair_Geodesic_001200E_000129Steps_Frame_000102}%
    \incl{PerkoPair_Geodesic_001200E_000129Steps_Frame_000128}%
    \caption{A shortest path between a the knots from the Perko pair. The color coding indicates the parameterization; the markings are equidistant in the coordinate domain.}
    \label{fig:PerkoPair}
\end{figure}

However, dealing with paths of immersed submanifolds is not sufficient for modeling impermeability of physical objects.
Instead, one has to guarantee that each submanifold along such a path stays embedded.
To our knowledge, there is only one construction for this so far:
In \cite{10.1145/3658174} the elastic shell metric $G_{\mathrm{bend}}$ from \cite{HeRuSc14} is augmented by a rank-one modification 
$G_{\mathrm{rep}} = G_{\mathrm{bend}} + D\Energy \otimes D\Energy$,
where $\Energy$ is the so-called \emph{tangent-point energy} of surfaces.
The tangent-point energy is repulsive in the sense that self-intersections cannot happen as long as the energy is bounded; see Section~\ref{sec:TangentPointEnergies} below for details.
The metric $G_{\mathrm{rep}}$ makes $\Energy$ globally Lipschitz-continuous, so a path of finite length with respect to $G_{\mathrm{rep}}$ cannot exhibit failure mode~(vi) above.

The metric $G$ that we will investigate here also involves the tangent-point energy $\Energy$, albeit in a more subtle way. 
While $G_{\mathrm{bend}}$ has fairly little to do with $\Energy$, the metric $G$ is specifically tailored to be a good preconditioner for $\Energy$ by resembling the essential features of the Hessian of $\Energy$. In particular, $G$ metricizes the energy space of $\Energy$.
In fact, the idea for $G$ is inspired by experiences made with numerical optimization of self-avoiding energies, see
\cite{2103.10408,reiterschumacher1,10.1145/3478513.3480521,2006.07859}.
We would like to stress that the feature of self-avoidance automatically leads us to working with fractional Sobolev spaces.

Before we go into further detail on the tangent-point energy~$\Energy$ and the Riemannian metric~$G$, some general words on infinite-dimensional manifolds are in order.

\subsection{Completeness and regularity in infinite-dimensional Riemannian manifolds}

In finite dimensions, the \emph{Hopf--Rinow theorem}
states that for a finite-dimensional smooth Riemannian manifold~$M$
metric and geodesic completeness as well as the Heine--Borel property are
equivalent;
moreover they imply the existence of length-minimizing geodesics.

The situation of infinite-dimensional manifolds differs quite a lot from this classical setting.
For instance, Riesz's lemma applied to an infinite-dimensional Hilbert space demonstrates that a ball of positive radius cannot be relatively compact anymore. 
So the Heine--Borel property cannot hold true without modifications.
The failure of the Hopf--Rinow theorem in infinite dimensions has also been illustrated by
Grossman's ellipsoid; see McAlpin~\cite{MR2614999},
Grossman~\cite{MR188943}, and Schmeding~\cite[Example 4.43]{zbMATH07575923}.
Further counterexamples have been provided, e.g., by Atkin~\cite{zbMATH03582926, zbMATH00992413}.
In certain instances, $\Bessel[1]$ metrics on immersions may fail to be metrically complete, in fact, their metric completion turns out to be in~$\Sobo[1,1]$ instead, see~\cite{MR4737374}.

Consequently, the best we can hope for in infinite dimensions---and what we actually achieve in this paper---is showing all the four statements of the Hopf--Rinow theorem separately: a relaxed version of the Heine--Borel property (which we call the \emph{Banach--Alaoglu property}), metric completeness, geodesic completeness, and existence of minimal geodesics.

\begin{figure}[t]%
    \centering%
    \newlength{\heigth}
    \setlength{\heigth}{0.12\textwidth}
    \newcommand{\incla}[1]{%
        \includegraphics[
            height=\heigth,
            trim = 620 180 620 180, 
            clip = true, 
            angle = 0
        ]{#1}%
    }%
    \newcommand{\inclb}[1]{%
        \includegraphics[
            height=\heigth,
            trim = 560 180 560 180, 
            clip = true, 
            angle = 0
        ]{#1}%
    }%
    \newcommand{\inclc}[1]{%
        \includegraphics[
            height=\heigth,
            trim = 520 180 520 180, 
            clip = true, 
            angle = 0
        ]{#1}%
    }%
    \newcommand{\incld}[1]{%
        \includegraphics[
            height=\heigth,
            trim = 460 180 460 180, 
            clip = true, 
            angle = 0
        ]{#1}%
    }%
    \newcommand{\incle}[1]{%
        \includegraphics[
            height=\heigth,
            trim = 400 180 400 180, 
            clip = true, 
            angle = 0
        ]{#1}%
    }%
    \newcommand{\incl}[1]{%
        \includegraphics[
            height=\heigth,
            trim = 360 180 360 180, 
            clip = true, 
            angle = 0
        ]{#1}%
    }%
    \incla{Pictures/Fig_GeodesicShooting_H1/Frame_000001}%
    \inclb{Pictures/Fig_GeodesicShooting_H1/Frame_000022}%
    \inclc{Pictures/Fig_GeodesicShooting_H1/Frame_000043}%
    \incld{Pictures/Fig_GeodesicShooting_H1/Frame_000064}%
    \incle{Pictures/Fig_GeodesicShooting_H1/Frame_000085}%
    \incl{Pictures/Fig_GeodesicShooting_H1/Frame_000106}%
    \\
    \incla{Pictures/Fig_GeodesicShooting_G/Frame_000001}%
    \inclb{Pictures/Fig_GeodesicShooting_G/Frame_000022}%
    \inclc{Pictures/Fig_GeodesicShooting_G/Frame_000043}%
    \incld{Pictures/Fig_GeodesicShooting_G/Frame_000064}%
    \incle{Pictures/Fig_GeodesicShooting_G/Frame_000085}%
    \incl{Pictures/Fig_GeodesicShooting_G/Frame_000106}%
    \caption{Numerical simulations of the initial value problem for geodesics.    
    \emph{Top row:} A few snapshots of an $\Bessel[1]$-geodesic starting at a $(3,2)$-torus knot. The current velocity fields along the curves are indicated by arrows.
    After finite time, a collision arises and the knot type changes to the unknot's. 
    \emph{Bottom row:} Snapshots of the $G$-geodesic for $s = 7/4$ with the same initial state and initial velocities. Note that the velocities on the strands in the center are more and more reduced as the strands approach each other. Consequently, the movement in this region stops and a collision is prevented.}
    \label{fig:GeodesicShooting}
\end{figure}

\bigskip

\noindent
The results on metrics on the space of immersions that have been cited above reveal the crucial role of regularity.
In fact, counter-intuitive phenomena may occur if the metric employed is too weak, see \cite{MR2148075} for $\Lebesgue[2]$-metrics.
Sobolev metrics of higher order have been found suitable to establish (metric and geodesic) completeness~\cite{zbMATH06502944,MR3264258,MR3669789}.
Recently also fractional Sobolev $\Bessel[s]$-metrics have been studied~\cite{MR3745697,bauerharmsmichor2,zbMATH07845668}. 
These are defined in terms of Fourier series and conjugation of Fourier multipliers with reparameterization operators.
Effectively, this corresponds to fractional powers $(-\Delta_\gamma)^s$ of the geometric Laplace--Beltrami operator~$-\Delta_\gamma$, $\gamma\in\Imm(\Sphere^1,\AmbSpace)$.
A full picture of the current knowledge about $\Diff(\Sphere^1)$ and $\Imm(\Sphere^1,\AmbSpace)$ is provided in Bauer et al.~\cite[Table~1]{zbMATH07845668}.
Here the threshold $s=3/2$ plays a special role; it can be traced back to a heuristic argument for diffeomorphism groups by Ebin and Marsden~\cite{MR271984}.
In this light, it is no surprise that we will also require regularity $\Bessel[s]$ with $s > 3/2$.

We circumvent using fractional powers of $-\Delta_\gamma$ by employing integro-diﬀerential or Gagliardo-like inner products for several reasons:
Firstly, our metric is inspired by the tangent-point energies whose energy spaces are much closer related to Gagliardo norms ``with variable coefficients'' than to Fourier multipliers;
details can be found below.
Secondly, we prefer a situation that does not involve reparameterization operators (while still being reparameterization invariant).
This results in a conceptionally less involved and more flexible setting at the expense of being analytically more intricate.
Thirdly, avoiding fractional powers of $-\Delta_\gamma$ is crucial for numerical applications because forming fractional powers of a positive-definite matrix is very expensive:
Our inner products of Gagliardo type can be seen as compact perturbations of the operator $(-\Delta_\gamma)^s$. 
They are non-local and thus their discretized Gram matrices are dense, even though the finite-element discretization of the Laplacian $-\Delta_\gamma$ is sparse.
Fortunately, the \emph{action} of these matrices can be approximated efficiently with so-called \emph{tree codes} like the Barnes--Hut method or the Fast Multipole Method; and the attendant linear equations can be solved quickly by iterative methods and sophisticated preconditioning strategies; see~\cite{10.1145/3478513.3480521,2006.07859}. 
So our results here are not only of purely theoretical relevance;
they can also be implemented and  simulated, see 
\cref{fig:FigureEight,,fig:Trefoil,,fig:PerkoPair,,fig:GeodesicShooting,,fig:Borromean}.

\subsection{Tangent-point energies}
\label{sec:TangentPointEnergies}

\begin{figure}[t]
    \centering
    \includegraphics[width=\textwidth]{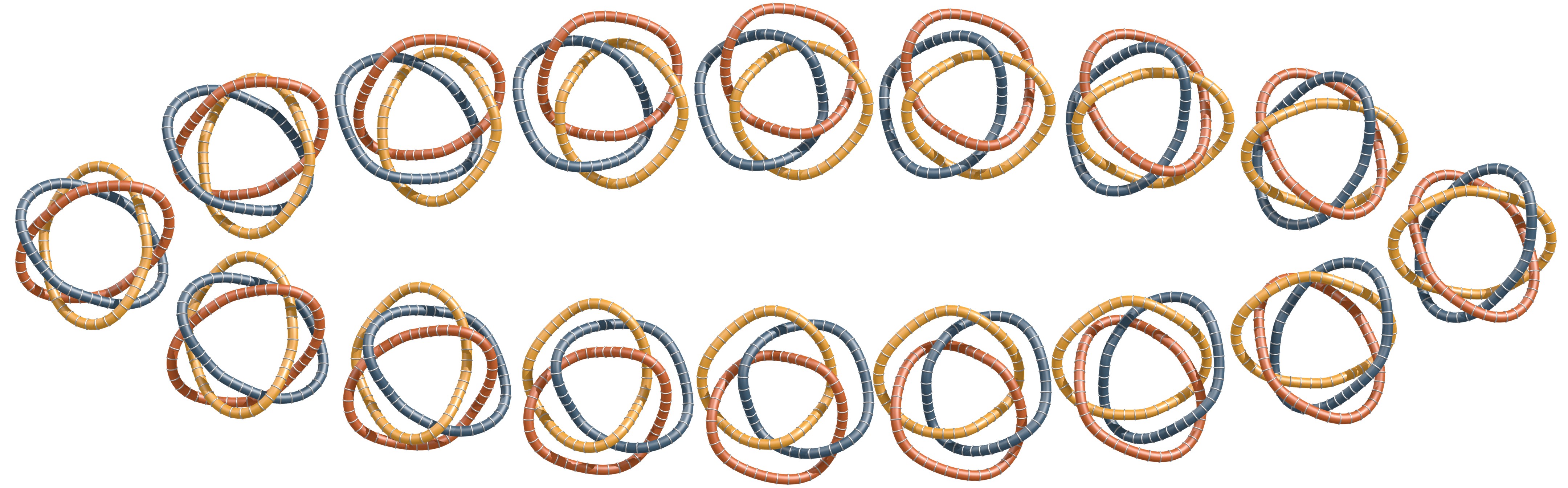}
    \caption{A nontrivial geodesic loop in the space of embedded Borromean rings, illustrating that the fundamental group of this embedding space is nontrivial. 
    We thank Jason Cantarella for bringing this fact to our attention. For more details on the homotopy type of this space see \cite{zbMATH07402702}, in particular, Corollary~5.4 therein (the Borromean link is hyperbolic).
    The Borromean rings form a multiple-component link. So, strictly speaking, this is not within the realm of the present paper.
    However, with some ample modifications, our theory can be extended to cover this more general case as well.}
    \label{fig:Borromean}
\end{figure}

\begin{figure}[t]%
    \centering%
    \newcommand{\incl}[2]{\begin{tikzpicture}%
        \node[inner sep=0pt] (fig) at (0,0) {\includegraphics[	trim = 0 0 0 0, 
        clip = true,  
        angle = 0,
        width = 0.2\textwidth]{#1}};
        \node[above right= -.4ex] at (fig.south west) {\begin{footnotesize}(#2)\end{footnotesize}};%
    \end{tikzpicture}}
    \vphantom{.}\hfill
    \incl{Gradient_L2}{a} \hfill
    \incl{Gradient_H1}{b} \hfill
    \incl{Gradient_Hs}{c} \hfill
    \incl{Gradient_G}{d} \hfill \vphantom{.}
    \caption{Various downward gradients of the tangent-point energy $\Energy$ for $s = 7/4$ plotted as arrows along the same 
    four-tangle. 
    (a)~The $\Lebesgue[2]$-gradient is very sensitive to local changes, which makes explicit gradient descent methods instable unless tiny step sizes are used. 
    It has particularly high magnitude in tight or highly curved spots.
    (b)~The $\Bessel[1]$-gradient and (c)~the $\Bessel[s]$-gradient;
    these lead to sightly more stable gradient steps since they smooth out the $\Lebesgue[2]$-gradient. Nonethess, the magnitude can still be large in tight spots such as the left-hand twisting.
    (d)~The $G$-gradient adapts to local features, as the terms $B^{2}$ and $B^{3}$ (see \cref{eq:B2} and \cref{eq:B3}) add some penalty for variations that have unfavorable behavior where the tangent-point energy density is high.}
    \label{fig:Gradients}
\end{figure}

Our metric is in fact based on the concept of self-avoiding energies, more precisely on the family of tangent-point energies.
In fact, they provide some  ``measure of embeddedness''.
From the perspective of applications, the main feature of the those energies is that they can be used to model impermeability, see~\cite{MR4182084,MR4273107,MR3800032,10.1145/3658174,10.1145/3478513.3480521,2006.07859}.

The classical tangent-point energies have been defined by Gonzalez and Maddocks \cite{GonzalezMaddocks} in an attempt to construct functionals on the space of knots that blow up when a knot approaches the boundary of its path component;
a special case already appears in an earlier paper by Buck and Orloff~\cite{MR1317077}.

A rigorous analysis of regularizing and self-avoiding effects has been carries out by Strzelecki and von der Mosel~\cite{strzeleckivdm}.
In particular, every closed rectifiable curve with finite length and finite tangent-point energy needs to be embedded
and its reparameterization by arc length is continuously differentiable.
Subsequently, Blatt~\cite{zbMATH06214305} characterized the domains in terms of \SoboSlobo spaces~$\Sobo[s,\NumExp][][\Domain][\AmbSpace]$.
Generalizations for higher-dimensional submanifolds have been studied in~\cite{strzeleckivdm2,zbMATH06214305,kolasinskistrzeleckivdm}.

At this point it became apparent that the classical tangent-point energies practically
disallows us to work in Hilbert spaces and thus to construct strong Riemannian metrics.
With this very problem in mind, Blatt and the second author introduced the family of \emph{generalized} tangent-point energies~\cite{blattreiter1}
by decoupling powers in the integrand of the tangent-point functional.
It has also been shown in \cite{blattreiter1} that the tangent-point energies are continuously differentiable functionals; 
as a byproduct of our analysis we will strengthen this result and show that the tangent-point energies
in the Hilbert case are smooth (see \cref{thm:DE}).
The scale-invariant case is discussed in~\cite{blatt-reiter-schikorra-vorderobermeier}.

In order to establish a strong Riemannian metric, we will focus on those elements of the generalized tangent-point energy family
whose domains turn out to be Hilbert spaces while maintaining the self-avoidance property.
These are just the elements of the one-parameter family of functionals which will be introduced below in~\eqref{eq:E}.
To this end, we first need to fix some notation. Let
\[ \Circle \ceq \R \slash \Z \cong \Sphere^1 \]
denote a unit length periodic interval or, equivalently, the circle of arc length one.
Now we introduce the following $\gamma$-dependent operator that maps a $C^1$-mapping $u \colon \Domain \to \AmbSpace$ to a measurable mapping $\Op{\gamma}{s} u \colon \Domain \times \Domain \to \AmbSpace$,
\begin{align}
    \Op{\gamma}{s} u(x,y)
    \ceq 
    \frac{
        u(y) - u(x) - \cD_\gamma u (x) \pars{\gamma(y) - \gamma(x)} 
    }{
        \abs{\gamma(y) - \gamma(x)}^s
    },
    \label{eq:Op}
\end{align}
where the differential operator $\cD_\gamma$ is defined by
\begin{align}
    \cD_\gamma  u (x)
    \ceq 
    \dd u(x) \dd \gamma(x)^\dagger 
    = 
    \frac{u'(x)}{\abs{\gamma'(x)}} \frac{\gamma'(x)\trans}{\abs{\gamma'(x)}}
    \label{eq:OpD}
    .
\end{align}
Here $\dagger$ denotes the Moore--Penrose pseudoinverse which takes the form $A^\dagger=(A^\top A)^{-1}A^\top$ if $A$ is injective (and has closed range).
In order to keep notation brief, we also
abbreviate the line element by
\[ \dLineElC[x] \ceq \abs{\gamma'(x)} \dLebesgueM[x], \]
where $\LebesgueM$ denotes the Lebesgue measure,
and introduce the $\gamma$-dependent measure $\mu_\gamma$
\begin{align}
    \dd \mu_\gamma(x,y) \ceq \frac{\dLineElC[y] \dLineElC[x]}{\abs{\gamma(y) - \gamma(x)}}
    .
    \label{eq:muGeometric}
\end{align}
Now we are ready to introduce the \emph{tangent-point energy} (in the \emph{Hilbert case})
\begin{align}
    \Energy(\gamma) 
    \ceq 
    \int_\Domain \int_\Domain 
        \abs{ \Op{\gamma}{s} \gamma(x,y)}^2
    \dd \mu_\gamma(x,y)
    \qquad\text{where }
    s\in\intervaloo{3/2,2}
    \label{eq:E}
    .
\end{align}
In particular, the energy does not depend on the parameterization of the curve.
The regularity theory of~$\Energy$ has been established in~\cite{blattreiter1,zbMATH07531070}.
We also refer to the survey~\cite{zbMATH07063699}.

\subsection{Definition of the metric}
\label{sec:DefinitionMetric}

Now we are finally in the position to define our Riemannian metric on the smooth manifold
\begin{align}
    \Mfld \ceq \braces[\Big]{
        \gamma \in \Bessel[s][][\Domain][\AmbSpace]
        \, \Big| \,
        \text{$\gamma$ is an embedding}
    }.
\end{align}
Because $\Bessel[s]$ embeds continuously to $\Holder[1]$ and because the set of embeddings is open in $\Holder[1]$, (see, e.g.,~\cite[Thm.~2.1.4]{MR448362}), the set $\Mfld$ is an open subset of~$\Bessel[s]$.
At $\gamma \in \Mfld$ and for tangent vectors $u$, $v \in T_\gamma \Mfld = \Bessel[s][][\Domain][\AmbSpace]$ we define the metric $G$ in terms of five bilinear forms whose precise definitions will be detailed out next:
\begin{equation}
    G_\gamma(u,v)
    \ceq 
    2 \, B_\gamma^{1}(u,v)
    +
    (2\,s+1)
    \, B_\gamma^{2}(u,v)
    +
    B_\gamma^{3}(u,v)
    +
    \inner{u,v}_{\Bessel[1](\gamma)}
    +
    \inner{u,v}_{\Lebesgue[2](\gamma)}
    .
    \label{eq:G}
\end{equation}
The main ingredients here are the following three bilinear forms:
\begin{align}
    B_\gamma^{1}(u,v)
    &\ceq 
    \int_\Domain \int_\Domain  
        \inner[\Big]{ \Op{\gamma}{s}u(x,y), \Op{\gamma}{s}v(x,y) } 
    \dd \mu_\gamma(x,y),
    \label{eq:B1}
    \\
    B_\gamma^{2}(u,v)
    &\ceq 
    \int_\Domain \int_\Domain 
            \abs[\big]{ \Op{\gamma}{s}\gamma(x,y) }^2 
            \inner[\Big]{ 
                \fdfrac{u(y)-u(x)}{\abs{\gamma(y) - \gamma(x)}}, 
                \fdfrac{v(y)-v(x)}{\abs{\gamma(y) - \gamma(x)}} 
            } 
    \dd \mu_\gamma(x,y),
    \label{eq:B2}
    \\
    B_\gamma^{3}(u,v)
    &\ceq 
    \int_\Domain \int_\Domain 
            \abs[\big]{ \Op{\gamma}{s}\gamma(x,y) }^2 
            \pars[\Big]{
                \inner{ D_\gamma u(x), D_\gamma v(x)}
                +
                \inner{ D_\gamma u(y), D_\gamma v(y)}
            }
    \dd \mu_\gamma(x,y).    
    \label{eq:B3} 
\end{align}
Here and in the following, $\abs{\cdot}$ and $\inner{\cdot,\cdot}$ refer to the Euclidean norm and the Euclidean inner product for vectors---or to the Frobenius norm and the Frobenius inner product for tensors.

As we will see later in \cref{thm:DE}, all the terms $B_\gamma^{1}$, $B_\gamma^{2}$ and $B_\gamma^{3}$ appear very naturally in the first variation of $\Energy$.
Among these, the term $B_\gamma^{1}$ is the one that topologizes $\Bessel[s][][\Domain][\AmbSpace]$, so it is responsible for making this a \emph{strong} Riemannian metric (see \cref{thm:GIsStrong}).
Moreover, one can show that $B_\gamma^{1}(u,v)$ is the weak formulation of a compact perturbation of the 
fractional Laplacian $(-\Laplacian_\gamma)^s = (-\cD_\gamma^* \cD_\gamma)^s$ (see \cite[Proposition~4.1]{blattreiter1}).

The terms $B_\gamma^{2}$ and $B_\gamma^{3}$ have the responsibility to push curves with self-intersections ``towards infinity''.
Metrics very similar to $B_\gamma^{1} + B_\gamma^{2}$ have been used as preconditioners for numerical optimization via gradient descent for the Möbius energy of curves \cite{reiterschumacher1}, for the tangent-point energy of curves \cite{2006.07859}, and for the tangent-point energy of surfaces \cite{10.1145/3478513.3480521}.
In fact, these numerical works are the main motivation for considering this particular metric $G$:
Not only do we know that $G$ is an excellent preconditioner for $\Energy$ (and compact perturbations of it); we also observed in these experiments that including the term $B_\gamma^{2}$ made collision detection during the line search almost obsolete when we used (a second order approximation of) the Riemannian exponential map as update routine. 
In a nutshell: In which direction you ever go; you never bounce into the boundary of $\Mfld$. This boundary is of course the set of curves with self-intersections (or some other singularities like kinks or cusps).
This observation lead us to wonder whether the employed metric (or some minor modification thereof) could be geodesically complete. The present paper is the affirmative answer to this.

Alas, the three $B$-terms are not enough: Note that $B_\gamma^{1}(u,u) + B_\gamma^{2}(u,u) + B_\gamma^{3}(u,u)$ vanishes if $u$ is a constant function, hence $B_\gamma^{1} + B_\gamma^{2} + B_\gamma^{3}$ is not positive-definite. This can be coped with in various way, but the easiest one is to add also the geometric $\Lebesgue[2]$-term
\begin{equation}
    \inner{u,v}_{\Lebesgue[2](\gamma)}
    \ceq 
    \int_\Domain \inner{ u(x), v (x) } \dLineElC[x]    
    .
    \label{eq:L2}
\end{equation}
This assigns a non-vanishing ``cost'' to constant vectors fields which generate the translations in $\AmbSpace$.
Finally, we also add the geometric $\Bessel[1]$ term
\begin{equation}
    \inner{u,v}_{\Bessel[1](\gamma)}
    \ceq 
    \int_\Domain \inner{ D_\gamma u(x), D_\gamma v (x) } \dLineElC[x]    
    ,
    \quad
    \text{where}
    \quad 
    D_\gamma u(x) \ceq \frac{u'(x)}{\abs{\gamma'(x)}}
    \label{eq:H1}
\end{equation}
is the derivative of $u$ \emph{with respect to arc length}.
Adding $\inner{u,v}_{\Bessel[1](\gamma)}$ to the metric grants some good control over the arc length functional (see \cref{lem:ArcLengthLipschitz}). 
In principle, Sobolev inequalities allow for bounding $\inner{u,u}_{\Bessel[1](\gamma)}$ in terms of $B_\gamma^{1}(u,u) + B_\gamma^{2}(u,u) + B_\gamma^{3}(u,u)$ (and actually in terms of $B_\gamma^{1}(u,u)$ alone). 
However, the constants in these inequalities depend on $\gamma$ in such an unfavorable way that this prevents us from using Grönwall inequalities for showing geodesic completeness unless we introduce some artificial control over the arc length functional (e.g., by a hard constraint).

\subsection{Main results and outline of the paper}
\label{sec:MainResults}

We strive to make the exposition as self-contained as possible.
To this end, we require some thorough preparation before we can prove the main results in \cref{sec:BanachAlaoglu}--\cref{sec:ExistenceMinimizingGeodesics}.
Thus, this paper is organized as follows.

In \cref{sec:Preliminaries} we first introduce the \SoboSlobo spaces which form the domains
for the generalized tangent-point energies. We limit our attention to the Hilbert case.
We also define $\Curve$-dependent \SoboSlobo spaces along with corresponding inner products based on certain geometric quantities.
Later on it will become apparent that these $\Curve$-dependent spaces fit very well to the tangent-point energies and to the geodesic equation.
The induced norms turn out to be equivalent to the classical $\Curve$-independent notion (see \cref{lem:HsgammaHs}).
Finally, we prove a geometric Morrey inequality (see \cref{thm:MorreyInequalityGeometricGeometric}) and quote the uniform distortion estimate (see \cref{thm:UniformBiLipschitz}) from~\cite{blattreiter1}, which will play a central role in all of our analysis.

The next section is devoted to studying the family of operators $\Op{}{s}$ that has been introduced in~\cref{eq:Op}.
It constitutes the main building block of our metric~$G$.
We start by proving some elementary results on reparameterizations. These demonstrate that our metric $G$ is actually a \emph{geometric} entity (see \cref{thm:GIsInvariant}).
Subsequently, we derive bounds on $\Op{}{s}$ in terms of \SoboSlobo norms and distortion (see \cref{lem:RsEquiBoundedII}).
Paying particular attention to deriving quantitative bounds is fundamental for later applications, e.g., for proving that $\Op{}{s}$ is uniformly bounded on small $\Bessel[s]$-balls
and later on for establishing completeness properties of~$G$.
In the third part of the section we show that $\Op{}{s}$ is actually a smooth operator (see \cref{thm:RsIsSmooth}).

In \cref{sec:GenProp} we derive several foundational properties of the metric $G$:
First we prove that $G$ is a \emph{strong} metric (see \cref{thm:GIsStrong}), i.e., it is a Riemannian metric such that the induced norm 
$\norm{u}_{\smash{G_\gamma}} \ceq \sqrt{ G_\gamma(u,u)}$ for $u \in T_\gamma \Mfld$
generates the right topology on the tangent space $T_\gamma \Mfld$.
Subsequently, we show that the functionals~\cref{eq:B1}--\cref{eq:B3} depend smoothly on~$\gamma$.
Consequently, both the metric~$G$ and the tangent-point energy~$\Energy$ are smooth operators (see \cref{thm:GIsSmooth} and \cref{thm:DE}).

Afterwards we introduce the geodesic distance~$\dist[G]$ and the path energy functional~$\Dirichlet$. We prove that both the square root of the tangent-point energies and the square root of the length functional are globally Lipschitz continuous with respect to $\dist[G]$ (see \cref{thm:EnergyLipschitz} and \cref{lem:ArcLengthLipschitz}).
In particular, this shows that they are bounded on $G$-bounded sets.
Furthermore, we establish a lower bound on the geodesic distance in terms of the $\Bessel[s]$ distance (see \cref{lem:GLocalDistanceBound}).
This has several consequences. In particular, it implies that the metric~$G$ is in fact definite.
An opposite bound is not feasible in general, but at least on sufficiently small sets (see \cref{lem:GDistanceLocalDistanceBound}).

By then we will have laid the ground for proving our main results, namely the three Hopf--Rinow properties and the existence of minimal geodesics.
These proofs constitute the remainder of the paper:
\cref{sec:BanachAlaoglu} contains our proof of the Banach--Alaoglu property (see \cref{thm:BanachAlaoglu})
and in \cref{sec:MetricCompleteness}  we present the short proof of metric completeness (see~\cref{thm:MetricCompleteness}).

\cref{sec:GeodesicCompleteness} on geodesic completeness begins with the derivation of the geodesic equation~\cref{eq:GeodesicEqII}.
Rewriting it as a system of first-order ordinary differential equations with a locally Lipschitz continuous vector field yields short-time existence of its solutions (see \cref{thm:ShortTimeExistence}).
The same line of reasoning also applies to geodesics in a constraint manifold $\ConstraintMfld\subset\Mfld$.
This way we could, e.g., constrain the curves to be parameterized
by arc length or to be contained in the unit sphere $S^3 \subset \R^4$. (The latter was frequently asked in our discussions with topologists.)
Afterwards we establish long-time existence (see \cref{thm:LongTimeExistence}), which implies geodesic completeness.

Finally, \cref{sec:ExistenceMinimizingGeodesics} on length-minimizing geodesics features the existence proof (see \cref{thm:ExistenceMinimizingGeodesics}),
which relies on the weak sequential lower semicontinuity of the path energy functional~$\Dirichlet$ (see \cref{thm:DirichletEnergyIsLowerSemicontinuous}).


\section{Preliminaries}
\label{sec:Preliminaries}

To start on solid ground, let us briefly pin down a couple of definitions and some notation that we are going to use throughout this paper. 

\subsection{\SoboSlobo spaces}
\label{sec:SoboSlobo}

Here we introduce the \SoboSlobo space $\HSpace$ for $3/2< s < 2$ and fix a norm that topologizes it.

First we define the following bilinear forms for measurable mappings $u$, $v \colon \Domain \to \AmbSpace$:
\begin{equation*}
    \inner{u,v}_{\Lebesgue[2]}
    \ceq 
    \textstyle
    \int_\Domain \inner{u(x),v(x)} \dLebesgueM[x]
    \qand
    \inner{u,v}_{\Bessel[1]}
    \ceq 
    \inner{u',v'}_{\Lebesgue[2]}
    =
    \textstyle
    \int_\Domain \inner{u'(x),v'(x)} \dLebesgueM[x]
    .
\end{equation*}
On top of that, we denote the distance function in $\Domain$ by $\dist[\Circle] \colon \Domain \times \Domain \to \intervalcc{0,\infty}$ and for $\sigma \in \intervalcc{0,1}$ define the operator $\diffop{}{\sigma}$ and the measure $\mu$ on $\Domain \times \Domain$ by
\begin{equation}
    \diffop{}{\sigma}  u(x,y) \ceq \frac{u(y) - u(x)}{\dist[\Domain][y][z]^{\sigma}}
    \qand 
    \dd \mu(x,y) \ceq \frac{\dd \lambda(y) \dd \lambda(x)}{\dist[\Domain][y][x]}.
    \label{eq:DsigmaAndMu}
\end{equation}
We denote the $\Lebesgue[2]$-inner product with respect to $\mu$ by $\inner{u,v}_{\Lebesgue[2](\mu)}$.
This notation allows us to write down the \emph{Gagliardo product} of order $s-1$ in the following way:
\begin{equation*}
    \inner{u,v}_{\Bessel[s-1]}
    \ceq
    \int_\Domain \int_\Domain
        \inner*{
            \frac{u(y) - u(x)}{\dist[\Domain][y][x]^{s-1}},
            \frac{v(y) - v(x)}{\dist[\Domain][y][x]^{s-1}}
        }
    \frac{\dd \lambda(y) \dd \lambda(x)}{\dist[\Domain][y][x]}
    =
    \inner{ 
        \diffop{}{s-1} u, \diffop{}{s-1} v }_{\Lebesgue[2](\mu)}
\end{equation*}
We obtain the \emph{Gagliardo product} of order $s$ by chaining the above with the classical weak derivative $D$:
\begin{equation*}
    \inner{u,v}_{\Bessel[s]}
    \ceq
    \inner{u',v'}_{\Bessel[s-1]}
    =
    \inner{ \diffop{}{s-1} D u, \diffop{}{s-1} D v }_{\Lebesgue[2](\mu)}
    .
\end{equation*}
Let us denote the corresponding (semi)norms by 
\[
	\norm{u}_{\Lebesgue[2]} \ceq \sqrt{\inner{u,u}}_{\Lebesgue[2]},\;
	\seminorm{u}_{\Bessel[1]} \ceq \sqrt{\inner{u,u}}_{\Bessel[1]},\;
	\seminorm{u}_{\Bessel[s-1]} \ceq \sqrt{\inner{u,u}}_{\Bessel[s-1]},
	\text{ and }
	\seminorm{u}_{\Bessel[s]} \ceq \sqrt{\inner{u,u}}_{\Bessel[s]}.
\]
Finally, we define the inner product and norm
\begin{equation*}
    \iinner{u,v}_{\Bessel[s]}
    \ceq 
    \inner{u,u}_{\Lebesgue[2]} + \inner{u,u}_{\Bessel[1]} + \inner{u,u}_{\Bessel[s]}
    \qand 
    \norm{u}_{\Bessel[s]}
    \ceq 
    \sqrt{\iinner{u,v}}_{\Bessel[s]}
\end{equation*}
and the \SoboSlobo space
\begin{equation*}
    \HSpace
    \ceq 
    \myset[\big]{ u \colon \Domain \to \AmbSpace }{ \norm{u}_{\Bessel[s]} < \infty }
    .
\end{equation*}
As it is well-known, $\HSpace$ together with $\iinner{u,v}_{\Bessel[s]}$ is a separable Hilbert space in which $\Holder[\infty][][\Domain][\AmbSpace]$ is a dense subset.
The Morrey inequality \cref{thm:MorreyInequality} implies that $\HSpace \subset \Holder[1][][\Domain][\AmbSpace]$.

\subsection{Geometric operators and norms}
\label{sec:GeometricOperators}

In the context of the geometry of the space $\Mfld$ of $\Bessel[s]$-embeddings it will often be convenient to work with \emph{geometric} operators and norms that depend on a particular point $\gamma \in \Mfld$. For example, we have seen already the operators $\cD_\gamma$, $D_\gamma$, and the measure $\mu_\gamma$ (cf.~\cref{eq:OpD}, \cref{eq:H1}, and \cref{eq:muGeometric}).
Now we introduce for $x$, $y \in \Circle$ the \emph{arc length distance} $\dist[\gamma](x,y)$, the arc length of the shorter arc of $\gamma(\Circle)$ between $\gamma(x)$ and $\gamma(y)$.
Assuming that $x$ and $y$ are represented by $0 \leq \tilde{x} \leq \tilde{y} < 1$, we have
\[
    \dist[\gamma](x,y) = 
    \min \braces*{
        \textstyle \int_{\tilde x}^{\tilde y} \abs{\gamma'(z)} \dd z
        ,
        \textstyle \int_{\tilde y}^{\tilde x+1} \abs{\gamma'(z)} \dd z
    }.
\]
Moreover, we introduce the $\gamma$-dependent measure $\nu_\gamma$
and the fractional difference operator $\diffop{\gamma}{\sigma}$:
\begin{align}
    \dd \nu_\gamma(x,y) 
    \ceq 
    \frac{\dLineElC[y] \dLineElC[x]}{\dist[\gamma][y][x]}
    \qand
    \diffop{\gamma}{\sigma} u (x,y)
    \ceq 
    \frac{ u(y) - u(x) }{ \dist[\gamma][y][x]^\sigma }
    \quad
    \text{for $\sigma \in \intervalcc{0,1}$.}
    \label{eq:NuAndDsigma}
\end{align} 
For measurable maps $U$,~$V \colon \Domain \times \Domain \to \AmbSpace$
the measure $\nu_\gamma$ generates the following $\Lebesgue[2]$-inner product:
\begin{align*}
    \inner{U,V}_{\Lebesgue[2](\nu_\gamma)}
    \ceq 
    \textstyle
    \int_\Domain \int_\Domain \inner{ U(x,y), V(x,y) }  \dd \nu_\gamma(x,y)
    \qand
    \norm{U}_{\Lebesgue[2](\nu_\gamma)}
    \ceq 
    \SmashedSqrt{ \inner{U,U}}_{\Lebesgue[2](\nu_\gamma)}
    .
\end{align*}
For $1 < s < 2$, we define the $\gamma$-dependend bilinear form and seminorm:
\begin{align}
    \inner{u,v}_{\Bessel[s](\gamma)}
    \ceq 
    \inner{
        \diffop{\gamma}{s-1} D_\gamma u , \diffop{\gamma}{s-1} D_\gamma v }_{\Lebesgue[2](\nu_\gamma)}
    \;\;\text{and}\;\;
    \seminorm{u}_{\Bessel[s](\gamma)}
    \ceq 
    \sqrt{\inner{ u, u }}_{\Bessel[s](\gamma)}
    \label{eq:DefintionHsSemi}
    .
\end{align}
This gives rise to the $\gamma$-dependend inner product and norm
\begin{align}
    \iinner{u,v}_{\Bessel[s](\gamma)}
    \ceq 
    \inner{ u, v }_{\Lebesgue[2](\gamma)}
    +
    \inner{ u, v }_{\Bessel[1](\gamma)}
    +
    \inner{ u, v }_{\Bessel[s](\gamma)}
    \;\;\text{and}\;\;
    \norm{u}_{\Bessel[s](\gamma)}
    \ceq 
    \sqrt{\iinner{u,v}}_{\Bessel[s](\gamma)}
    \label{eq:DefintionHsNorm}
    .
\end{align}

The point of introducing these $\gamma$-dependent quantities is that they are \emph{covariant} or \emph{geometric}, i.e., they transform ``in the right way'' under reparameterizations.
This will frequently allow us to change the parameterization and assume that $\gamma$ is in  \emph{arc length parameterization}, i.e., that $\abs{\gamma'(x)} = 1$ for all $x \in \Domain$.

Let us make this more precise and briefly analyze how these entities transform under a $\Bessel[s]$-diffeomorphism $\varphi \colon \mathbb{S} \to \Domain$, $\mathbb{S} \ceq \R \slash (\ell \Z)$ for $\ell >0$.
Since $\dd \varphi(x) \colon T_x \mathbb{S} \to T_{\varphi(x)} \Circle$ is invertible and
$\dd \gamma(x) \colon T_x \Circle \to \AmbSpace$ is injective, we have for $u \colon \Domain \to \AmbSpace$ that
\begin{equation}
\begin{split}
    \cD_{(\gamma \circ \varphi)} (u \circ \varphi) (x)
    &=
    \pars[\big]{\dd (u \circ \varphi) (x)}  \pars[\big]{\dd (\gamma \circ \varphi) (x)}^{\dagger}
    \\
    &=
    \dd u( \varphi(x)) \dd \varphi(x)  \pars{ \dd \varphi(x)}^{-1} \dd \gamma( \varphi(x))^\dagger
    =
    \cD_\gamma u ( \varphi(x) ),
\end{split}    
    \label{eq:cDIsGeometric}
\end{equation}
or in short: $\cD_{(\gamma \circ \varphi)} (u \circ \varphi) = (\cD_\gamma u) \circ \varphi$. In the same way one checks that $D_{(\gamma \circ \varphi)} (u \circ \varphi) = \pm  (D_\gamma u) \circ \varphi$:
\begin{equation}
    D_{(\gamma \circ \varphi)} (u \circ \varphi) (x)
    =
    \frac{ u'(\varphi(x)) \, \varphi'(x) }{ \abs{\gamma'(\varphi(x)) \, \varphi'(x)}}
    =
    \sgn( \varphi'(x) ) \, D_\gamma u(\varphi(x))
    .
    \label{eq:DIsGeometric}
\end{equation}
The sign flip will not be of concern, because $\varphi$ will usually be orientation-preserving. (Besides, the terms involving $D_\gamma$ will always come in pairs.)
Also, the fractional difference operator is covariant
and satisfies $\diffop{(\gamma \circ \varphi)}{\sigma} (u \circ \varphi) = 
\diffop{\gamma}{\sigma} u \circ (\varphi \times \varphi)$:
\begin{align}
    \diffop{(\gamma \circ \varphi)}{\sigma} (u \circ \varphi) (x,y)
    =
    \frac{u(\varphi(y)) - u(\varphi(x))}{\dist[(\gamma\circ \varphi)][\varphi(y)][\varphi(x)]^\sigma}
    =
    \diffop{\gamma}{\sigma} u (\varphi(x),\varphi(y))
    .
    \label{eq:DsigmaIsGeometric}
\end{align}
The norms and inner products of mappings are defined in terms of integrals.
By the chain rule we have $\dLineEl[(\gamma \circ \varphi)][x] = \abs{\gamma'(\varphi(x))} \abs{\varphi'(x)} \dLebesgueM[x]$. 
Hence, we may use the transformation formula for the Lebesgue integral to compute:
\begin{align}
\begin{split}
    \inner{ u \circ \varphi, v \circ \varphi}_{\Lebesgue[2](\gamma \circ \varphi)}
    &=
    \textstyle
    \int_{\mathbb{S}} 
        \inner{ u(\varphi(x)) , v(\varphi(x)) } \dLineEl[(\gamma \circ \varphi)][x]
    \\
    &=
    \textstyle
    \int_{\varphi(\Domain)} 
        \inner{ u(\varphi(x)) , v(\varphi(x)) }  \abs{\gamma'(\varphi(x))}  \abs{\varphi'(x)} 
        \dLebesgueM[x]
    \\
    &=
    \textstyle
    \int_{\Domain} 
        \inner{ u(\xi) , v(\xi) }  \abs{\gamma'(\xi)} 
    \dLebesgueM[\xi]        
    =
    \inner{ u,v}_{\Lebesgue[2](\gamma)}
    .
\end{split}  
    \label{eq:omegaIsGeometric}
\end{align}
Together with \cref{eq:cDIsGeometric} this immediately implies that the inner product $\inner{\cdot,\cdot}_{\Bessel[1](\gamma)}$ from \cref{eq:H1} is invariant under reparameterization, too.
In the same vein one derives the following:
\begin{align}
    \inner{ U \circ (\varphi \times \varphi), V \circ (\varphi \times \varphi)}_{\Lebesgue[2](\nu_{(\gamma \circ \varphi)})}
    &=
    \inner{ U, V}_{\Lebesgue[2](\nu_\gamma)}
    \quad and
    \label{eq:nuIsGeometric}
    \\
    \inner{ U \circ (\varphi \times \varphi), V \circ (\varphi \times \varphi)}_{\Lebesgue[2](\mu_{(\gamma \circ \varphi)})}
    &=
    \inner{ U, V }_{\Lebesgue[2](\mu_\gamma)}
    .
    \label{eq:muIsGeometric} 
\end{align}
Combined with \cref{eq:DsigmaIsGeometric}, this shows us that the bilinear form
$\gamma \mapsto \inner{ \cdot,\cdot}_{\Bessel[s](\gamma)}$, 
the seminorm 
$\gamma \mapsto \seminorm{\cdot}_{\Bessel[s](\gamma)}$ 
and the norm 
$\gamma \mapsto \norm{\cdot}_{\Bessel[s](\gamma)}$ 
are covariant in the following sense:
\begin{align}
\begin{split}
    \inner{ u \circ \varphi, v \circ \varphi}_{\Bessel[s](\gamma \circ \varphi)}
    &=
    \inner{ u, v }_{\Bessel[s](\gamma)}
    ,
    \\
    \seminorm{ u \circ \varphi}_{\Bessel[s](\gamma \circ \varphi)}
    &=
    \seminorm{ u }_{\Bessel[s](\gamma)}
    ,
    \\
    \norm{ u \circ \varphi}_{\Bessel[s](\gamma \circ \varphi)}
    &=
    \norm{ u }_{\Bessel[s](\gamma)}
    .
    \label{eq:Ws2SemimormEquivariance}
\end{split}    
\end{align}
Although the covariant norms $\norm{\cdot}_{\Bessel[s](\gamma)}$ are very compelling, at some point we have (i) to show that they generate the correct topology on $\HSpace$ and (ii) to quantify their deviation from $\norm{\cdot}_{\Bessel[s]}$. We decided that this point should be here.
A central ingredient in these proofs are the following notions of \emph{bi-Lipschitz constant} and  \emph{distortion} of a curve $\gamma \colon \Circle\to \AmbSpace$:
\begin{equation}
    \BiLip(\gamma) \ceq \esssup_{x,y \in \Circle}
        \frac{\dist[\Domain](y,x)}{\abs{\gamma(y) - \gamma(x)}}
 	\quad
    \text{and}
    \quad
    \distor(\gamma)
		\ceq
		\esssup_{x,y\in\Circle} 
			\frac{\dist[\gamma](y,x)}{\abs{\gamma(y)-\gamma(x)}}
        .
        \label{eq:DefBiLip}
\end{equation}
While distortion can be interpreted as the worst-case ratio between intrinsic and extrinsic length,
the bi-Lipschitz constant is the Lipschitz constant of the inverse map of~$\Curve$.
We would like to emphasize that $\distor(\gamma)$ and $\BiLip(\gamma)$ coincide if
$\gamma$ is parameterized by arc length.
In general, they differ: 
In contrast to $\BiLip(\gamma)$, the quantity $\distor(\gamma)$ is invariant under reparameterization, thus geometric,
and can in fact be bounded in terms of~$\Energy(\gamma)$.

The following norm equivalence will be pivotal for our analysis of the metric $G$. In \cref{lem:HsgammaHsOnBoundedSets} we will make these estimates uniform on sets that are bounded with respect to the geodesic distance induced by $G$.
This is why we keep precise track of the equivalence constants.

\begin{lemma}\label{lem:HsgammaHs}
There are continuous functions $f_{\Bessel[s]}, F_{\Bessel[s]} \colon \Mfld \rightarrow (0,\infty)$ such that the following chain of inequalities holds true:
    \begin{equation*}
        f_{\Bessel[s]}(\gamma) \norm{u}_{\Bessel[s]}
        \leq
        \norm{u}_{\Bessel[s](\gamma)}
        \leq
        F_{\Bessel[s]}(\gamma) \norm{u}_{\Bessel[s]}
        \quad 
        \text{for all $u\in \Bessel[s][][\Circle][\AmbSpace]$.}
    \end{equation*}
\end{lemma}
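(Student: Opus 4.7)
The plan is to control each of the three ingredients of $\norm{\cdot}_{\Bessel[s](\gamma)}$ separately by tracking how the pointwise and fractional data of $\gamma$ enter, and then package the resulting constants as continuous functions on $\Mfld$. Since $\Bessel[s]\embeds \Holder[1]$ for $s>3/2$, the scalars
\[
    m(\gamma)\ceq \min_\Circle\abs{\gamma'}>0,\qquad
    M(\gamma)\ceq\max_\Circle\abs{\gamma'}<\infty,\qquad
    [\gamma]_{\Bessel[s]}<\infty
\]
all depend continuously on $\gamma\in\Mfld$, and positivity of $m$ is exactly the immersion condition. Moreover one checks at once that $m(\gamma)\,\rho_\Circle(x,y)\le\dist[\gamma](x,y)\le M(\gamma)\,\rho_\Circle(x,y)$ and $\dLineElC[x]\in[m(\gamma),M(\gamma)]\,\dLebesgueM[x]$. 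Applied to the two easy summands of $\norm{\cdot}_{\Bessel[s](\gamma)}^{2}$, these bounds give directly
\[
    m(\gamma)\,\norm{u}_{\Lebesgue[2]}^{2}\le\norm{u}_{\Lebesgue[2](\gamma)}^{2}\le M(\gamma)\,\norm{u}_{\Lebesgue[2]}^{2},\qquad
    M(\gamma)^{-1}\seminorm{u}_{\Bessel[1]}^{2}\le\seminorm{u}_{\Bessel[1](\gamma)}^{2}\le m(\gamma)^{-1}\seminorm{u}_{\Bessel[1]}^{2}.
\]

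For the fractional piece, set $g\ceq 1/\abs{\gamma'}$ so that $D_\gamma u=u'g$, and split
\[
    D_\gamma u(y)-D_\gamma u(x)=g(y)\pars{u'(y)-u'(x)}+u'(x)\pars{g(y)-g(x)}.
\]
Inserting this into \cref{eq:DefintionHsSemi}, using $(a+b)^{2}\le 2a^{2}+2b^{2}$, and bounding the factors $\abs{\gamma'},g,\dist[\gamma]^{-1}$ by $M(\gamma),1/m(\gamma),1/(m(\gamma)\rho_\Circle)$ yields
\[
    \seminorm{u}_{\Bessel[s](\gamma)}^{2}\le
        \frac{2M(\gamma)}{m(\gamma)^{2s}}\seminorm{u}_{\Bessel[s]}^{2}
        +\frac{2M(\gamma)^{2}}{m(\gamma)^{2s+3}}\,\norm{u'}_{\Holder[0]}^{2}\seminorm{\gamma}_{\Bessel[s]}^{2},
\]
where in the second term I exploited $\abs{g(y)-g(x)}\le m(\gamma)^{-2}\abs{\gamma'(y)-\gamma'(x)}$ so that after division by $\rho_\Circle^{2s-1}$ and integration one recovers $\seminorm{\gamma}_{\Bessel[s]}^{2}=\seminorm{\gamma'}_{\Bessel[s-1]}^{2}$. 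Finally the classical Morrey embedding $\Bessel[s]\embeds\Holder[1]$ bounds $\norm{u'}_{\Holder[0]}\le \MorreyConst\,\norm{u}_{\Bessel[s]}$, and altogether this produces the upper estimate $\norm{u}_{\Bessel[s](\gamma)}\le F_{\Bessel[s]}(\gamma)\,\norm{u}_{\Bessel[s]}$ with $F_{\Bessel[s]}$ an explicit continuous function of $m(\gamma)$, $M(\gamma)$, and $\seminorm{\gamma}_{\Bessel[s]}$.

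The reverse inequality is obtained symmetrically: write $u'=\abs{\gamma'}\,D_\gamma u$, perform the analogous product-type splitting
\[
    u'(y)-u'(x)=\abs{\gamma'(y)}\pars{D_\gamma u(y)-D_\gamma u(x)}+\pars{\abs{\gamma'(y)}-\abs{\gamma'(x)}}D_\gamma u(x),
\]
and estimate termwise using the same kind of bounds, this time controlling $\norm{D_\gamma u}_{\Holder[0]}\le m(\gamma)^{-1}\MorreyConst\,\norm{u}_{\Bessel[s]}$ via Morrey. A small algebraic rearrangement (absorbing the resulting $\norm{u}_{\Bessel[s]}$ term into the left-hand side is unnecessary because we can iterate once or use a Young-type trick with $\varepsilon$ on the $\seminorm{\gamma}_{\Bessel[s]}$ factor) produces $f_{\Bessel[s]}(\gamma)\,\norm{u}_{\Bessel[s]}\le\norm{u}_{\Bessel[s](\gamma)}$ with $f_{\Bessel[s]}$ again continuous and strictly positive on $\Mfld$.

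The main obstacle is the genuinely non-trivial cross term involving $g(y)-g(x)$: it is not pointwise controllable by $u$ alone but must be absorbed into $\seminorm{\gamma}_{\Bessel[s]}\norm{u}_{\Bessel[s]}$ via the Morrey embedding. The bookkeeping of how $m(\gamma)$, $M(\gamma)$, and $\seminorm{\gamma}_{\Bessel[s]}$ enter the two constants $f_{\Bessel[s]}$ and $F_{\Bessel[s]}$ is what makes the estimate later upgradable to the uniform statement announced for \cref{lem:HsgammaHsOnBoundedSets}.
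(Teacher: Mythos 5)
Your forward inequality (bounding $\norm{u}_{\Bessel[s](\gamma)}$ by $\norm{u}_{\Bessel[s]}$) is essentially the paper's argument and is sound: the splitting $D_\gamma u(y)-D_\gamma u(x)=g(y)(u'(y)-u'(x))+u'(x)(g(y)-g(x))$, the elementary bounds on $\dist[\gamma]$ and the line element, and the classical Morrey embedding all combine correctly, and here it is perfectly acceptable for $\norm{u}_{\Bessel[s]}$ to reappear on the right. (Your cosmetic choice to bound $\seminorm{g}_{\Bessel[s-1]}\leq m(\gamma)^{-2}\seminorm{\gamma}_{\Bessel[s]}$ instead of keeping $\seminorm{\InvSpeed[\gamma]}_{\Bessel[s-1]}$ is fine.)

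The gap is in the reverse direction. After splitting $u'(y)-u'(x)=\abs{\gamma'(y)}(D_\gamma u(y)-D_\gamma u(x))+(\abs{\gamma'(y)}-\abs{\gamma'(x)})D_\gamma u(x)$ you are left with a commutator term proportional to $\seminorm{\Speed[\gamma]}_{\Bessel[s-1]}^2\norm{D_\gamma u}_{\Lebesgue[\infty]}^2$, and you bound $\norm{D_\gamma u}_{\Lebesgue[\infty]}\leq m(\gamma)^{-1}\MorreyConst\norm{u}_{\Bessel[s]}$ by the \emph{stationary} Morrey inequality. That produces an estimate of the form
\begin{equation*}
\norm{u}_{\Bessel[s]}^{2}\ \leq\ A(\gamma)\,\norm{u}_{\Bessel[s](\gamma)}^{2}+B(\gamma)\,\norm{u}_{\Bessel[s]}^{2},
\end{equation*}
with $B(\gamma)$ proportional to $m(\gamma)^{-2}\MorreyConst^{2}\seminorm{\Speed[\gamma]}_{\Bessel[s-1]}^{2}$. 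This is useless unless $B(\gamma)<1$, which you have no control over: $\seminorm{\gamma}_{\Bessel[s]}$ can be arbitrarily large while $m(\gamma)$ can be small, and neither an $\varepsilon$-Young trick nor ``iterating once'' changes the fact that $a^{2}\leq A b^{2}+B a^{2}$ with $B\geq 1$ gives you nothing. The paper avoids this entirely by invoking the \emph{geometric} Morrey inequality, \cref{lem:MorreyInequalityGeometric}, which bounds $\norm{D_\gamma u}_{\Lebesgue[\infty]}\leq C_{\Morrey,s-1}\ArcLength(\gamma)^{s-5/2}\seminorm{u}_{\Bessel[s](\gamma)}$ directly in terms of the $\gamma$-dependent seminorm, so the right-hand side only involves $\seminorm{u}_{\Bessel[s](\gamma)}$. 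That covariant Morrey estimate is the missing ingredient; without it (or an equivalent rescale-and-reparameterize argument), the reverse inequality does not close.
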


\begin{proof}
    We abbreviate $\sigma \ceq s-1$ and define
    $\Speed[\gamma](x) \ceq \abs{\gamma'(x)}$
    and
    $\InvSpeed[\gamma](x) \ceq 1 / \abs{\gamma'(x)}$.
    In order to bound $\seminorm{u}_{\Bessel[s](\gamma)}$ from above, we observe
    $\dist[\gamma](x,y) \geq \pars{\inf \abs{\gamma'}} \, \dist[\Circle](x,y)$.
    Hence, we have
    \[
        \frac{1}{\dist[\gamma](x,y)} 
        \leq 
        \frac{\norm{\InvSpeed[\gamma]}_{\Lebesgue[\infty]}}{\dist[\Circle](x,y)}
        .
    \]
    Now we calculate as follows:
    \begin{align*}        
        \abs{ \diffop{\gamma}{\sigma} D_\gamma u(x,y)}
        &=
        \abs[\Big]{
        	\fdfrac{ 
            	\InvSpeed[\gamma](y) \, u'(y) - \InvSpeed[\gamma](x) \, u'(x)
       		 }{
                \dist[\gamma](x,y)^\sigma
        	}
        }
        \leq 
        \norm{\InvSpeed[\gamma]}_{\Lebesgue[\infty]}^\sigma 
        \abs[\Big]{
        	\fdfrac{ 
            	\InvSpeed[\gamma](y) \, u'(y) - \InvSpeed[\gamma](x) \, u'(x)
       		 }{
            	\dist[\Domain][y][x]^\sigma
        	}
        }
        \\
        &\leq
        \norm{\InvSpeed[\gamma]}_{\Lebesgue[\infty]}^\sigma 
        \pars*{
            \InvSpeed[\gamma](y)
            \abs[\Big]{
                \fdfrac{ 
                    u'(y) -  u'(x)
                    }{
                    \dist[\Domain][y][x]^\sigma
                }
            }
            +
            \abs[\Big]{
                \fdfrac{ 
                    \InvSpeed[\gamma](y) - \InvSpeed[\gamma](x)
                }{
                    \dist[\Domain][y][x]^\sigma
                }
            }
            \abs{
                u'(x)
            }
        }
        \\
        &=
        \norm{\InvSpeed[\gamma]}_{\Lebesgue[\infty]}^\sigma 
        \pars[\Big]{
            \InvSpeed[\gamma](y) \abs{\diffop{}{\sigma} u'(x,y)}
            +
            \abs{\diffop{}{\sigma}  \InvSpeed[\gamma] (x,y)} \abs{u'(x)}
        }
    .
    \end{align*}
    Together with 
    $\nu_\gamma \leq  \norm{\InvSpeed[\gamma]}_{\Lebesgue[\infty]} \norm{\Speed[\gamma]}_{\Lebesgue[\infty]}^2 \, \mu$
    and $(a+b)^2 \leq 2 \pars{a^2+b^2}$, this leads us to
    \begin{align*}
    \begin{split}
        \seminorm{u}_{\Bessel[s](\gamma)}^2
        &=
        \norm{ \diffop{\gamma}{\sigma} D_\gamma u }_{\Lebesgue[2](\nu_\gamma)}^2
        \leq 
        \norm{\InvSpeed[\gamma]}_{\Lebesgue[\infty]} 
        \norm{\Speed[\gamma]}_{\Lebesgue[\infty]}^2
        \norm{ \diffop{\gamma}{\sigma} D_\gamma u }_{\Lebesgue[2](\mu)}^2
        \\
        &\leq 
        2 \norm{\InvSpeed[\gamma]}_{\Lebesgue[\infty]}^{2\sigma + 1}
        \norm{\Speed[\gamma]}_{\Lebesgue[\infty]}^2
        \pars*{
            \norm{\InvSpeed[\gamma]}_{\Lebesgue[\infty]}^2
       		\seminorm{u}_{\Bessel[s]}^2
            +
            \seminorm{\InvSpeed[\gamma]}_{\Bessel[s-1]}^2
           	\norm{u'}_{\Lebesgue[\infty]}^2
        }
        .
    \end{split}
    \end{align*}
    Using the Morrey inequality \cref{thm:MorreyInequality} we obtain
    \begin{equation}
        \seminorm{u}_{\Bessel[s](\gamma)}^2
        \leq 
        2 \norm{\InvSpeed[\gamma]}_{\Lebesgue[\infty]}^{2s - 1}
        \norm{\Speed[\gamma]}_{\Lebesgue[\infty]}^2
        \pars*{
            \norm{\InvSpeed[\gamma]}_{\Lebesgue[\infty]}^2
            +
            C_{\Morrey,\sigma}^2
            \seminorm{\InvSpeed[\gamma]}_{\Bessel[s-1]}^2
        }
        \seminorm{u}_{\Bessel[s]}^2
        .
        \label{eqn:SeminormHsGammaAgainstSeminormHs}
    \end{equation}
    For the lower order terms, we can find the following estimates:
    \begin{align*}
    	\norm{u}_{\Lebesgue[2](\gamma)}^2
	    &=
        \int_{\Domain} 
            \abs{u}^2 \abs{\gamma'} 
        \dd \lambda 
        \leq 
        \norm{\Speed[\gamma]}_{\Lebesgue[\infty]}
        \norm{u}_{\Lebesgue[2]}^2
        \quad \text{and}
        \\
        \norm{D_\gamma u}_{\Lebesgue[2](\gamma)}^2
        &= 
        \int_{\Domain} 
            \frac{\abs{u'}^2 }{\abs{\gamma'}^2} \abs{\gamma'} 
        \dd \lambda
        \leq 
        \norm{\InvSpeed[\gamma]}_{\Lebesgue[\infty]}
        \norm{u'}_{\Lebesgue[2]}^2
        .
    \end{align*}
    Combined with \cref{eqn:SeminormHsGammaAgainstSeminormHs}, this implies
    $
    	\norm{u}_{\Bessel[s](\gamma)}^2
	    \leq 
		F_{\Bessel[s]}(\gamma)^2 \norm{u}_{\Bessel[s]}^2
    $,
    where
    \begin{equation}
        \label{eqn:DefinitionF}
   		F_{\Bessel[s]}(\gamma)^2
        =
        \norm{\Speed[\gamma]}_{\Lebesgue[\infty]}
        +
        \norm{\InvSpeed[\gamma]}_{\Lebesgue[\infty]}
        +
        2 \norm{\InvSpeed[\gamma]}_{\Lebesgue[\infty]}^{2s - 1}
        \norm{\Speed[\gamma]}_{\Lebesgue[\infty]}^2 
        \pars[\Big]{
            \norm{\InvSpeed[\gamma]}_{\Lebesgue[\infty]}^{2} 
            +
            C_{\Morrey,s-1}^2 \seminorm{\InvSpeed[\gamma]}_{\Bessel[s-1]}^2
        }
		.
	\end{equation}
	Now we move on to the reverse inequality.
	For the higher order terms, we observe $\dist[\gamma](y,x) \leq \norm{\gamma'}_{\Lebesgue[\infty]} \dist[\Domain](y,x)$,
	which implies 
	\[
		\fdfrac{1}{\dist[\Domain](y,x)} 
		\leq 
		\fdfrac{\norm{\gamma'}_{\Lebesgue[\infty]}}{\dist[\gamma](y,x)}.
	\]
	Furthermore, we find the following estimate:
    \begin{align*}
        \abs{ \diffop{}{\sigma} u'(x,y)}
        &=
        \abs[\Big]{
            \fdfrac{
                \abs{\gamma'(y)} D_\gamma u(y) - \abs{\gamma'(x)} D_\gamma u(x)
            }{
                \dist[\Circle](y,x)^\sigma
            }
        }
        \\
        &\leq 
        \abs{\gamma'(y)}
        \abs[\Big]{
            \fdfrac{
                D_\gamma u(y) - D_\gamma u(x)
            }{
                \dist[\Circle](y,x)^\sigma
            }
        }
        +
        \abs[\Big]{
            \fdfrac{
                \abs{\gamma'(y)} - \abs{\gamma'(x)}
            }{
                \dist[\Circle](y,x)^\sigma
            }
        }
        \abs{D_\gamma u(x)}
        \\
        &\leq
        \norm{\gamma'}_{\Lebesgue[\infty]}^\sigma
        \abs{\gamma'(y)} \abs{ \diffop{\gamma}{\sigma} D_\gamma u(x,y)}
        +
        \abs{\diffop{}{\sigma} \Speed[\gamma](x,y)} \abs{D_\gamma u(x)}
        .
    \end{align*}
    Together with 
    $(a+b)^2 \leq 2 \pars{a^2+b^2}$
    and the Morrey inequality \cref{lem:MorreyInequalityGeometric},
    this leads to
    \begin{align*}
        \seminorm{u}_{\Bessel[s]}^2
        &\leq
        2 \norm{\gamma'}_{\Lebesgue[\infty]}^{2\sigma} 
        \!\!\int_\Circle \! \int_\Circle 
        \abs{\gamma'(y)}^2 \abs{ \diffop{\gamma}{\sigma}  D_\gamma u (x,y)}^2 
        \dd \mu(x,y)
        +
        2 \norm{\diffop{}{\sigma} \Speed[\gamma]}_{\Lebesgue[2](\mu)}^2 \norm{D_\gamma u}_{\Lebesgue[\infty]}^2 
        \\
        &=
        2 \norm{\gamma'}_{\Lebesgue[\infty]}^{2\sigma} 
        \!\!\int_\Circle \! \int_\Circle 
        \, 
        \abs{\gamma'(y)} \InvSpeed[\gamma](x)
        \,
        \fdfrac{\dist[\gamma](x,y)}{\dist[\Circle](x,y)} \abs{ \diffop{\gamma}{\sigma} D_\gamma u (x,y)}^2 
        \dd \nu_\gamma(x,y)
        +
        2 \seminorm{\Speed[\gamma]}_{\Bessel[\sigma]}^2 \!\norm{D_\gamma u}_{\Lebesgue[\infty]}^2 
        \\
        &\leq
        2 \norm{\gamma'}_{\Lebesgue[\infty]}^{2\sigma+2} 
        \norm{\InvSpeed[\gamma]}_{\Lebesgue[\infty]}
        \seminorm{ D_\gamma u }_{\Bessel[\sigma](\gamma)}^2
        +
        2 \seminorm{\Speed[\gamma]}_{\Bessel[\sigma]}^2 \norm{D_\gamma u}_{\Lebesgue[\infty]}^2 
        \\
        &\leq
        \pars[\Big]{
            2 \norm{\Speed[\gamma]}_{\Lebesgue[\infty]}^{2s} 
            \norm{\InvSpeed[\gamma]}_{\Lebesgue[\infty]}
            +
            2 \, C_{\Morrey,s-1}^2 
            \ArcLength(\gamma)^{2s-5}  \seminorm{\Speed[\gamma]}_{\Bessel[s-1]}^2 
        }
        \seminorm{ u }_{\Bessel[s](\gamma)}^2
        .
    \end{align*}
    For the lower order terms, we find
	\begin{align*}
		\norm{u}_{\Lebesgue[2]}^2
		&=
		\int_\Domain 
			\abs{u}^2 
            \,
			\InvSpeed[\gamma]
			\abs{\gamma'}
		\dd \lambda
		\leq 
		\norm{\InvSpeed[\gamma]}_{\Lebesgue[\infty]}
		\norm{u}_{\Lebesgue[2](\gamma)}^2
        \quad \text{and}
		\\
		\norm{u'}_{\Lebesgue[2]}^2
		&=
		\int_\Domain
			\abs{D_\gamma u}^2 \abs{\gamma'} ^2 
		\dd \lambda
		\leq 
		\norm{\Speed[\gamma]}_{\Lebesgue[\infty]} 
		\norm{D_\gamma u}_{\Lebesgue[2](\gamma)}^2
        .
	\end{align*}
	Combined, we obtain
	$\norm{u}_{\Bessel[s]}^2 \leq f_{\Bessel[s]}(\gamma)^{-2} \norm{u}_{\Bessel[s](\gamma)}^2$, where
	\begin{align}\label{eq:Definitionf}
		f_{\Bessel[s]}(\gamma)^{-2}
		&=
        \norm{\Speed[\gamma]}_{\Lebesgue[\infty]}
        +
        \pars{
            1
            +
            2
            \norm{\Speed[\gamma]}_{\Lebesgue[\infty]}^{2s} 
        }
        \norm{\InvSpeed[\gamma]}_{\Lebesgue[\infty]} 
        +
        2 \, C_{\Morrey,s-1}^2 
        \ArcLength(\gamma)^{2 s-5} 
        \seminorm{\Speed[\gamma]}_{\Bessel[s-1]}^2
		.
	\end{align}
    This concludes the proof.
\end{proof}

As side effect, the techniques from the previous proof reveal that all the measures $\mu$, $\nu_\gamma$ and $\mu_\gamma$ give rise to the same Lebesgue spaces:
\begin{corollary}\label{cor:MuGamma}
    Let $1 \leq p < \infty$.
    For $\gamma \in \Mfld$ we have
    \begin{gather*}
        c_\nu(\gamma)
        \norm{U}_{\Lebesgue[p](\mu)}
        \leq
        \norm{U}_{\Lebesgue[p](\nu_\gamma)} 
        \leq 
        C_\nu(\gamma)
        \norm{U}_{\Lebesgue[p](\mu)}
        \\
        c_\mu(\gamma)
        \norm{U}_{\Lebesgue[p](\mu)}
        \leq
        \norm{U}_{\Lebesgue[p](\mu_\gamma)} 
        \leq 
        C_\mu(\gamma)
        \norm{U}_{\Lebesgue[p](\mu)}
    \end{gather*}
    for all $U \in \Lebesgue[p][\mu][\Domain\times \Domain][\AmbSpace]$.
    The constants are
    $
        \displaystyle
        c_\nu(\gamma) 
        =
        c_\mu(\gamma) 
        \ceq 
        \norm{ \Speed[\gamma] }_{\Lebesgue[\infty]}^{-1/p}
        \norm{\InvSpeed[\gamma] }_{\Lebesgue[\infty]}^{-2/p}
    $,
    $
        \displaystyle
        C_\nu(\gamma) 
        \ceq
        \norm{ \Speed[\gamma] }_{\Lebesgue[\infty]}^{2/p} 
        \norm{ \InvSpeed[\gamma] }_{\Lebesgue[\infty]}^{1/p}
    $,
    and
    $
        \displaystyle
        C_\mu(\gamma) 
        \ceq
        \norm{\Speed[\gamma]}_{\Lebesgue[\infty]}^{2/p} 
        \BiLip(\gamma)^{1/p}
    $.
\end{corollary}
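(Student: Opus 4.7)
The plan is to reduce both inequalities to pointwise comparisons of Radon--Nikodym derivatives with respect to $\mu$, then integrate. From \cref{eq:muGeometric}, \cref{eq:NuAndDsigma}, and the identity $\dLineElC[x] = \Speed[\gamma](x) \, \dLebesgueM[x]$, one reads off
\begin{equation*}
    \frac{\dd\nu_\gamma}{\dd\mu}(x,y) = \frac{\Speed[\gamma](y)\,\Speed[\gamma](x)\,\dist[\Circle][y][x]}{\dist[\gamma][y][x]}
    \qand
    \frac{\dd\mu_\gamma}{\dd\mu}(x,y) = \frac{\Speed[\gamma](y)\,\Speed[\gamma](x)\,\dist[\Circle][y][x]}{\abs{\gamma(y)-\gamma(x)}}.
\end{equation*}

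Next I would bound each of the three factors two-sidedly. Since $\dist[\gamma]$ is an infimum of arc-length integrals, one has $\norm{\InvSpeed[\gamma]}_{\Lebesgue[\infty]}^{-1}\,\dist[\Circle][y][x] \leq \dist[\gamma][y][x] \leq \norm{\Speed[\gamma]}_{\Lebesgue[\infty]}\,\dist[\Circle][y][x]$; for the Euclidean chord, the fundamental theorem of calculus gives $\abs{\gamma(y)-\gamma(x)} \leq \norm{\Speed[\gamma]}_{\Lebesgue[\infty]}\,\dist[\Circle][y][x]$, while the definition in \cref{eq:DefBiLip} yields $\dist[\Circle][y][x] \leq \BiLip(\gamma)\,\abs{\gamma(y)-\gamma(x)}$ for almost every $(x,y)$. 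Combined with the trivial pointwise bounds $\norm{\InvSpeed[\gamma]}_{\Lebesgue[\infty]}^{-1} \leq \Speed[\gamma] \leq \norm{\Speed[\gamma]}_{\Lebesgue[\infty]}$, these produce essential bounds of the form $c_\nu(\gamma)^p \leq \dd\nu_\gamma/\dd\mu \leq C_\nu(\gamma)^p$ and $c_\mu(\gamma)^p \leq \dd\mu_\gamma/\dd\mu \leq C_\mu(\gamma)^p$ with precisely the constants stated: each of the two factors of $\Speed[\gamma]$ contributes a power of $\norm{\Speed[\gamma]}_{\Lebesgue[\infty]}$ (or of its inverse on the lower side), while the denominator contributes one power of $\norm{\InvSpeed[\gamma]}_{\Lebesgue[\infty]}$ or of $\BiLip(\gamma)$.

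The final step is the standard observation that an essential bound $\dd m_1/\dd m_2 \leq A^p$ implies $\norm{U}_{\Lebesgue[p](m_1)} \leq A\,\norm{U}_{\Lebesgue[p](m_2)}$, and symmetrically for lower bounds, simply by integrating $\abs{U}^p$ against $m_1 = (\dd m_1/\dd m_2) \, m_2$ and extracting $p$-th roots. Applying this to the bounds above gives the corollary. I do not anticipate any substantive obstacle; the claim is a bookkeeping consequence of the volume-element estimates already implicit in the proof of \cref{lem:HsgammaHs}, and the only care needed is to distribute the exponent $1/p$ correctly across the Radon--Nikodym factors.
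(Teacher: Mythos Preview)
Your proposal is correct and is exactly the approach the paper has in mind: the corollary is stated without its own proof, as a ``side effect'' of the measure comparisons (e.g., $\nu_\gamma \leq \norm{\InvSpeed[\gamma]}_{\Lebesgue[\infty]} \norm{\Speed[\gamma]}_{\Lebesgue[\infty]}^2 \, \mu$) already established inside the proof of \cref{lem:HsgammaHs}. Your write-up makes these pointwise Radon--Nikodym bounds explicit and then passes to $\Lebesgue[p]$-norms, which is precisely the intended bookkeeping.
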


\subsection{Geometric inequalities}

These covariant quantities allow us to geometrize some well-known inequalities that were original derived only in special choices of coordinates.
The inequalities have to be adjusted if one has more components, i.e., if one wants to work with links.

\begin{blemma}[Morrey inequality]\label{lem:MorreyInequalityGeometric}
    Let $s \in \intervaloo{3/2,2}$ and $\alpha \ceq s - 3/2$. 
    Then there is a $C_{\Morrey,s-1} > 0$ such that
    \begin{align*}
        \norm{D_\gamma u}_{\Lebesgue[\infty]}
        \leq 
        C_{\Morrey,s-1} \, \ArcLength(\gamma)^{\alpha-1} \seminorm{ u }_{\Bessel[s](\gamma)}
        \quad
        \text{holds true for all $\gamma \in \Mfld$, $u \in \Bessel[s][][\Circle][\AmbSpace]$.}
    \end{align*}
\end{blemma}
\begin{proof}
    First we rescale the curve $\eta(x) \ceq \ArcLength(\gamma)^{-1} \, \gamma(x)$. 
    This is a curve of arc length $\ArcLength(\eta) = 1$.
    Now we reparameterize $\eta$ by an appropriate, orientation preserving $\Bessel[s]$-diffeomorphism $\varphi \colon \Circle \to \Circle$ to obtain a curve $\xi \ceq \eta \circ \varphi^{-1}$ in parameterization by arc length.
    With $v \ceq u \circ \varphi^{-1}$ we have
    \begin{equation*}
        \ArcLength(\gamma) \, D_\gamma u(x) 
        = 
        \frac{u'(x)}{\abs{\gamma'(x)} / \ArcLength(\gamma)}
        =
        \frac{u'(x)}{\abs{\eta'(x)}}
        =
        D_\eta u(x)
        =
        D_\xi v( \varphi(x) )
        =
        v'(\varphi(x))
        .
    \end{equation*}
    Now the classical Morrey embedding \cref{thm:MorreyInequality} implies 
    \begin{equation*}
        \norm{v'}_{\Lebesgue[\infty]} 
        \leq 
        C_{\Morrey,\sigma} 
        \,
        \seminorm{ v' }_{\Bessel[s-1]}
        .
    \end{equation*}
    By the transformation behavior of the embedding-dependent $\Bessel[s]$ seminorms (see \cref{eq:Ws2SemimormEquivariance}), we have
    \begin{equation*}
        \seminorm{ v' }_{\Bessel[s-1]}
        =
        \seminorm{ v }_{\Bessel[s]}
        =
        \seminorm{ v }_{\Bessel[s](\xi)}
        =
        \seminorm{ u }_{\Bessel[s](\eta)}
        .
    \end{equation*}
    Because of $D_\eta u = \ArcLength(\gamma) \, D_\gamma u$ and $\dvol_\eta = \ArcLength(\gamma)^{-1} \dvol_\gamma$, we have
    \begin{align*}
        \seminorm{ u }_{\Bessel[s](\eta)}^2
        &=
        \pars[\bigg]{
            \int_\Domain \! \int_\Domain 
            \abs*{
                \frac{ D_\eta u(y) - D_\eta u(x) }{ \dist[\eta](y,x)^{s-1} }
            }^2
            \,
            \frac{\dvol_\eta(y) \dvol_\eta(x)}{ \dist[\eta](y,x) }
        }^{1/2}
        \\
        &=
        \pars[\bigg]{
            \ArcLength(\gamma)^{2s-3}
            \int_\Domain \! \int_\Domain 
            \abs*{
            \frac{ D_\gamma u(y) - D_\gamma u(x) }{ \dist[\gamma](y,x)^{s-1} }
            }^2
            \,
            \frac{\dvol_\gamma(y) \dvol_\gamma(x)}{ \dist[\gamma](y,x) }
        }^{1/2}
        =
        \ArcLength(\gamma)^{\alpha} \seminorm{ u }_{\Bessel[s](\gamma)}.
    \end{align*}
    Finally, we have
    \begin{equation*}
        \norm{D_\gamma u}_{\Lebesgue[\infty]}
        =
        \ArcLength(\gamma)^{-1} \norm{v'}_{\Lebesgue[\infty]}
        \leq 
        C_{\Morrey,s-1} \, \ArcLength(\gamma)^{\alpha-1} \seminorm{ u }_{\Bessel[s](\gamma)}
        .
    \end{equation*}
\end{proof}

In the proof of \cite[Proposition 2.5]{blattreiter1} a geometric Morrey inequality was sketched for curves in arc length parameterization.
It translates into our covariant setting as follows:
\begin{btheorem}[Geometric Morrey inequality]\label{thm:MorreyInequalityGeometricGeometric}
    Let $s \in \intervaloo{3/2,2}$. Then there is a constant $\MorreyConst \geq 0$ such that the following holds true for each
    injective curve $\gamma \in \Holder[1][][\Circle][\AmbSpace]$:
    \begin{align*}
        \abs{D_\gamma \gamma(y) - D_\gamma \gamma(x)}
        \leq 
        \MorreyConst \sqrt{\Energy(\gamma)} \; \dist[\gamma][y][x]^{\alpha}
        \quad 
        \text{where}
        \quad 
        \alpha \ceq s-3/2
        .
    \end{align*}
\end{btheorem}
\begin{proof}
    We reuse the rescaling $\eta$ and the reparameterized curve $\xi$ from \cref{lem:MorreyInequalityGeometric}.
    Note that $D_\gamma \gamma$, $D_\eta \eta$ and $\xi'$ are all unit tangent vectors.
    In particular, we have
    \begin{align*}
        \abs{D_\gamma \gamma(y) - D_\gamma \gamma(x)} 
        &=
        \abs{D_\eta \eta(y) - D_\eta \eta(x)} 
        =
        \abs{\xi'( \varphi(y)) - \xi'(\varphi(x))} 
        .
    \end{align*}
    Now we apply \cite[(2.7)]{blattreiter1} to $\xi$ to get 
    \begin{align*}
        \abs{\xi'( \varphi(y)) - \xi'(\varphi(x))} 
        \leq 
        \MorreyConst 
        \sqrt{\Energy(\xi)} \, \dist[\Domain](\varphi(y),\varphi(x))^\alpha
        =
        \MorreyConst 
        \sqrt{\Energy(\xi)} \, \dist[\eta](y,x)^\alpha
        .
    \end{align*}
    Recall that $\Energy$ is invariant under reparameterization, thus $\Energy(\xi) = \Energy(\eta) = \ArcLength(\gamma)^{2s - 3} \Energy(\gamma) = \ArcLength(\gamma)^{2\alpha} \Energy(\gamma)$.
    Moreover, we have $\dist[\eta] = \ArcLength(\gamma)^{-1} \dist[\gamma]$.
    Plugged into the above, we get
    \begin{align*}
        \abs{D_\gamma \gamma(y) - D_\gamma \gamma(x)} 
        \leq 
        \MorreyConst 
        \,
        \cancel{\ArcLength(\gamma)^{\alpha}}  \sqrt{\Energy(\gamma)} \, \cancel{\ArcLength(\gamma)^{-\alpha}} \, \dist[\gamma](y,x)^\alpha
        .
    \end{align*}
\end{proof}

Translating \cite[Proposition~2.7]{blattreiter1} into our covariant language, we get the following theorem:

\begin{btheorem}[Uniform distortion estimate]\label{thm:UniformBiLipschitz}
    Let $s \in \intervaloo{3/2,2}$.
    Then there is a constant $0 < C_{\distor} < \infty$ such that the following holds true for each
    injective curve $\gamma \in \Holder[1][][\Circle][\AmbSpace]$:
    \begin{align*}
        \distor(\gamma) 
        \ceq 
        \sup_{x,y \in \Domain} \frac{\dist[\gamma][y][x]}{\abs{\gamma(y) - \gamma(x)}}
        \leq
        C_{\distor}
        \,
        \ArcLength(\gamma)^{(\alpha +1)/ \alpha } \, \Energy(\gamma)^{\beta}
        \quad 
        \text{for}
        \;
        \alpha \ceq s - \frac{3}{2}, \;
        \beta \ceq \frac{\alpha + 1}{2 \alpha^2}
        .
    \end{align*}
\end{btheorem}
\begin{proof}
    Once more, we reuse the rescaling $\eta$ and the reparametrized curve $\xi$ from \cref{lem:MorreyInequalityGeometric}.
    Note that the rescaling does not affect the distortion, hence we have $\distor(\eta) = \distor(\gamma)$.
    Since $\ArcLength$, $\distor$, and $\Energy$ are  invariant under parameterization, we have 
    \begin{equation*}
        \ArcLength(\xi) = \ArcLength(\eta) = 1,
        \;\;
        \distor(\xi) = \distor(\eta) = \distor(\gamma),
        \;\;\text{and}\;\;
        \Energy(\xi) = \Energy(\eta) = \ArcLength(\gamma)^{2\alpha} \, \Energy(\gamma).
    \end{equation*}
    Now we can apply \cite[Proposition~2.7]{blattreiter1}, which implies (if one meticulously collects the powers from the proof) that there is a constant $0 < C_{\distor} < \infty$ such that
    \begin{equation*}
        \distor(\xi) 
        =
        \sup_{x,y \in \Domain} \frac{ \dist[\Domain][y][x] }{ \abs{\xi(y) - \xi(x)} } 
        \leq 
        C_{\distor} \, \Energy(\xi)^{\beta}
        .
    \end{equation*}
    Thus, we have
    \begin{equation*}
        \distor(\gamma) 
        =
        \distor(\xi) 
        =
        \sup_{x,y \in \Domain} \frac{ \dist[\Domain][y][x] }{ \abs{\gamma(y) - \gamma(x)} } 
        \leq 
        C_{\distor} \, \Energy(\xi)^{\beta}
        =
        C_{\distor} \, \ArcLength(\gamma)^{2\alpha\beta} \, \Energy(\gamma)^{\beta}
        .
    \end{equation*}
\end{proof}
We briefly state an immediate consequence of the previous result:
\begin{corollary}\label{cor:MorreyUniformBiLipschitz}
    Let $s \in \intervaloo{3/2,2}$ and $\alpha \ceq s - 3/2$.
    Then for $\gamma \in \Mfld$ and $u \in \Bessel[s][][\Circle][\AmbSpace]$ we have
    \begin{align*}
        \norm[\Big]{ (x,y) \mapsto \fdfrac{\abs{u(y) - u(x)}}{ \abs{\gamma(y) - \gamma(x)} } }_{\Lebesgue[\infty]}
        \leq 
        C_{\distor} \, C_{\Morrey,s-1}
        \,
        \ArcLength(\gamma)^{\alpha + 1/ \alpha} \, \Energy(\gamma)^{(\alpha + 1)/(2 \alpha^2)}
        \seminorm{ u }_{\Bessel[s](\gamma)}
        .
    \end{align*}
\end{corollary}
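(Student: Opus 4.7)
The approach is essentially to chain together the two immediately preceding results, exploiting that $\dist[\gamma]$ is the natural intrinsic distance which controls $|u(y)-u(x)|$ via the $\Lebesgue[\infty]$-norm of $D_\gamma u$, and that the quotient $\dist[\gamma](y,x)/|\gamma(y)-\gamma(x)|$ is by definition controlled by the distortion $\distor(\gamma)$. This is why the statement combines the Morrey constant $C_{\Morrey,s-1}$ from \cref{lem:MorreyInequalityGeometric} with the distortion constant $C_{\distor}$ from \cref{thm:UniformBiLipschitz}.

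First, I would establish the elementary inequality
\begin{equation*}
    \abs{u(y) - u(x)} \leq \dist[\gamma](y,x) \, \norm{D_\gamma u}_{\Lebesgue[\infty]}
    \qquad \text{for all $x,y \in \Domain$.}
\end{equation*}
This is most transparent after passing to \arcpara\xspace: reparameterize $\gamma$ by $\xi = \eta \circ \varphi^{-1}$ as in the proof of \cref{thm:MorreyInequalityGeometricGeometric}, so that the shorter arc of $\xi$ from $\xi(\varphi(x))$ to $\xi(\varphi(y))$ has length $\dist[\gamma](y,x)/\ArcLength(\gamma)$; then the fundamental theorem of calculus applied to $u \circ \varphi^{-1}$ along that arc, together with $D_\xi(u\circ \varphi^{-1}) = (D_\gamma u) \circ \varphi^{-1}$ (cf.~\cref{eq:DIsGeometric}), gives the claim.

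Next, dividing by $\abs{\gamma(y) - \gamma(x)}$ and taking the supremum over $(x,y) \in \Domain \times \Domain$ yields
\begin{equation*}
    \norm[\Big]{ (x,y) \mapsto \fdfrac{\abs{u(y) - u(x)}}{ \abs{\gamma(y) - \gamma(x)} } }_{\Lebesgue[\infty]}
    \leq
    \distor(\gamma) \, \norm{D_\gamma u}_{\Lebesgue[\infty]}
\end{equation*}
directly from the definition of $\distor(\gamma)$ in \cref{eq:DefBiLip}.

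Finally, I would plug in the geometric Morrey inequality \cref{lem:MorreyInequalityGeometric} for $\norm{D_\gamma u}_{\Lebesgue[\infty]}$ and the uniform distortion estimate \cref{thm:UniformBiLipschitz} for $\distor(\gamma)$. Collecting the powers of $\ArcLength(\gamma)$ gives the exponent
\begin{equation*}
    \frac{\alpha+1}{\alpha} + (\alpha - 1) = \alpha + \frac{1}{\alpha},
\end{equation*}
while the $\Energy(\gamma)$-exponent stays $\beta = (\alpha+1)/(2\alpha^2)$, matching the formula in the statement. No step is a genuine obstacle; the only thing requiring a bit of care is the passage to \arcpara\xspace in the opening inequality (to avoid any appearance of $\norm{\InvSpeed[\gamma]}_{\Lebesgue[\infty]}$), and the bookkeeping of exponents in $\ArcLength(\gamma)$ at the end.
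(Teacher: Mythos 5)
Your proposal is correct and follows essentially the same two-step route as the paper: the fundamental theorem of calculus along the shorter $\gamma$-arc gives $\abs{u(y)-u(x)} \leq \norm{D_\gamma u}_{\Lebesgue[\infty]}\,\dist[\gamma](y,x)$, and then one substitutes \cref{lem:MorreyInequalityGeometric} and \cref{thm:UniformBiLipschitz}. Your exponent bookkeeping $(\alpha+1)/\alpha + (\alpha-1) = \alpha + 1/\alpha$ is also exactly right; the only difference is that you spell out the reparameterization-to-arc-length argument behind the FTC step, which the paper leaves implicit.
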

\begin{proof}
    By the fundamental theorem of calculus, we have $\abs{u(y) - u(x)} \leq \norm{ D_\gamma u }_{\Lebesgue[\infty]} \, \dist[\gamma](y,x)$.
    Now the statement follows from combining \cref{lem:MorreyInequalityGeometric} with \cref{thm:UniformBiLipschitz}.
\end{proof}


\section{The operator \texorpdfstring{$\Op{}{s}$}{Rs}}
\label{sec:OperatorRs}

From now on we assume that
\[ 
    s \in \intervaloo[\Big]{\sdfrac{3}{2},2}
    . 
\]
An essential ingredient for our metric $G$ is the family of operators
\begin{equation*}
    \Op{}{s}
    \colon 
    \Mfld \to 
    \BoundedLinOps\!\pars[\big]{ 
        \HSpace 
        , 
        \Lebesgue[2][\mu][\Domain \times \Domain][\AmbSpace]},
    \quad
    \Op{}{s}(\gamma) \ceq  \Op{\gamma}{s}.
\end{equation*}
Many properties of $G$ relate to properties of $\Op{}{s}$.
The fact that $\Op{\gamma}{s}$ is always a \emph{bounded} linear operator (and thus that $\Op{}{s}$ is well-defined) will be an essential ingredient when we show that $G$ is a \emph{strong} Riemannian metric (see \cref{thm:GIsStrong}).
As we will see soon, this is already a quite involved task (see \cref{lem:RsEquiBounded}). 
Later we will show that $G$ is a smooth Riemannian metric (see \cref{thm:GIsSmooth}). 
That will require us to show that $\Op{}{s}$ is a smooth map (see \cref{sec:SmoothnessOfRs}).
Ironically, we will be able to reduce its Fréchet differentiability on the slightly stronger fact \cref{lem:RsEquiBoundedII}: that $\Op{}{s}$ is \emph{uniformly} bounded on small $\Bessel[s]$-balls. 
This is why we put quite some effort into quantifying the bounds in \cref{sec:MetricCompleteness,sec:GeodesicCompleteness}. 
Moreover, these bounds will be used when we discuss the completeness properties of the metric $G$.

But before we dive deeper into these technical aspects, we first analyze how $\Op{}{s}$ transforms under reparameterizations. 

\subsection{Covariance}
\label{sec:CovarianceOfRs}
The considerations below show that the operator $\Op{}{s}$ and the metric $G$ are \emph{geometric} objects, i.e., that they transform geometrically meaningfully under reparameterizations.

\begin{lemma}
    Let $M$ and $N$ be two one-dimensional smooth manifolds. 
    Let $\varphi \colon M \to N$ be a diffeomorphism of class $\Bessel[s]$,
    let $u \in \Bessel[s][][N][\AmbSpace]$, and let $\gamma \in \Bessel[s][][N][\AmbSpace]$ be an embedding. Then we have:
    \begin{equation*}
        \Op{(\gamma \circ \varphi)}{s} (u \circ \varphi)
        =
        \pars[\big]{\Op{\gamma}{s} u} \circ (\varphi \times \varphi)
        .
    \end{equation*}
\end{lemma}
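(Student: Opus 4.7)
The plan is a direct computation: unfold the definition of $\Op{\gamma}{s}$ on both sides, and use the covariance of the differential operator $\cD_\gamma$ that was already established in \cref{eq:cDIsGeometric}.

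More concretely, fix $x, y \in M$ and write out
\[
    \Op{(\gamma \circ \varphi)}{s} (u \circ \varphi) (x, y)
    =
    \frac{
        (u \circ \varphi)(y) - (u \circ \varphi)(x) - \cD_{(\gamma \circ \varphi)} (u \circ \varphi)(x) \, \pars{(\gamma \circ \varphi)(y) - (\gamma \circ \varphi)(x)}
    }{
        \abs{(\gamma \circ \varphi)(y) - (\gamma \circ \varphi)(x)}^s
    }.
\]
The numerator's first two terms already read $u(\varphi(y)) - u(\varphi(x))$, and similarly the factor in parentheses equals $\gamma(\varphi(y)) - \gamma(\varphi(x))$. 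Denominator: $\abs{\gamma(\varphi(y)) - \gamma(\varphi(x))}^s$. The only nontrivial piece is $\cD_{(\gamma \circ \varphi)} (u \circ \varphi)(x)$, which by \cref{eq:cDIsGeometric} equals $(\cD_\gamma u)(\varphi(x))$. After substituting, the whole expression is exactly $\Op{\gamma}{s} u$ evaluated at $(\varphi(x), \varphi(y))$, i.e., $\pars[\big]{\Op{\gamma}{s} u} \circ (\varphi \times \varphi)(x,y)$.

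I do not expect any real obstacle: all the work was already done in proving that $\cD_\gamma$ is geometric. The statement essentially records the tautology that $\Op{}{s}$ is built only from $\gamma$, $u$, $\cD_\gamma u$, and pointwise operations on the ambient space $\AmbSpace$, so any reparameterization factor cancels. The only thing to be mildly careful about is that the Moore--Penrose pseudoinverse identity used in \cref{eq:cDIsGeometric} requires $\dd \gamma$ to be injective with closed range, which holds since $\gamma$ is an embedding, in particular an immersion; this is granted by the hypotheses. No regularity issues arise either, since $\Bessel[s] \hookrightarrow \Holder[1]$ for $s > 3/2$ makes all pointwise evaluations licit.
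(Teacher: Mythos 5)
Your proof is correct and follows exactly the paper's argument: unfold the definition of $\Op{}{s}$ and invoke the covariance identity $\cD_{(\gamma \circ \varphi)} (u \circ \varphi) = (\cD_\gamma u) \circ \varphi$ from \cref{eq:cDIsGeometric}. The additional remarks on injectivity of $\dd\gamma$ and the $\Holder[1]$-embedding are sensible context but not needed beyond what was already justified in the preliminaries.
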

\begin{proof}
We merely write down the definition of $\Op{}{s}$ and employ \cref{eq:cDIsGeometric}:
\begin{gather}
    \Op{(\gamma \circ \varphi)}{s} (u \circ \varphi)(x,y)
    =
    \frac{
        u(\varphi(y)) 
        - 
        u(\varphi(x)) 
        - 
        \cD_{(\gamma \circ \varphi)} (u \circ \varphi)(x) \pars{ \gamma(\varphi(y)) - \gamma(\varphi(x)) } 
    }{
        \abs{ \gamma(\varphi(y)) - \gamma(\varphi(x)) }^s
    }
    \label{eq:OpIsGeometric}
    \\
    =
    \frac{
        u(\varphi(y)) 
        - 
        u(\varphi(x)) 
        - 
        \cD_\gamma u ( \varphi(x) ) \pars{ \gamma(\varphi(y)) - \gamma(\varphi(x)) } 
    }{
        \abs{ \gamma(\varphi(y)) - \gamma(\varphi(x)) }^s
    }        
    =
    \Op{\gamma}{s} u( \varphi(x),\varphi(y) )
    .
    \notag
\end{gather}    
\end{proof}

We can combine this with our observations in \cref{sec:GeometricOperators} to see that also $B_\gamma^1$, $B_\gamma^2$, and $B_\gamma^3$ are invariant under reparameterization. Thus, we have shown that $G$ is geometric:
\begin{theorem}\label{thm:GIsInvariant}
    Let $M$ and $N$ be two one-dimensional closed smooth manifolds.
    Let $\varphi \colon M \to N$ be a diffeomorphism of class $\Bessel[s]$ and
    let $\gamma \in \Bessel[s][][N][\AmbSpace]$ be an embedding.
    Then we have:
    \begin{equation*}
        G_{(\gamma \circ \varphi)}( u \circ \varphi, v \circ \varphi )
        =
        G_{\gamma}( u, v )
        \quad
        \text{for all $u$, $v \in \Bessel[s][][N][\AmbSpace]$.}
    \end{equation*}
\end{theorem}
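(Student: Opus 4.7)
The plan is to verify term by term that each of the five summands in the definition \cref{eq:G} of $G_\gamma$ is invariant under reparameterization, using only the covariance identities that have already been gathered in \cref{sec:GeometricOperators} and at the beginning of \cref{sec:CovarianceOfRs}. The two lower-order summands $\inner{\cdot,\cdot}_{\Lebesgue[2](\gamma)}$ and $\inner{\cdot,\cdot}_{\Bessel[1](\gamma)}$ were already settled: the former in \cref{eq:omegaIsGeometric}, the latter in the sentence immediately after, combining \cref{eq:omegaIsGeometric} with \cref{eq:cDIsGeometric}. Thus everything reduces to checking the three $B$-terms.

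For $B_\gamma^{1}$, substitute the identity $\Op{(\gamma \circ \varphi)}{s}(u \circ \varphi) = (\Op{\gamma}{s} u) \circ (\varphi \times \varphi)$ from the lemma just proven into the definition \cref{eq:B1}, and analogously for $v$. The integrand then factors through $\varphi \times \varphi$, and invoking the measure-covariance \cref{eq:muIsGeometric} yields $B_{(\gamma \circ \varphi)}^{1}(u \circ \varphi, v \circ \varphi) = B_\gamma^{1}(u,v)$.

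For $B_\gamma^{2}$, the only ingredient not yet covered is the difference quotient
\[
    \frac{(u \circ \varphi)(y) - (u \circ \varphi)(x)}{\abs{(\gamma \circ \varphi)(y) - (\gamma \circ \varphi)(x)}}
    =
    \frac{u(\varphi(y)) - u(\varphi(x))}{\abs{\gamma(\varphi(y)) - \gamma(\varphi(x))}},
\]
which simply pulls back along $\varphi \times \varphi$. Combined with the covariance of $\Op{\gamma}{s}\gamma$ (the previous lemma applied with $u = \gamma$) and of $\mu_\gamma$ via \cref{eq:muIsGeometric}, the claim follows. For $B_\gamma^{3}$, use the covariance \cref{eq:DIsGeometric}, i.e.\ $D_{(\gamma \circ \varphi)}(u \circ \varphi) = \sgn(\varphi') \, (D_\gamma u) \circ \varphi$; the factors $D_\gamma u$ and $D_\gamma v$ occur paired inside an inner product, so the sign squares to $1$ and disappears. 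With this, the integrand again factors through $\varphi \times \varphi$, and \cref{eq:muIsGeometric} closes the argument.

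There is no genuine obstacle: the proof is essentially bookkeeping built on top of the covariance statements for $\cD_\gamma$, $D_\gamma$, $\omega_\gamma$, $\mu_\gamma$ and $\Op{}{s}$ already established. The only point worth flagging explicitly is that the sign ambiguity in \cref{eq:DIsGeometric} forces one to keep the $D_\gamma$ factors paired, which is exactly how they appear in $B_\gamma^{3}$; this is why the statement of \cref{thm:GIsInvariant} needs no orientation assumption on $\varphi$.
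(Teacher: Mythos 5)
Your proof is correct and follows exactly the route the paper takes: the paper's proof is a one-sentence appeal to the covariance identities of \cref{sec:GeometricOperators} together with the covariance lemma for $\Op{}{s}$, and you simply spell out the term-by-term bookkeeping that this appeal leaves implicit. Your remark that the sign in \cref{eq:DIsGeometric} is harmless precisely because the $D_\gamma$ factors appear paired at the same point in $B_\gamma^3$ (and in the $\Bessel[1](\gamma)$ term) is a genuinely useful clarification of why no orientation hypothesis on $\varphi$ is needed.
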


\subsection{Boundedness}
\label{sec:BoundednessOfRs}

The next lemma is required for the norm bounds like \cref{lem:RsEquiBoundedII}  and \cref{lem:NormEquiv1}. The precise form of the bound becomes also important when we have to bound $DG$ in \cref{thm:LongTimeExistence}.

\begin{lemma}\label{lem:RsEquiBounded}
There is a continuous function $C_s \colon \Mfld \to \intervaloo{0,\infty}$
such that 
\begin{equation*}
    \norm{ \Op{\gamma}{s} u }_{\Lebesgue[2](\nu_\gamma)} 
    \leq 
    C_s(\gamma)  
    \seminorm{u}_{\Bessel[s](\gamma)}
    .
\end{equation*}
More precisely, we have
    \[
        C_s(\gamma)
        \leq
        C_{\distor}^s 
        \,
        s^{-1/2}
        \,
        \ArcLength(\gamma)^{s (\alpha+1)/\alpha}
        \,
        \Energy(\gamma)^{s \beta}
        \pars[\Big]{
            1
            +
            C_{\Morrey,s-1}^2 \, \ArcLength(\gamma)^{2s-5}
            \seminorm{\gamma}_{\Bessel[s](\gamma)}^2
        }^{1/2}
        .
    \]
\end{lemma}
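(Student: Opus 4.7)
The plan is to estimate $\Op{\gamma}{s}u$ by an algebraic split of its numerator into a ``$u$-piece'' carrying the fractional regularity of $u$ and a ``$\gamma$-piece'' carrying the bending of $\gamma$, to bound each in $\Lebesgue[2](\nu_\gamma)$ by weighted integral inequalities, and to finally absorb a $\distor(\gamma)^s$ factor (coming from the denominator) using \cref{thm:UniformBiLipschitz}. Since $\Op{}{s}$, $\norm{\cdot}_{\Lebesgue[2](\nu_\gamma)}$ and $\seminorm{\cdot}_{\Bessel[s](\gamma)}$ are all reparameterization-invariant and both sides of the claim scale consistently under dilations of $\gamma$, I may assume $\gamma$ is parameterized by arc length with $\abs{\gamma'}=1$ and recover the general statement by tracking the scaling behavior at the end.

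In arc length, the identity $\inner{\gamma'(x),\gamma'(t)} = 1 - \tfrac12 \abs{\gamma'(x) - \gamma'(t)}^2$ together with $\gamma(y) - \gamma(x) = \int_x^y \gamma'(t)\,dt$ yields the decomposition
\[
u(y) - u(x) - \cD_\gamma u(x)\bigl(\gamma(y) - \gamma(x)\bigr)
= \int_x^y \bigl(u'(t) - u'(x)\bigr)\,dt + \frac{u'(x)}{2} \int_x^y \bigl|\gamma'(x) - \gamma'(t)\bigr|^2\,dt .
\]
For the first summand I would apply a pointwise Cauchy--Schwarz $\abs{\int_x^y(u'(t)-u'(x))\,dt}^2 \leq \dist[\gamma](x,y) \int_x^y \abs{u'(t)-u'(x)}^2\,d\omega_\gamma(t)$, divide by $\dist[\gamma](x,y)^{2s}$, integrate against $\dd\nu_\gamma$, and swap the order of integration in $(x,t,y)$; evaluating $\int_{\dist[\gamma](x,t)}^\infty \sigma^{-2s}\,d\sigma = \dist[\gamma](x,t)^{1-2s}/(2s-1)$ reconstructs exactly $\seminorm{u}_{\Bessel[s](\gamma)}^2$ with prefactor $(2s-1)^{-1} \leq s^{-1}$.

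The second summand is the main obstacle. I would pull out $\abs{u'(x)} = \abs{D_\gamma u(x)}$ and bound it by $\norm{D_\gamma u}_{\Lebesgue[\infty]} \leq C_{\Morrey,s-1}\,\ArcLength(\gamma)^{\alpha-1}\,\seminorm{u}_{\Bessel[s](\gamma)}$ via the geometric Morrey inequality (\cref{lem:MorreyInequalityGeometric}); this is precisely the source of the factor $C_{\Morrey,s-1}^2\,\ArcLength(\gamma)^{2s-5}$ in the claim. What remains is to control, for fixed $x$, the singular integral $\int \bigl(\int_0^\sigma \abs{\gamma'(x)-\gamma'(t)}^2 d\omega_\gamma(t)\bigr)^{2} \sigma^{-(2s+1)}\,d\sigma$, whose integrand blows up as $\sigma \searrow 0$. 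For this I would use a weighted Cauchy--Schwarz of Hardy type,
\[
\Bigl(\int_0^\sigma g(\tau)\,d\tau\Bigr)^{\!2} \leq \frac{\sigma^{2a+1}}{2a+1} \int_0^\sigma g(\tau)^2 \tau^{-2a}\,d\tau,
\]
with the optimal choice $a = (s-1)/2$ (which maximizes $(2a+1)(2s-2a-1) = s^2$), combined with Fubini and the crude bound $\abs{\gamma'(x)-\gamma'(t)}^2 \leq 2$ (used to collapse the fourth power of the tangent difference into a second power). After integrating over $x$ and recognizing $\int\!\int \abs{\gamma'(y) - \gamma'(x)}^2/\dist[\gamma](x,y)^{2s-1}\,d\omega_\gamma\,d\omega_\gamma = \seminorm{\gamma}_{\Bessel[s](\gamma)}^2$, the $\gamma$-piece is bounded by a constant multiple of $s^{-2}\,C_{\Morrey,s-1}^2\,\ArcLength(\gamma)^{2s-5}\,\seminorm{u}_{\Bessel[s](\gamma)}^2\,\seminorm{\gamma}_{\Bessel[s](\gamma)}^2$.

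Finally, combining the two pieces by $(a+b)^2 \leq 2(a^2+b^2)$, multiplying by $\distor(\gamma)^{2s}$ from the denominator estimate $\abs{\gamma(y)-\gamma(x)} \geq \dist[\gamma](x,y)/\distor(\gamma)$, and invoking \cref{thm:UniformBiLipschitz} to replace $\distor(\gamma)$ by $C_\distor\,\ArcLength(\gamma)^{(\alpha+1)/\alpha}\,\Energy(\gamma)^{\beta}$ yields the claimed form of $C_s(\gamma)$; the remaining constant (such as the $\sqrt 2$) is absorbed into a slightly enlarged $C_\distor$. Transferring back from the normalized case to general $\gamma$ uses the homogeneity of $\norm{\Op{}{s} u}_{\Lebesgue[2](\nu_\gamma)}$, $\seminorm{u}_{\Bessel[s](\gamma)}$ and $\seminorm{\gamma}_{\Bessel[s](\gamma)}$, which is what produces the stated powers of $\ArcLength(\gamma)$. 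Continuity of $C_s$ on $\Mfld$ then follows from continuity of $\ArcLength$, $\Energy$ and $\seminorm{\cdot}_{\Bessel[s](\gamma)}$, the last via \cref{lem:HsgammaHs}.
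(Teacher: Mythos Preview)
Your argument is correct and the algebraic decomposition you write down is, after unwinding, exactly the paper's: the identity
\(\inner{\gamma'(x),\gamma'(t)}=1-\tfrac12\abs{\gamma'(x)-\gamma'(t)}^2\)
just rewrites the second summand
\(u'(x)\inner{\gamma'(x),\gamma(y)-\gamma(x)-\gamma'(x)(y-x)}\)
that the paper uses. Where you genuinely diverge is in the integral estimates. The paper observes that both summands have the same ``Taylor remainder'' shape, so it introduces a single auxiliary form
\(B(w,w)=\iint \abs{w(y)-w(x)-w'(x)(y-x)}^{2}/\abs{y-x}^{2s+1}\,dy\,dx\)
and bounds it once for all via Blatt's trick: write the remainder as \((y-x)\int_0^1(w'(x+\theta(y-x))-w'(x))\,d\theta\), apply Jensen in \(\theta\), swap integrals, and substitute. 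This gives the clean constant \(B(w,w)\le(2s)^{-1}\seminorm{w}_{\Bessel[s](\gamma)}^{2}\) and handles both the \(u\)-piece and the \(\gamma\)-piece simultaneously. Your route instead treats the two pieces asymmetrically---Cauchy--Schwarz plus Fubini for the first (yielding \((2s-1)^{-1}\)), a weighted Hardy--Cauchy--Schwarz for the second---which is equally valid and perhaps more elementary, but less symmetric and with slightly looser constants.

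Two small points. The crude bound should read \(\abs{\gamma'(x)-\gamma'(t)}^{2}\le 4\), not \(2\); this is harmless. More importantly, you cannot ``absorb'' the leftover factor of \(\sqrt 2\) into \(C_{\distor}\): that constant is fixed by \cref{thm:UniformBiLipschitz} and is not yours to adjust. Your argument does produce a continuous \(C_s(\gamma)\) of the same \emph{structure} as the one claimed (a power of \(\distor(\gamma)\) times \((1+C_{\Morrey,s-1}^{2}\ArcLength^{2s-5}\seminorm{\gamma}_{\Bessel[s](\gamma)}^{2})^{1/2}\)), which is all the paper actually needs downstream, but it will not reproduce the exact numerical prefactor \(s^{-1/2}\) stated in the ``more precisely'' clause.
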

\begin{proof}
    Since all entities involved are covariant (see also \cref{eq:OpIsGeometric}), we may 
    put $\ell \ceq \ArcLength(\gamma)$ and $\mathbb{S} \ceq \R \slash (\ell \, \Z)$ and
    assume that $\gamma \colon \mathbb{S} \to \AmbSpace$ is parameterized by arc length.
    Moreover, we pull all functions on $\mathbb{S} \times \mathbb{S} = (\R \times \R) \slash (\ell \, \Z \times \ell \, \Z)$ back to the fundamental domain 
    \begin{align}
        \FunDomain \ceq \myset[\big]{ (x,y) }{ \text{$0 \leq x \leq \ell$ and $x-\ell/2 \leq y \leq x+\ell/2$} }.
        \label{eq:FunDomain}
    \end{align}
    For $x$, $y \in \FunDomain$ 
    we have 
    $\dist[\gamma][y][x] = \abs{y-x}$,
    $D_\gamma u = u'$,
    and $\dd\omega_\gamma(x) = \dd x$, hence
    \begin{align*}
        \norm{ \Op{\gamma}{s} u }_{\Lebesgue[2](\nu_\gamma)}^2
        &=
        \iint_\FunDomain 
            \pars*{\frac{\dist[\gamma][y][x]}{\abs{\gamma(y) - \gamma(x)}}}^{2s}
            \,
            \abs*{
                \frac{
                    u(y) - u(x) - u'(x) \inner{\gamma'(x), \gamma(y) - \gamma(s)}
                }{
                    \abs{y-x}^s
                }
            }^2
        \frac{\dd x \dd y}{\abs{y-x}}
        .
    \end{align*}
    We can bound the first factor of the integrand by $\distor(\gamma)^{2s}$.
    To bound the second factor, we exploit that we can work in $\varSigma \subset \R^2$ now:
    \begin{align*}
        \MoveEqLeft
        u(y) - u(x) - u'(x) \inner{\gamma'(x), \gamma(y) - \gamma(x)}
        \\
        &=
        \pars[\big]{u(y) - u(x) - u'(x) \, (y-x)}
        +
        u'(x) \inner[\big]{\gamma'(x), \gamma(y) - \gamma(x) - \gamma'(x) \, (y-x)}.
    \end{align*}
    Together with the inequality $\abs{a+b}^2 \leq 2 \abs{a}^2 + 2 \abs{b}^2$
    we obtain   
    \begin{equation}      
        \norm{ \Op{\gamma}{s} u }_{\Lebesgue[2](\nu_\gamma)}^2
        \leq 
        2 \, \distor(\gamma)^{2s} \, B(u,u)
        +
        2 \, \distor(\gamma)^{2s} \norm{D_\gamma u}_{\Lebesgue[\infty]}^2 \, B(\gamma,\gamma)
        ,
        \label{eq:RsEquiBounded1}
    \end{equation}
    where
    \begin{equation*}
        B(u,u)
        \ceq
        \iint_\FunDomain 
            \abs*{ \frac{u(y) - u(x) - u'(x) \, (y-x)}{\abs{y-x}^s}}^2
        \frac{\dd y \dd x}{\abs{y-x}}
        .
    \end{equation*}
    In order to bound $B(u,u)$ in terms of $\seminorm{u}_{\Bessel[s](\gamma)}^2$, we employ a technique established by Blatt in \cite[Section 2]{zbMATH06214305}.
    First we use the fundamental theorem of calculus:
    \begin{equation*}
        u(y) - u(x) - u'(x) \, (y-x)
        = \textstyle
        (y-x) \int_0^1\pars{ u'(x+\theta(y-x))-u'(x) } \dd \theta.
    \end{equation*}
    Next we substitute $X = y - x$ and use Jensen's inequality and Fubini's Theorem to obtain
    \begin{align*}
        B(u,u)
        &=
        \!
        \int_0^\ell \!\!\!\int_{-\ell/2}^{\ell/2}
        \abs[\Big]{
            \fdfrac{ 
                \textstyle
                \int_0^1 \! \pars[\big]{ u'(x{+}\theta X )-u'(x) }  \dd \theta
            }{
                \abs{X}^{s-1}
            }
        }^2
        \fdfrac{\dd X \dd x}{\abs{X}}
        \leq
        \!
        \int_0^1 \!\!\!
        \int_0^\ell \!\!\!\int_{-\ell/2}^{\ell/2}
        \abs[\Big]{
            \fdfrac{ 
                u'(x {+} \theta X )-u'(x)
            }{
                \abs{X}^{s-1}
            }
        }^2
        \fdfrac{\dd y \dd x}{\abs{X}}  \dd \theta    
        .   
    \end{align*}
    Now we substitute
    $z = x + \theta \, X$,
    leading to
    $X = \theta^{-1} \pars{z -x}$ and
    $\dd X = \theta^{-1} \dd z$:
    \begin{align*}
        B(u,u)
        &\leq
        \int_0^1 \!\!\!
        \int_0^\ell \!\!\! \int_{x-\theta \ell/2}^{x+\theta \ell/2}
        \abs[\Big]{
            \fdfrac{ 
                u'(z)-u'(x)
            }{
                \theta^{1-s} \abs{z-x}^{s-1}
            }
        }^2
        \fdfrac{\dd z \dd x}{\theta^{-1} \!\abs{z-x}}  \dd \theta    
        \leq
        \int_0^1 
        \!\!\!
        \iint_\FunDomain
        \theta^{2s-1}
        \abs[\Big]{
            \fdfrac{ 
                u'(z)-u'(x)
            }{
                \abs{z-x}^{s-1}
            }
        }^2
        \fdfrac{\dd z \dd x}{\abs{z-x}}  \dd \theta  
        \\
        &=
        \pars[\Big]{
            \textstyle \int_0^1 \theta^{2s-1} \dd \theta
        } 
        \norm{ 
            \diffop{\gamma}{s-1} D_\gamma u
        }_{\Lebesgue[2](\gamma)}^2
        =
        \displaystyle
        \frac{1}{2s} \seminorm{ u }_{\Bessel[s](\gamma)}^2
        .   
    \end{align*}  
   Substituting this bound for $B(u,u)$ and $B(\gamma,\gamma)$ back into \cref{eq:RsEquiBounded1} leads us to
    \begin{align*}      
        \norm{ \Op{\gamma}{s} u }_{\Lebesgue[2](\nu_\gamma)}^2
        &\leq 
        \frac{1}{s} \, \distor(\gamma)^{2s} \, \seminorm{ u }_{\Bessel[s](\gamma)}^2
        +
        \frac{1}{s} \, \distor(\gamma)^{2s} \norm{D_\gamma u}_{\Lebesgue[\infty]}^2 \seminorm{ \gamma }_{\Bessel[s](\gamma)}^2
        .
    \end{align*} 
    The proof is concluded by employing the  Morrey inequality \cref{lem:MorreyInequalityGeometric} to bound $\norm{D_\gamma u}_{\Lebesgue[\infty]}$
    and 
    \cref{thm:UniformBiLipschitz} to bound $\distor(\gamma)$.
\end{proof}

Finally, combining \cref{lem:RsEquiBounded} with \cref{eqn:SeminormHsGammaAgainstSeminormHs} leads us immediately to the following bound.

\begin{lemma}\label{lem:RsEquiBoundedII}  
    There is a continuous function $F_{\Op{}{s}} \colon \Mfld \to \R$ such that
    \begin{equation*}
        \norm{ \Op{\gamma}{s} u }_{\Lebesgue[2](\nu_\gamma)} 
        \leq 
        F_{\Op{}{s}}(\gamma) \seminorm*{ u }_{\Bessel[s]}.
    \end{equation*}
    More precisely, we have 
    \begin{equation*}
        F_{\Op{}{s}}(\gamma)  
        = 
        C_s(\gamma)
        \pars[\Big]{
            2 \, \BiLip(\gamma)^{2\sigma+1} 
            \norm{\Speed[\gamma]}_{\Lebesgue[\infty]}^2 
            \pars[\big]{
                \norm{\InvSpeed[\gamma]}_{\Lebesgue[\infty]}^2 
                +
                C_{\Morrey,s-1} \norm{\InvSpeed[\gamma]}_{\Bessel[s-1]}^2
            }
        }^{1/2},
    \end{equation*}
    where $\Speed[\gamma](x) \ceq \abs{\gamma'(x)}$, $\InvSpeed[\gamma](x) \ceq 1/ \abs{\gamma'(x)}$, and $C_s(\gamma)$ is the function from \cref{lem:RsEquiBounded}.
\end{lemma}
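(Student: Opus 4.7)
The plan is simply to chain together two bounds that are already in hand. From \cref{lem:RsEquiBounded} we have $\norm{\Op{\gamma}{s} u}_{\Lebesgue[2](\nu_\gamma)} \leq C_s(\gamma)\,\seminorm{u}_{\Bessel[s](\gamma)}$, so the task reduces to controlling the covariant seminorm $\seminorm{u}_{\Bessel[s](\gamma)}$ by a multiple of the flat seminorm $\seminorm{u}_{\Bessel[s]}$. For this I would reuse the intermediate estimate \cref{eqn:SeminormHsGammaAgainstSeminormHs} that was established midway through the proof of \cref{lem:HsgammaHs}, which gives exactly such an inequality with an explicit constant involving $\norm{\Speed[\gamma]}_{\Lebesgue[\infty]}$, $\norm{\InvSpeed[\gamma]}_{\Lebesgue[\infty]}$, and $\seminorm{\InvSpeed[\gamma]}_{\Bessel[s-1]}$.

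Substituting that estimate into the bound of \cref{lem:RsEquiBounded} and taking a square root yields the claimed inequality. To match the factor $\BiLip(\gamma)^{2\sigma+1}$ appearing in the statement (rather than the $\norm{\InvSpeed[\gamma]}_{\Lebesgue[\infty]}^{2s-1}$ produced by \cref{eqn:SeminormHsGammaAgainstSeminormHs}), I would invoke the pointwise inequality $\norm{\InvSpeed[\gamma]}_{\Lebesgue[\infty]} \leq \BiLip(\gamma)$, which one obtains by letting $y\to x$ in the quotient $\dist[\Circle](y,x)/\abs{\gamma(y)-\gamma(x)}$ defining $\BiLip(\gamma)$. This substitution gives a slightly weaker but more geometric bound, in line with the terminology used elsewhere in the paper.

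Continuity of $F_{\Op{}{s}}$ on $\Mfld$ would then be inherited factor by factor: $C_s$ is continuous by \cref{lem:RsEquiBounded}, and the maps $\gamma\mapsto \norm{\Speed[\gamma]}_{\Lebesgue[\infty]}$, $\gamma\mapsto \norm{\InvSpeed[\gamma]}_{\Lebesgue[\infty]}$, $\gamma\mapsto \seminorm{\InvSpeed[\gamma]}_{\Bessel[s-1]}$, and $\gamma\mapsto \BiLip(\gamma)$ are all continuous in view of the embedding $\Bessel[s]\hookrightarrow \Holder[1]$ together with the openness of $\Mfld$ in $\Bessel[s]$, which supplies uniform upper bounds on $\abs{\gamma'}$ and a positive uniform lower bound near any fixed $\gamma$. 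I do not expect any step to present a real obstacle: the genuinely hard analytic content, namely the Blatt-type substitution that tames the second-order difference inside $\Op{\gamma}{s}$ and the Morrey control of $\norm{D_\gamma u}_{\Lebesgue[\infty]}$, is already packaged inside \cref{lem:RsEquiBounded} and \cref{lem:HsgammaHs}; what remains is careful bookkeeping of the constants.
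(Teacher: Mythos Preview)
Your approach is essentially identical to the paper's: the paper's entire proof is the one-line remark that combining \cref{lem:RsEquiBounded} with \cref{eqn:SeminormHsGammaAgainstSeminormHs} yields the bound, which is exactly the chaining you describe. Your additional observations about the replacement $\norm{\InvSpeed[\gamma]}_{\Lebesgue[\infty]}\leq \BiLip(\gamma)$ and about continuity of the factors are correct elaborations that the paper leaves implicit.
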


\subsection{Smoothness}
\label{sec:SmoothnessOfRs}

In this section we show that $\gamma \mapsto \Op{\gamma}{s}$ is smooth.
More precisely, our main goal is \cref{thm:RsIsSmooth} below.
But before we come to that, we first establish smoothness of a couple of auxiliary mappings.
\begin{lemma}\label{lem:AuxAreSmooth}
    For $s > 3/2$ the following maps are smooth:
    \begin{align}
        \Mfld 
        &\to 
        \Bessel[s-1][][\Domain][\R]
        ,
        &\gamma 
        &\mapsto 
        \Speed_\gamma = \abs{\gamma'}
        \label{eq:Speed}
        ,
        \\
        \Mfld 
        &\to 
        \Bessel[s-1][][\Domain][\R]
        , 
        &\gamma 
        &\mapsto 
        \InvSpeed_\gamma = 1 / \abs{\gamma'},
        \label{eq:InvSpeed}
        \\
        \Mfld 
        &\to 
        \BoundedLinOps\!\pars[\big]{\HSpace ; \Bessel[s-1][][\Domain][\AmbSpace]}
        ,
        &
        \gamma &\mapsto D_\gamma,
        \label{eq:ArcLengthDerivative}
        \\
        \Mfld 
        &\to 
        \BoundedLinOps\!\pars[\big]{\HSpace ; \Bessel[s-1][][\Domain][\BoundedLinOps(\AmbSpace;\AmbSpace)]}
        ,
        &
        \gamma &\mapsto  \cD_\gamma.
        \label{eq:FullDerivative}
    \end{align}
\end{lemma}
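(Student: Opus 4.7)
My plan is to reduce all four smoothness statements to two facts: (a) $\Bessel[s-1][][\Circle][\R]$ is a Banach algebra under pointwise multiplication, and (b) in any commutative Banach algebra, inversion and the positive square root (where it exists) are smooth. The Banach-algebra property (a) is a consequence of $s-1>1/2$ via the Morrey embedding $\Bessel[s-1][][\Circle] \hookrightarrow \Holder[s-3/2][][\Circle]$, which combined with a standard Gagliardo-seminorm estimate makes multiplication a continuous bilinear map $\Bessel[s-1]\times \Bessel[s-1]\to \Bessel[s-1]$ and hence yields a continuous linear inclusion $\Bessel[s-1]\hookrightarrow \BoundedLinOps\!\pars{\Bessel[s-1];\Bessel[s-1]}$, $f\mapsto M_f$, where $M_f$ denotes the multiplication operator.

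For (i), I first note that $\partial\co \gamma\mapsto \gamma'$ is continuous linear $\Bessel[s]\to \Bessel[s-1]$, hence smooth, so $\gamma\mapsto |\gamma'|^2 = \inner{\gamma',\gamma'}$ is smooth into $\Bessel[s-1][][\Circle][\R]$ by the Banach-algebra property. To pass to $\Speed_\gamma=\SmashedSqrt{|\gamma'|^2}$, I would apply the implicit function theorem to $(v,w)\mapsto v^2-w$ on $\Bessel[s-1][][\Circle][\R]\times \Bessel[s-1][][\Circle][\R]$: its $v$-derivative is $2M_v$, which is an isomorphism of $\Bessel[s-1]$ as soon as $v$ is bounded away from zero in $L^\infty$, and this holds for $\gamma\in\Mfld$ thanks to the Morrey embedding. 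Statement (ii) follows by composing (i) with the smoothness of inversion $v\mapsto v^{-1}$ on the open set of invertible elements of the Banach algebra $\Bessel[s-1][][\Circle][\R]$.

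For (iii), I factor $D_\gamma = M_{\InvSpeed_\gamma}\circ \partial$. The map $f\mapsto M_f\circ \partial$ is continuous linear $\Bessel[s-1]\to \BoundedLinOps\!\pars{\Bessel[s];\Bessel[s-1]}$ by the Banach-algebra property and the continuity of $\partial$, so composing with the smooth $\gamma\mapsto \InvSpeed_\gamma$ from (ii) gives smoothness of $\gamma\mapsto D_\gamma$. For (iv), I use the pointwise identity $\cD_\gamma u(x) = u'(x)\otimes \pars[\big]{\InvSpeed_\gamma(x)^2\,\gamma'(x)}$, so that $\cD_\gamma$ factors through multiplication by the map $\gamma\mapsto \InvSpeed_\gamma^2\,\gamma' \in \Bessel[s-1][][\Circle][\AmbSpace]$, which is smooth by another application of the Banach-algebra property to (i), (ii) and $\partial$; the tensorial multiplication $\Bessel[s-1]\times \Bessel[s-1]\to \Bessel[s-1][][\Circle][\End(\AmbSpace)]$ is again continuous bilinear, and the rest is the same bookkeeping as in (iii).

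The only genuinely nontrivial point is the Banach-algebra estimate for $\Bessel[s-1]$ at fractional order $s-1\in\intervaloo{1/2,1}$, which combines $L^\infty$ control (via Morrey) with the Leibniz-type inequality for fractional-difference quotients. Once that is in hand, each of the four claims becomes a short composition of continuous (multi)linear maps, the implicit function theorem, and the classical smoothness of inversion in a Banach algebra.
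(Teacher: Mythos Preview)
Your argument is correct, and for parts \eqref{eq:ArcLengthDerivative} and \eqref{eq:FullDerivative} it is essentially the paper's own proof: factor $D_\gamma$ and $\cD_\gamma$ through pointwise multiplication by $\InvSpeed_\gamma$ (respectively $\InvSpeed_\gamma^2\,\gamma'$) and use that $\Bessel[s-1][][\Circle]$ is a Banach algebra.

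For \eqref{eq:Speed} and \eqref{eq:InvSpeed}, however, you take a genuinely different route. The paper argues via the Nemytskii (composition) operator: since $\gamma'$ takes values in a fixed annulus $V=B_R(0)\setminus\overline{B_r(0)}$ for $\gamma$ in a small $\Bessel[s]$-neighborhood, and since $X\mapsto\abs{X}$ and $X\mapsto 1/\abs{X}$ are smooth with bounded derivatives on $V$, the chain rule for composition in $\Bessel[s-1]$ yields smoothness of both maps at once. Your approach instead stays entirely within the Banach-algebra framework: pass first to $\abs{\gamma'}^2$ via the continuous bilinear product, then recover the square root by the implicit function theorem applied to $(v,w)\mapsto v^2-w$ (using that $M_v$ is invertible when $v$ is bounded away from zero), and finally invert in the algebra. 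The gain is that you never invoke Nemytskii-operator mapping properties in fractional Sobolev spaces---which, while standard, require some care to state and prove---at the cost of a slightly longer detour through $\abs{\gamma'}^2$. The paper's method is more direct once the composition result is taken for granted; yours is more self-contained and uses only the single algebra estimate plus the implicit function theorem.
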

\begin{proof}
    Note that $s>3/2$ implies that $\Bessel[s][][\Circle][\AmbSpace] \hookrightarrow \Holder[1][][\Circle][\AmbSpace]$ embeds continuously (see \cref{thm:MorreyInequality}).
    Thus, for each $\gamma \in \Mfld$ we find a neighborhood $\BoundedSet \subset \Mfld$ and $0 < r \leq R < \infty$ such that the derivative $\xi'$ of each $\xi \in \BoundedSet$ maps $\Circle$ into the open set $V \ceq B_R(0) \setminus \overline{B_r(0)}$.
    The maps $\varPhi \colon V \to \R$, $\varPhi(X) \ceq \abs{X}$ and $\varPsi \colon V \to \R$, $\varPsi(X) \ceq 1/\abs{X}$ are smooth
    and all their derivatives $D^k \varPhi$ and $D^k \varPsi$, $k \in \N \cup \braces{0}$ are bounded. 
    Thus, the chain rule in $\Bessel[s-1]$ implies that 
    $\gamma \mapsto \abs{\gamma'} = \varPhi \circ \gamma'$ 
    and
    $\gamma \mapsto 1/\abs{\gamma'} = \varPsi \circ \gamma'$
    are smooth as maps into $\Bessel[s-1][][\Circle][\R]$. 
    This shows the smoothness of \cref{eq:Speed} and \cref{eq:InvSpeed}.
    Now the identities
    \begin{equation*}
        D_\gamma u = \InvSpeed[\gamma] \cdot u'
        \qand
        \cD_\gamma u = \pars{D_\gamma u} \cdot \pars{D_\gamma \gamma}\trans 
        \quad 
        \text{for $u \in \Bessel[s][][\Circle][\AmbSpace]$}
    \end{equation*}
    and the fact that $\Bessel[s-1][][\Circle][\R]$ is a Banach algebra
    imply the smoothness of \cref{eq:ArcLengthDerivative} and \cref{eq:FullDerivative}.
\end{proof}

\begin{remark}
    The derivatives of the maps in the previous lemma are now readily computed by the chain rule and product rule.
    We just state them here for reference:
    \begin{align}
        D( \gamma \mapsto \Speed[\gamma])(\gamma) \, v 
        &=
        \InvSpeed[\gamma] \inner{\gamma',v'}
        =
        \Speed[\gamma] \inner{D_\gamma \gamma, D_\gamma v}
        ,
        \label{eq:DSpeed}
        \\
        D( \gamma \mapsto \InvSpeed[\gamma])(\gamma) \, v 
        &=
        -\InvSpeed[\gamma]^3 \inner{\gamma',v'}
        =
        - \InvSpeed[\gamma] \inner{D_\gamma \gamma, D_\gamma v}
        ,
        \label{eq:DInvSpeed}
        \\
        D( \gamma \mapsto D_\gamma u)(\gamma) \, v 
        &=
        -\InvSpeed[\gamma]^3 \inner{\gamma',v'} \, u'
        =
        - \inner{ D_\gamma \gamma, D_\gamma v} D_\gamma u,
        \label{eq:DArcLengthDerivative}
        \\
        D( \gamma \mapsto \cD_\gamma u)(\gamma) \, v 
        &=
        \pars{\cD_\gamma u} \pars{ \cD_\gamma v}\trans \pars{\id_{\AmbSpace} - \cD_\gamma \gamma}
        -
        \pars{\cD_\gamma u} \pars{ \cD_\gamma v}
        ,
        \label{eq:DFullDerivative}
    \end{align}
    for all $u$, $v \in \Bessel[s][][\Circle][\AmbSpace]$.
\end{remark}

\begin{lemma}\label{lem:LambdaIsSmooth}
    For $s > 3/2$ and for every $\beta \in \R$ the following function is smooth:
    \begin{equation*}
        \varLambda^\beta \colon \Mfld \to \Lebesgue[\infty][][\Domain \times \Domain][\AmbSpace],
        \quad
        \varLambda^\beta(\gamma)(x,y) \ceq \pars*{\fdfrac{ \dist[\Circle][y][x] }{ \abs{\gamma(y)-\gamma(x)} }}^{\beta}.
    \end{equation*}
\end{lemma}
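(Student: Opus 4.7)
The plan is to factor $\varLambda^\beta$ as a composition of a bounded linear map and a smooth Nemytskii-type operator, and then invoke the chain rule.

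First I would introduce the linear difference-quotient map $Q \colon \HSpace \to \Lebesgue[\infty][][\Domain \times \Domain][\AmbSpace]$ defined by
\[
    (Q\gamma)(x,y) \ceq \frac{\gamma(y) - \gamma(x)}{\dist[\Circle][y][x]}.
\]
Because $s > 3/2$, the Morrey embedding $\HSpace \embeds \Holder[1][][\Domain][\AmbSpace]$ together with the mean value theorem yields $\norm{Q\gamma}_{\Lebesgue[\infty]} \leq \norm{\gamma'}_{\Lebesgue[\infty]} \leq C\, \norm{\gamma}_{\Bessel[s]}$, so $Q$ is a bounded linear operator and hence smooth. A direct calculation gives the factorization $\varLambda^\beta(\gamma)(x,y) = \abs{(Q\gamma)(x,y)}^{-\beta}$.

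Next I would identify an open subset of the target $\Lebesgue[\infty]$-space into which $Q$ sends $\Mfld$ and on which the pointwise operation $W \mapsto \abs{W}^{-\beta}$ is well-defined and smooth. Since $\gamma \in \Mfld$ is bi-Lipschitz, $\abs{(Q\gamma)(x,y)} \geq 1/\BiLip(\gamma) > 0$ almost everywhere, so $Q\gamma$ lies in
\[
    \cU \ceq \myset[\Big]{ W \in \Lebesgue[\infty][][\Domain \times \Domain][\AmbSpace] }{ \operatorname{ess\,inf}\abs{W} > 0 }.
\]
The set $\cU$ is open in $\Lebesgue[\infty]$: if $\operatorname{ess\,inf}\abs{W_0} \geq 2c$ and $\norm{W - W_0}_{\Lebesgue[\infty]} < c$, then $\operatorname{ess\,inf}\abs{W} \geq c$. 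Thus $Q(\Mfld) \subset \cU$.

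It remains to show that the Nemytskii operator $\Psi \colon \cU \to \Lebesgue[\infty][][\Domain \times \Domain][\R]$, $\Psi(W)(x,y) \ceq \abs{W(x,y)}^{-\beta}$, is smooth. The scalar map $F \colon \AmbSpace \setminus \braces{0} \to \R$, $F(X) \ceq \abs{X}^{-\beta}$, is $C^\infty$ and all its derivatives $D^k F$ are uniformly bounded on each spherical shell $\braces{X : c \leq \abs{X} \leq C}$. On any sufficiently small $\Lebesgue[\infty]$-ball around $W_0 \in \cU$ the values of $W$ stay inside such a fixed shell, so Taylor's theorem applied pointwise together with these uniform bounds yields a Fréchet expansion of $\Psi$ at $W_0$ to arbitrary order, with remainder controlled in $\Lebesgue[\infty]$ and derivatives given by pointwise composition with the $D^k F$. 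Combining by the chain rule, $\varLambda^\beta = \Psi \circ Q\at_{\Mfld}$ is smooth of every order.

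The main technical point is this last step: smoothness of the Nemytskii operator $\Psi$ in the $\Lebesgue[\infty]$ setting. While the analogous statement in $\Lebesgue[p]$ with $p < \infty$ is notoriously delicate, here it is quite benign because $\Lebesgue[\infty]$ essentially encodes uniform pointwise control, and the uniform boundedness of $D^k F$ on shells---which is precisely what the openness of $\cU$ buys us---provides exactly the estimates required.
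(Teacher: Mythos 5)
Your proof is correct, and it takes a genuinely different and arguably cleaner route than the paper's. You factor $\varLambda^\beta = \Psi \circ Q$, where $Q\gamma = (\gamma(y)-\gamma(x))/\dist[\Circle](y,x)$ is bounded linear (hence smooth) and $\Psi(W) = \abs{W}^{-\beta}$ is a Nemytskii-type operator, and then verify that $\Psi$ is smooth on the open set $\cU = \{W : \operatorname{ess\,inf}\abs{W} > 0\}$ via a pointwise Taylor expansion with $\Lebesgue[\infty]$-remainder control. This is a valid and systematic strategy; as you note, it works precisely because $\Lebesgue[\infty]$ is one of the few Lebesgue targets where the composition operator $W \mapsto F\circ W$ inherits the $C^\infty$ regularity of $F$ from uniform boundedness of its derivatives on shells. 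The paper instead proves continuity of $\varLambda^{\pm 1}$ by hand-crafted Lipschitz estimates (the identity $\varLambda^1(\gamma) - \varLambda^1(\eta) = \varLambda^1(\gamma)\varLambda^1(\eta)(\varLambda^{-1}(\eta) - \varLambda^{-1}(\gamma))$ plus a perturbation bound showing $\norm{\varLambda^1(\eta)}_{\Lebesgue[\infty]}$ stays finite near $\gamma$ -- this is your openness-of-$\cU$ step in disguise), then computes the Gateaux derivative explicitly as \cref{eq:DiffLambda} and bootstraps to arbitrary order. What the paper's more computational route buys is the closed-form formula \cref{eq:DiffLambda}, which is cited again in the subsequent remark when differentiating $\Op{}{s}$; if you adopt your factorization, you should still extract that formula by the chain rule applied to $F = \abs{\,\cdot\,}^{-\beta}$ and $Q$, so nothing is lost. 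One harmless cosmetic point: the codomain of $\varLambda^\beta$ is really $\Lebesgue[\infty][][\Domain\times\Domain][\R]$ (scalar-valued), as your $\Psi$ makes explicit; the statement's $\AmbSpace$ is a typo inherited from the surrounding context.
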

\begin{proof}
    First we observe that $\varLambda^1(\gamma)$ and $\varLambda^{-1}(\gamma)$ are bounded because $\gamma$ is an embedding of class $\Holder[1]$.
    Let $\gamma$ and $\eta$ be two curves in $\Mfld$.
    For $x \in \Circle$, $y \in \Circle$, $x \neq y$, we have
    \begin{align*}
        \abs{\varLambda^{-1}(\gamma)(x,y) - \varLambda^{-1}(\eta)(x,y)}
        &=
        \abs[\Big]{
            \fdfrac{\abs{ \gamma(y) - \gamma(x) } - \abs{ \eta(y) - \eta(x) }}{ \dist[\Circle](y,x) }
        }
        \\
        &\leq 
        \abs[\Big]{
            \fdfrac{  \pars{\gamma(y) -  \eta(y)} - \pars{\gamma(x) -  \eta(x)} }{ \dist[\Circle](y,x) }
        }
        \leq 
        \norm{ \gamma' - \eta' }_{\Lebesgue[\infty]}
        .
    \end{align*}
    Taking the essential supremum over $(x,y) \in \Circle \times \Circle$ yields
    $
        \norm{\varLambda^{-1}(\gamma) - \varLambda^{-1}(\eta) }_{\Lebesgue[\infty]}
        \leq 
        \norm{ \gamma' - \eta' }_{\Lebesgue[\infty]}
    $,
    hence $\varLambda^{-1}$ is continuous.
    To show that $\varLambda$ is continuous, we start with
    \begin{align*}
        \norm{ \varLambda^1(\gamma) - \varLambda^1(\eta) }_{\Lebesgue[\infty]}
        &=
        \norm{ 
            \varLambda^1(\gamma) \, \varLambda^1(\eta) \pars[\big]{ \varLambda^{-1}(\eta) - \varLambda^{-1}(\gamma)  }
        }_{\Lebesgue[\infty]}
        \\
        &\leq 
        \norm{ \varLambda^1(\gamma)  }_{\Lebesgue[\infty]} \norm{ \varLambda^1(\eta) }_{\Lebesgue[\infty]} \norm{ \gamma' - \eta' }_{\Lebesgue[\infty]}
        .
    \end{align*}
    So it suffices to show that $ \norm{ \varLambda^1(\eta) }_{\Lebesgue[\infty]} $ stays bounded for $\eta \to \gamma$ in $\Bessel[s]$.
    Indeed, for $\eta$ sufficiently close to $\gamma$ in $\Bessel[s]$, we may assume that $u \ceq \eta - \gamma$ satisfies
    $\Lip(u) \leq \frac{1}{2} \BiLip(\gamma)^{-1}$.
    Then for $x$, $y \in \Circle$, $x \neq y$ we have
    \begin{align*}
        \abs{\eta(y) - \eta(x)}
        &\geq 
        \abs[\big]{ \abs{\gamma(y) - \gamma(x)} - \abs{u(y) - u(x)} }
        \geq 
        \abs{\gamma(y) - \gamma(x)} - \Lip(u) \, \dist[\Circle](y,x)
        \\
        &\geq
        \abs{\gamma(y) - \gamma(x)} 
        - 
        \abs{\gamma(y) - \gamma(x)}\,  \Lip(u) \, 
        \fdfrac{\dist[\Circle](y,x)}{\abs{\gamma(y) - \gamma(x)}}
        \\
        &\geq
        \abs{\gamma(y) - \gamma(x)} 
        - 
        \abs{\gamma(y) - \gamma(x)}\,  \Lip(u) \, 
        \BiLip(\gamma)
        \geq 
        \fdfrac{1}{2} \abs{\gamma(y) - \gamma(x)} 
        .
    \end{align*}
    This shows that $\norm{ \varLambda^1(\eta) }_{\Lebesgue[\infty] } \leq 2 \norm{ \varLambda^1(\gamma) }_{\Lebesgue[\infty] } < \infty$ stays bounded as $\eta \to \gamma$, and that $\varLambda^1$ is continuous.
    So both $\varLambda^1$ and $\varLambda^{-1}$ are continuous, and we may raise each of them to an arbitrary nonnegative power without breaking continuity.
    Thus, $\varLambda^\beta$ is continuous for each $\beta \geq 0$.

    The Gateaux derivative of $\varLambda^\beta$ in direction $v \in \Bessel[s][][\Circle][\AmbSpace]$ is given by
    \begin{align}
        D \varLambda^\beta(\gamma) \, v 
       & =
        -\beta \; \varLambda^{\beta+2}(\gamma) \, 
        \inner*{ 
            \fdfrac{ \gamma(y)-\gamma(x) }{ \dist[\Domain](y,x) }
            ,
            \fdfrac{ v(y)-v(x) }{ \dist[\Domain] (y,x)}
        }
        \label{eq:DiffLambda}
        .
    \end{align} 
    Since both $\gamma$ and $v$ are of class $C^1$, we have $D \varLambda^\beta(\gamma) \, v \in \Lebesgue[\infty][][\Domain \times \Domain][\R]$.
    The mappings $\gamma \mapsto  \frac{\gamma(y)-\gamma(x)}{ \dist[\Domain](y,x)}$ and $v \mapsto \frac{ v(y)-v(x)}{ \dist[\Domain](y,x)}$ are smooth as mappings into $\Lebesgue[\infty][][\Circle\times \Circle][\AmbSpace]$ since they are linear and bounded. 
    Together with the continuity of $\varLambda^{\beta+2}$, this implies that 
    $D \varLambda^\beta \colon \Mfld \to \BoundedLinOps( \Bessel[s], \Lebesgue[\infty][][\Domain \times \Domain][\AmbSpace] )$ is continuous. Hence, $\varLambda^\beta$ is Fréchet differentiable.
    Finally, arbitrarily high Fréchet derivatives of $\varLambda^\beta$ can be computed recursively using \cref{eq:DiffLambda} and the product rule.
\end{proof}

\begin{remark}
    As a byproduct, we have shown that  $\gamma \mapsto \BiLip(\gamma) = \norm{ \varLambda^1(\gamma) }_{\Lebesgue[\infty]}$ is continuous and that $\Mfld \subset \Bessel[s][][\Circle][\AmbSpace]$ is indeed an open set.
\end{remark}

\begin{corollary}\label{cor:DifferenceOpIsSmooth}
    For $s > 3/2$ the following map is smooth:
    \begin{align*}  
        \Mfld 
        \to 
        \BoundedLinOps\!\pars[\big]{ 
            \HSpace 
            ; 
            \Lebesgue[\infty][][\Domain \times \Domain][\AmbSpace]
        },
        \quad 
        \gamma \mapsto \pars*{ u \mapsto  \fdfrac{u(y)-u(x) }{\abs{ \gamma(y)-\gamma(x)}}
        =
        \varLambda^1(\gamma) \, \fdfrac{ u(y)-u(x)}{\dist[\Domain](y,x)}
        }.
    \end{align*}
\end{corollary}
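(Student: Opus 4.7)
My plan is to exploit the factorization already highlighted in the statement of the corollary,
\[
    \frac{u(y)-u(x)}{\abs{\gamma(y)-\gamma(x)}}
    =
    \varLambda^{1}(\gamma)(x,y) \cdot \frac{u(y)-u(x)}{\dist[\Circle](y,x)},
\]
and to realize the map in question as a composition of two smooth maps, one of which is the nonlinear map $\varLambda^{1}$ (smooth by \cref{lem:LambdaIsSmooth}) and the other of which is a bounded linear map that does not involve $\gamma$ at all. The hard part will only be verifying the boundedness of these two linear building blocks, and for that I would lean on the Morrey inequality.

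Concretely, I would first introduce the $\gamma$-independent finite-difference operator
\[
    T \colon \Bessel[s][][\Circle][\AmbSpace] \to \Lebesgue[\infty][][\Domain \times \Domain][\AmbSpace],
    \qquad
    Tu(x,y) \ceq \frac{u(y)-u(x)}{\dist[\Circle](y,x)},
\]
and observe that it is bounded and linear. Boundedness is immediate from the Morrey inequality ($\Bessel[s] \hookrightarrow \Holder[1]$ for $s > 3/2$): we have $\abs{u(y)-u(x)} \leq \norm{u'}_{\Lebesgue[\infty]} \dist[\Circle](y,x)$, so $\norm{Tu}_{\Lebesgue[\infty]} \leq C_{\Morrey,s-1}\norm{u}_{\Bessel[s]}$.

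Next I would introduce the pointwise multiplication map
\[
    \varPsi \colon
    \Lebesgue[\infty][][\Domain\times\Domain][\R]
    \to
    \BoundedLinOps\!\pars[\big]{\HSpace;\Lebesgue[\infty][][\Domain\times\Domain][\AmbSpace]},
    \qquad
    \varPsi(f)(u) \ceq f \cdot Tu.
\]
This is linear in $f$, and the elementary estimate
$\norm{\varPsi(f)\,u}_{\Lebesgue[\infty]} \leq \norm{f}_{\Lebesgue[\infty]} \norm{T}_{\mathrm{op}} \norm{u}_{\Bessel[s]}$
shows it is bounded, hence smooth. Since the map under consideration equals $\varPsi \circ \varLambda^{1}$, and the composition of smooth maps is smooth, the conclusion follows. (The identity $\varLambda^{1}(\gamma)(x,y) \cdot Tu(x,y) = \tfrac{u(y)-u(x)}{\abs{\gamma(y)-\gamma(x)}}$ is just the definition of $\varLambda^{1}$, so no further computation is required.)
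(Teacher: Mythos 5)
Your proposal is correct and matches what the paper intends: the corollary is stated as immediate from \cref{lem:LambdaIsSmooth} via exactly the factorization displayed in its statement, and the paper omits an explicit proof for that reason. Your reconstruction — viewing the map as the composition of the smooth $\varLambda^1$ with the bounded linear map $f \mapsto \pars{u \mapsto f \cdot Tu}$, where $T$ is the $\gamma$-independent difference quotient operator bounded via the Morrey embedding $\Bessel[s] \hookrightarrow \Holder[1]$ — supplies precisely the argument left implicit.
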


Finally, we have aggregated all requirements for showing the main theorem in this section.

\begin{theorem}\label{thm:RsIsSmooth}
    For $s \in \intervaloo{3/2,2}$ the map
    $
        \Op{}{s}
        \colon 
        \Mfld \to 
        \BoundedLinOps\!\pars[\big]{ 
            \HSpace 
            , 
            \Lebesgue[2][\mu][\Domain \times \Domain][\AmbSpace]}
    $
    is smooth.
\end{theorem}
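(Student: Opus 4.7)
The plan is to factor $\Op{\gamma}{s}$ into pieces whose smoothness has already been established. I would start from the identity
\begin{equation*}
    \Op{\gamma}{s} u(x,y)
    =
    \varLambda^s(\gamma)(x,y) \; \tilde R_\gamma u(x,y),
    \qquad
    \tilde R_\gamma u(x,y)
    \ceq
    \fdfrac{u(y) - u(x) - \cD_\gamma u(x) \pars{\gamma(y) - \gamma(x)}}{\dist[\Circle][y][x]^s},
\end{equation*}
where $\varLambda^s \colon \Mfld \to \Lebesgue[\infty][][\Domain \times \Domain][\R]$ is smooth by \cref{lem:LambdaIsSmooth}. Pointwise multiplication is a bounded bilinear map from $\Lebesgue[\infty][][\Domain \times \Domain][\R] \times \Lebesgue[2][\mu][\Domain \times \Domain][\AmbSpace]$ to $\Lebesgue[2][\mu][\Domain \times \Domain][\AmbSpace]$, and therefore smooth, so the task reduces to showing that $\gamma \mapsto \tilde R_\gamma$ is smooth as a map into $\BoundedLinOps\pars[\big]{\HSpace, \Lebesgue[2][\mu][\Domain \times \Domain][\AmbSpace]}$.

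The subtle point is that one cannot split the numerator of $\tilde R_\gamma u$ naively into a $u$-part and a $\cD_\gamma u$-part: for generic $u \in \HSpace$ neither of those pieces divided by $\dist[\Circle][y][x]^s$ lies in $\Lebesgue[2](\mu)$, and the second-order Taylor cancellation between them is essential. I would therefore make the refined split
\begin{equation*}
    \tilde R_\gamma u = T_0 \, u - Q_\gamma u,
    \qquad
    T_0 u(x,y) \ceq \fdfrac{u(y) - u(x) - u'(x)(y-x)}{\dist[\Circle][y][x]^s},
\end{equation*}
in which $T_0$ is \emph{independent} of $\gamma$. The boundedness $\norm{T_0 u}_{\Lebesgue[2][\mu]} \leq C \, \seminorm{u}_{\Bessel[s]}$ is precisely the Jensen and change-of-variable computation carried out for $B(u,u)$ in the proof of \cref{lem:RsEquiBounded}, now applied to the flat measure $\mu$ instead of $\nu_\gamma$. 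Hence the constant map $\gamma \mapsto T_0$ is trivially smooth, and it remains only to handle $Q_\gamma$.

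For the remaining piece, substituting the explicit form of $\cD_\gamma u(x)$ from \cref{eq:OpD} and rearranging to expose a second Taylor cancellation yields
\begin{equation*}
    Q_\gamma u(x,y)
    =
    u'(x) \,
    \inner[\Big]{
        \fdfrac{\gamma'(x)}{\abs{\gamma'(x)}^2} ,\; P \gamma(x,y)
    },
    \qquad
    P \eta(x,y) \ceq \fdfrac{\eta(y) - \eta(x) - \eta'(x)(y-x)}{\dist[\Circle][y][x]^s}.
\end{equation*}
Now $P$ has exactly the same form as $T_0$, so $P \colon \HSpace \to \Lebesgue[2][\mu][\Domain \times \Domain][\AmbSpace]$ is bounded linear, and hence $\gamma \mapsto P\gamma$ is smooth. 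By \cref{lem:AuxAreSmooth}, $\gamma \mapsto \gamma'/\abs{\gamma'}^2$ is smooth into $\Bessel[s-1][][\Circle][\AmbSpace] \embeds \Lebesgue[\infty][][\Circle][\AmbSpace]$, while $u \mapsto u'$ is bounded linear $\HSpace \to \Lebesgue[\infty][][\Circle][\AmbSpace]$ by Morrey. Finally, the trilinear contraction $(w, v, W) \mapsto \bigl((x,y) \mapsto w(x) \inner{v(x), W(x,y)}\bigr)$ is bounded (and therefore smooth) from $\Lebesgue[\infty] \times \Lebesgue[\infty] \times \Lebesgue[2][\mu]$ to $\Lebesgue[2][\mu]$. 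Composing these smooth factors shows that $\gamma \mapsto Q_\gamma$ is smooth in the operator norm, which closes the argument. The main obstacle I would expect is isolating exactly this refined decomposition: once the Taylor-remainder piece $T_0$ is peeled off and the quadratic cancellation is repackaged into $P\gamma$, the rest of the $\gamma$-dependence is absorbed into maps already proved smooth in \cref{lem:AuxAreSmooth,lem:LambdaIsSmooth}.
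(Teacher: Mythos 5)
Your proof is correct, and it is a clean variant of the paper's argument: both proofs peel off the distortion weight $\varLambda^s(\gamma)$ and then express the remaining $\gamma$-dependence through a fixed ($\gamma$-independent) Taylor-remainder operator that is already known to be bounded on $\HSpace$ by the Blatt-style computation for $B(u,u)$ in \cref{lem:RsEquiBounded}. The paper anchors that bounded operator to a fixed reference \emph{embedding} $\eta\in\Mfld$ via the pseudoinverse identity $\cD_\eta u = \cD_\gamma u\,\cD_\eta\gamma$, arriving at
\begin{equation*}
  \Op{\gamma}{s} u
  = \sdfrac{\varLambda^s(\gamma)}{\varLambda^s(\eta)}
    \pars[\big]{\Op{\eta}{s} u - (\cD_\gamma u\circ\pr_1)\,\Op{\eta}{s}\gamma},
\end{equation*}
whereas you anchor it to the parametrization itself, using $T_0$ (equal to your $P$) in place of $\Op{\eta}{s}$. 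Expanding $\cD_\gamma u(x)$ explicitly, the paper's term $(\cD_\gamma u\circ\pr_1)\,\Op{\eta}{s}\gamma = u'(x)\inner{\gamma'(x)/\abs{\gamma'(x)}^2,\,\Op{\eta}{s}\gamma}$ is exactly your $Q_\gamma u$ with $\Op{\eta}{s}\gamma$ replaced by $T_0\gamma$. Your version is arguably more elementary, since it avoids introducing $\eta$ and the intertwining argument; the paper's version stays within the covariant framework and, via \cref{eq:RDecomposition}, feeds more directly into the derivative formula \cref{eq:RsGateaux} used in \cref{thm:DE}. One small point worth making explicit is that the parameter difference $(y-x)$ appearing in $T_0$ and $P$ must be read as the signed representative in $(-1/2,1/2]$ (i.e.\ the lift along the shorter arc), exactly as the paper does implicitly by passing to a fundamental domain; with this reading the identity $T_0 u = \diffop{}{s-1}$-remainder is a genuine Taylor remainder and the $B(u,u)$ estimate applies verbatim. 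The final currying step from ``$\gamma\mapsto Q_\gamma u$ smooth for each $u$'' to ``$\gamma\mapsto Q_\gamma$ smooth in operator norm'' goes through because $Q_\gamma u = T\pars[\big]{G(\gamma),\,Lu}$ with $G$ smooth into $\Lebesgue[\infty]\times\Lebesgue[2][\mu]$, $L$ a fixed bounded linear map, and $T$ bounded multilinear, so the map factors through smooth maps into spaces of bounded operators.
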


\begin{proof}
Let $\eta \colon \Circle \to \AmbSpace$ be the standard embedding of the round circle (or any other fixed $\eta \in \Mfld$) and let $u \in \Bessel[s][][\Circle][\AmbSpace]$.
First we show that we can write
\begin{equation}
    \Op{\gamma}{s} u
    =
    \fdfrac{ \varLambda^s(\gamma) }{ \varLambda^s(\eta) }
    \pars[\big]{    
        \Op{\eta}{s} u
        -
        \pars{\cD_{\gamma} u \circ \pr_1} \, \Op{\eta}{s} \gamma
    }
    \label{eq:RDecomposition}
    .
\end{equation}
The crucial observation here is that $\dd \gamma(x)\pinv \dd \gamma(x) = \id_{(T_x \Circle)}$. Hence, we have
\begin{equation*}
    \cD_\eta u(x) 
    =  \dd u(x) \dd \eta(x)\pinv
    =  \dd u(x) \dd \gamma(x)\pinv \dd \gamma(x) \dd \eta(x)\pinv
    =
    \cD_\gamma u(x)  \, \cD_\eta \gamma(x) 
    .
\end{equation*}
Now we have
\begin{align*}
    \abs{\gamma(y)-\gamma(x)}^s \, \Op{\gamma} u (x,y)
    &=
    u(y) - u(x) - \cD_\gamma u(x) \pars{\gamma(x) - \gamma(y)}
    \\
    &=
    u(y) - u(x) - \cD_\eta u(x) \pars{\eta(x) - \eta(y)}
    \\
    &\qquad
    +
    \pars[\big]{
        \cD_\gamma u(x) \, \cD_\eta \gamma(x) \pars{\eta(x) - \eta(y)} - \cD_\gamma u(x) \pars{\gamma(x) - \gamma(y)}
    }
    \\
    &=
    \abs{\eta(y)-\eta(x)}^s \pars[\big]{
        \Op{\eta}{s} u (x,y)
        -
        \cD_\gamma u(x) \, \Op{\eta}{s} u (x,y)
    }
    .
\end{align*}
Hence, \cref{eq:RDecomposition} follows from dividing this by $\abs{\gamma(y)-\gamma(x)}^s$ and observing that 
\begin{equation*}
    \fdfrac{ \abs{\eta(y)-\eta(x)}^s }{ \abs{\gamma(y)-\gamma(x)}^s }
    =
    \fdfrac{ \varLambda^s(\gamma) }{ \varLambda^s(\eta) }
    .
\end{equation*}
Now formula \cref{eq:RDecomposition} makes it now easy to show the smoothness of $\Op{}{s}$:
By \cref{lem:RsEquiBoundedII}, the operator $\Op{\eta}{s} \colon \Bessel[s][][\Circle][\AmbSpace] \to \Lebesgue[2][\mu][\Circle \times \Circle][\AmbSpace]$ is bounded, so $\gamma \mapsto \Op{\eta}{s} \gamma$ is smooth as a map to $\Lebesgue[2][\mu][\Circle \times \Circle][\AmbSpace]$.
From \cref{lem:AuxAreSmooth} we know that 
$\gamma \mapsto \cD_{\gamma} u \circ \pr_1$ is smooth as a map into $\Lebesgue[\infty][][\Circle\times\Circle][\Hom(\AmbSpace;\AmbSpace)]$.
And \cref{lem:LambdaIsSmooth} shows that $\gamma \mapsto \varLambda^s(\gamma)$ is smooth 
as a map into $\Lebesgue[\infty][][\Circle\times\Circle][\R]$.
So the smoothness of $\gamma \mapsto \Op{\gamma}{s} u$ follows from the product rule in $\Lebesgue[\infty]$ and $\Lebesgue[2][\mu]$.
\end{proof}

\begin{remark}
    With \cref{eq:RDecomposition} we can readily compute the derivative of $\Op{}{s}$.
    This will be useful later, e.g., when we compute the derivative of the tangent-point energy (see \cref{thm:DE}).
    Let $u$, $v \in \Bessel[s][][\Circle][\AmbSpace]$ and abbreviate 
    $U(\gamma) \ceq \cD_\gamma u \circ \pr_1$, 
    $V(\gamma) \ceq \cD_\gamma v \circ \pr_1$, 
    $\Delta \gamma= \gamma(y)-\gamma(x)$,
    $\Delta v= v(y)-v(x)$,
    and 
    $W(\gamma) \ceq \cD_\gamma \gamma \circ \pr_1$.
    Now the product rule implies
    \begin{align*}
        \MoveEqLeft
        \pars[\big]{D\Op{}{s}(\gamma) \, v} \, u
        = 
        D \!\pars[\big]{ \xi \mapsto \Op{\xi}{s} u } (\gamma) \, v
        \\
        &=
        \fdfrac{ \pars{ D\varLambda^s(\gamma)\,v } }{ \varLambda^s(\eta) }
        \pars[\big]{    
            \Op{\eta}{s} u
            -
            U(\gamma) \, \Op{\eta}{s} \gamma
        }
        -
        \fdfrac{ \varLambda^s(\gamma) }{ \varLambda^s(\eta) } \, U(\gamma) \, 
        \Op{\eta}{s}    v
        -
        \fdfrac{ \varLambda^s(\gamma) }{ \varLambda^s(\eta) } \pars{DU(\gamma) \, v}
        \, \Op{\eta}{s} \gamma
        .
    \end{align*}
    Plugging in \cref{eq:DiffLambda} and \cref{eq:DFullDerivative}, which takes the form 
    \begin{equation*}
        DU(\gamma) \, v =
        - U(\gamma) \,  V(\gamma)
        +
        U(\gamma) \, V(\gamma)\trans \pars{ \id_{\AmbSpace} -  \, W(\gamma)}
        ,
    \end{equation*}
    and rearranging terms leads to
    \begin{align*}
        \pars[\big]{D\Op{}{s}(\gamma) \, v} \, u
        &=
        -
        s
        \inner*{ \fdfrac{\Delta \gamma}{\abs{\Delta \gamma}}, \fdfrac{\Delta v}{\abs{\Delta \gamma} } }
        \fdfrac{ \varLambda^s(\gamma) }{ \varLambda^s(\eta) }
        \pars[\big]{    
            \Op{\eta}{s} u
            -
            U(\gamma) \, \Op{\eta}{s} \gamma
        }
        \\
        &\qquad
        -
        \fdfrac{ \varLambda^s(\gamma) }{ \varLambda^s(\eta) }
        U(\gamma)
        \pars[\big]{
            \Op{\eta}{s} v
            -
            V(\gamma) \, \Op{\eta}{s} \gamma
        }
        -
        \fdfrac{ \varLambda^s(\gamma) }{ \varLambda^s(\eta) }
        U(\gamma) \, V(\gamma)\trans
        \pars[\big]{
            \Op{\eta}{s} \gamma
            -
            W(\gamma) \, \Op{\eta}{s} \gamma
        }
        .
    \end{align*}
    Finally, using \cref{eq:RDecomposition} in reverse shows that
    \begin{equation}
        \pars[\big]{D\Op{}{s}(\gamma) \, v} \, u
        =
        - 
        s \, \Op{\gamma}{s} u
        \inner*{ 
            \fdfrac{ \Delta \gamma }{ \abs{\Delta \gamma} }
            , 
            \fdfrac{ \Delta v }{ \abs{\Delta \gamma} }
        }
        - \pars{\cD_\gamma u \circ \pr_1} \, \Op{\gamma}{s} v
        - \pars{\cD_\gamma u \circ \pr_1} \pars{\cD_\gamma v \circ \pr_1}\trans \, \Op{\gamma}{s} \gamma
    .
    \label{eq:RsGateaux}
    \end{equation}
\end{remark}


\section{The metric}
\label{sec:GenProp}

We have introduced the metric $G$ on $\Mfld$ already in \cref{eq:G} in the introduction.
In this section we collect several important properties of the metric.

In \cref{sec:Strong} we show that $G$ is a \emph{strong Riemannian metric} on $\Mfld$, i.e., it is positive-definite and its induced norm generates the topology of $T_\gamma \Mfld = \Bessel[s][][\Domain][\AmbSpace]$.
In \cref{sec:MetricSmooth} we show that $G_\gamma$ depends smoothly on $\gamma$.
Afterwards, we introduce the geodesic distance function $\dist[G]$ of $G$ (see \cref{sec:GeodesicDistance}) and show that it makes several important functionals \emph{globally} Lipschitz continuous. 
Most importantly, this applies to the tangent-point energy $\Energy$ (see \cref{thm:EnergyLipschitz}) and to the arc length functional $\ArcLength$ (see \cref{lem:ArcLengthLipschitz}). 
We then show that that quantities like $\Energy$, $\abs{\gamma'}$, and $1 / \abs{\gamma'}$ are bounded on $G$-bounded sets (see \cref{lem:BiLipBound}). 
Summarized, these bounds will show that the Riemannian metric $G$ has sufficient control to keep curves away from various modes of degeneration. 
Finally we compare our distance to the normal $\Bessel[s]$-distance, quantify their relation, and show that the identity map $\id \colon (\Mfld, \dist[G]) \to (\Mfld, \dist[\Bessel[s]])$ is locally bi-Lipschitz continuous (see \cref{lem:GLocalDistanceBound}, \cref{lem:GDistanceLocalDistanceBound}, and \cref{cor:SameTopology}). 
These findings will be crucial 
for the discussion of compactness properties of bounded sets (see \cref{sec:BanachAlaoglu})
and
for estabilishing geodesic completeness (\cref{sec:GeodesicCompleteness})
and
metric completeness (\cref{sec:MetricCompleteness}).


\newpage 

\subsection{The metric is strong}
\label{sec:Strong}

Our aim here is to show the following theorem:
\begin{theorem}
    \label{thm:GIsStrong}
    The metric $G$ defined in \cref{eq:G} is a strong Riemannian metric on $\Mfld$.
    \newline
    In particular, the \emph{Riesz isomorphism} 
\begin{align*}
    \Riesz_\gamma \colon T_\gamma \Mfld \to T\dual_\gamma\Mfld,
	\quad 
	\inner{ \Riesz_\gamma u , v} \ceq G_\gamma(u,v)
\end{align*}
    is an isomorphism between the tangent space $T_\gamma \Mfld = \HSpace$
    and the continuous cotangent space $T\dual_\gamma\Mfld \ceq (\HSpace)\dual = \Bessel[-s][][\Domain][\AmbSpace]$.
\end{theorem}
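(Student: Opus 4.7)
The goal reduces to showing that $G_\gamma$ is a continuous, symmetric, positive-definite bilinear form on $T_\gamma\Mfld = \Bessel[s][][\Domain][\AmbSpace]$ whose induced norm is equivalent to the standard $\norm{\cdot}_{\Bessel[s]}$. Symmetry is immediate from the definition~\cref{eq:G}. Once the norm equivalence $c(\gamma)\norm{u}_{\Bessel[s]} \leq \sqrt{G_\gamma(u,u)} \leq C(\gamma)\norm{u}_{\Bessel[s]}$ is established, the Riesz isomorphism statement follows at once from the classical Riesz representation theorem: $G_\gamma$ equips $\Bessel[s]$ with an equivalent Hilbert-space inner product, so $\Riesz_\gamma$ identifies it with its topological dual $\Bessel[-s][][\Domain][\AmbSpace]$.

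\textbf{Upper bound.} Estimate each of the five summands of~\cref{eq:G} separately by a constant times $\norm{u}_{\Bessel[s]}^2$. For $B^1_\gamma(u,u) = \norm{\Op{\gamma}{s}u}^2_{\Lebesgue[2](\mu_\gamma)}$, \cref{cor:MuGamma} gives equivalence of the $\Lebesgue[2](\mu_\gamma)$ and $\Lebesgue[2](\nu_\gamma)$ norms, and then \cref{lem:RsEquiBoundedII} bounds $\norm{\Op{\gamma}{s}u}_{\Lebesgue[2](\nu_\gamma)}$ by $F_{\Op{}{s}}(\gamma) \, \seminorm{u}_{\Bessel[s]}$. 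For $B^2_\gamma(u,u)$ and $B^3_\gamma(u,u)$, pull out the essential suprema of $(u(y)-u(x))/\abs{\gamma(y)-\gamma(x)}$ (via \cref{cor:MorreyUniformBiLipschitz}) and of $D_\gamma u$ (via \cref{lem:MorreyInequalityGeometric}) respectively; what remains is the factor $\iint\abs{\Op{\gamma}{s}\gamma}^2\,\dd\mu_\gamma = \Energy(\gamma)$, which is finite since the bound in \cref{lem:RsEquiBoundedII} applied to $u = \gamma$ is finite. The geometric $\Bessel[1](\gamma)$- and $\Lebesgue[2](\gamma)$-terms are controlled in terms of their flat counterparts by \cref{lem:HsgammaHs}.

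\textbf{Lower bound.} Since $B^2_\gamma(u,u), B^3_\gamma(u,u) \geq 0$, the task reduces to proving
\[
2 B^1_\gamma(u,u) + \inner{u,u}_{\Bessel[1](\gamma)} + \inner{u,u}_{\Lebesgue[2](\gamma)} \,\geq\, c(\gamma) \norm{u}^2_{\Bessel[s](\gamma)},
\]
which, through \cref{lem:HsgammaHs}, transfers to a bound in terms of $\norm{u}_{\Bessel[s]}$. The last two terms on the left already match two of the three summands in $\norm{u}^2_{\Bessel[s](\gamma)} = \inner{u,u}_{\Lebesgue[2](\gamma)} + \inner{u,u}_{\Bessel[1](\gamma)} + \seminorm{u}^2_{\Bessel[s](\gamma)}$, so the remaining crux is the seminorm estimate $\seminorm{u}^2_{\Bessel[s](\gamma)} \lesssim B^1_\gamma(u,u) + \text{(lower-order)}$.

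\textbf{Main obstacle.} This last seminorm bound is the hardest step, and the plan is to invoke \cite[Proposition~4.1]{blattreiter1}: the bilinear form $B^1_\gamma$ differs from the quadratic form associated with the geometric fractional Laplacian $(-\Laplacian_\gamma)^s = (-\cD_\gamma^* \cD_\gamma)^s$ by a \emph{compact} perturbation on $\Bessel[s]$, and the principal form is comparable to $\seminorm{\cdot}^2_{\Bessel[s](\gamma)}$. A G{\aa}rding-type inequality then produces constants $c(\gamma), K(\gamma) > 0$ with
\[
B^1_\gamma(u,u) + K(\gamma) \norm{u}^2_{\Lebesgue[2](\gamma)} \,\geq\, c(\gamma) \seminorm{u}^2_{\Bessel[s](\gamma)},
\]
which, together with the $\Lebesgue[2](\gamma)$-summand already present in $G_\gamma$, completes the coercivity estimate. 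The continuous dependence of both equivalence constants on $\gamma$ will be obtained by tracking the $\gamma$-dependence through each of the ingredient bounds used above.
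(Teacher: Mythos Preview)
Your upper bound is essentially the paper's (\cref{lem:NormEquiv1}). Your lower bound, however, takes a genuinely different route. You discard $B^2_\gamma$ and $B^3_\gamma$ and appeal to \cite[Proposition~4.1]{blattreiter1} together with an abstract G{\aa}rding argument to get coercivity of $B^1_\gamma$ alone modulo a lower-order term. The paper instead keeps $B^2_\gamma$ and proves the explicit bound
\[
\seminorm{u}_{\Bessel[s](\gamma)}^2 \leq 32\,B^1_\gamma(u,u) + 128\,B^2_\gamma(u,u) + C\,\ArcLength(\gamma)\,\Energy(\gamma)^{(\alpha+1)/\alpha}\,\seminorm{u}^2_{\Bessel[1](\gamma)}
\]
(\cref{lem:NormEquiv2}) by an elementary near-diagonal/far-diagonal split: away from the diagonal one controls the $\Bessel[s]$-integrand by the $\Bessel[1]$-term, and near the diagonal one uses \cref{lem:DeltaBound} to write $D_\gamma u(y)-D_\gamma u(x)$ as a telescoping sum whose pieces are bounded pointwise by $\abs{\Op{\gamma}{s}u}$ and $\abs{\Op{\gamma}{s}\gamma}\cdot\abs{\triangle u}/\abs{\triangle\gamma}$, producing $B^1$ and $B^2$ respectively.

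Two remarks on your version. First, a small gap: the G{\aa}rding constant $K(\gamma)$ may well exceed the coefficient~$1$ in front of the $\Lebesgue[2](\gamma)$-term of $G_\gamma$, so ``together with the $\Lebesgue[2](\gamma)$-summand already present'' is not quite enough as written; you need to spend only a fraction $1/K(\gamma)$ of $B^1_\gamma$ in the G{\aa}rding estimate and use $B^1_\gamma\geq 0$ for the rest. Second, and more consequential, your final sentence about tracking continuous $\gamma$-dependence is where the approaches really diverge in value: the compactness-based G{\aa}rding constant is nonconstructive, whereas the paper's direct estimate depends only on $\ArcLength(\gamma)$ and $\Energy(\gamma)$. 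That explicit dependence is not needed for \cref{thm:GIsStrong} itself, but it is exactly what drives the uniform norm equivalences on $G$-bounded sets (\cref{lem:NormEquivOnBoundedSets}) and hence the completeness theorems later on. So your plan suffices for the theorem at hand but would leave a hole downstream.
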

\begin{proof}
    To this end we have to show that the induced norm 
    $
        \norm{u}_{\smash{G_\gamma}} \ceq \sqrt{ G_\gamma(u,u)}
    $
    for
    $
        u \in T_\gamma \Mfld
    $
    generates the right topology on $T_\gamma \Mfld = \HSpace$.
    By \cref{lem:HsgammaHs} we already know that $\norm{\cdot}_{\Bessel[s](\gamma)}$ generates this topology.
    So it suffices to show that the norms $\norm{\cdot}_{\Bessel[s](\gamma)}$ and $\norm{\cdot}_{\smash{G_\gamma}}$ are equivalent for fixed $\gamma$. 
    As this is very technical and because we will later need also some detailed information on the proportionality factors in this norm estimates, we delegate this task to \cref{lem:NormEquiv2} and \cref{lem:NormEquiv1} below.

    The norm bounds imply that $G_\gamma$ is a bounded and coercive symmetric bilinear form on $\HSpace$.
    So the Lax--Milgram theorem shows that $\Riesz_\gamma \colon \HSpace \to \pars{\HSpace}'$ is indeed an isomorphism of Hilbert spaces.
\end{proof}

We spend the remainder of this section on showing \cref{lem:NormEquiv2} and \cref{lem:NormEquiv1}.
To this end we heavily rely on the techniques established in \cite[Theorem~2.5]{blattreiter1}.
The following is a preparation for \cref{lem:NormEquiv2}.

\begin{lemma}\label{lem:DeltaBound}
    Let $s \in \intervaloo{3/2,2}$ and $\gamma \in \Mfld$. Put $\delta(\gamma) \ceq \pars[\big]{2 \, \MorreyConst \SmashedSqrt{\Energy(\gamma)} }^{-1/\alpha}$.
    Then for all $(x,y) \in \Domain \times \Domain$ satisfying $\dist[\gamma][y][x] < \delta(\gamma)$
    we have
    \begin{align}
        \begin{split}
            \abs{\inner{ D_\gamma \gamma (x), \gamma(y) -\gamma(x)}}
            &\geq 
            \fdfrac{1}{2} \abs{\gamma(y) -\gamma(x)}
            \quad \text{and}
            \\
            \abs{\inner{ D_\gamma \gamma (y), \gamma(y) -\gamma(x)}}
            &\geq 
            \fdfrac{1}{2} \abs{\gamma(y) -\gamma(x)}
            .
        \end{split}
        \label{eq:TangetLowerBound}
        \end{align}
\end{lemma}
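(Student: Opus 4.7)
The plan is to reduce both estimates to a direct computation along the shorter arc connecting $\gamma(x)$ to $\gamma(y)$, exploiting the geometric Morrey inequality (\cref{thm:MorreyInequalityGeometricGeometric}) to show that the unit tangent barely rotates on such a short piece. Since $D_\gamma \gamma$ is a unit tangent vector field, the chord $\gamma(y)-\gamma(x)$ will then be almost aligned with $D_\gamma \gamma(x)$ (and with $D_\gamma \gamma(y)$).

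More precisely, I would first reparameterize the shorter arc of $\gamma$ from $\gamma(x)$ to $\gamma(y)$ by arc length, obtaining $\tilde\gamma \co \intervalcc{0,L} \to \AmbSpace$ with $L \ceq \dist[\gamma](y,x)$. Since $\distC$ is the shorter-arc distance, $L \leq \ArcLength(\gamma)/2$, so the piece of curve traced by $\tilde\gamma$ on $\intervalcc{0,t}$ is itself the shorter arc between its endpoints; in particular $\dist[\gamma](\tilde\gamma(t), \tilde\gamma(0)) = t$. Writing $\gamma(y)-\gamma(x) = \int_0^L \tilde\gamma'(t)\dd t$ and observing $\tilde\gamma'(0) = \pm D_\gamma\gamma(x)$, I obtain
\begin{equation*}
    \abs{\inner{D_\gamma\gamma(x), \gamma(y)-\gamma(x)}}
    =
    \abs*{\int_0^L \inner{\tilde\gamma'(0), \tilde\gamma'(t)}\dd t}.
\end{equation*}

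The next step is to bound the integrand from below. Since $\tilde\gamma'(t)$ and $\tilde\gamma'(0)$ are unit vectors, $\inner{\tilde\gamma'(0), \tilde\gamma'(t)} = 1 + \inner{\tilde\gamma'(0), \tilde\gamma'(t)-\tilde\gamma'(0)} \geq 1 - \abs{\tilde\gamma'(t)-\tilde\gamma'(0)}$. The geometric Morrey inequality applied to $\gamma$ then yields
\begin{equation*}
    \abs{\tilde\gamma'(t)-\tilde\gamma'(0)}
    =
    \abs{D_\gamma\gamma(\tilde\gamma(t))-D_\gamma\gamma(\tilde\gamma(0))}
    \leq
    \MorreyConst \sqrt{\Energy(\gamma)} \, t^\alpha.
\end{equation*}
For $L < \delta(\gamma)$ the right-hand side stays strictly below $1/2$ on $\intervalcc{0,L}$, so the integrand is positive (in particular the inner product is positive, and the outer absolute value is harmless). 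Integrating gives
\begin{equation*}
    \abs{\inner{D_\gamma\gamma(x), \gamma(y)-\gamma(x)}}
    \geq
    L - \tfrac{\MorreyConst \sqrt{\Energy(\gamma)}}{\alpha+1}\, L^{\alpha+1}
    \geq
    L\bigl(1 - \tfrac{1}{2(\alpha+1)}\bigr)
    \geq
    \tfrac{L}{2},
\end{equation*}
where in the second step I used $\MorreyConst\sqrt{\Energy(\gamma)} L^\alpha < 1/2$ from the definition of $\delta(\gamma)$. Finally $L = \dist[\gamma](y,x) \geq \abs{\gamma(y)-\gamma(x)}$ yields the first inequality.

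The second inequality follows by the same argument, reparameterizing the shorter arc from $\gamma(y)$ to $\gamma(x)$ instead and noting the sign change in $\gamma(y)-\gamma(x)$ vanishes under the absolute value. I do not anticipate a real obstacle: the only mildly delicate point is verifying that on the shorter arc we indeed have $\dist[\gamma](\tilde\gamma(t),\tilde\gamma(0)) = t$ (so the Morrey inequality applies with the intrinsic arc length), which is a direct consequence of $L \leq \ArcLength(\gamma)/2$.
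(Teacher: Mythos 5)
Your proof is correct and takes essentially the same route as the paper's: both express the chord $\gamma(y)-\gamma(x)$ as an integral of unit tangent vectors along the shorter arc and invoke the geometric Morrey inequality (\cref{thm:MorreyInequalityGeometricGeometric}) together with $\abs{\gamma(y)-\gamma(x)}\leq\dist[\gamma](y,x)$. The only cosmetic difference is that you integrate the pointwise Morrey bound (yielding the slightly sharper but unneeded factor $1-\tfrac{1}{2(\alpha+1)}$), whereas the paper works with the normalized chord, applies Jensen's inequality to the averaged tangent, and uses the uniform bound $\MorreyConst\sqrt{\Energy(\gamma)}\,\dist[\gamma](y,x)^\alpha<\tfrac12$.
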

\begin{proof}
    Let $x$, $y \in \Circle$ be two distinct points.
    We denote by $I \subset \Circle$ the shortest closed arc with respect to $\omega_\gamma$.
    By swapping $x$ and $y$ if necessary, we may assume that $x$ is the left end point and that $y$ is the right end point of $I$ with respect to the orientation chosen on $\Circle$.
    Fix an $\eta \in I$. 
    Now we exploit $\abs{\gamma(y)-\gamma(x)} \leq \dist[\gamma](y,x)$ (the chord length is less or equal to the arc length),
    $\abs{D_\gamma \gamma(\eta)} = 1$ for any $\eta\in I$, and the inverse triangle inequality:
     \[
        \abs[\Big]{\inner[\Big]{
                D_\gamma \gamma(\eta), 
                \fdfrac{\gamma(y)-\gamma(x)}{\abs{\gamma(y)-\gamma(x)}}
        }}
        \geq
            \abs[\Big]{\inner[\Big]{
                D_\gamma \gamma(\eta), 
                \fdfrac{\gamma(y)-\gamma(x)}{\dist[\gamma](y,x)}
        }}
        \geq
            1
            -
            \abs[\Big]{\inner[\Big]{
                D_\gamma \gamma(\eta),
                D_\gamma\gamma(\eta)- \fdfrac{\gamma(y)-\gamma(x)}{\dist[\gamma](y,x)}
            }}
        .
    \]
    By construction of $I$, we have $\gamma(y) - \gamma(x) = \int_I D_\gamma \gamma(z) \dd \omega_\gamma(z)$ and $\dist[\gamma](y,x) = \int_I \dd \omega_\gamma(z)$.
    Now the Cauchy-Schwarz inequality, the Jensen inequality, and the geometric Morrey inequality \cref{thm:MorreyInequalityGeometricGeometric} imply:
    \begin{gather*}
        \abs[\Big]{\inner[\Big]{
			D_\gamma \gamma(\eta),
            D_\gamma\gamma(\eta)- \fdfrac{\gamma(y)-\gamma(x)}{\dist[\gamma](y,x)}
		}}
        \leq
        \abs[\Big]{
            D_\gamma\gamma(\eta)- \fdfrac{\gamma(y)-\gamma(x)}{\dist[\gamma](y,x)}
		}
        \\
        =
        \abs[\Big]{
            \fint_I \pars[\big]{D_\gamma\gamma(\eta) - D_\gamma \gamma(z)} \dd \omega_\gamma(z)
		}
        \leq 
        \fint_I \abs[\big]{D_\gamma\gamma(\eta)- D_\gamma \gamma(z)} \dd \omega_\gamma(z)
        \\
        \leq
        \fint_I \MorreyConst \sqrt{\Energy(\gamma)} \dist[\gamma](y,x)^{\alpha} \dd \omega_\gamma(z)
        \leq
        \MorreyConst \sqrt{\Energy(\gamma)}\,\delta(\gamma)^{\alpha}
        \leq \fdfrac{1}{2}
        .
    \end{gather*}
    Inserting this into the above leads to
    \[
        \abs[\Big]{\inner[\Big]{
                D_\gamma \gamma(\eta), 
                \fdfrac{\gamma(y)-\gamma(x)}{\abs{\gamma(y)-\gamma(x)}}
        }}
        \geq
        1 - \fdfrac{1}{2}
        = \fdfrac{1}{2}.
    \]
\end{proof}

\begin{lemma}\label{lem:NormEquiv2}
    There is a constant $0 < C < \infty$ depending on $s\in \intervaloo{3/2,2}$ such that the following holds true
    for all $\gamma \in \Mfld$ and $u \in \HSpace$:
    \begin{equation}
        \seminorm{u}_{\Bessel[s](\gamma)}^2
        \leq 
        32 \, B_\gamma^1(u,u) + 128 \, B_\gamma^2(u,u)
        +
        C\,\ArcLength(\gamma) \, \Energy(\gamma)^{(\alpha + 1)/\alpha} \seminorm{u}_{\Bessel[1](\gamma)}^2
        .
        \label{eq:HsgammaNormControlledByG0}
    \end{equation}
    Moreover, we have
    \bignegskip%
    \begin{equation}
        \norm{u}_{\Bessel[s](\gamma)}
        \leq 
        \sqrt{32 + C \,  \ArcLength(\gamma) \,\Energy(\gamma)^{(\alpha+1)/\alpha} }
        \norm{u}_{G_\gamma}
        .
        \label{eq:HsgammaNormControlledByG}
    \end{equation}
\end{lemma}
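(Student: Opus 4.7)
Put $\delta(\gamma) \ceq \pars[\big]{2\,\MorreyConst\sqrt{\Energy(\gamma)}}^{-1/\alpha}$, the threshold from \cref{lem:DeltaBound}, and split the integral defining $\seminorm{u}_{\Bessel[s](\gamma)}^2$ according to whether $\dist[\gamma][y][x]$ lies below or above $\delta(\gamma)$. On the far region I would use the crude bound $\abs{D_\gamma u(y) - D_\gamma u(x)}^2 \leq 2\pars[\big]{\abs{D_\gamma u(x)}^2 + \abs{D_\gamma u(y)}^2}$ together with $\dist[\gamma][y][x] \geq \delta(\gamma)$ to pull out the prefactor $\delta(\gamma)^{-(2s-1)}$; a short computation using $(2s-1)/(2\alpha) = (\alpha+1)/\alpha$ identifies this with a constant multiple of $\Energy(\gamma)^{(\alpha+1)/\alpha}$, producing the lower-order summand on the right-hand side of \cref{eq:HsgammaNormControlledByG0}.

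The bulk of the argument is on the close region. \cref{lem:DeltaBound} guarantees that the tangential projections
\[
    a \ceq \inner{D_\gamma\gamma(y), \gamma(y)-\gamma(x)}
    \qand
    b \ceq \inner{D_\gamma\gamma(x), \gamma(y)-\gamma(x)}
\]
satisfy $\abs{a}, \abs{b} \geq \tfrac{1}{2}\abs{\gamma(y)-\gamma(x)}$ with the same sign, hence $\abs{a+b} \geq \abs{\gamma(y)-\gamma(x)}$. Writing the defining identity of $\Op{\gamma}{s}u$ at both $(x,y)$ and $(y,x)$ and using the factorization $\cD_\gamma u(z)\,v = D_\gamma u(z)\inner{D_\gamma\gamma(z),v}$ produces the key identity
\[
    a\,D_\gamma u(y) - b\,D_\gamma u(x)
    =
    \abs{\gamma(y)-\gamma(x)}^s\bigl[\Op{\gamma}{s}u(x,y) + \Op{\gamma}{s}u(y,x)\bigr].
\]
Rewriting the left-hand side as $\tfrac12(a+b)(D_\gamma u(y) - D_\gamma u(x)) + \tfrac12(a-b)(D_\gamma u(x) + D_\gamma u(y))$, solving for $D_\gamma u(y)-D_\gamma u(x)$ and using $\abs{a+b} \geq \abs{\gamma(y)-\gamma(x)}$ yields, after squaring, a pointwise bound dominated by $8\abs{\gamma(y)-\gamma(x)}^{2(s-1)}\abs{\Op{\gamma}{s}u(x,y)+\Op{\gamma}{s}u(y,x)}^2$ plus $4\abs{a-b}^2\abs{\gamma(y)-\gamma(x)}^{-2}\bigl(\abs{D_\gamma u(x)}^2 + \abs{D_\gamma u(y)}^2\bigr)$. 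After dividing by $\dist[\gamma][y][x]^{2(s-1)}$ and integrating against $\nu_\gamma$, the first summand converts into a multiple of $B_\gamma^1(u,u)$ via $\dd\nu_\gamma = (\abs{\gamma(y)-\gamma(x)}/\dist[\gamma][y][x])\,\dd\mu_\gamma$ and $\abs{\gamma(y)-\gamma(x)} \leq \dist[\gamma][y][x]$.

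For the $\abs{a-b}$-term I apply the key identity to $u = \gamma$ and use the expansion $\abs{D_\gamma\gamma(y)\,a - D_\gamma\gamma(x)\,b}^2 = (a-b)^2 + a\,b\,\abs{D_\gamma\gamma(y) - D_\gamma\gamma(x)}^2$ (coming from $\inner{D_\gamma\gamma(x),D_\gamma\gamma(y)} = 1 - \tfrac12\abs{D_\gamma\gamma(y)-D_\gamma\gamma(x)}^2$) together with $a\,b \geq 0$ to obtain $(a-b)^2 \leq 2\abs{\gamma(y)-\gamma(x)}^{2s}\bigl(\abs{\Op{\gamma}{s}\gamma(x,y)}^2 + \abs{\Op{\gamma}{s}\gamma(y,x)}^2\bigr)$. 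The resulting integrand has the shape of $B_\gamma^3$; to re-route it into $B_\gamma^2(u,u)$ I invoke \cref{lem:DeltaBound} a second time to derive on the close region the pointwise estimate $\abs{D_\gamma u(x)} \leq 2\abs{u(y)-u(x)}/\abs{\gamma(y)-\gamma(x)} + 2\abs{\gamma(y)-\gamma(x)}^{s-1}\abs{\Op{\gamma}{s}u(x,y)}$ (and its analogue for $y$), combined with the orthogonality identity $\abs{\Op{\gamma}{s}\gamma(x,y)}^2\,\abs{\gamma(y)-\gamma(x)}^{2(s-1)} \leq 1$, which holds because $\abs{\gamma(y)-\gamma(x)}^s\,\Op{\gamma}{s}\gamma(x,y)$ is the component of $\gamma(y)-\gamma(x)$ perpendicular to $D_\gamma\gamma(x)$. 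Residual $\Op{\gamma}{s}u$-contributions feed back into $B_\gamma^1(u,u)$ via the very same orthogonality bound.

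Finally, the displayed norm estimate \cref{eq:HsgammaNormControlledByG} is obtained by adding $\inner{u,u}_{\Bessel[1](\gamma)} + \inner{u,u}_{\Lebesgue[2](\gamma)}$ to both sides of \cref{eq:HsgammaNormControlledByG0} and dominating each of the resulting four summands by the appropriate multiple of $G_\gamma(u,u)$, using that the coefficients of $B_\gamma^1$ and $B_\gamma^2$ in \cref{eq:G} are $2$ and $2s+1 \geq 4$, so that $32 \leq 2\,(32+C')$ and $128 \leq 4\,(32+C')$ hold for every $C' \geq 0$. The main obstacle is keeping the constants sharp enough to land on the stated $32$ and $128$; this hinges on systematically exploiting the $x \leftrightarrow y$ symmetry of $\mu_\gamma$ and on choosing carefully when to split $\abs{\Op{\gamma}{s}u(x,y)+\Op{\gamma}{s}u(y,x)}^2 \leq 2\bigl(\abs{\Op{\gamma}{s}u(x,y)}^2 + \abs{\Op{\gamma}{s}u(y,x)}^2\bigr)$ versus keeping the symmetrized form and applying $(p+q)^2 \leq 2(p^2+q^2)$ only once.
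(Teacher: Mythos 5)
Your overall strategy matches the paper's: split $\seminorm{u}_{\Bessel[s](\gamma)}^2$ at the threshold $\delta(\gamma)$ from \cref{lem:DeltaBound}, bound the far region via $\dist[\gamma][y][x]\geq\delta(\gamma)$ (the exponent bookkeeping $(2s-1)/\alpha = 2(\alpha+1)/\alpha$ is correct, and this reproduces the paper's low-order term), and control the near region using the pointwise tangential lower bound of \cref{lem:DeltaBound}. Your "key identity" $a\,D_\gamma u(y)-b\,D_\gamma u(x)=\abs{\Delta\gamma}^s\bigl(\Op{\gamma}{s}u(x,y)+\Op{\gamma}{s}u(y,x)\bigr)$ is correct, as is the observation that $a$ and $b$ share a sign (it follows from the proof, not just the statement, of \cref{lem:DeltaBound}), the expansion $\abs{a\,\tau(y)-b\,\tau(x)}^2=(a-b)^2+ab\abs{\tau(y)-\tau(x)}^2$, and the orthogonality bound $\abs{\Delta\gamma}^{2(s-1)}\abs{\Op{\gamma}{s}\gamma}^2\leq 1$.

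The gap is in the constants, and it is not merely cosmetic. By passing from $(a-b)\bigl(D_\gamma u(x)+D_\gamma u(y)\bigr)$ to $\abs{a-b}\bigl(\abs{D_\gamma u(x)}+\abs{D_\gamma u(y)}\bigr)$ you destroy a cancellation, which forces you to re-convert $\abs{D_\gamma u(x)}^2+\abs{D_\gamma u(y)}^2$ into the $B^2$-shape $\abs{\Delta u}^2/\abs{\Delta\gamma}^2$ via the pointwise estimate $\abs{D_\gamma u(x)}\leq 2\abs{\Delta u}/\abs{\Delta\gamma}+2\abs{\Delta\gamma}^{s-1}\abs{\Op{\gamma}{s}u(x,y)}$. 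Carried through, this conversion produces \emph{two} sources of $\Op{\gamma}{s}u$-contributions (one from the key identity, one from the pointwise estimate), and the resulting near-region bound is on the order of $288\,B_\gamma^1(u,u)+256\,B_\gamma^2(u,u)$, not $32\,B_\gamma^1(u,u)+128\,B_\gamma^2(u,u)$. You flag the constant issue yourself, but the ``symmetry plus careful splitting'' you propose does not recover the factor of roughly $9$ that is lost here, so the lemma as stated (with $32$ and $128$) is not proved.

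The good news is that the fix is algebraic rather than analytic: you should not take absolute values yet. Substituting the exact expressions $D_\gamma u(x)=(\Delta u-\abs{\Delta\gamma}^s\Op{\gamma}{s}u(x,y))/b$ and $D_\gamma u(y)=(\Delta u+\abs{\Delta\gamma}^s\Op{\gamma}{s}u(y,x))/a$ into $\tfrac{a-b}{a+b}\bigl(D_\gamma u(x)+D_\gamma u(y)\bigr)$ and combining with $\tfrac{2\abs{\Delta\gamma}^s}{a+b}\bigl(\Op{\gamma}{s}u(x,y)+\Op{\gamma}{s}u(y,x)\bigr)$, the $\Op{\gamma}{s}u$ contributions telescope and one obtains exactly
\begin{equation*}
    D_\gamma u(y)-D_\gamma u(x)
    =
    \abs{\Delta\gamma}^s\,\frac{\Op{\gamma}{s}u(x,y)}{b}
    +\Bigl(\frac{\Delta u}{a}-\frac{\Delta u}{b}\Bigr)
    +\abs{\Delta\gamma}^s\,\frac{\Op{\gamma}{s}u(y,x)}{a}
    ,
\end{equation*}
which is the paper's three-term decomposition (with $a=\inner{\tau(y),\Delta\gamma}$, $b=\inner{\tau(x),\Delta\gamma}$). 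From there the pointwise estimate is unnecessary, the middle term feeds directly into $B_\gamma^2$, the outer terms feed into $B_\gamma^1$, and applying $(p_1+p_2+p_3+p_4)^2\leq 4\sum p_i^2$ once plus the $x\leftrightarrow y$ symmetry of $\mu_\gamma$ yields precisely $32\,B_\gamma^1(u,u)+128\,B_\gamma^2(u,u)$. So your starting identity is a perfectly fine point of departure, but you must carry out the algebra before estimating, not after.
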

\begin{proof}
    Let $\delta \ceq \delta(\gamma)$ as in \cref{lem:DeltaBound} and
    define 
    \begin{align*}
        U \ceq \myset[\big]{ (x,y) \in \Domain \times \Domain }{ \dist[\gamma][y][x]<\delta }
        \qand 
        U^c \ceq \myset[\big]{ (x,y) \in \Domain \times \Domain }{ \dist[\gamma][y][x] \geq \delta }
    \end{align*}
    and split $\seminorm{u}_{\Bessel[s](\gamma)}^2$ as follows:
    \begin{align*}
        \seminorm{u}_{\Bessel[s](\gamma)}^2
        =
        \iint_{U}
        \abs{ \diffop{\gamma}{s-1} D_\gamma u }^2
        \dd \nu_\gamma
        +
        \iint_{U^c}
            \abs{ \diffop{\gamma}{s-1} D_\gamma u }^2
        \dd \nu_\gamma
        .
    \end{align*}
    Using $(a - b)^2 \leq 2 \, a^2 + 2 \, b^2$ and the definition of $U^c$, we can bound the integral over $U^c$ by
    \begin{align}
    \begin{split}
        \iint_{U^c}
            \abs{ \diffop{\gamma}{s-1} D_\gamma u }^2
        \dd \nu_\gamma
        &=
        \iint_{U^c} 
            \fdfrac{
                \abs{D_\gamma u(y) - D_\gamma u(x)}^2 
            }{
                \dist[\gamma](y,x)^{2(s-1)}
            }
        \fdfrac{
            \dLineElC[y] \dLineElC[x]
        }{
                \dist[\gamma](y,x)
            }
        \\
        &\leq
        \frac{2}{\delta^{2s-1}}
        \iint_{U^c} 
            \pars[\big]{
                \abs{D_\gamma u(x)}^2 + \abs{D_\gamma u(y)}^2 
            }
        \dLineElC[y] \dLineElC[x]
        \\
        &
        \leq
        \frac{4 \, \ArcLength(\gamma)}{\delta^{2s-1}} 
        \norm{D_\gamma u}_{\Lebesgue[2](\gamma)}^2
        = 
        C \, \ArcLength(\gamma) \, \Energy(\gamma)^{(\alpha + 1)/\alpha} 
        \seminorm{u}_{\Bessel[1](\gamma)}^2
        .
    \end{split}
    \label{eq:NormEquiv2a}
    \end{align}
    Next we deal with the integral over $U$.
    Let $(x,y) \in U$ and abbreviate $\tau \ceq D_\gamma \gamma$,
    $\Delta \gamma \ceq \gamma(y) - \gamma(x)$,
    and
    $\Delta u \ceq u(y) - u(x)$.
    We have
    \begin{align}
        \begin{split}
        \MoveEqLeft
        \abs{D_\gamma u(y) {-} D_\gamma u(x)}
        \leq
        \abs[\Big]{D_\gamma u(y) {-} \fdfrac{\Delta u}{\inner{\tau(y),\Delta \gamma}}}
        +
        \abs[\Big]{\fdfrac{\Delta u}{\inner{\tau(y),\Delta \gamma}} {-} \fdfrac{\Delta u}{\inner{\tau(x),\Delta \gamma}} }
        +
        \abs[\Big]{\fdfrac{\Delta u}{\inner{\tau(x),\Delta \gamma}} {-} D_\gamma u(x)}
        .
        \end{split}
        \label{eq:hPrimeDifference}
    \end{align}    
   Using
    $D_\gamma u(x) \inner{\tau(x), \Delta \gamma} = \cD_\gamma u (x) \, \Delta \gamma$, we get for the last summand:
    \begin{align*}
        \abs[\Big]{\fdfrac{\Delta u}{\inner{\tau(x),\Delta \gamma}} - D_\gamma u(x)}
        =
        \abs[\Big]{
            \fdfrac{\Delta u - \smash{\cD_\gamma u(x)} \, \Delta \gamma}{\inner{\tau(x),\Delta \gamma}}
        }
        =
        \fdfrac{
            \abs*{u(y) - u(x) - \smash{\cD_\gamma u(x)} \, (\gamma(y) - \gamma(x))}
        }{
            \abs{\inner{ D_\gamma \gamma(x), \gamma(y) - \gamma(x)}}
        }
        .
    \end{align*}  
    Now  \cref{eq:TangetLowerBound} implies 
    \[
        \abs[\Big]{\fdfrac{\Delta u}{\inner{\tau(x),\Delta \gamma}} - D_\gamma u(x)}
        \leq
        2 \abs{\gamma(y)- \gamma(x)}^{s-1} \abs{\Op{\gamma}{s} u(y,x)}
        .
    \]
    Likewise, we obtain for the first summand in \cref{eq:hPrimeDifference}:
    \[
        \abs[\Big]{D_\gamma u(y) - \fdfrac{\Delta u}{\inner{\tau(y),\Delta \gamma}}}
        \leq
        2 \abs{\gamma(y)- \gamma(x)}^{s-1} \abs{\Op{\gamma}{s} u(x,y)}
        .
    \]
    For the center term in \cref{eq:hPrimeDifference}
    we use \cref{eq:TangetLowerBound} once more:
    \begin{align*}
        \abs[\Big]{\fdfrac{\Delta u}{\inner{\tau(y),\Delta \gamma}} - \fdfrac{\Delta u}{\inner{\tau(x),\Delta \gamma}} }
        =
        \fdfrac{
            \abs{ \inner{\tau(x),\Delta \gamma} {-} \inner{\tau(y),\Delta \gamma} }
        }{
            \abs{\inner{\tau(y),\Delta \gamma}} \abs{\inner{\tau(x),\Delta \gamma}}
        }
        \abs{\Delta u}
        \leq 
        4 
        \fdfrac{
            \abs{ \inner{\tau(x),\Delta \gamma} {-} \inner{\tau(y),\Delta \gamma} }
        }{
            \abs{\Delta \gamma}
        }
        \fdfrac{\abs{\Delta u}}{\abs{\Delta \gamma}}
        .
    \end{align*}  
    Because of $\cD_\gamma \gamma =  \tau(x) \, \tau(x)\trans$, we may proceed with the numerator as follows:
    \begin{align*}
        \MoveEqLeft
        \abs{ \inner{\tau(x),\Delta \gamma} - \inner{\tau(y),\Delta \gamma} }
        =
        \abs[\big]{ 
            \abs{ \cD_\gamma \gamma(x) \, \Delta \gamma} 
            - 
            \abs{ \cD_\gamma \gamma(y) \, \Delta \gamma}
        }
        \\
        &\leq 
        \abs[\big]{ 
            \cD_\gamma \gamma(x) \, \Delta \gamma
            - 
            \cD_\gamma \gamma(y) \, \Delta \gamma
        }
        =
        \abs[\big]{ 
            \pars{\Delta \gamma - \cD_\gamma \gamma(y) \, \Delta \gamma}
            - 
            \pars{\Delta \gamma - \cD_\gamma \gamma(x) \, \Delta \gamma}
        }
        \\
        &\leq
        \abs{\gamma(x) - \gamma(y) - \cD_\gamma \gamma(y) \, (\gamma(x) - \gamma(y))}
        +
        \abs{\gamma(y) - \gamma(x) - \cD_\gamma \gamma(x) \, (\gamma(y) - \gamma(x))}
        \\
        &=
        \abs{\gamma(y)- \gamma(x)}^{s}
        \abs{\Op{\gamma}{s} \gamma(y,x)} 
        + 
        \abs{\gamma(y)- \gamma(x)}^{s}
        \abs{\Op{\gamma}{s} \gamma(x,y)}
        .
    \end{align*}
    Now we insert all this into \cref{eq:hPrimeDifference}
    and divide by $\dist[\gamma][y][x]^{s-1} \geq \abs{\gamma(y) - \gamma(x)}^{s-1}$:
    \begin{align*}
        \abs{
            \diffop{\gamma}{s-1}  D_\gamma u(x,y)
        }
        \leq
        2 \abs{ \Op{\gamma}{s} u(y,x)}
        +
        4 \abs{ \Op{\gamma}{s} \gamma(y,x)} \fdfrac{\abs{\Delta u}}{\abs{\Delta \gamma}} 
        +
        4 \abs{ \Op{\gamma}{s} \gamma(x,y)} \fdfrac{\abs{\Delta u}}{\abs{\Delta \gamma}} 
        +
        2 \abs{ \Op{\gamma}{s} u(x,y)}
        .
    \end{align*} 
    Squaring and the inequality $\pars{a+b+c+d}^2 \leq 4 \pars{a^2+b^2+c^2+d^2}$ lead us to
\begin{align*}
        \abs{
            \diffop{\gamma}{s-1}  D_\gamma u(x,y)
        }^2
        &\leq
        16  \abs{ \Op{\gamma}{s} u(y,x)}^2
        +
        64 \abs{ \Op{\gamma}{s} \gamma(y,x)}^2 \fdfrac{\abs{\Delta u}^2}{\abs{\Delta \gamma}^2}
        \\
        &\qquad
        +
        64 \abs{ \Op{\gamma}{s} \gamma(x,y)}^2 \fdfrac{\abs{\Delta u}^2}{\abs{\Delta \gamma}^2}
        +
        16 \abs{ \Op{\gamma}{s} u(x,y)}^2
        .
\end{align*}
    Now we integrate against $\dd \nu_\gamma \leq \dd \mu_\gamma$
    and exploit that $U$ is symmetric \wrt $(x,y) \mapsto (y,x)$:
    \begin{align*}
        &\iint_{U}
            \abs{
                \diffop{\gamma}{s-1}  D_\gamma u
            }^2
        \dd \nu_\gamma
        \leq 
        \iint_{U}
            \abs{
                \diffop{\gamma}{s-1}  D_\gamma u
            }^2
        \dd \mu_\gamma
        \\
        &\leq 
        32
        \iint_{U} \abs{ \Op{\gamma}{s} u}^2 \dd \mu_\gamma
        +
        128
        \iint_{U} \abs{ \Op{\gamma}{s} \gamma}^2 \abs[\Big]{ \fdfrac{\Delta u}{\abs{\Delta \gamma}}}^2\dd \mu_\gamma
        \leq 
        32 \, B_\gamma^1(u,u) + 128 \, B_\gamma^2(u,u)
        .
    \end{align*}
    Combined with \cref{eq:NormEquiv2a}, this shows \cref{eq:HsgammaNormControlledByG0}. 
    Recalling the definition \cref{eq:DefintionHsNorm} of $\norm{u}_{\Bessel[s](\gamma)}$, we have
    \begin{equation*}
        \norm{u}_{\Bessel[s](\gamma)}^2
        \leq
        32 \, B_\gamma^1(u,u) 
        + 
        \fdfrac{128}{\pars{2s+1}} \pars{2s+1}\, B_\gamma^2(u,u)
        +
        \pars[\big]{
            1 
            + 
            C \, \ArcLength(\gamma) \, \Energy(\gamma)^{(\alpha + 1)/\alpha}
        } 
        \seminorm{u}_{\Bessel[1](\gamma)}^2
        +
        \norm{u}_{\Lebesgue[2](\gamma)}^2
        .
    \end{equation*}
    Recalling also the definition \cref{eq:G} of $\norm{u}_{G_\gamma}^2 = G_\gamma(u,u)$,
    \cref{eq:HsgammaNormControlledByG} follows by $\pars{2s+1} \geq 4$.
\end{proof}

\begin{lemma}\label{lem:NormEquiv1}
    Let $\gamma\in \Mfld$ be an embedding.
    Then there exists an $F_{G}(\gamma)>0$ such that 
    \begin{align*}
        \norm{u}_{G_\gamma}\leq F_{G}(\gamma) \norm{u}_{\Bessel[s](\gamma)}
        \quad
        \text{holds true for all $u \in \HSpace$.}
    \end{align*}
\end{lemma}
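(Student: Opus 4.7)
\medskip

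The plan is to bound each of the five summands of $G_\gamma(u,u) = 2 B_\gamma^1(u,u) + (2s+1) B_\gamma^2(u,u) + B_\gamma^3(u,u) + \seminorm{u}_{\Bessel[1](\gamma)}^2 + \norm{u}_{\Lebesgue[2](\gamma)}^2$ individually by a constant multiple of $\norm{u}_{\Bessel[s](\gamma)}^2$, then add the constants. The last two summands are subordinate to $\norm{u}_{\Bessel[s](\gamma)}^2$ by the very definition \cref{eq:DefintionHsNorm}, so they contribute trivially. All remaining work is dedicated to $B_\gamma^1$, $B_\gamma^2$, $B_\gamma^3$, whose integrands decouple into a factor involving $u$ and a factor which, after integration, reduces to the geometric quantity $\Energy(\gamma)$.

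For $B_\gamma^1(u,u) = \norm{\Op{\gamma}{s} u}_{\Lebesgue[2](\mu_\gamma)}^2$ I would first invoke the measure-comparison \cref{cor:MuGamma} to estimate $\norm{\Op{\gamma}{s} u}_{\Lebesgue[2](\mu_\gamma)}$ against $\norm{\Op{\gamma}{s} u}_{\Lebesgue[2](\nu_\gamma)}$ up to constants depending only on $\norm{\Speed[\gamma]}_{\Lebesgue[\infty]}$, $\norm{\InvSpeed[\gamma]}_{\Lebesgue[\infty]}$, and $\BiLip(\gamma)$; then apply \cref{lem:RsEquiBounded} to bound the latter by $C_s(\gamma) \seminorm{u}_{\Bessel[s](\gamma)}$.

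For $B_\gamma^2(u,u)$ the idea is to use \cref{cor:MorreyUniformBiLipschitz}, which bounds the pointwise quotient $\abs{u(y)-u(x)}/\abs{\gamma(y)-\gamma(x)}$ in $\Lebesgue[\infty]$ by a constant depending on $\ArcLength(\gamma)$ and $\Energy(\gamma)$ times $\seminorm{u}_{\Bessel[s](\gamma)}$. Pulling this $\Lebesgue[\infty]$-estimate out of the integral leaves the factor $\int_\Domain\!\int_\Domain \abs{\Op{\gamma}{s}\gamma}^2 \dd \mu_\gamma = \Energy(\gamma)$. For $B_\gamma^3(u,u)$ I would proceed analogously but apply instead the geometric Morrey inequality \cref{lem:MorreyInequalityGeometric} to bound $\norm{D_\gamma u}_{\Lebesgue[\infty]} \leq C_{\Morrey,s-1} \, \ArcLength(\gamma)^{\alpha-1} \seminorm{u}_{\Bessel[s](\gamma)}$; pulling this out of the integral likewise collapses the remaining double integral to $\Energy(\gamma)$ (up to a factor $2$ from symmetrization over the two summands in the integrand).

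There is no genuine obstacle here: every ingredient has already been assembled in \cref{sec:Preliminaries,sec:OperatorRs}, and the only labor is bookkeeping the resulting constant
\[
    F_{G}(\gamma)^2
    \ceq
    1
    +
    2 \, \pars[\big]{C_\mu(\gamma)/c_\nu(\gamma)}^2 C_s(\gamma)^2
    +
    (2s+1) \, C_{\distor}^2 \, C_{\Morrey,s-1}^2 \, \ArcLength(\gamma)^{2(\alpha + 1/\alpha)} \, \Energy(\gamma)^{1 + (\alpha+1)/\alpha^2}
    +
    2 \, C_{\Morrey,s-1}^2 \, \ArcLength(\gamma)^{2(\alpha-1)} \, \Energy(\gamma),
\]
which is manifestly a continuous (hence positive and finite) function of $\gamma \in \Mfld$ because all the quantities $\ArcLength(\gamma)$, $\Energy(\gamma)$, $\BiLip(\gamma)$, $\norm{\Speed[\gamma]}_{\Lebesgue[\infty]}$, $\norm{\InvSpeed[\gamma]}_{\Lebesgue[\infty]}$ depend continuously on $\gamma$ in $\HSpace$. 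The slightly awkward point to watch is that the bounds in $B_\gamma^2$ and $B_\gamma^3$ produce only seminorm bounds $\seminorm{u}_{\Bessel[s](\gamma)}^2$, but this is already subordinate to $\norm{u}_{\Bessel[s](\gamma)}^2$, so no further effort is needed.
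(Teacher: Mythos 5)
Your proposal is correct and follows essentially the same route as the paper: decompose $G_\gamma$ term by term, note that the $\Lebesgue[2]$ and $\Bessel[1]$ summands are subsumed directly by $\norm{\cdot}_{\Bessel[s](\gamma)}^2$, bound $B_\gamma^1$ via \cref{lem:RsEquiBounded}, and bound $B_\gamma^2$, $B_\gamma^3$ by pulling out $\Lebesgue[\infty]$ bounds on $\abs{u(y)-u(x)}/\abs{\gamma(y)-\gamma(x)}$ and on $D_\gamma u$ (leaving $\Energy(\gamma)$) and then applying \cref{lem:MorreyInequalityGeometric} and \cref{thm:UniformBiLipschitz}. Your packaging differs in two inessential places: for $B_\gamma^2$ you invoke the ready-made composite \cref{cor:MorreyUniformBiLipschitz}, whereas the paper separates the distortion factor from the Morrey bound; and for $B_\gamma^1$ you interpose the measure comparison \cref{cor:MuGamma} between $\Lebesgue[2](\mu_\gamma)$ and $\Lebesgue[2](\nu_\gamma)$ before applying \cref{lem:RsEquiBounded}. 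The latter is actually a touch more careful than the paper, which silently applies the $\nu_\gamma$-bound of \cref{lem:RsEquiBounded} to $B_\gamma^1 = \norm{\Op{\gamma}{s}u}_{\Lebesgue[2](\mu_\gamma)}^2$; since $\dd\mu_\gamma = \tfrac{\dist[\gamma]}{\abs{\gamma(y)-\gamma(x)}}\dd\nu_\gamma \geq \dd\nu_\gamma$, one does need an extra distortion factor (or your detour through $\mu$) to close this gap, so your version is the safer one even if the constants come out slightly bigger.
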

\begin{proof}
    Both $\norm{u}_{G_\gamma}^2$ and $\norm{u}_{\Bessel[s](\gamma)}^2$
    have the summands $\seminorm{u}_{\Bessel[1](\gamma)}^2 + \norm{u}_{\Lebesgue[2](\gamma)}^2$ in common.
    Hence, the proof boils down to bounding $B_\gamma^k(u,u)$, $k \in \braces{1,2,3}$ in terms of $\seminorm{u}_{\Bessel[s](\gamma)}^2$.
    Fortunately, we have done the most difficult part of this already: in \cref{lem:RsEquiBounded} in we found $C_s(\gamma)$ such that
    \begin{equation*}
        B_\gamma^1(u,u)
        =
        \norm{ \Op{\gamma}{s} u }_{\Lebesgue[2](\gamma)}^2
        \leq 
        C_s(\gamma)
        \seminorm*{ u }_{\Bessel[s](\gamma)}^2
        .
    \end{equation*}
    Because of $\abs{u(y) - u(x)}  \leq \norm{D_\gamma u}_{\Lebesgue[\infty]} \dist[\gamma][y][x]$, we may bound $B_\gamma^2$ as follows:
    \begin{align*}
        B_\gamma^2(u,u)
        &=
        \int_\Domain \int_\Domain
            \pars[\Big]{
                \fdfrac{\dist[\gamma][y][x]}{\abs{\gamma(y) - \gamma(x)}}
            }^{2}
            \abs{\Op{\gamma}{s} \gamma}^2
            \abs[\Big]{
             \fdfrac{u(y) - u(x)}{\dist[\gamma][y][x]}
            }^2
        \dd \mu_\gamma(x,y)
        \leq 
        \distor(\gamma)^{2}
        \,
        \Energy(\gamma)
        \norm{D_\gamma u}_{\Lebesgue[\infty]}^2
        .
    \end{align*}
    Closer inspection of the definition of $B_\gamma^3$ reveals that
    $B_\gamma^3(u,u) \leq 2\, \Energy(\gamma) \norm{D_\gamma u}_{\Lebesgue[\infty]}^2$.
    Finally, we apply the Morrey inequality \cref{lem:MorreyInequalityGeometric} to bound $\norm{D_\gamma u}_{\Lebesgue[\infty]}$ and \cref{thm:UniformBiLipschitz} to bound $\distor(\gamma)$:
    \begin{equation*}
        B_\gamma^2(u,u)
        +
        B_\gamma^3(u,u)
        \leq
          C_{\Morrey,s-1}^2
            \,
            \Energy(\gamma)
            \,
            \ArcLength(\gamma)^{2s-5}
            \pars[\big]{
                2 
                + 
                C_{\text{distor}}^2\;
                \ArcLength(\gamma)^{\frac{2\alpha+2}{\alpha}}
                \Energy(\gamma)^{2\beta}
            }
        \seminorm{ u }_{\Bessel[s](\gamma)}^2
        .
    \end{equation*}
    Recalling the definition of $C_s(\gamma)$, we conclude that
    \begin{align}
    	F_{G}(\gamma)^2
        &=
        C_{\Morrey,s-1}^2
        \,
        \Energy(\gamma)
        \,
        \ArcLength(\gamma)^{2s-5}
        \pars[\big]{
            2 
            + 
            C_{\text{distor}}^2\;
            \ArcLength(\gamma)^{\frac{2\alpha+2}{\alpha}}
            \Energy(\gamma)^{2\beta}
        }
        \label{eqn:DefinitionFG}
        \\
        &+
        C_{\text{distor}}^{2s} \frac{1}{s}
        \ArcLength(\gamma)^{\frac{2s(\alpha+1)}{\alpha}}
        \Energy(\gamma)^{2s\beta}
        \pars[\big]{
            1
            +
            C_{\Morrey,s-1}^2
            \ArcLength(\gamma)^{2s-5}
            \seminorm{\gamma}_{\Bessel[s-1]}^2
        }.
        \notag
    \end{align}
 \end{proof}


\subsection{Smoothness}
\label{sec:MetricSmooth}

We are now ready to collect the fruits from the previous sections.
Next we prove smoothness of the metric.
\emph{Some} amount of regularity of the metric is crucial for the existence of geodesics, since the argument relies on the Picard--Lindelöff theorem.
Since proving smoothness does not pose any extra problems, we prove the stronger statement.
As a byproduct, we also prove the smoothness of $\Energy$.

\begin{theorem}\label{thm:GIsSmooth}
    The metric $G$ in $\Mfld$ is smooth.
\end{theorem}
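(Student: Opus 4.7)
The plan is to decompose $G_\gamma$ according to \cref{eq:G} into its five summands $B_\gamma^1$, $B_\gamma^2$, $B_\gamma^3$, $\inner{\cdot,\cdot}_{\Bessel[1](\gamma)}$, and $\inner{\cdot,\cdot}_{\Lebesgue[2](\gamma)}$ and verify that each depends smoothly on $\gamma$. Because $\Mfld$ is open in the Banach space $\HSpace$, its tangent bundle is trivial, so smoothness of $G$ reduces to showing that $\gamma \mapsto G_\gamma$ is smooth as a map into the Banach space of continuous bilinear forms on $\HSpace$. The toolbox will be \cref{lem:AuxAreSmooth,lem:LambdaIsSmooth,cor:DifferenceOpIsSmooth,thm:RsIsSmooth}, combined with the general principle that continuous multilinear maps between Banach spaces are automatically smooth.

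For the lower-order summands I would write
\[
\inner{u,v}_{\Lebesgue[2](\gamma)} = \int_\Domain \inner{u,v}\,\abs{\gamma'}\,\dd\lambda
\qand
\inner{u,v}_{\Bessel[1](\gamma)} = \int_\Domain \inner{D_\gamma u, D_\gamma v}\,\abs{\gamma'}\,\dd\lambda.
\]
By \cref{lem:AuxAreSmooth}, $\gamma\mapsto\abs{\gamma'}$ is smooth into $\Bessel[s-1][][\Domain][\R]\hookrightarrow\Lebesgue[\infty]$ and $\gamma\mapsto D_\gamma$ is smooth into $\BoundedLinOps(\HSpace,\Bessel[s-1])$. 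Composing with the bounded trilinear pairing $(\rho,u,v)\mapsto\int_\Domain\rho\,\inner{u,v}\,\dd\lambda$ yields smoothness of both forms.

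For the $B$-terms I would introduce the density $\rho_\gamma(x,y)\ceq\abs{\gamma'(x)}\abs{\gamma'(y)}\,\varLambda^1(\gamma)(x,y)$, so that $\dd\mu_\gamma = \rho_\gamma\,\dd\mu$; by \cref{lem:AuxAreSmooth,lem:LambdaIsSmooth}, $\gamma\mapsto\rho_\gamma$ is smooth into $\Lebesgue[\infty][][\Domain\times\Domain][\R]$. For $B_\gamma^1$, \cref{thm:RsIsSmooth} directly gives smoothness of $\Op{}{s}\colon\Mfld\to\BoundedLinOps(\HSpace,\Lebesgue[2](\mu))$, and the trilinear pairing $(\rho,f,g)\mapsto\int\rho\,\inner{f,g}\,\dd\mu$ from $\Lebesgue[\infty]\times\Lebesgue[2](\mu)\times\Lebesgue[2](\mu)$ to $\R$ is continuous, which closes the argument. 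For $B_\gamma^2$, the composition $\gamma\mapsto\Op{\gamma}{s}\gamma$ is smooth into $\Lebesgue[2](\mu)$, and the pointwise Cauchy--Schwarz map $\Lebesgue[2](\mu)\times\Lebesgue[2](\mu)\to\Lebesgue[1](\mu)$ followed by multiplication against $\rho_\gamma\in\Lebesgue[\infty]$ shows that $\gamma\mapsto\abs{\Op{\gamma}{s}\gamma}^2\rho_\gamma$ is smooth into $\Lebesgue[1](\mu)$. In parallel, \cref{cor:DifferenceOpIsSmooth} makes the difference operator $A(\gamma)\colon u\mapsto (u(y)-u(x))/\abs{\gamma(y)-\gamma(x)}$ a smooth map $\Mfld\to\BoundedLinOps(\HSpace,\Lebesgue[\infty])$, so $(\gamma,u,v)\mapsto\inner{A(\gamma)u,A(\gamma)v}$ is smooth into $\Lebesgue[\infty]$ and bilinear in $(u,v)$. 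Pairing the $\Lebesgue[1](\mu)$-factor with the $\Lebesgue[\infty]$-factor through integration exhibits $B_\gamma^2$ as a smooth family of bounded bilinear forms. The treatment of $B_\gamma^3$ is entirely analogous, with $\inner{D_\gamma u(x), D_\gamma v(x)} + \inner{D_\gamma u(y), D_\gamma v(y)}$ replacing $\inner{A(\gamma)u,A(\gamma)v}$; this is admissible since $D_\gamma u$ lies in $\Bessel[s-1]\hookrightarrow\Lebesgue[\infty]$ and $\gamma\mapsto D_\gamma$ is smooth.

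The hard part is really just bookkeeping: one must place each pointwise factor into a target space in which it depends smoothly on $\gamma$ and such that the pairings that glue the factors together remain continuous. The squeezing point is that $\abs{\Op{\gamma}{s}\gamma}^2\rho_\gamma$ is only $\Lebesgue[1](\mu)$-integrable, which forces its companion in $B_\gamma^2$ and $B_\gamma^3$ to sit in $\Lebesgue[\infty]$. This is precisely what the $\Lebesgue[\infty]$-valued smoothness of $A(\gamma)$ in \cref{cor:DifferenceOpIsSmooth} and of $D_\gamma$ via the Morrey embedding delivers, and it is the step at which the hypothesis $s>3/2$ is essential.
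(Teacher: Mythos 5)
Your proposal is correct and follows essentially the same route as the paper's proof: the paper likewise writes $B_\gamma^i(u,v)=\int\int b_\gamma^i(u,v)\,\dd\mu$ with densities built from $\varLambda(\gamma)$, $\abs{\gamma'}$, $\Op{\gamma}{s}$, $D_\gamma$, and the normalized difference operator, and invokes the same auxiliary results (\cref{lem:AuxAreSmooth}, \cref{lem:LambdaIsSmooth}, \cref{cor:DifferenceOpIsSmooth}, \cref{thm:RsIsSmooth}) to show that each $(\gamma,u,v)\mapsto b_\gamma^i(u,v)$ is smooth into $\Lebesgue[1][\mu]$. The only difference is that you make explicit the $\Lebesgue[1]$-versus-$\Lebesgue[\infty]$ bookkeeping and the trilinear pairings, while the paper compresses this into the remark that the smoothness ``becomes apparent'' from the differentiation rules and Hölder inequalities.
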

\begin{proof}
We can write $B_\gamma^i(u,v) = \int_\Domain \int_\Domain b_\gamma^i(u,v) (x,y) \dd \mu(x,y)$ with the densities
\begin{align*}
    b_\gamma^1(u,v) (x,y)
    &= 
    \inner{\Op{\gamma}{s} u (x,y),\Op{\gamma}{s} v (x,y)} 
    \,
    \varLambda(\gamma) (x,y) 
    \abs{\gamma'(x)} \abs{\gamma'(y)}
    ,
    \\
    b_\gamma^2(u,v) (x,y)
    &= 
    b_\gamma^1(\gamma,\gamma) (x,y)
    \inner*{
        \fdfrac{u(y) - u(x)}{\dist[\Domain][y][x]},
        \fdfrac{v(y) - v(x)}{\dist[\Domain][y][x]}} 
    \, \varLambda^2(\gamma) (x,y)  
    ,
    \\
    b_\gamma^3(u,v) (x,y)
    &= 
    b_\gamma^1(\gamma,\gamma) (x,y)
    \pars*{ 
        \inner{D_\gamma u(x),D_\gamma v(x)} 
        +   
        \inner{D_\gamma u(y),D_\gamma v(y)} 
    }
    .     
\end{align*}
Combining  
\cref{lem:AuxAreSmooth}, \cref{lem:LambdaIsSmooth}, \cref{cor:DifferenceOpIsSmooth}, and \cref{thm:RsIsSmooth} 
with the well-known rules of differentiation and the Hölder inequalities, it becomes now apparent that each of the maps $(\gamma,u,v) \mapsto b_\gamma^i(u,v)$ is a smooth map with values in $\Lebesgue[1][\mu][\Domain \times \Domain][\R]$.
\end{proof}

As a consequence of this, we can reproduce the following result about the first derivative of the energy $\Energy$. 
Fréchet differentiability and the formula for the derivative have of course been shown before (see \cite{blattreiter1}). However, the formula that we obtain involves the terms $B_\gamma^1$, $B_\gamma^2$, $B_\gamma^3$, and this is a major reason why we included them into the metric:

\begin{theorem}\label{thm:DE}
    The tangent-point energy $\Energy \colon \Mfld \to \R$ is smooth. Its first derivative is given by
    \begin{align*}
        D\Energy(\gamma) \, u 
        =
        2\, B_\gamma^1(\gamma,u) - \DenomExp \, B_\gamma^2(\gamma,u) + B_\gamma^3(\gamma,u)
        \quad
        \text{with}
        \quad
        \DenomExp = 2\,s + 1
        .
    \end{align*}
\end{theorem}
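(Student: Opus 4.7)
\emph{Plan.} For smoothness, the integrand of $\Energy(\gamma) = \iint_{\Circle \times \Circle} \abs{\Op{\gamma}{s}\gamma}^2 \dd\mu_\gamma$ factors as a product of maps whose smoothness is already contained in \cref{sec:OperatorRs}: the map $\gamma \mapsto \Op{\gamma}{s}\gamma$ is smooth into $\Lebesgue[2](\mu)$ by \cref{thm:RsIsSmooth}, while the density of $\dd\mu_\gamma$ with respect to the fixed measure $\dd\mu$ from \cref{eq:DsigmaAndMu} equals $\varLambda^1(\gamma)\, \abs{\gamma'(x)}\, \abs{\gamma'(y)}$ and is smooth into $\Lebesgue[\infty]$ by \cref{lem:LambdaIsSmooth} and \cref{lem:AuxAreSmooth} (using $\Bessel[s-1] \embeds \Lebesgue[\infty]$). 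The Hölder inequality $\Lebesgue[2](\mu) \cdot \Lebesgue[2](\mu) \cdot \Lebesgue[\infty] \hookrightarrow \Lebesgue[1](\mu)$ together with the ordinary product rule then yields smoothness of $\Energy$.

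For the derivative, I set $R \ceq \Op{\gamma}{s}\gamma$ and differentiate $\iint \inner{R,R} \dd\mu_\gamma$ in direction $u$. The product rule gives three contributions: the two (by symmetry equal) variations of $R$, and the variation of the measure. Each variation of $R$ splits further into $\Op{\gamma}{s} u$ (from varying the argument) and $(D\Op{}{s}(\gamma)\, u)\,\gamma$ (from varying the operator, computed via \cref{eq:RsGateaux}). Pairing the first piece with $R$ and integrating against $\dd\mu_\gamma$ directly gives $2\,B_\gamma^{1}(\gamma,u)$ by the definition \cref{eq:B1}. For the second piece the decisive observation is the orthogonality identity $\cD_\gamma\gamma(x)\, R(x,y) = 0$, which holds because $R(x,y) = (I_{\AmbSpace} - \cD_\gamma\gamma(x))(\gamma(y)-\gamma(x))/\abs{\gamma(y)-\gamma(x)}^s$ lies in the kernel of the tangential (hence idempotent) projection $\cD_\gamma\gamma(x) = D_\gamma\gamma(x)\,D_\gamma\gamma(x)\trans$. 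The second and third summands on the right-hand side of \cref{eq:RsGateaux} both carry a leading factor $\cD_\gamma\gamma \circ \pr_1$, so their contractions with $R$ vanish; only the first summand $-s\, R\,\inner{\Delta\gamma/\abs{\Delta\gamma},\,\Delta u/\abs{\Delta\gamma}}$ survives, contributing $-2s\,B_\gamma^{2}(\gamma,u)$.

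It remains to compute the variation of the density $\abs{\gamma'(x)}\abs{\gamma'(y)}/\abs{\gamma(y)-\gamma(x)}$ of $\dd\mu_\gamma$ against $\dd\lambda\otimes\dd\lambda$. Using \cref{eq:DSpeed} for the two numerator factors and $D\abs{\gamma(y)-\gamma(x)}\cdot u = \inner{\Delta\gamma/\abs{\Delta\gamma},\,\Delta u}$ for the denominator, the logarithmic derivative unfolds as $\inner{D_\gamma\gamma(x),D_\gamma u(x)} + \inner{D_\gamma\gamma(y),D_\gamma u(y)} - \inner{\Delta\gamma/\abs{\Delta\gamma},\,\Delta u/\abs{\Delta\gamma}}$; integrating $\abs{R}^2$ against this times $\dd\mu_\gamma$ and recognising the defining formulas \cref{eq:B2} and \cref{eq:B3} yields exactly $B_\gamma^{3}(\gamma,u) - B_\gamma^{2}(\gamma,u)$. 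Summing the three contributions produces $2\,B_\gamma^{1}(\gamma,u) - (2s+1)\,B_\gamma^{2}(\gamma,u) + B_\gamma^{3}(\gamma,u)$, as claimed. The main obstacle is not analytic---smoothness follows routinely from the preparatory \cref{sec:OperatorRs}---but is the algebraic bookkeeping required to spot the tangential-projection orthogonality that collapses two of the three summands of \cref{eq:RsGateaux}; without it, the terms do not reassemble into the clean expression involving only $B^{1}$, $B^{2}$, and $B^{3}$.
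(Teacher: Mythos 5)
Your proof is correct and follows essentially the same route as the paper's: apply the product rule to $\iint\inner{\Op{\gamma}{s}\gamma,\Op{\gamma}{s}\gamma}\dd\mu_\gamma$, expand $D(\eta\mapsto\Op{\eta}{s}\eta)(\gamma)\,u$ via \cref{eq:RsGateaux}, and kill two of its three terms using the fact that $\Op{\gamma}{s}\gamma(x,y)$ lies in the kernel of the orthogonal projection $\cD_\gamma\gamma(x)=D_\gamma\gamma(x)\,D_\gamma\gamma(x)\trans$. The only cosmetic difference is that the paper records the projection argument in terms of tangency/orthogonality rather than idempotence, and invokes \cref{thm:GIsSmooth} for the smoothness part instead of re-deriving the Hölder factorization, but the content is identical.
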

\begin{proof}
    Smoothness follows from \cref{thm:GIsSmooth} and the identity
    \begin{equation*}
        \Energy(\gamma)
        =
        \int_\Domain \int_\Domain 
            b_\gamma^1(\gamma,\gamma)(x,y)
        \dd \mu(x,y)
        .
    \end{equation*}
    To compute the derivative, we notice that the product rule implies
    \begin{equation*}
        D\Energy(\gamma) \, u 
        =
        \int_\Domain \int_\Domain
            2 \inner*{ 
                \Op{\gamma}{s} \gamma 
                , 
                D\!\pars[\big]{ \eta \mapsto \Op{\eta}{s} \eta} (\gamma) \, u
            }
        \dd \mu_\gamma
        +
        \int_\Domain \int_\Domain
            \abs{\Op{\gamma}{s}}^2
            \,
            D\!\pars[\big]{ \eta \mapsto \dd \mu_\eta} (\gamma) \, u
        .
    \end{equation*}
    Once again, we abbreviate
    $\Delta \gamma= \gamma(y)-\gamma(x)$ and 
    $\Delta u=u(y)-u(x)$.
    It is straightforward to check that
    \begin{equation*}
        D\!\pars[\big]{ \eta \mapsto \dd \mu_\eta} (\gamma) \, u
        =
        - \inner*{\fdfrac{\Delta \gamma}{\abs{\Delta \gamma}}, \fdfrac{\Delta u}{\abs{\Delta \gamma}} }
        \dd \mu_\gamma
        +
        \pars*{ 
            \inner{\cD_\gamma \gamma, \cD_\gamma u} \circ \pr_1
            +
            \inner{\cD_\gamma \gamma, \cD_\gamma u} \circ \pr_2
        }
        \dd \mu_\gamma
        .
    \end{equation*}
    Using \cref{eq:RsGateaux} with $v = \gamma$, we obtain
    \begin{align*}
        \MoveEqLeft
        D\!\pars[\big]{ \eta \mapsto \Op{\eta}{s} \eta } (\gamma) \, u
        \\
        &
        =     
        \Op{\gamma}{s} u 
        - \pars{\cD_\gamma \gamma \circ \pr_1} \, \Op{\gamma}{s} u
        - \pars{\cD_\gamma \gamma \circ \pr_1} \pars{\cD_\gamma u \circ \pr_1}\trans \, \Op{\gamma}{s} \gamma
        - 
        s \, \Op{\gamma}{s} \gamma
        \inner*{ 
            \fdfrac{ \Delta \gamma }{ \abs{\Delta \gamma} }
            , 
            \fdfrac{ \Delta u }{ \abs{\Delta \gamma} }
        }.
    \end{align*}
    Now observe that $\cD_\gamma \gamma(x)$ is tangent to $\gamma'(x)$, and that $\Op{\gamma}{s} \gamma(x,y)$ is perpendicular to $\gamma'(x)$. Hence,
    \bignegskip%
    \begin{equation*}
        2 \inner*{ 
            \Op{\gamma}{s} \gamma 
            , 
            D\pars[\big]{ \eta \mapsto \Op{\eta}{s} \eta} (\gamma) \, u
        }
        =
        2 \inner{ \Op{\gamma}{s}\gamma , \Op{\gamma}{s} u  }
        - 2 \, s  \abs{\Op{\gamma}{s}\gamma}^2
        \inner*{ 
            \fdfrac{ \Delta \gamma }{ \abs{\Delta \gamma} }
            , 
            \fdfrac{ \Delta u }{ \abs{\Delta \gamma} }
        }.
    \end{equation*}
    Combining all this leads directly to the stated formula for $D\Energy(\gamma) \,u$.
\end{proof}

\begin{remark}
    The same techniques can be applied to the other tangent-point energies $\TP^{(q,p)}$ that take the form
    \begin{equation*}
        \TP^{(q,p)}(\gamma)
        =2^q
        \int_\Domain \int_\Domain 
            \abs{ \Op{\gamma}{s}\gamma(x,y) }^q \varLambda(\gamma)(x,y) \abs{\gamma'(y)} \abs{\gamma'(x)}
        \dd \mu(x,y),
        \qquad
        s = \frac{p - 1}{q}
        .
    \end{equation*}
    If $q$ is an even integer, then $\TP^{(q,p)}$ is smooth. 
    Otherwise, it has at least $\ceil{q-1}$ continuous derivatives. 
    We leave the details to the reader.
\end{remark}


\subsection{Geodesic distance}
\label{sec:GeodesicDistance}

With a strong Riemannian metric at disposal, we may introduce the \emph{geodesic distance}
\begin{align*}
    \dist[G] \colon \Mfld \times \Mfld \to \intervalcc{0,\infty}. 
\end{align*}
To this end, let $\gamma_0$ and $\gamma_1$ be two points in $\Mfld$. 
Suppose there is a \emph{path} $\Path$ connecting them, i.e., a map $\Path \in \Holder[1][][\UnitInterval][\Mfld]$ such that $\Path(0) = \gamma_0$ and $\Path(1) = \gamma_1$. 
Moreover, these paths themselves are also of class $C^1$. Thus, by definition, a path is always an isotopy.

For a ``time'' $t \in \UnitInterval$ we denote the \emph{velocity} by $\dot \Path(t) \ceq \frac{\dd}{\dd t} \Path(t)$.
We define the \emph{path length} $\PathLength$ 
of the path $\Path$ by
\begin{equation}
    \PathLength(\Path) 
    \ceq \int_0^1 \!\! \SmashedSqrt{G(\Path(t))( \dot{\Path}(t), \dot{\Path}(t) )} \dd t
    \label{eq:PathLength}
    .
\end{equation}
Then the \emph{geodesic distance} between $\gamma_0$ and $\gamma_1$ is defined as the infimum over the lengths of all paths connecting $\gamma_0$ and $\gamma_1$:
\begin{equation}
    \dist[G][\gamma_1][\gamma_0]
    =
    \inf \braces[\Big]{ 
        \;
        \PathLength(\Path)  
        \;\big| \;
        \text{$\Path \in \Holder[1][][\UnitInterval][\Mfld]$, $\Path(0) = \gamma_0$, $\Path(1) = \gamma_1$}
    }
    \label{eq:GeodesicDistance}
    .
\end{equation}
Of course, if no such path exists, the geodesic distance is $\infty$. 

Clearly $\gamma_0 = \gamma_1$ implies $\dist[G](\gamma,\gamma) = 0$.
Moreover, we have $\dist[G](\gamma_0,\gamma_1) = \dist[G](\gamma_1,\gamma_0)$ because we can just reverse every path from $\gamma_0$ to $\gamma_1$ to get a path from $\gamma_1$ to $\gamma_0$.
One can also show that $\dist[G]$ satisfies the triangle inequality 
$\dist[G][\gamma_2][\gamma_0] \leq \dist[G][\gamma_2][\gamma_1] + \dist[G][\gamma_1][\gamma_0]$:
one merely has to concatenate two $\Holder[1]$-paths from $\gamma_0$ to $\gamma_1$ and from $\gamma_1$ to $\gamma_2$ and to smooth them ever so slightly around the junction. 
Thus, $\dist[G] \colon \Mfld \times \Mfld \to \intervalcc{0,\infty}$ is what is sometimes called an \emph{extended pseudometric} or \emph{extended semimetric}. 
``Extended'' because the value $\infty$ may indeed be attained when $\gamma_0$ and $\gamma_1$ lie in distinct path components or knot classes.
And ``pseudo'' or ``semi'' because we have not yet shown that $\dist[G]$ is definite, i.e., that $\dist[G](\gamma_0,\gamma_1) = 0$ implies $\gamma_0 = \gamma_1$. 
In \cref{cor:GeodesicDistanceIsExtendedMetric} we will obtain definiteness of $\dist[G]$ as a byproduct of our analysis.

In what follows we will frequently talk about subsets of $\BoundedSet \subset \Mfld$ that are \emph{bounded with respect to $\dist[G]$}, i.e., sets $\BoundedSet$ for which there is a $\gamma \in \Mfld$ and $0 < r < \infty$ such that 
\[
    \BoundedSet \subset \myset[\big]{ \eta \in \Mfld }{ \dist[G](\eta, \gamma)  < r}. 
\]
For the sake of brevity we will call such sets \emph{$G$-bounded}.

\subsection{Lipschitz continuity}
\label{sec:LipschitzContinuity}

As we mentioned in the introduction, we chose the constituents of $G$ carefully to allow the metric $G$ to control  the tangent-point energy $\Energy$ and the arc length functional $\ArcLength$. We make this more precise now.

\begin{theorem}\label{thm:EnergyLipschitz}
    The function $\sqrt{ \smash[b]{\Energy}} \colon (\Mfld,\dist[G]) \to \R$ is globally Lipschitz-continuous:
    \begin{align*}
        \abs[\big]{\SmashedSqrt{\Energy(\gamma_1)} - \SmashedSqrt{\Energy(\gamma_0)} \, }
        \leq 
        \sqrt{1+\DenomExp/4} \; \dist[G][\gamma_1][\gamma_0],
        \quad
        \DenomExp = 2 \, s + 1,
        \quad 
        \text{for all $\gamma_0$, $\gamma_1 \in \Mfld$.}
    \end{align*}
    In particular, we learn: $\Energy$ is bounded on $G$-bounded sets.
\end{theorem}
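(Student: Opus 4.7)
The plan is to derive an $L^\infty$-bound on $\bigl|\tfrac{\mathrm{d}}{\mathrm{d}t}\!\sqrt{E(\Path(t))}\bigr|$ along any $C^1$ path in $\Mfld$ in terms of $\sqrt{G_{\Path(t)}(\dotPath,\dotPath)}$ with constant $\sqrt{1+p/4}$, and then integrate and pass to the infimum in the definition of $\dist[G]$.

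First, along a $C^1$ path $\Path \colon \intervalcc{0,1} \to \Mfld$, the smoothness of $E$ (\cref{thm:DE}) yields
\[
    \frac{\mathrm{d}}{\mathrm{d}t} E(\Path(t))
    =
    D\Energy(\Path) \, \dotPath
    =
    2\,B_{\Path}^{1}(\Path,\dotPath) - p\,B_{\Path}^{2}(\Path,\dotPath) + B_{\Path}^{3}(\Path,\dotPath),
\]
with $p = 2s+1$. Each $B_\gamma^k$ is a nonnegative symmetric bilinear form, so the Cauchy--Schwarz inequality gives
$\abs{B_\gamma^k(\gamma,u)} \leq \sqrt{B_\gamma^k(\gamma,\gamma)}\,\sqrt{B_\gamma^k(u,u)}$.
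A brief inspection of \cref{eq:B1}, \cref{eq:B2}, and \cref{eq:B3} using $\abs{D_\gamma\gamma}=1$ yields the key identities
$B_\gamma^1(\gamma,\gamma)=B_\gamma^2(\gamma,\gamma)=\Energy(\gamma)$ and $B_\gamma^3(\gamma,\gamma)=2\,\Energy(\gamma)$.
Since $\Energy>0$ on $\Mfld$ (a closed embedded curve cannot have all its secants tangent), $\sqrt{\Energy}$ is differentiable along $\Path$ with
\[
    \abs[\bigg]{\frac{\mathrm{d}}{\mathrm{d}t}\SmashedSqrt{E(\Path)}}
    =
    \frac{1}{2\sqrt{E(\Path)}}\abs{D\Energy(\Path)\,\dotPath}
    \leq
    \sqrt{B_{\Path}^{1}(\dotPath,\dotPath)}
    +
    \tfrac{p}{2}\sqrt{B_{\Path}^{2}(\dotPath,\dotPath)}
    +
    \tfrac{1}{\sqrt{2}}\sqrt{B_{\Path}^{3}(\dotPath,\dotPath)}.
\]

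Next, I would apply a weighted Cauchy--Schwarz in $\R^3$ tailored to the coefficients $(2, p, 1)$ that appear in \cref{eq:G}. With these weights,
\[
    \pars[\big]{\sqrt{B^1} + \tfrac{p}{2}\sqrt{B^2} + \tfrac{1}{\sqrt{2}}\sqrt{B^3}}^{2}
    \leq
    \pars[\big]{\tfrac{1}{2} + \tfrac{(p/2)^2}{p} + \tfrac{(1/\sqrt{2})^2}{1}}
    \pars[\big]{2\,B^1 + p\,B^2 + B^3}
    =
    \pars[\big]{1 + \tfrac{p}{4}} \pars[\big]{2\,B^1 + p\,B^2 + B^3}.
\]
Discarding the nonnegative $\Bessel[1](\gamma)$- and $\Lebesgue[2](\gamma)$-summands of $G_\gamma$, the right-hand side is bounded by $(1+p/4)\,G_\Path(\dotPath,\dotPath)$, so $\bigl|\tfrac{\mathrm{d}}{\mathrm{d}t}\sqrt{E(\Path)}\bigr| \leq \sqrt{1+p/4}\,\sqrt{G_\Path(\dotPath,\dotPath)}$.

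Integrating over $\intervalcc{0,1}$, the fundamental theorem of calculus gives
$\abs{\sqrt{E(\gamma_1)} - \sqrt{E(\gamma_0)}} \leq \sqrt{1+p/4}\,\PathLength(\Path)$ for every $C^1$ path $\Path$ from $\gamma_0$ to $\gamma_1$. Taking the infimum over such paths (or noting the estimate is vacuous if the infimum is $\infty$, i.e., $\gamma_0$ and $\gamma_1$ lie in different path components) yields the asserted inequality. The boundedness claim then follows trivially: if $\dist[G](\eta,\gamma_0)<r$, then $\sqrt{E(\eta)} \leq \sqrt{E(\gamma_0)} + \sqrt{1+p/4}\,r$. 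The only mild subtlety to watch for is the potential non-differentiability of $\sqrt{\cdot}$ at $0$; since $\Energy>0$ on $\Mfld$ this does not arise, but if one prefers one may carry out the entire argument with $\sqrt{\Energy+\varepsilon}$ and let $\varepsilon\to 0$, since the bound on $|\tfrac{\mathrm{d}}{\mathrm{d}t}\sqrt{\Energy+\varepsilon}|$ is $\varepsilon$-uniform.
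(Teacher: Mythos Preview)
Your proof is correct and follows essentially the same approach as the paper's: differentiate $\sqrt{E}$ along a path, use the formula for $DE$ from \cref{thm:DE}, apply Cauchy--Schwarz on each $B^k_\gamma$, invoke the identities $B^1_\gamma(\gamma,\gamma)=B^2_\gamma(\gamma,\gamma)=E(\gamma)$ and $B^3_\gamma(\gamma,\gamma)=2E(\gamma)$, and then apply Cauchy--Schwarz in $\R^3$ with the weights $(2,p,1)$ matching the coefficients in $G$. The paper carries out the $\R^3$ Cauchy--Schwarz \emph{before} substituting the identities (obtaining $|DE(\gamma)u|\le\sqrt{(4+p)E(\gamma)}\sqrt{2B^1+pB^2+B^3}$ directly), whereas you substitute first and then apply a weighted version; these are algebraically the same step and yield the same constant $\sqrt{1+p/4}$.
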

\begin{proof}
    Let $\Path$ be an arbitrary path.
    Using the fundamental theorem of calculus, the chain rule, and the well-known formula for the derivative of $\ArcLength$, we obtain 
    \begin{equation}
        \abs[\big]{\SmashedSqrt{\Energy(\gamma_1)} - \SmashedSqrt{\Energy(\gamma_0)} \, }
        =
        \abs*{\int_0^1 \frac{\dd}{\dd t} \SmashedSqrt{\Energy(\Path(t))} \dd t}
        \leq
        \int_0^1 \frac{\abs{ D\Energy(\Path(t)) \, \dot{\Path}(t) }}{2 \SmashedSqrt{\Energy(\Path(t))}} \dd t
        .
        \label{eq:EnergyLipschitz1}
    \end{equation}
    We now bound the integrand. To this end, let us fix some $t$ for the moment and abbreviate $\gamma \ceq \Path(t)$ and $u \ceq \Path(t)$. 
    From \cref{thm:DE} we know that
    \begin{equation*}
        D\Energy(\gamma) \, u
        =
        2 \, B_\gamma^1(\gamma,u)
        -
        \DenomExp \, B_\gamma^2(\gamma,u)
        +
        B_\gamma^3(\gamma,u)
        .
    \end{equation*}
    Using the Cauchy-Schwartz inequality first for each of the $B_\gamma^i$ and then for the Euclidean inner product on $\R^3$,
    and the definition \cref{eq:G} of $G$, we obtain:
    \begin{align*}
        \abs{ D\Energy(\gamma) \, u }
        &\leq 
        \abs{2 \, B_\gamma^1(\gamma,u)}
        +
        \abs{\DenomExp \, B_\gamma^2(\gamma,u)}
        +
        \abs{B_\gamma^3(\gamma,u)}
        \\
        &\leq
        \SmashedSqrt{2 \, B_\gamma^1(\gamma,\gamma)}
        \SmashedSqrt{2 \, B_\gamma^1(u,u)}
        +
        \SmashedSqrt{\DenomExp \, B_\gamma^2(\gamma,\gamma)}
        \SmashedSqrt{\DenomExp \, B_\gamma^2(u,u)}
        +
        \SmashedSqrt{B_\gamma^3(\gamma,\gamma)}
        \SmashedSqrt{B_\gamma^3(u,u)}
        \\
        &\leq 
        \SmashedSqrt{
            2 \, B_\gamma^1(\gamma,\gamma) + \DenomExp \, B_\gamma^2(\gamma,\gamma) + B_\gamma^3(\gamma,\gamma)
        }
        \SmashedSqrt{
            2 \, B_\gamma^1(u,u) + \DenomExp \, B_\gamma^2(u,u) + B_\gamma^3(u,u)
        }
        .
    \end{align*}
    Most curiously, each of the terms $B_\gamma^i(\gamma,\gamma)$ is tightly connected to the energy $\Energy(\gamma)$,
    Indeed, when we insert the identities
    \begin{align*}
            \inner*{ 
                \fdfrac{\gamma(y)-\gamma(x)}{\abs{\gamma(y)-\gamma(x)}}, 
                \fdfrac{\gamma(y)-\gamma(x)}{\abs{\gamma(y)-\gamma(x)}} 
            }  = 1
            \qand
            \inner[\big]{\cD_\gamma \gamma(x) , \cD_\gamma \gamma(x) } = 1,
    \end{align*}
    into the definitions \cref{eq:B1}, \cref{eq:B2}, and \cref{eq:B3}, then we can make the following observations:
    \begin{align}
        B_\gamma^1(\gamma,\gamma) = \Energy(\gamma),
        \quad 
        B_\gamma^2(\gamma,\gamma) = \Energy(\gamma),
        \qand
        B_\gamma^3(\gamma,\gamma) = 2 \, \Energy(\gamma).     
        \label{eq:MetricAndEnergy}
    \end{align}
    Hence, it turns out that
    \begin{align}
        \begin{split}
        \abs{ D\Energy(\gamma) \, u }
        &\leq 
        \SmashedSqrt{4 + \DenomExp}
        \SmashedSqrt{
            \Energy(\gamma)
        }
        \SmashedSqrt{
            2 \, B_\gamma^1(u,u) + \DenomExp \, B_\gamma^2(u,u) + B_\gamma^3(u,u)
        }
        \\
        &\leq
        \SmashedSqrt{4 + \DenomExp}
        \SmashedSqrt{
            \Energy(\gamma)
        }
        \SmashedSqrt{
            G_\gamma(u,u)
        }
        .
        \end{split}
        \label{eq:DEBound}
    \end{align}
    When we substitute this back into \cref{eq:EnergyLipschitz1}, the square root of the energy cancels and we get
    \begin{equation*}
        \abs[\big]{\SmashedSqrt{\Energy(\gamma_1)} - \SmashedSqrt{\Energy(\gamma_0)} \, }
        \leq
        \SmashedSqrt{1 + \DenomExp/4} \int_0^1 \!\! 
            \SmashedSqrt{G_{\Path(t)} \pars{ \dot{\Path}(t) , \dot{\Path}(t) } } 
        \dd t
        =
        \SmashedSqrt{1 + \DenomExp/4} \, \PathLength(\Path)
        .
    \end{equation*}
    Finally, taking the infimum over all such paths leads to the statement of the theorem.
\end{proof}
Note that this theorem holds true for every Riemannian metric that includes at least the terms $B_\gamma^1$, $B_\gamma^2$, and $B_\gamma^3$.
Neither the $\Lebesgue[2]$-term nor the $\Bessel[1]$-term are needed for this.

This theorem's proof is very much in spirit as the following result's.
The latter is of course well-known for metrics that contain the $\Bessel[1]$-term, see for example \cite[Section 7.4]{zbMATH06391675}.
It is also the reason we included the $\Bessel[1]$-term in our metric.
We include a short proof for the reader's convenience.

\begin{lemma}\label{lem:ArcLengthLipschitz}
    The function $\sqrt{ \smash[b]{\ArcLength}} \colon (\Mfld,\dist[G]) \to \R$ is globally Lipschitz-continuous:
    \begin{align*}
        \abs[\big]{ 
            \sqrt{ \smash[b]{\ArcLength(\gamma_1)}} - \sqrt{ \smash[b]{\ArcLength(\gamma_0)}}
        \,}
        \leq 
        \tfrac{1}{2} \, \dist[G][\gamma_1][\gamma_0]
        \quad 
        \text{for all $\gamma_0$, $\gamma_1 \in \Mfld$.}
    \end{align*}
    In particular, we learn: $\ArcLength$ is bounded on $G$-bounded sets.
\end{lemma}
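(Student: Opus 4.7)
The plan is to mimic the proof strategy of \cref{thm:EnergyLipschitz} almost verbatim, replacing the tangent-point energy by the arc length functional and exploiting the fact that $G_\gamma$ dominates $\inner{\cdot,\cdot}_{\Bessel[1](\gamma)}$.

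First I would pick an arbitrary admissible path $\Path \in \Holder[1][][\UnitInterval][\Mfld]$ from $\gamma_0$ to $\gamma_1$ and apply the fundamental theorem of calculus together with the chain rule to
\[
    \abs[\big]{\SmashedSqrt{\ArcLength(\gamma_1)} - \SmashedSqrt{\ArcLength(\gamma_0)}\,}
    =
    \abs[\bigg]{\int_0^1 \frac{D\ArcLength(\Path(t))\,\dot \Path(t)}{2\,\SmashedSqrt{\ArcLength(\Path(t))}}\dd t}
    .
\]
The derivative of the arc length functional at $\gamma \in \Mfld$ in direction $u\in\Bessel[s][][\Circle][\AmbSpace]$ is (as is well-known and follows immediately from \cref{eq:DSpeed})
\[
    D\ArcLength(\gamma)\,u
    =
    \int_\Circle \inner{ D_\gamma \gamma(x), D_\gamma u(x) } \dLineElC[x]
    =
    \inner{D_\gamma \gamma, D_\gamma u}_{\Lebesgue[2](\gamma)}
    .
\]

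Next I would apply the Cauchy--Schwarz inequality to this bilinear form and observe that $\abs{D_\gamma \gamma(x)} = 1$ for all $x\in\Circle$, so that
\[
    \norm{D_\gamma \gamma}_{\Lebesgue[2](\gamma)}^2
    =
    \int_\Circle \abs{D_\gamma \gamma(x)}^2 \dLineElC[x]
    =
    \ArcLength(\gamma)
    .
\]
Since by the definition~\cref{eq:G} of $G$ and the positivity of $B_\gamma^1$, $B_\gamma^2$, $B_\gamma^3$, and the $\Lebesgue[2]$-term we have
\[
    \norm{D_\gamma u}_{\Lebesgue[2](\gamma)}^2
    =
    \inner{u,u}_{\Bessel[1](\gamma)}
    \leq
    G_\gamma(u,u),
\]
we obtain the pointwise bound
\[
    \abs{ D\ArcLength(\gamma)\,u }
    \leq
    \SmashedSqrt{\ArcLength(\gamma)}\,\SmashedSqrt{G_\gamma(u,u)}
    .
\]

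Plugging this back into the integral and cancelling $\SmashedSqrt{\ArcLength(\Path(t))}$ against the denominator gives
\[
    \abs[\big]{\SmashedSqrt{\ArcLength(\gamma_1)} - \SmashedSqrt{\ArcLength(\gamma_0)}\,}
    \leq
    \tfrac{1}{2}\int_0^1 \SmashedSqrt{G_{\Path(t)}(\dot\Path(t),\dot\Path(t))}\dd t
    =
    \tfrac{1}{2}\,\PathLength(\Path).
\]
Taking the infimum over all such paths yields the claimed Lipschitz estimate. Boundedness of $\ArcLength$ on $G$-bounded sets is then immediate: fix a reference $\gamma_0$; for any $\gamma$ in a $\dist[G]$-ball of radius $r$ around $\gamma_0$ we have $\SmashedSqrt{\ArcLength(\gamma)} \leq \SmashedSqrt{\ArcLength(\gamma_0)} + r/2$. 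There is no real obstacle here; the proof is essentially a direct application of Cauchy--Schwarz plus the identity $\abs{D_\gamma \gamma} \equiv 1$ that makes the arc length functional interact cleanly with the $\Bessel[1](\gamma)$-term of $G$.
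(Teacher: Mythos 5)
Your proof is correct and follows essentially the same route as the paper's: fundamental theorem of calculus plus chain rule on $t\mapsto\sqrt{\ArcLength(\Path(t))}$, Cauchy--Schwarz on $D\ArcLength(\gamma)\,u = \inner{D_\gamma\gamma,D_\gamma u}_{\Lebesgue[2](\gamma)}$, the identity $\abs{D_\gamma\gamma}\equiv 1$ giving $\norm{D_\gamma\gamma}_{\Lebesgue[2](\gamma)}^2 = \ArcLength(\gamma)$, domination of the $\Bessel[1](\gamma)$-term by $G_\gamma$, cancellation of $\sqrt{\ArcLength}$, and taking the infimum over paths.
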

\begin{proof}
    Let $\Path$ be a path from $\gamma_0$ to $\gamma_1$.
    In the same way as in the previous proof one shows:
    \begin{equation*}
        \abs[\big]{\SmashedSqrt{\ArcLength(\gamma_1)} - \SmashedSqrt{\ArcLength(\gamma_0)}}
        \leq
        \smash{\int_0^1 \frac{ \abs{D\ArcLength(\Path(t)) \, \dot{\Path}(t)} }{ 2 \SmashedSqrt{\ArcLength(\Path(t))}} \dd t}
        .
    \end{equation*}
    Now we have:
    \begin{gather*}
        D \ArcLength(\gamma) \, u 
        =
        \textstyle \int_\Domain \inner{ D_\gamma \gamma(x), D_\gamma u(x)} \dLineElC[x]
        \leq 
        \pars*{ \textstyle \int_\Domain \abs{D_\gamma \gamma}^2  \dLineElC[x]}^{1/2}
        \pars*{ \textstyle \int_\Domain \abs{D_\gamma u}^2  \dLineElC[x]}^{1/2}
        \\
        =
        \pars*{ \textstyle \int_\Domain 1 \dLineElC[x]}^{1/2}
        \SmashedSqrt{\inner{ u, u}}_{\Bessel[1](\gamma)}
        \leq
        \norm{D_\gamma u}_{\Lebesgue[s](\gamma)}^2
        \SmashedSqrt{ \ArcLength(\gamma) } \SmashedSqrt{\inner{u, u}}_{\Bessel[1](\gamma)}
        \leq 
        \SmashedSqrt{ \ArcLength(\gamma) } \SmashedSqrt{G_\gamma (u, u)}
        .
    \end{gather*}
    So we get
    $
        \SmashedSqrt{\ArcLength(\gamma_1)} - \SmashedSqrt{\ArcLength(\gamma_0)}
        \leq
        \int_0^1 \!\! \SmashedSqrt{G_\gamma (u, u)} \dd t,
    $
    and the lemma follows from taking the infimum over all such paths.
\end{proof}


\subsection{Norm equivalences}\label{sec:NormEquivOnBoundedSets}

\cref{thm:EnergyLipschitz} and \cref{lem:ArcLengthLipschitz} show that the metric $G$ has good control on $\ArcLength$ and $\Energy$.
Both these quantities appear in the norm bounds \cref{lem:NormEquiv2} and \cref{lem:NormEquiv1}.
Our next aim is to make the resulting norm equivalence uniform on $G$-bounded sets (see \cref{lem:NormEquivOnBoundedSets}).
However, closer inspection of the constants in \cref{lem:NormEquiv1} reveals that also \emph{negative} powers of $\ArcLength$ appear there.
Hence, we first need to find a way to control $1/\ArcLength$.
To this end, we employ a result by Volkmann (see \cite[Corollary~5.12]{volkmann1}) for curves of length $1$ and 
exploit the scaling behavior of the tangent-point energy $\Energy$.

\begin{lemma}\label{lem:LowerEnergyBound}
	For $s \in \intervaloo{3/2,2}$ every embedding $\gamma \in \Bessel[s](\Domain,\AmbSpace)$ satisfies
	$
		\uppi^2 \leq \Energy(\gamma) \, \ArcLength(\gamma)^{2s-3}
	$.
\end{lemma}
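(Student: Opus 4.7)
The plan is to exploit the scale invariance of the product $\mathcal{E}(\gamma)\,L(\gamma)^{2s-3}$ to reduce the claim to the unit-length case, where the quoted result from Volkmann applies directly.

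First I would record the scaling behavior. For any $\gamma \in \Mfld$ set $\eta \ceq \ArcLength(\gamma)^{-1}\,\gamma$, so that $\ArcLength(\eta)=1$. Since the tangent-point energy is invariant under reparameterization, I may furthermore compose with an orientation-preserving $\Bessel[s]$-diffeomorphism $\varphi \colon \Domain \to \Domain$ to obtain $\xi \ceq \eta \circ \varphi^{-1}$ in arc-length parameterization; neither step changes $\Energy$ or $\ArcLength$. The computation already carried out in the proof of \cref{thm:UniformBiLipschitz} (respectively \cref{thm:MorreyInequalityGeometricGeometric}) gives
\[
    \Energy(\xi) = \Energy(\eta) = \ArcLength(\gamma)^{2\alpha}\,\Energy(\gamma) = \ArcLength(\gamma)^{2s-3}\,\Energy(\gamma),
\]
so that the product $\Energy(\cdot)\,\ArcLength(\cdot)^{2s-3}$ is invariant under both rescaling and reparameterization. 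In particular, it coincides for $\gamma$ and for the unit-length arc-length-parameterized embedding $\xi$.

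Next I would invoke Volkmann's \cite[Corollary~5.12]{volkmann1}, which for an arc-length-parameterized embedded curve $\xi$ of length $1$ asserts the sharp lower bound $\Energy(\xi) \geq \uppi^2$ (the extremal configuration being the round circle of length $1$, for which the tangent-point density integrates to exactly $\uppi^2$ in our normalization). Combining this with the displayed identity yields
\[
    \uppi^2 \leq \Energy(\xi) = \Energy(\xi)\,\ArcLength(\xi)^{2s-3} = \Energy(\gamma)\,\ArcLength(\gamma)^{2s-3},
\]
which is the desired estimate.

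The only point requiring any care is verifying that Volkmann's statement applies to the precise normalization of $\Energy$ used in \cref{eq:E}; in particular, one needs to check the factors of $2$ and the precise form of the line-element weights against Volkmann's conventions. I would do this by directly evaluating $\Energy(\gamma_{\mathrm{circle}})$ for the unit-length round circle (for which $\Op{\gamma}{s}\gamma$ and $\mu_\gamma$ can be computed explicitly from \cref{eq:Op}, \cref{eq:muGeometric}) to confirm that the constant on the right is indeed $\uppi^2$ in our convention, and then appeal to Volkmann's minimality result. No other technicalities arise: the remainder is just the scaling identity above.
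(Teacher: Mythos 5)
There is a genuine gap in the middle step. You write that Volkmann's \cite[Corollary~5.12]{volkmann1} "asserts the sharp lower bound $\Energy(\xi) \geq \uppi^2$" for unit-length curves, but that is not what it says: Volkmann's result bounds the \emph{classical} tangent-point integral $\int\!\!\int R_{\TP}(\xi)^{-2}\,\dd\omega_\xi\dd\omega_\xi$ from below by $4\uppi^2$, i.e.\ the functional with exponent $4$ in the denominator (equivalently $\TP^{(2,4)}$), not the generalized energy $\Energy = \TP^{(2,2s+1)}$ of \cref{eq:E}. The two only agree at $s=3/2$, which is outside the allowed range. So you cannot simply "check factors of $2$ and line-element weights"; you are quoting a lower bound for the wrong functional, and the round-circle-minimality heuristic you sketch at the end would have to be reproved for $\Energy$.

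What makes the proof work is precisely the reduction to unit arc length: for the rescaled curve $\eta$ one has $\abs{\eta(y)-\eta(x)} \leq 1$, and since $2s-3 > 0$ this gives the pointwise comparison
\begin{equation*}
    \frac{\abs{P_\eta(x)(\eta(y)-\eta(x))}^2}{\abs{\eta(y)-\eta(x)}^{4}}
    =
    \frac{\abs{P_\eta(x)(\eta(y)-\eta(x))}^2}{\abs{\eta(y)-\eta(x)}^{2s+1}}\,\abs{\eta(y)-\eta(x)}^{2s-3}
    \leq
    \frac{\abs{P_\eta(x)(\eta(y)-\eta(x))}^2}{\abs{\eta(y)-\eta(x)}^{2s+1}}
    ,
\end{equation*}
so the integrand of $\Energy(\eta)$ dominates that of the classical energy, and integrating yields $\uppi^2 \leq \Energy(\eta) = \Energy(\gamma)\,\ArcLength(\gamma)^{2s-3}$. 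This comparison is the essential content of the lemma; without it, the claim for the generalized energy does not follow from Volkmann. Your scaling reduction is correct (and you don't even need the arc-length reparameterization step, since $\Energy$ and $\ArcLength$ are already reparameterization-invariant), but the proposal as written leaves the actual inequality unproved.
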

\begin{proof}
    Let $\gamma \in \Mfld$ and rescale it to $\eta \ceq \gamma / \ArcLength(\gamma)$.    
    This is a curve of length $1$.
    Its tangent-point energy is given by $\Energy(\eta) = \Energy(\gamma) \, \ArcLength(\gamma)^{2s-3}$.
	From \cite[Corollary~5.12]{volkmann1}
	we know that
	\begin{align*}
		4 \, \uppi^2 
		\leq 
        \int_{\Domain} \! \int_{\Domain} 
            \frac{1}{R_{\TP}(\eta)(x,y)^2} 
        \dd \omega_\eta(y)\dd\omega_\eta(x) 
        =		
        4
        \!
        \int_{\Domain} \! \int_{\Domain} 
            \frac{\abs{P_\eta(x) \pars{\eta(y)-\eta(x)}}^2}{\abs{\eta(y)-\eta(x)}^4} 
        \dd \omega_\eta(y)\dd\omega_\eta(x)
        ,
	\end{align*}
    where $R_{\TP}$ denotes the \emph{tangent-point radius}.
	Because of $\abs{\eta(x)-\eta(y)}\leq 1$, we obtain
	\begin{equation*}
		\uppi^2 
		\leq
		\iint_{\Domain} 
			\frac{\abs{P_\eta(x) \pars{\eta(y)-\eta(x)}}^2}{\abs{\eta(y)-\eta(x)}^{2s}} 
            \abs{\eta(y)-\eta(x)}^{2s-3}
        \dd \mu_{\eta}(x,y)
        \leq \Energy(\eta) = \Energy(\gamma) \, \ArcLength(\gamma)^{2s-3}
        .
	\end{equation*}
\end{proof}

\begin{corollary}\label{lem:LengthBoundedFromBelow}
	The function 
    $\gamma \mapsto 1 / \ArcLength(\gamma)$ is bounded on $G$-bounded sets.
\end{corollary}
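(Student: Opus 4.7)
The plan is to combine the previous lemma with the global Lipschitz continuity of $\sqrt{\Energy}$ established in \cref{thm:EnergyLipschitz}. From \cref{lem:LowerEnergyBound} we have the scale-invariant inequality
\[
    \uppi^2 \leq \Energy(\gamma)\, \ArcLength(\gamma)^{2s-3}
    \quad\text{for every }\gamma\in\Mfld.
\]
Since $s\in\intervaloo{3/2,2}$, the exponent $2s-3$ is strictly positive, so we may solve for the reciprocal of the arc length and obtain
\[
    \frac{1}{\ArcLength(\gamma)} \leq \pars[\Big]{\fdfrac{\Energy(\gamma)}{\uppi^2}}^{\!1/(2s-3)}.
\]
By \cref{thm:EnergyLipschitz}, the functional $\Energy$ is bounded on every $G$-bounded set; composing with the continuous, monotone map $t\mapsto (t/\uppi^2)^{1/(2s-3)}$ immediately yields the desired bound on $1/\ArcLength$. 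No serious obstacle is anticipated; the only point worth flagging is the sign of the exponent $2s-3$, which requires the standing assumption $s>3/2$ (the threshold identified earlier in the paper) in order to turn the lower bound on $\Energy\,\ArcLength^{2s-3}$ into an upper bound on $1/\ArcLength$.
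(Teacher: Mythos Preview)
Your proof is correct and follows essentially the same approach as the paper: both rearrange the inequality from \cref{lem:LowerEnergyBound} using $2s-3>0$ to bound $1/\ArcLength(\gamma)$ by a power of $\Energy(\gamma)$, and then invoke \cref{thm:EnergyLipschitz} to conclude that $\Energy$ is bounded on $G$-bounded sets.
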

\begin{proof}
    Let $\BoundedSet \subset \Mfld$ be $G$-bounded.
	By \cref{thm:EnergyLipschitz} and \cref{lem:ArcLengthLipschitz}, there is $0<\Energy_{\max}<\infty$ such that 
	\begin{equation*}
		\Energy(\gamma)\leq \Energy_{\max} 
        \quad
        \text{holds true for all $\gamma\in\BoundedSet$.}
	\end{equation*}
    Now \cref{lem:LowerEnergyBound} and $2 \, s - 3 > 0$ imply
    \begin{align*}
        \ArcLength(\gamma) 
        \geq 
        \uppi^{2/(2s-3)} \Energy(\gamma)^{-1/(2s-3)}
        &\geq 
        \ArcLength_{\min} 
        \ceq 
        \uppi^{2/(2s-3)} \Energy_{\max}^{-1/(2s-3)}
        .
    \end{align*}
\end{proof}

Note that this approach for bounding $1/\ArcLength$ fails in the scale-invariant case $s=3/2$.
Indeed, for $s=3/2$ the tangent-point energy does not penalize scaling down and therefore does not prevent the embedding from shrinking to a single point.
Note also that it is crucial here that all the curves in $\Mfld$ are \emph{closed}. 
In particular, the tangent-point energy vanishes on straight line segments.

\begin{remark}
    In the same vein one may show,
    relying on $\Energy(\gamma)\geq \pi^2 \ArcLength(\gamma)^{3-2s}$,
    that
    the function $\gamma \mapsto 1 / \Energy(\gamma)$ is bounded on $G$-bounded sets.
    This fact might be of interest on its own. 
    However, in what follows we only require upper bounds on $\Energy$, no lower bounds.
\end{remark}

Now we are in the position of making the norm equivalence from \cref{lem:NormEquiv1}  uniform on $G$-bounded sets:

\begin{lemma}\label{lem:NormEquivOnBoundedSets}
For each $G$-bounded set $\BoundedSet \subset \Mfld$ there are $0 < c \leq C <\infty$ such that 
\begin{equation*}
    c \norm{u}_{G_{\gamma}} 
    \leq
    \norm{u}_{\Bessel[s](\gamma)}
    \leq
    C \norm{u}_{G_{\gamma}} 
    \quad 
    \text{hold true for all $\gamma \in \BoundedSet$ and all $u\in \HSpace$.}
\end{equation*}
\end{lemma}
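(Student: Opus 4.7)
The plan is to read off the two-sided bound from \cref{lem:NormEquiv2} and \cref{lem:NormEquiv1} and then show that each $\gamma$-dependent coefficient appearing there is uniformly bounded on the $G$-bounded set $\BoundedSet$. Explicitly, \cref{lem:NormEquiv2} supplies
\[
    \norm{u}_{\Bessel[s](\gamma)} \leq \SmashedSqrt{32 + C \, \ArcLength(\gamma) \, \Energy(\gamma)^{(\alpha+1)/\alpha}}\; \norm{u}_{G_\gamma},
\]
while \cref{lem:NormEquiv1} gives $\norm{u}_{G_\gamma} \leq F_G(\gamma) \, \norm{u}_{\Bessel[s](\gamma)}$ with $F_G(\gamma)$ as in \cref{eqn:DefinitionFG}. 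Once we bound these constants uniformly in $\gamma \in \BoundedSet$, the claim follows with $C$ equal to the supremum of the former and $c$ equal to the reciprocal of the supremum of $F_G$.

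The bulk of the required uniform bounds is already available. \cref{thm:EnergyLipschitz} and \cref{lem:ArcLengthLipschitz} yield finite suprema for $\Energy$ and $\ArcLength$ on $\BoundedSet$, which immediately controls the constant coming from \cref{lem:NormEquiv2} and all factors of $\Energy$ and positive powers of $\ArcLength$ in \cref{eqn:DefinitionFG}. The negative-exponent factor $\ArcLength(\gamma)^{2s-5}$ (with $2s - 5 < 0$) is handled by \cref{lem:LengthBoundedFromBelow}, which provides a positive lower bound on $\ArcLength$.

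The only remaining ingredient of $F_G(\gamma)$ is the seminorm factor $\seminorm{\gamma}_{\Bessel[s-1]}^2$ (which, traced back to \cref{lem:RsEquiBounded}, is essentially $\seminorm{\gamma}_{\Bessel[s](\gamma)}^2$). I would dispose of it through a mildly self-referential trick: apply \cref{lem:NormEquiv2} with $u = \gamma$. The identities \cref{eq:MetricAndEnergy} give $B_\gamma^1(\gamma,\gamma) = B_\gamma^2(\gamma,\gamma) = \Energy(\gamma)$, while $\seminorm{\gamma}_{\Bessel[1](\gamma)}^2 = \int_\Domain \abs{D_\gamma \gamma}^2 \, \dLineElC = \ArcLength(\gamma)$. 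Together these yield
\[
    \seminorm{\gamma}_{\Bessel[s](\gamma)}^2 \leq 160 \, \Energy(\gamma) + C \, \ArcLength(\gamma)^2 \, \Energy(\gamma)^{(\alpha+1)/\alpha},
\]
which is uniformly bounded on $\BoundedSet$. Translating back to the classical seminorm via \cref{lem:HsgammaHs} (whose equivalence constants themselves depend only on $\ArcLength(\gamma)$, $\Energy(\gamma)$, and quantities we have already shown to be controlled) completes the uniform bound on $F_G$. The main, and only, subtle step is this self-referential control of $F_G(\gamma)$ via the $u = \gamma$ substitution; otherwise the argument is a straightforward assembly of the preceding estimates.
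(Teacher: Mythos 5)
Your proposal is correct and follows the same overall strategy as the paper: read off the two-sided bound from \cref{lem:NormEquiv2} and \cref{lem:NormEquiv1}, then control the $\gamma$-dependent constants uniformly on $\BoundedSet$ by invoking the Lipschitz bounds on $\Energy$ and $\ArcLength$ together with the lower bound on $\ArcLength$. The paper, however, compresses the argument to the assertion that $F_1$ and $F_2$ can be written as continuous functions of $(\ArcLength(\gamma),\Energy(\gamma))$ alone and then invokes compactness of $\intervalcc{\ArcLength_{\min},\ArcLength_{\max}}\times\intervalcc{0,\Energy_{\max}}$. That assertion is not immediate from \cref{lem:NormEquiv1} because $F_G(\gamma)$ in \cref{eqn:DefinitionFG} still carries the factor $\seminorm{\gamma}_{\Bessel[s](\gamma)}^2$ (via $C_s(\gamma)$ from \cref{lem:RsEquiBounded}); you correctly identify this as the one genuine subtlety. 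Your self-referential step---applying \cref{lem:NormEquiv2} with $u=\gamma$ and using $B_\gamma^1(\gamma,\gamma)=B_\gamma^2(\gamma,\gamma)=\Energy(\gamma)$ and $\seminorm{\gamma}_{\Bessel[1](\gamma)}^2=\ArcLength(\gamma)$ to obtain $\seminorm{\gamma}_{\Bessel[s](\gamma)}^2\leq 160\,\Energy(\gamma)+C\,\ArcLength(\gamma)^2\Energy(\gamma)^{(\alpha+1)/\alpha}$---is exactly the right way to close that gap and is a genuine improvement in explicitness over the paper's terse presentation.

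One small caveat: your closing sentence about ``translating back to the classical seminorm via \cref{lem:HsgammaHs}'' is superfluous and, if taken at face value, would be circular. The equivalence constants $f_{\Bessel[s]}(\gamma)$ and $F_{\Bessel[s]}(\gamma)$ in \cref{lem:HsgammaHs} involve $\norm{\Speed[\gamma]}_{\Lebesgue[\infty]}$, $\norm{\InvSpeed[\gamma]}_{\Lebesgue[\infty]}$, and $\seminorm{\InvSpeed[\gamma]}_{\Bessel[s-1]}$, which are only controlled in \cref{lem:BiLipBound} and \cref{lem:HIsBounded}; those lemmata are proved \emph{after} the present one and in fact rely on it. Fortunately you do not actually need this translation: the seminorm appearing in $C_s(\gamma)$ is the geometric one $\seminorm{\gamma}_{\Bessel[s](\gamma)}$ (the $\seminorm{\gamma}_{\Bessel[s-1]}$ in \cref{eqn:DefinitionFG} looks like a typo), and your $u=\gamma$ bound controls it directly. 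Dropping that last sentence leaves a fully correct and non-circular proof.
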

\begin{proof}
From \cref{lem:NormEquiv2} and \cref{lem:NormEquiv1}
we know that there are continuous functions $F_1$, $F_2 \colon \intervaloo{0,\infty} \times \intervalco{0,\infty} \to \intervalco{0,\infty}$ such that
\begin{equation*}
    \norm{u}_{\Bessel[s](\gamma)}
    \leq 
    F_1(\ArcLength(\gamma),\Energy(\gamma))
    \norm{u}_{G_\gamma}
    \qand 
    \norm{u}_{G_\gamma}
    \leq 
    F_2(\ArcLength(\gamma),\Energy(\gamma))
    \norm{u}_{\Bessel[s](\gamma)}
    .
\end{equation*}

By \cref{thm:EnergyLipschitz}, \cref{lem:LengthBoundedFromBelow}, and \cref{lem:ArcLengthLipschitz} there are $0 < \ArcLength_{\min} \leq \ArcLength_{\max} < \infty$ 
and
 $0 < \Energy_{\max} < \infty$ such that 
$\ArcLength(\gamma) \in \intervalcc{\ArcLength_{\min},\ArcLength_{\max}}$ and $\Energy(\gamma) \in \intervalcc{0,\Energy_{\max}}$ hold true for all $\gamma \in \BoundedSet$.
The set $\intervalcc{\ArcLength_{\min},\ArcLength_{\max}} \times \intervalcc{0,\Energy_{\max}} \subset \intervaloo{0,\infty} \times \intervalco{0,\infty}$ is compact;
hence the continuous functions $F_1$ and $F_2$ are bounded on this set.
This completes the proof.
\end{proof}

Our next aim is to make also the norm equivalence \cref{lem:HsgammaHs} uniform on $G$-bounded sets.

\begin{lemma}\label{lem:HsgammaHsOnBoundedSets}
    For each $G$-bounded set $\BoundedSet \subset \Mfld$ there are $0 < c \leq C <\infty$ such that 
\begin{equation*}
    c \norm{u}_{\Bessel[s]}
    \leq
    \norm{u}_{\Bessel[s](\gamma)}
    \leq
    C \norm{u}_{\Bessel[s]}
    \quad 
    \text{hold true for all $\gamma \in \BoundedSet$ and all $u\in \HSpace$.}
\end{equation*}
\end{lemma}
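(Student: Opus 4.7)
The plan is to reduce the desired uniform norm equivalence to uniform bounds on the five quantities
$\norm{\Speed[\gamma]}_{\Lebesgue[\infty]}$,
$\norm{\InvSpeed[\gamma]}_{\Lebesgue[\infty]}$,
$\seminorm{\Speed[\gamma]}_{\Bessel[s-1]}$,
$\seminorm{\InvSpeed[\gamma]}_{\Bessel[s-1]}$,
and $\ArcLength(\gamma)^{\pm 1}$ that enter the explicit formulas \eqref{eqn:DefinitionF} and \eqref{eq:Definitionf} for $F_{\Bessel[s]}(\gamma)$ and $f_{\Bessel[s]}(\gamma)$ in \cref{lem:HsgammaHs}; once these are controlled on the $G$-bounded set $\BoundedSet$, substitution into \cref{lem:HsgammaHs} immediately yields the claim. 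The $\ArcLength$-bounds are already provided by \cref{lem:ArcLengthLipschitz} and \cref{lem:LengthBoundedFromBelow}, so the essential task is to control the four speed-related quantities. My strategy is a two-stage Grönwall argument along $G$-paths.

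The key uniform input is the estimate $\norm{D_\gamma u}_{\Lebesgue[\infty]} \leq C_1 \norm{u}_{G_\gamma}$ on $\BoundedSet$, which follows by chaining the geometric Morrey inequality \cref{lem:MorreyInequalityGeometric}, the lower bound on $\ArcLength(\gamma)$, and \cref{lem:NormEquivOnBoundedSets}. Fix $\gamma_0 \in \Mfld$ and $0 < D < \infty$ with $\BoundedSet \subset \myset{\gamma \in \Mfld}{\dist[G][\gamma][\gamma_0] \leq D}$, and, for each $\gamma \in \BoundedSet$, pick a $\Holder[1]$-path $\Path \colon \UnitInterval \to \Mfld$ from $\gamma_0$ to $\gamma$ with $\PathLength(\Path) \leq 2 D$. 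A direct calculation yields the pointwise identity
\[
    \partial_t \log \abs{\pars{\Path(t)}'(x)}
    =
    \inner[\big]{D_{\Path(t)} \Path(t)(x), D_{\Path(t)} \dotPath(t)(x)},
\]
whose right-hand side is bounded in modulus by $\norm{D_{\Path(t)} \dotPath(t)}_{\Lebesgue[\infty]} \leq C_1 \norm{\dotPath(t)}_{G_{\Path(t)}}$. Integrating in $t$ and taking essential suprema in $x$ yields $\norm{\Speed[\gamma]}_{\Lebesgue[\infty]} \leq \norm{\Speed[\gamma_0]}_{\Lebesgue[\infty]} \, \ee^{2 C_1 D}$, and the analogous derivation applied to $1/\abs{\pars{\Path(t)}'(x)}$ yields the same exponential bound on $\norm{\InvSpeed[\gamma]}_{\Lebesgue[\infty]}$.

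For the fractional seminorms, the Lipschitz estimates $\abs[\big]{\abs{v} - \abs{w}} \leq \abs{v - w}$ and $\abs{\abs{v}^{-1} - \abs{w}^{-1}} \leq \norm{\InvSpeed[\gamma]}_{\Lebesgue[\infty]}^2 \abs{v - w}$, applied under the Gagliardo integral, give
\[
    \seminorm{\Speed[\gamma]}_{\Bessel[s-1]} \leq \norm{\gamma}_{\Bessel[s]}
    \qand
    \seminorm{\InvSpeed[\gamma]}_{\Bessel[s-1]} \leq \norm{\InvSpeed[\gamma]}_{\Lebesgue[\infty]}^2 \, \norm{\gamma}_{\Bessel[s]},
\]
so only a uniform bound on $\norm{\gamma}_{\Bessel[s]}$ is missing. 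Setting $\Phi(t) \ceq \norm{\Path(t)}_{\Bessel[s]}$ and combining \cref{lem:HsgammaHs}, \cref{lem:NormEquivOnBoundedSets}, and the previous bullet produces constants $a, b$ depending only on $\BoundedSet$ (through the already-bounded quantities) such that $\norm{\dotPath(r)}_{\Bessel[s]} \leq \pars[\big]{a + b \, \Phi(r)} \norm{\dotPath(r)}_{G_{\Path(r)}}$. Plugging this into $\Phi(t) \leq \Phi(0) + \int_0^t \norm{\dotPath(r)}_{\Bessel[s]} \dd r$ and using $\int_0^1 \norm{\dotPath(r)}_{G_{\Path(r)}} \dd r \leq 2 D$, linear Grönwall yields the desired uniform bound on $\Phi(1) = \norm{\gamma}_{\Bessel[s]}$.

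The main obstacle is precisely this closure of the bootstrap loop: $f_{\Bessel[s]}(\gamma)^{-2}$ contains the quadratic term $\seminorm{\Speed[\gamma]}_{\Bessel[s-1]}^2$, so the square root $f_{\Bessel[s]}(\gamma)^{-1}$ is only affine-linear in $\seminorm{\Speed[\gamma]}_{\Bessel[s-1]}$, which in turn the Lipschitz estimate controls linearly by $\norm{\gamma}_{\Bessel[s]}$; this overall linear dependence is exactly what makes classical linear Grönwall applicable. Substituting all five now-uniform bounds back into \eqref{eqn:DefinitionF} and \eqref{eq:Definitionf} produces constants $0 < c \leq C < \infty$ realizing the claimed norm equivalence on $\BoundedSet$.
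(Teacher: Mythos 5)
Your proposal is correct and takes essentially the same approach as the paper: reduce to uniform control of $\norm{\Speed[\gamma]}_{\Lebesgue[\infty]}$, $\norm{\InvSpeed[\gamma]}_{\Lebesgue[\infty]}$, $\seminorm{\Speed[\gamma]}_{\Bessel[s-1]}$, $\seminorm{\InvSpeed[\gamma]}_{\Bessel[s-1]}$, and $\ArcLength(\gamma)^{\pm 1}$ in the explicit constants of \cref{lem:HsgammaHs}, then obtain these via a pointwise Grönwall argument along $G$-paths (using the uniform Morrey bound $\norm{D_\gamma u}_{\Lebesgue[\infty]} \lesssim \norm{u}_{G_\gamma}$) for the $\Lebesgue[\infty]$ quantities, followed by a second Grönwall on the $\Bessel[s]$-norm via the reverse-triangle and product Lipschitz estimates for the Gagliardo seminorms. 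The paper packages exactly this program into \cref{lem:LinftyDominatedByG}, \cref{lem:BiLipBound}, and \cref{lem:HIsBounded}, whereas you carry it out inline; the minor variants (logarithmic-derivative form of the first Grönwall, Grönwalling the full norm $\norm{\Path(t)}_{\Bessel[s]}$ rather than the seminorm) are both sound.
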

\begin{proof}
    From \cref{lem:HsgammaHs} we know that 
    $f_{\Bessel[s]}(\gamma) \norm{u}_{\Bessel[s]}
    \leq
    \norm{u}_{\Bessel[s](\gamma)}
    \leq
    F_{\Bessel[s]}(\gamma) \norm{u}_{\Bessel[s]}$.
    Closer inspection of $f_{\Bessel[s]}(\gamma)$ and $F_{\Bessel[s]}(\gamma)$ reveals that we also need control over 
    $\norm{\Speed[\gamma]}_{\Lebesgue[\infty]}$, $\seminorm{\Speed[\gamma]}_{\Bessel[s-1]}$,
    $\norm{\InvSpeed[\gamma]}_{\Lebesgue[\infty]}$, and $\seminorm{\InvSpeed[\gamma]}_{\Bessel[s-1]}$.  
    This will be provided in \cref{lem:BiLipBound} and \cref{lem:HIsBounded} below.
\end{proof}

The remainder of this subsection will be devoted to completing the proof of \cref{lem:HsgammaHsOnBoundedSets}. 
We start with the following auxiliary lemma.
\begin{lemma}\label{lem:LinftyDominatedByG}
For each $G$-bounded set $\BoundedSet \subset \Mfld$ there is $0 < K <\infty$ such that 
\begin{equation*}
    \norm{D_\gamma u}_{\Lebesgue[\infty]}
    \leq
    K \norm{u}_{G_{\gamma}} 
    \quad 
    \text{holds true for all $\gamma \in \BoundedSet$ and all $u\in \HSpace$.}
\end{equation*}
\end{lemma}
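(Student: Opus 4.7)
The plan is to reduce this to the geometric Morrey inequality already in hand, exploiting the uniform two-sided control of $\ArcLength$ and of the norm $\norm{\cdot}_{\Bessel[s](\gamma)}$ on $G$-bounded sets that the preceding lemmata provide.

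First I would apply \cref{lem:MorreyInequalityGeometric} to the pair $(\gamma,u)$, which yields the pointwise bound
\[
    \norm{D_\gamma u}_{\Lebesgue[\infty]}
    \leq
    C_{\Morrey,s-1} \, \ArcLength(\gamma)^{\alpha-1} \, \seminorm{u}_{\Bessel[s](\gamma)}
    \qwith \alpha = s - \tfrac{3}{2} \in \intervaloo{0,\tfrac{1}{2}},
\]
so in particular the exponent $\alpha - 1$ is strictly negative. Next I would uniformly control the factor $\ArcLength(\gamma)^{\alpha-1}$ on $\BoundedSet$: \cref{lem:ArcLengthLipschitz} gives an upper bound $\ArcLength(\gamma) \leq \ArcLength_{\max}$, and \cref{lem:LengthBoundedFromBelow} provides a strictly positive lower bound $\ArcLength(\gamma) \geq \ArcLength_{\min} > 0$. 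Since $\alpha - 1 < 0$, this implies $\ArcLength(\gamma)^{\alpha-1} \leq \ArcLength_{\min}^{\alpha-1}$ on $\BoundedSet$.

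It then remains to dominate the seminorm factor by the $G$-norm. Trivially $\seminorm{u}_{\Bessel[s](\gamma)} \leq \norm{u}_{\Bessel[s](\gamma)}$, and \cref{lem:NormEquivOnBoundedSets} furnishes a constant $C$, independent of $\gamma \in \BoundedSet$, with $\norm{u}_{\Bessel[s](\gamma)} \leq C \norm{u}_{G_\gamma}$. Assembling the three estimates yields the claim with $K \ceq C_{\Morrey,s-1} \, \ArcLength_{\min}^{\alpha-1} \, C$.

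No genuine obstacle arises; this is essentially a corollary that aggregates previously established pieces. The only subtlety worth flagging is that the exponent $\alpha - 1$ is negative, which is precisely why the lower bound on $\ArcLength$ from \cref{lem:LengthBoundedFromBelow} is indispensable here. This lower bound in turn relies on the scale-sensitivity of $\Energy$ encoded in \cref{lem:LowerEnergyBound}, reflecting the fact that the present argument genuinely uses $s > 3/2$ and would break down in the scale-invariant case.
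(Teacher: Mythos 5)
Your proof is correct and follows essentially the same route as the paper: apply the geometric Morrey inequality \cref{lem:MorreyInequalityGeometric}, control the (negatively exponentiated) arc-length factor via the lower bound from \cref{lem:LengthBoundedFromBelow}, and then dominate $\seminorm{u}_{\Bessel[s](\gamma)}$ by $\norm{u}_{G_\gamma}$ using \cref{lem:NormEquivOnBoundedSets}. The paper's version is identical in substance (it writes the factor as $\ArcLength(\gamma)^{s-5/2}$, which equals your $\ArcLength(\gamma)^{\alpha-1}$), and your remark that only the lower bound on $\ArcLength$ is actually needed because the exponent is negative is exactly right.
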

\begin{proof}
    From combining the above arguments with the Morrey inequality (\cref{lem:MorreyInequalityGeometric}), we infer
	\begin{equation*}
		\norm{D_\gamma u}_{\Lebesgue[\infty]}
		\leq
		C_{\Morrey,s-1} \, 
		\ArcLength(\gamma)^{s-5/2} 
		\seminorm{ u }_{\Bessel[s](\gamma)}
		\leq 
		C_{\Morrey,s-1} \, 
        \pars[\Big]{\inf_{\eta \in \BoundedSet} \ArcLength(\eta) }^{s-5/2} 
        C
		\norm{u}_{G_\gamma}
        .
	\end{equation*}
\end{proof}

The second statement of the following lemma reveals why $G$ prevents immersions from degenerating into curves with vanishing velocity.

\begin{lemma}\label{lem:BiLipBound}
    The maps $\gamma \mapsto \norm{\Speed[\gamma]}_{\Lebesgue[\infty]}$
    and 
    $\gamma \mapsto \norm{\InvSpeed[\gamma]}_{\Lebesgue[\infty]}$
    are bounded on $G$-bounded sets.
\end{lemma}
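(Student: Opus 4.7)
The plan is to establish a pointwise Lipschitz-type estimate for $\log \Speed[\gamma](x)$ with respect to the geodesic distance $\dist[G]$; the claim then follows immediately, since both $\Speed[\gamma_0]$ and $\InvSpeed[\gamma_0]$ are bounded on $\Circle$ for any fixed reference $\gamma_0 \in \Mfld$. To set up the constants, I would fix $\gamma_0 \in \Mfld$ and $r > 0$ with $\BoundedSet \subset \braces{\eta \in \Mfld : \dist[G](\eta,\gamma_0) < r}$, and introduce the slightly larger $G$-ball $\BoundedSet' \ceq \braces{\eta \in \Mfld : \dist[G](\eta,\gamma_0) < 2r}$. Applying \cref{lem:LinftyDominatedByG} to $\BoundedSet'$ yields a constant $K > 0$ with $\norm{D_\eta u}_{\Lebesgue[\infty]} \leq K \norm{u}_{G_\eta}$ for all $\eta \in \BoundedSet'$ and $u \in \HSpace$.

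Given $\gamma \in \BoundedSet$, I would pick a $C^1$-path $\Path \colon \UnitInterval \to \Mfld$ from $\gamma_0$ to $\gamma$ of length at most $2r$; the triangle inequality for $\dist[G]$ along $\Path$ guarantees $\Path(t) \in \BoundedSet'$ for every $t$. Fix $x \in \Circle$ and set $\varphi(t) \ceq \log \Speed[\Path(t)](x)$. Since $s > 3/2$, the map $\HSpace \to \AmbSpace$, $u \mapsto u'(x)$ is bounded linear, so $t \mapsto \Path(t)'(x)$ is a $C^1$-curve in $\AmbSpace \setminus \braces{0}$ and hence $\varphi$ is $C^1$. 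Formula \cref{eq:DSpeed} together with $\abs{D_\gamma \gamma} = 1$ and the Cauchy--Schwarz inequality then give
\[
    \abs{\dot\varphi(t)}
    =
    \abs[\big]{\inner{D_{\Path(t)} \Path(t)(x),\, D_{\Path(t)} \dotPath(t)(x)}}
    \leq
    \abs{D_{\Path(t)} \dotPath(t)(x)}
    \leq
    K \norm{\dotPath(t)}_{G_{\Path(t)}}.
\]
Integrating over $t \in \UnitInterval$ yields $\abs{\varphi(1) - \varphi(0)} \leq K \, \PathLength(\Path) \leq 2Kr$.

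Exponentiating produces $\Speed[\gamma](x) \leq e^{2Kr}\, \Speed[\gamma_0](x)$ and $\InvSpeed[\gamma](x) \leq e^{2Kr}\, \InvSpeed[\gamma_0](x)$ for every $x \in \Circle$. Since $\gamma_0 \in \HSpace$ embeds into $\Holder[1][][\Circle][\AmbSpace]$ and is an immersion of the compact $\Circle$, both $\norm{\Speed[\gamma_0]}_{\Lebesgue[\infty]}$ and $\norm{\InvSpeed[\gamma_0]}_{\Lebesgue[\infty]}$ are finite, so taking the supremum over $x$ closes the proof with bound $e^{2Kr} \max\braces{\norm{\Speed[\gamma_0]}_{\Lebesgue[\infty]}, \norm{\InvSpeed[\gamma_0]}_{\Lebesgue[\infty]}}$. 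The only mildly delicate point is the pointwise $C^1$-regularity of $t \mapsto \Path(t)'(x)$, which is handled by the Morrey embedding $\Bessel[s-1] \embeds \Holder[0]$ used throughout \cref{sec:Preliminaries}; beyond this and the already-established \cref{lem:LinftyDominatedByG} together with the product-rule identity \cref{eq:DSpeed}, no further ingredient is required, so I do not anticipate a serious obstacle.
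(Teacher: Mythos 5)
Your proof is correct and follows essentially the same route as the paper: a $C^1$-path of $G$-length $\leq 2r$ from $\gamma_0$ to $\gamma$, the pointwise formula \cref{eq:DSpeed} with $\abs{D_\gamma\gamma}=1$, the bound from \cref{lem:LinftyDominatedByG} on the enlarged ball $B_{2r}^G(\gamma_0)$, and integration along the path. The only cosmetic difference is that you package the Gr\"onwall step as a direct estimate on $\log\Speed[\Path(t)](x)$ while the paper invokes the Gr\"onwall inequality (\cref{thm:GronwallDifferential}) explicitly on $\Speed$ and $\InvSpeed$; the resulting bound $\ee^{2Kr}$ is identical.
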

\begin{proof}
    Let $\BoundedSet \subset \Mfld$ be a $G$-bounded set.
    Then there are $\gamma_0\in \Mfld$ and $0 < R < \infty$ such that $\BoundedSet \subset B_R^{G}(\gamma_0)$.
    For $\gamma_1 \in \BoundedSet$, we may pick a  path $\Path \in \Holder[1][][\intervalcc{0,1}][\Mfld]$ of  length $\leq 2 \, R$ that connects $\gamma_0$ with $\gamma_1$.
    Note that $\Path(\intervalcc{0,1}) \subset B_{2 R}^{G}(\gamma_0)$.
    We define the functions $f$, $g \colon \intervalcc{0,1} \times \Circle \to \R$ by
    \[
        f(t,x) \ceq \Speed[\Path(t)](x) = \abs{\partial_x \Path(t,x)}
        \qand 
        g(t,x) \ceq \InvSpeed[\Path(t)](x) = 1/\abs{\partial_x \Path(t,x)}
        .
    \]
    We intend to apply a Grönwall argument to bound $f$ and $g$.
    From \cref{eq:DSpeed} and \cref{eq:DInvSpeed} we infer
    \begin{align*}
        \abs*{\partial_t f(t,x)}
        &\leq
        g(t,x) \norm{ D_{\Path(t)} \dot \Path(t)}_{\Lebesgue[\infty]}
        \qand
        \abs*{\partial_t g(t,x)}
        \leq
        g(t,x) \norm{ D_{\Path(t)} \dot \Path(t)}_{\Lebesgue[\infty]}
        .
    \end{align*}
    Employing \cref{lem:LinftyDominatedByG} on the bounded set $B_{2 R}^{G}(\gamma_0)$ with $\gamma = \Path(t)$, and $u = \dot \Path(t)$ yields a $0 < K < \infty$ such that
    \begin{equation*}
        \abs*{\partial_t f(t,x)} 
        \leq 
        K \, f(t,x) \norm{ D_{\Path(t)} \dot \Path(t)}_{G_{\Path(t)}}
        \qand
        \abs*{\partial_t g(t,x)} 
        \leq 
        K \, g(t,x) \norm{ D_{\Path(t)} \dot \Path(t)}_{G_{\Path(t)}}
        .
    \end{equation*}
    Now the Grönwall inequality (\cref{thm:GronwallDifferential}) leads us to
	\begin{equation*}
        f(1,x)
        \leq
        f(0,x) \exp \!\pars[\big]{ \textstyle K \int_0^1 \norm{\dot \Path}_{G_{\Path}} \dd t }
        \qand
        g(1,x)
        \leq
        g(0,x) \exp \!\pars[\big]{ \textstyle K \int_0^1 \norm{\dot \Path}_{G_{\Path}} \dd t }
        .
    \end{equation*}
    Substituting the definitions of $f$ and $g$ and recalling that the path length of $\Path$ in $\Mfld$ is bounded by $2\,R$, we obtain
    \begin{equation*}
        \Speed[\gamma_1](x)
        \leq
        \ee^{2 K R} \, \Speed[\gamma_0](x)
        \qand
        \InvSpeed[\gamma_1](x)
        \leq
        \ee^{2 K R} \, \InvSpeed[\gamma_0](x)
        .
    \end{equation*}
    Passing to the suprema over $x \in \Circle$ proves the claim.
\end{proof}

\begin{lemma}\label{lem:HIsBounded}
    The maps 
    $\gamma \mapsto \seminorm{\Speed[\gamma]}_{\Bessel[s-1]}$ 
    and
    $\gamma \mapsto \seminorm{\InvSpeed[\gamma]}_{\Bessel[s-1]}$ are bounded on $G$-bounded sets.
\end{lemma}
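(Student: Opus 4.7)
The plan is to reduce both bounds to showing that $\gamma \mapsto \seminorm{\gamma}_{\Bessel[s]}$ is bounded on $G$-bounded sets, and then to establish that via a Grönwall argument along paths. The pointwise inequalities $\abs[\big]{\abs{a}-\abs{b}} \leq \abs{a-b}$ and $\abs[\big]{\tfrac{1}{\abs{a}}-\tfrac{1}{\abs{b}}} \leq \tfrac{\abs{a-b}}{\abs{a}\,\abs{b}}$, applied pointwise to $\gamma'(x)$ and $\gamma'(y)$, yield
$$\seminorm{\Speed[\gamma]}_{\Bessel[s-1]} \leq \seminorm{\gamma}_{\Bessel[s]}, \qquad \seminorm{\InvSpeed[\gamma]}_{\Bessel[s-1]} \leq \norm{\InvSpeed[\gamma]}_{\Lebesgue[\infty]}^2 \seminorm{\gamma}_{\Bessel[s]},$$
so by \cref{lem:BiLipBound} the reduction is complete.

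To bound $\seminorm{\gamma}_{\Bessel[s]}$ on a $G$-bounded set $\BoundedSet \subset \Mfld$, I would fix a reference $\gamma_0 \in \BoundedSet$, and for any $\gamma_1 \in \BoundedSet$ pick an $\Holder[1]$-path $\Path \colon \UnitInterval \to \Mfld$ from $\gamma_0$ to $\gamma_1$ of length $\leq R$ with $R = R(\BoundedSet)$; then $\Path(\UnitInterval)$ lies in a single $G$-bounded set $\BoundedSet'$. Setting $\psi(t) \ceq \seminorm{\Path(t)}_{\Bessel[s]}^2$, a Cauchy--Schwarz estimate on the Gagliardo bilinear form gives $\tfrac{d}{dt}\sqrt{\psi(t)} \leq \seminorm{\dot\Path(t)}_{\Bessel[s]}$. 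Writing $u' = \Speed[\gamma] \cdot D_\gamma u$ and using the elementary product inequality for Gagliardo seminorms (valid since $s-1 \in \intervaloo{1/2,1}$), I would obtain for $u = \dot\Path(t)$ and $\gamma = \Path(t) \in \BoundedSet'$ the key estimate
$$\seminorm{u}_{\Bessel[s]}^2 \leq 2 \norm{\Speed[\gamma]}_{\Lebesgue[\infty]}^2 \seminorm{D_\gamma u}_{\Bessel[s-1]}^2 + 2 \norm{D_\gamma u}_{\Lebesgue[\infty]}^2 \seminorm{\Speed[\gamma]}_{\Bessel[s-1]}^2.$$
The sole occurrence of $\seminorm{\Speed[\gamma]}_{\Bessel[s-1]}$ is bounded a priori by $\sqrt{\psi(t)}$. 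All other ingredients are controlled on $\BoundedSet'$: the $\Lebesgue[\infty]$-norms of $\Speed[\gamma]$ and $\InvSpeed[\gamma]$ by \cref{lem:BiLipBound}, $\norm{D_\gamma u}_{\Lebesgue[\infty]} \leq C \norm{u}_{G_\gamma}$ by \cref{lem:LinftyDominatedByG}, and
$$\seminorm{D_\gamma u}_{\Bessel[s-1]}^2 \leq \norm{\Speed[\gamma]}_{\Lebesgue[\infty]}^{2s-1} \norm{\InvSpeed[\gamma]}_{\Lebesgue[\infty]}^2 \seminorm{u}_{\Bessel[s](\gamma)}^2$$
via a pointwise bi-Lipschitz comparison of $\dist[\gamma]$ and $\dist[\Circle]$ combined with \cref{lem:NormEquivOnBoundedSets}. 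Altogether, there is $K = K(\BoundedSet')$ such that
$$\seminorm{\dot\Path(t)}_{\Bessel[s]} \leq K \norm{\dot\Path(t)}_{G_{\Path(t)}} \bigl(1+\sqrt{\psi(t)}\bigr),$$
and the standard Grönwall inequality (\cref{thm:GronwallDifferential}) applied to $t \mapsto 1 + \sqrt{\psi(t)}$ yields $1 + \sqrt{\psi(1)} \leq \bigl(1 + \seminorm{\gamma_0}_{\Bessel[s]}\bigr) e^{KR}$, uniform in $\gamma_1 \in \BoundedSet$.

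The main obstacle is an apparent circularity: \cref{lem:HsgammaHs}, the most natural tool for comparing $\norm{\cdot}_{\Bessel[s]}$ and $\norm{\cdot}_{\Bessel[s](\gamma)}$, has equivalence constants explicitly involving $\seminorm{\Speed[\gamma]}_{\Bessel[s-1]}$ and $\seminorm{\InvSpeed[\gamma]}_{\Bessel[s-1]}$, which are the very quantities we wish to bound. The Grönwall setup above breaks this circularity by controlling every Gagliardo factor other than $\seminorm{\Speed[\gamma]}_{\Bessel[s-1]}$ itself through $\Lebesgue[\infty]$ bi-Lipschitz comparisons plus the already proven \cref{lem:NormEquivOnBoundedSets}, and by absorbing $\seminorm{\Speed[\gamma]}_{\Bessel[s-1]}$ via the free estimate $\seminorm{\Speed[\gamma]}_{\Bessel[s-1]} \leq \seminorm{\gamma}_{\Bessel[s]} = \sqrt{\psi(t)}$ into the linear differential inequality for $\sqrt\psi$.
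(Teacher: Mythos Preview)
Your proof is correct and follows essentially the same strategy as the paper: both reduce to bounding $\seminorm{\gamma}_{\Bessel[s]}$ via a Grönwall argument along paths, with the circularity broken by absorbing $\seminorm{\Speed[\gamma]}_{\Bessel[s-1]} \leq \seminorm{\gamma}_{\Bessel[s]}$ into the linear differential inequality while controlling the remaining factors through \cref{lem:NormEquivOnBoundedSets} and \cref{lem:BiLipBound}. The only cosmetic difference is that the paper quotes the explicit constant $f_{\Bessel[s]}(\gamma)^{-1}$ from \cref{lem:HsgammaHs} (whose formula already contains $\seminorm{\Speed[\gamma]}_{\Bessel[s-1]}$), whereas you re-derive the equivalent product-type estimate $\seminorm{u}_{\Bessel[s]}^2 \leq 2\norm{\Speed[\gamma]}_{\Lebesgue[\infty]}^2 \seminorm{D_\gamma u}_{\Bessel[s-1]}^2 + 2\norm{D_\gamma u}_{\Lebesgue[\infty]}^2 \seminorm{\Speed[\gamma]}_{\Bessel[s-1]}^2$ inline.
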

\begin{proof}
  	We start by proving that $\gamma \mapsto \seminorm{\Speed[\gamma]}_{\Bessel[s-1]}$ is bounded on $G$-bounded sets.
    Let $\BoundedSet$, $R$, $\gamma_0$, $\gamma_1$, and $\Path$ be as in the previous proof.
    Due to the reverse triangle inequality, we have 
    \[
        \seminorm{\Speed[\gamma]}_{\Bessel[s-1]}^2
        =
        \int_\Circle \! \int_\Circle 
            \fdfrac{ \abs[\big]{\abs{\gamma'(y)} - \abs{\gamma'(x)}}^2 }{ \dist[\Circle](y,x)^{2s-2} }
            \, \dd \mu(x,y)
        \leq 
        \int_\Circle \! \int_\Circle 
            \fdfrac{ \abs[\big]{\gamma'(y) - \gamma'(x)}^2 }{ \dist[\Circle](y,x)^{2s-2} }
            \, \dd \mu(x,y)
        =
        \seminorm{\gamma}_{\Bessel[s]}^2,
    \]
    hence it suffices to bound the latter.
    In order to apply another Grönwall argument, we define 
    $
        f(t) \ceq \seminorm{\Path(t)}_{\Bessel[s]}
    $.
    Note that $\gamma \mapsto \seminorm{\gamma}_{\Bessel[s]}$ is $1$-Lipschitz with respect to the seminorm $\seminorm{\cdot}_{\Bessel[s]}$, hence $f$ is weakly differentiable, and we have
    \begin{equation}
    	\tfrac{\dd}{\dd t} f(t)
		\leq 
		\seminorm{\dot \Path(t)}_{\Bessel[s]}.
        \label{eq:NormEquivOnBoundedSetsA}
    \end{equation}
    Using \cref{lem:NormEquivOnBoundedSets} on the bounded set $B_{2R}^{G}(\gamma_0)$, there exists a uniform $C>0$ such that
    \[
    	\norm{\dot \Path(t)}_{\Bessel[s](\Path)} \leq C \norm{\dot \Path(t)}_{G_{\Path(t)}}
        \quad 
        \text{for all }
        t\in [0,1]
        .
    \]
    Next we use \cref{lem:HsgammaHs} to bound the stationary norm by the covariant norm:
    \begin{equation*}
    	\norm{\dot  \Path(t)}_{\Bessel[s]}
	    \leq
		F_{\Bessel[s]}(\Path(t))
		\norm{\dot \Path(t)}_{\Bessel[s](\Path(t))}
        \leq 
        C \, F_{\Bessel[s]}(\Path(t)) \norm{\dot \Path(t)}_{G_{\Path(t)}}
        ,
        \quad 
        \text{where}
    \end{equation*}
    \begin{equation*}
    	F_{\Bessel[s]}(\gamma)
	    =
        \pars[\Big]{
            \norm{\Speed[\gamma]}_{\Lebesgue[\infty]}
            +
            \pars{ 1 + 2 \norm{\Speed[\gamma]}_{\Lebesgue[\infty]}^{2s} }
            \norm{\InvSpeed[\gamma]}_{\Lebesgue[\infty]} 
            +
            2 \, C_{\Morrey,s-1}^2 
            \ArcLength(\gamma)^{2 s-5} 
            \seminorm{\Speed[\gamma]}_{\Bessel[s-1]}^2
        }^{1/2}
        .
    \end{equation*}
    With our knowledge from \cref{lem:BiLipBound}, we can bound $\norm{\Speed[\gamma]}_{\Lebesgue[\infty]}$ and  $\norm{\InvSpeed[\gamma]}_{\Lebesgue[\infty]}$ uniformly in $\gamma \in B_{2R}^{G}(\gamma_0)$.
    Moreover, we can also bound $\ArcLength(\gamma)$ uniformly on $B_{2R}^{G}(\gamma_0)$ from above and below.
    Thus, we can find $0 < k_1,\,k_2,\,K_1,\,K_2 < \infty$ such that
    \begin{align*}
    	F_{\Bessel[s]}(\Path(t))
        &\leq
        \pars[\big]{
            k_1
            +
            k_2 
            \seminorm{\Speed[\Path(t)]}_{\Bessel[s-1]}^{2}
        }^{1/2}
        \leq
        K_1
        +
        K_2 
        \seminorm{ \dot \Path(t)}_{\Bessel[s]}
        =
        K_1
        +
        K_2 \, f(t)
        .
    \end{align*}
    Inserting this into \cref{eq:NormEquivOnBoundedSetsA} we obtain
    \begin{align*}
        \abs{\dot f(t)}
        &\leq
        C \, K_1 \norm{\Path'(t)}_{G_{\Path(t)}}
        +
        C \, K_2 \norm{\Path'(t)}_{G_{\Path(t)}}
        \, f(t) 
        \quad
        \text{for all $t \in \intervalcc{0,1}$.}
    \end{align*}
    Hence, the fundamental theorem of calculus implies 
    \begin{align*}
        f(t) 
        &\leq
        f(0)+
        \textstyle \int_0^t 
            \abs{\dot f(s)}
        \dd s
        \\
        &\leq
		f(0)
		+
		C \, K_1
		\textstyle \int_0^t 
			\norm{\dot \Path(s)}_{G_{\Path(s)}}
		\dd s
		+
		\int_0^t 
            C \, K_2
			\norm{\dot \Path(s)}_{G_{\Path(s)}}
            \varphi(s)
		\dd s
	\\ 
	&\leq
		\underset{\alpha(t)}{\underbrace{
            f(0)
            + 
            2 \, C \, K_1 \,  R
        }}
		+
		\textstyle \int_0^t 
            \underset{\beta(t)}{\underbrace{
                C \, K_2
			    \norm{\Path'(s)}_{G_{\Path(s)
            }}}}
            \,
            f(s)
		\dd s
		\quad 
		\text{ for all } t\in [0,1].
    \end{align*}
    This allows us to use the integral version of the famous Grönwall inequality (see \cref{thm:GronwallIntegral}):
    \begin{align*}
        f(1) 
        &\leq
        \alpha(1)
        \exp \pars[\Big]{
            \textstyle
            \int_0^1 \beta(s)
            \dd s
        }
        \leq 
        \pars{ f(0) + 2 \, C \, K_1 R}
        \,
        \ee^{C \, K_2 R}
        .
    \end{align*} 
    This proves the first claim. 
    Now we show that $\gamma \mapsto \seminorm{\InvSpeed[\gamma]}_{\Bessel[s-1]}$ is bounded on $G$-bounded sets.
    From
    \begin{equation*}
        \abs{\InvSpeed[\gamma](y) - \InvSpeed[\gamma](x)}
        =
        \abs[\Big]{ 
            \fdfrac{\abs{\gamma'(x)} - \abs{\gamma'(y)}}{\abs{\gamma'(y)} \abs{\gamma'(x)}}
        }
        =
        \InvSpeed[\gamma](x) \, \InvSpeed[\gamma](y) \abs{\Speed[\gamma](x) - \Speed[\gamma](y)}
        ,
    \end{equation*}
    we obtain
    \begin{equation}
        \seminorm{\InvSpeed[\gamma]}_{\Bessel[s-1]}
        =
        \pars[\Big]{
            \textstyle
            \int_\Domain \int_\Domain 
                \abs[\Big]{\fdfrac{\InvSpeed[\gamma](y) - \InvSpeed[\gamma](x)}{\dist[\Domain][y][x]^{s-1}}}^2 
            \dd \mu(x,y)
        }^{1/2}
        \leq 
        2 \norm{\InvSpeed[\gamma]}_{\Lebesgue[\infty]}^2 \seminorm{\Speed[\gamma]}_{\Bessel[s-1]}
        .
        \label{eq:InverseSpeedSeminormBound}
    \end{equation}
    By the previous claim, $\gamma \mapsto \seminorm{\Speed[\gamma]}_{\Bessel[s-1]}$ is bounded on $G$-bounded sets.
    Applying \cref{lem:BiLipBound} once more completes the proof.
\end{proof}

\subsection{Distance bounds}\label{sec:DistanceBounds}

The uniform norm equivalences \cref{lem:NormEquivOnBoundedSets} and \cref{lem:HsgammaHsOnBoundedSets} (uniform on $G$-bounded subsets) finally allow us to derive estimates between the $\Bessel[s]$-\SoboSlobo distance
on the one hand 
and the geodesic distance $\dist[G]$ on the other hand.

\begin{lemma}\label{lem:GLocalDistanceBound}
    For each $G$-bounded subset $\BoundedSet \subset \Mfld$ there is  $0 < K < \infty$ such that 
    \[
        \dist[\Bessel[s]](\gamma_1,\gamma_0)
        \ceq
        \norm{\gamma_1 - \gamma_0}_{\Bessel[s]} 
        \leq K \, \dist[G](\gamma_1,\gamma_0) 
        \quad 
        \text{ for all $\gamma_0$, $\gamma_1 \in \BoundedSet$.}
    \]
\end{lemma}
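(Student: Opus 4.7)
The plan is to exploit the path definition of $\dist[G]$ directly, together with the uniform norm equivalences from \cref{lem:NormEquivOnBoundedSets} and \cref{lem:HsgammaHsOnBoundedSets}.

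First I would enlarge $\BoundedSet$ to a slightly larger $G$-bounded superset containing all admissible paths. Since $\BoundedSet$ is $G$-bounded, there exist $\gamma_* \in \Mfld$ and $R > 0$ with $\BoundedSet \subset B_R^G(\gamma_*)$. For any $\gamma_0,\gamma_1 \in \BoundedSet$ we have $\dist[G](\gamma_1,\gamma_0) \leq 2R$, so for every $\varepsilon \in (0,R]$ any path $\Path \in \Holder[1][][\UnitInterval][\Mfld]$ from $\gamma_0$ to $\gamma_1$ with $\PathLength(\Path) \leq \dist[G](\gamma_1,\gamma_0) + \varepsilon$ stays in $\tilde{\BoundedSet} \ceq \overline{B_{3R}^{G}(\gamma_*)}$, which is itself $G$-bounded.

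Next, I would combine \cref{lem:NormEquivOnBoundedSets} and \cref{lem:HsgammaHsOnBoundedSets} on $\tilde{\BoundedSet}$ to obtain a single constant $K > 0$ such that
\begin{equation*}
    \norm{u}_{\Bessel[s]} \leq K \norm{u}_{G_\gamma}
    \quad \text{for all } \gamma \in \tilde{\BoundedSet}, \; u \in \HSpace.
\end{equation*}
Applied pointwise along the path, this yields $\norm{\dot\Path(t)}_{\Bessel[s]} \leq K \sqrt{G_{\Path(t)}(\dot\Path(t),\dot\Path(t))}$ for almost every $t$.

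Finally, the fundamental theorem of calculus gives $\gamma_1 - \gamma_0 = \int_0^1 \dot\Path(t)\,\dd t$ in $\HSpace$, so Minkowski's integral inequality combined with the previous pointwise estimate yields
\begin{equation*}
    \norm{\gamma_1 - \gamma_0}_{\Bessel[s]}
    \leq \int_0^1 \norm{\dot\Path(t)}_{\Bessel[s]} \,\dd t
    \leq K \int_0^1 \SmashedSqrt{G_{\Path(t)}(\dot\Path(t),\dot\Path(t))} \,\dd t
    = K \, \PathLength(\Path)
    \leq K \pars[\big]{\dist[G](\gamma_1,\gamma_0) + \varepsilon}.
\end{equation*}
Sending $\varepsilon \to 0$ concludes the proof. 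The only subtle point — and the reason for the enlargement step at the beginning — is that norm equivalence constants depend on $\gamma$ through $\Energy(\gamma)$, $\ArcLength(\gamma)$, and $\BiLip(\gamma)$, so one must ensure the \emph{entire} path remains within a $G$-bounded region where these equivalences are uniform; otherwise, the pointwise estimate could not be integrated with a single constant.
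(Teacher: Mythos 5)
Your proof is correct and follows essentially the same strategy as the paper's: enlarge to the $G$-bounded superset $B_{3R}^G$ (where the paper uses the open ball and you take a closure, which is unnecessary but harmless), apply \cref{lem:NormEquivOnBoundedSets} and \cref{lem:HsgammaHsOnBoundedSets} to get a uniform constant $K$ on that set, integrate via Minkowski's inequality (the paper calls it Jensen), and pass to the infimum over near-optimal paths. The only cosmetic difference is that you bound the near-optimal path by $\dist[G](\gamma_1,\gamma_0)+\varepsilon$ and send $\varepsilon\to 0$, whereas the paper bounds all paths of length $\leq 2R$ and takes an infimum; these are equivalent.
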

\begin{proof}
    Let $\gamma \in \Mfld$ and $0 < R < \infty$ such that $\BoundedSet \subset B_{R}^{G}(\gamma)$.
    Clearly, we have $\dist[G](\gamma_1,\gamma_0) \leq \dist[G](\gamma_1,\gamma) + \dist[G](\gamma,\gamma_0) < 2 \, R$.
    Let $\Path \in \Holder[1][][\intervalcc{0,1}][\Mfld]$ be an arbitrary path from $\gamma_0$ to $\gamma_1$ with path length $\PathLength(\Path) \leq 2 \, R$.
    Note that we have $\dist[G]( \Path(t), \gamma ) \leq \dist[G]( \Path(t), \gamma_0 ) + \dist[G]( \gamma_0, \gamma ) < 3 \, R$ for all $t \in \intervalcc{0,1}$, so the image of the path lies in $B_{3R}^{G}(\gamma)$, which is of course a bounded set.
    By \cref{lem:NormEquivOnBoundedSets} and \cref{lem:HsgammaHsOnBoundedSets} we find a $0 < K < \infty$ such that 
    $
        \norm{ \dot \Path(t) }_{\Bessel[s]}
        \leq
        \norm{ \dot \Path(t) }_{G_{\Path(t)}}
    $ holds for all $t \in \intervalcc{0,1}$.
    The Jensen inequality yields 
    \begin{equation*}
        \norm{\gamma_1 - \gamma_0}_{\Bessel[s]}
        =
        \norm[\big]{\textstyle\int_0^1 \dot \Path(t) \, \dd t}_{\Bessel[s]}
        \leq 
        \int_0^1 \norm{ \dot \Path(t) }_{\Bessel[s]} \, \dd t
        \leq 
        K \, \int_0^1 \norm{ \dot \Path(t) }_{G_{\Path(t)}} \, \dd t
        =
        K \, \PathLength(\Path).
    \end{equation*}
    Taking the infimum on the right-hand side over all paths with $\PathLength(\Path) \leq 2 \, R$ (which is equal to the infimum over \emph{all} such paths), yields the claim.
\end{proof}   

As direct consequences, we can state the following corollaries.

\begin{corollary}\label{cor:GeodesicDistanceGeneratesTopology}
    Let $(\gamma_k)_{k \in N}$ be a sequence in $\Mfld$ that converges to $\gamma \in \Mfld$ with respect to the geodesic distance, i.e., $\dist[G](\gamma_k, \gamma) \converges[k \to \infty] 0$.
    Then $(\gamma_k)_{k \in N}$ converges to $\gamma$ also in $\HSpace$.
\end{corollary}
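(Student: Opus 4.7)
The plan is to deduce this corollary essentially as a direct consequence of the local Lipschitz estimate furnished by Lemma \ref{lem:GLocalDistanceBound}. First I would note that since $\dist[G](\gamma_k,\gamma) \to 0$, there exists an index $N \in \N$ such that $\dist[G](\gamma_k,\gamma) < 1$ for all $k \geq N$. Consequently, the set
\[
    \BoundedSet \ceq \braces{\gamma} \cup \myset[\big]{\gamma_k}{k \geq N}
\]
is contained in the closed $G$-ball of radius $1$ around $\gamma$ and is therefore $G$-bounded in the sense introduced in \cref{sec:GeodesicDistance}.

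Next I would apply Lemma \ref{lem:GLocalDistanceBound} to this $G$-bounded set $\BoundedSet$. This produces a constant $0 < K < \infty$ (depending on $\BoundedSet$, but not on the individual points) such that
\[
    \norm{\gamma_k - \gamma}_{\Bessel[s]}
    =
    \dist[\Bessel[s]](\gamma_k,\gamma)
    \leq
    K \, \dist[G](\gamma_k,\gamma)
    \quad \text{for all } k \geq N.
\]
Letting $k \to \infty$ on the right-hand side and using the hypothesis $\dist[G](\gamma_k,\gamma) \to 0$ yields $\norm{\gamma_k - \gamma}_{\Bessel[s]} \to 0$, which is precisely convergence in $\HSpace$.

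There is no real obstacle here: the entire content is hidden in Lemma \ref{lem:GLocalDistanceBound}, which was already established via the two uniform norm equivalences (\cref{lem:NormEquivOnBoundedSets} and \cref{lem:HsgammaHsOnBoundedSets}) on $G$-bounded sets. The only subtle point worth mentioning is the need to first localize the tail of the sequence in a $G$-bounded neighborhood of $\gamma$ before Lemma \ref{lem:GLocalDistanceBound} can be invoked; this is why the estimate must be applied to a neighborhood rather than globally.
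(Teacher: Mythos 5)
Your proof is correct and is exactly the argument the paper intends; the corollary appears in the paper with no separate proof precisely because it follows immediately from \cref{lem:GLocalDistanceBound} once the tail of the sequence is localized in a $G$-bounded ball around $\gamma$, which is the observation you make explicit.
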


\begin{corollary}\label{cor:GeodesicDistanceIsExtendedMetric}
    The geodesic distance $\dist[G]$ is definite, i.e., 
    $\dist[G](\gamma_0,\gamma_1) = 0$ implies $\gamma_0 = \gamma_1$.
\end{corollary}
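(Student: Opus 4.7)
The proof plan is a direct and essentially immediate consequence of \cref{lem:GLocalDistanceBound}. Suppose $\gamma_0$, $\gamma_1 \in \Mfld$ satisfy $\dist[G](\gamma_0,\gamma_1) = 0$. First I would exhibit a $G$-bounded set containing both points: the set $\BoundedSet \ceq \braces{\gamma_0,\gamma_1}$ is trivially contained in the open $G$-ball $\myset{\eta \in \Mfld}{\dist[G](\eta,\gamma_0) < 1}$, since $\dist[G](\gamma_0,\gamma_0) = 0$ and $\dist[G](\gamma_1,\gamma_0) = 0$ by assumption. Hence $\BoundedSet$ is $G$-bounded in the sense of \cref{sec:GeodesicDistance}.

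Now I would apply \cref{lem:GLocalDistanceBound} to this $G$-bounded set, yielding a constant $0 < K < \infty$ with
\[
    \norm{\gamma_1 - \gamma_0}_{\Bessel[s]}
    \leq
    K \, \dist[G](\gamma_0,\gamma_1)
    =
    0.
\]
Since $\norm{\cdot}_{\Bessel[s]}$ is a genuine norm on $\Bessel[s][][\Domain][\AmbSpace]$, this forces $\gamma_0 = \gamma_1$. Combined with the obvious symmetry and triangle inequality of $\dist[G]$ already discussed in \cref{sec:GeodesicDistance}, this upgrades $\dist[G]$ from an extended pseudometric to an extended metric.

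There is essentially no obstacle here: the work has already been done in \cref{lem:GLocalDistanceBound}, which in turn relied on \cref{lem:NormEquivOnBoundedSets} and \cref{lem:HsgammaHsOnBoundedSets}. The only thing worth double-checking is that a two-point set at $\dist[G]$-distance zero is actually $G$-bounded according to the paper's definition, but this is transparent from the formulation given just after \cref{eq:GeodesicDistance}.
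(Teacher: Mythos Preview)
Your argument is correct and matches the paper's intent exactly: the corollary is stated there without proof, as an immediate consequence of \cref{lem:GLocalDistanceBound}, and your verification that a two-point set at $\dist[G]$-distance zero is $G$-bounded is the only detail needed.
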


\begin{corollary}\label{cor:BoundedIsBounded}
    Every $G$-bounded set is also bounded in $\Bessel[s]$.
\end{corollary}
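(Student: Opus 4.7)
The plan is very short: this is essentially a direct corollary of the distance bound in \cref{lem:GLocalDistanceBound}. First I would dispose of the trivial case where $\BoundedSet$ is empty, and then pick any fixed $\gamma_0 \in \BoundedSet$. Since $\BoundedSet$ is $G$-bounded, there exist $\eta \in \Mfld$ and $R < \infty$ with $\BoundedSet \subset \{\gamma \in \Mfld \mid \dist[G](\gamma,\eta) < R\}$, and by the triangle inequality for $\dist[G]$ every $\gamma \in \BoundedSet$ satisfies $\dist[G](\gamma,\gamma_0) < 2R$.

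Now I would apply \cref{lem:GLocalDistanceBound} to the $G$-bounded set $\BoundedSet$ itself: this yields a constant $0 < K < \infty$ (depending on $\BoundedSet$) such that
\[
    \norm{\gamma - \gamma_0}_{\Bessel[s]} \leq K \, \dist[G](\gamma,\gamma_0) \leq 2 K R
    \qquad \text{for all $\gamma \in \BoundedSet$.}
\]
Hence $\BoundedSet$ is contained in the affine $\Bessel[s]$-ball of radius $2KR$ centered at $\gamma_0$, which is exactly the statement of $\Bessel[s]$-boundedness.

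There is no real obstacle here, since all the heavy lifting has already been done in \cref{lem:NormEquivOnBoundedSets}, \cref{lem:HsgammaHsOnBoundedSets} and \cref{lem:GLocalDistanceBound}; the corollary simply records the end product in the cleanest form.
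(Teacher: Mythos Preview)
Your proposal is correct and follows exactly the route the paper intends: the corollary is listed among the ``direct consequences'' of \cref{lem:GLocalDistanceBound} without an explicit proof, and your argument spells out precisely that deduction. There is nothing to add.
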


\begin{remark}
Beware that the reverse statement does not hold true in general:     
Not every $\norm{\cdot}_{\Bessel[s]}$-bounded set is $G$-bounded.
To see this, consider some $\gamma\in \Mfld$ and some $\eta \in \Bessel[s][][\Circle][\AmbSpace]$ that is not an embedding. Define $R \ceq 2 \norm{\eta-\gamma}_{\Bessel[s]}$. Then the $R$-ball with respect to $\norm{\cdot}_{\Bessel[s]}$ centered at $\gamma$ cannot be bounded with respect to $\dist[G]$.
\emph{Assume it were}.
Then there would be a path of finite length $\ell$ with respect to $G$ connecting $\gamma$ and $\eta$.
\cref{thm:EnergyLipschitz} would lead to
$\Energy(\eta) \leq \Energy(\gamma) + \sqrt{1+\DenomExp/4} \, \ell < \infty$, which would imply that $\eta$ is an embedding. \emph{Contradiction.}
\end{remark}

Nonetheless, we can show that $\dist[\Bessel[s]]$ can dominate $\dist[G]$ \emph{on sufficiently small sets}. 
This will be relevant for our proof of metric completeness (see \cref{thm:MetricCompleteness}).

\begin{lemma}\label{lem:GDistanceLocalDistanceBound}
	For every $\gamma\in \Mfld$
	there is an $r = r(\gamma)>0$ and a $C > 0$ such that 
	\begin{align*}
		\dist[G](\gamma_1,\gamma_0) \leq C \, \dist[\Bessel[s]](\gamma_1,\gamma_0   )
        \quad
        \text{for all $\gamma_0$, $\gamma_1 \in B_r(\gamma)$,}
	\end{align*}
	where $B_r(\gamma)$ denotes the open $r$-ball with respect to the $\Bessel[s]$-distance $\dist[\Bessel[s]]$.
\end{lemma}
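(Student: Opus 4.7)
The plan is to fix $\gamma \in \Mfld$ and, for any two points $\gamma_0, \gamma_1$ in a small $\Bessel[s]$-ball around $\gamma$, simply use the \emph{straight-line path} $\Path(t) \ceq (1-t)\gamma_0 + t\gamma_1$ as a competitor in the infimum defining $\dist[G]$. Since $\Mfld$ is an open subset of $\Bessel[s][][\Circle][\AmbSpace]$ and $\Bessel[s]$-balls are convex, for $r$ small enough the image of $\Path$ stays inside $B_r(\gamma) \subset \Mfld$, and $\Path$ is smooth in $t$ with constant velocity $\dot\Path(t) = \gamma_1 - \gamma_0$.

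First I would chain together the two norm inequalities \cref{lem:NormEquiv1} and \cref{lem:HsgammaHs} to obtain, for each $\eta \in \Mfld$ and each $u \in \HSpace$, the pointwise bound
\[
    \norm{u}_{G_\eta} \leq F_G(\eta) \, \norm{u}_{\Bessel[s](\eta)} \leq F_G(\eta) \, F_{\Bessel[s]}(\eta) \, \norm{u}_{\Bessel[s]}.
\]
Applied along the path, this gives
\[
    \PathLength(\Path) = \int_0^1 \norm{\gamma_1 - \gamma_0}_{G_{\Path(t)}} \dd t \leq \pars[\Big]{\sup_{t \in \intervalcc{0,1}} F_G(\Path(t)) \, F_{\Bessel[s]}(\Path(t))} \norm{\gamma_1 - \gamma_0}_{\Bessel[s]}.
\]
Taking the infimum over paths yields the desired estimate, with $C$ being the asserted supremum.

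The real work, which I expect to be the main obstacle, is to show that $F_G \cdot F_{\Bessel[s]}$ is \emph{bounded} on a sufficiently small $\Bessel[s]$-ball around $\gamma$. Such a ball is not compact, so I cannot invoke any compactness argument directly. Inspecting the explicit formulas \cref{eqn:DefinitionF} and \cref{eqn:DefinitionFG}, the relevant quantities are $\ArcLength(\eta)$, $\Energy(\eta)$, $\BiLip(\eta)$, $\norm{\Speed_\eta}_{\Lebesgue[\infty]}$, $\norm{\InvSpeed_\eta}_{\Lebesgue[\infty]}$, $\seminorm{\Speed_\eta}_{\Bessel[s-1]}$, and $\seminorm{\gamma}_{\Bessel[s](\eta)}$. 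I would verify that each of these depends $\Bessel[s]$-continuously on $\eta$ at the point $\eta = \gamma$: the continuous embedding $\Bessel[s] \hookrightarrow \Holder[1]$ (\cref{thm:MorreyInequality}) handles $\ArcLength$, $\norm{\Speed_\eta}_{\Lebesgue[\infty]}$ and $\norm{\InvSpeed_\eta}_{\Lebesgue[\infty]}$ (the latter stays finite because $\inf\abs{\eta'}$ remains close to $\inf\abs{\gamma'} > 0$ for $r$ small); the smoothness of $\eta \mapsto \Speed_\eta$ and $\eta \mapsto \InvSpeed_\eta$ as maps into $\Bessel[s-1]$ (\cref{lem:AuxAreSmooth}) handles the two $\Bessel[s-1]$-seminorms; the continuity of $\Energy$ (\cref{thm:DE}) and of $\BiLip$ (noted after \cref{lem:LambdaIsSmooth}) handles the remaining two. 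Each of these continuous functions is therefore bounded on a sufficiently small $\Bessel[s]$-ball around $\gamma$, and shrinking $r$ if necessary, we obtain a uniform bound $C$ on $F_G(\eta) \, F_{\Bessel[s]}(\eta)$ for $\eta \in B_r(\gamma)$. Substituting back concludes the proof.
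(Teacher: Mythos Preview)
Your proposal is correct and follows essentially the same approach as the paper: use the straight-line path in the convex $\Bessel[s]$-ball as a competitor, chain \cref{lem:NormEquiv1} with \cref{lem:HsgammaHs}, and then verify that $F_G(\eta)\,F_{\Bessel[s]}(\eta)$ is bounded on a small ball by inspecting the explicit formulas \cref{eqn:DefinitionF} and \cref{eqn:DefinitionFG}. The paper argues the boundedness slightly more explicitly (first fixing $r$ so that $\abs{\eta'}$, $\norm{\eta}_{\Bessel[s]}$, and $\Energy(\eta)$ lie in a fixed compact interval, then noting that $\seminorm{\eta}_{\Bessel[s](\eta)} \leq F_{\Bessel[s]}(\eta)\,\norm{\eta}_{\Bessel[s]}$ so that $F_G$ is bounded once $F_{\Bessel[s]}$ is), but your continuity-based justification accomplishes the same thing.
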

\begin{proof}
    Let $\gamma \in \Mfld$. 
    Since $\Mfld \subset \Bessel[s][][\Domain][\AmbSpace]$ is open, there is a radius $r>0$ such 
    $B_r(\gamma) \subset \Mfld$.
	Because of continuity, we may shrink $r$ such that the following holds true for every $\eta \in B_r(\gamma)$:
    \begin{equation*} 
        \abs{\eta'(x)}\in \intervalcc[\big]{\tfrac{1}{2} \inf \abs{\gamma'}, 2\sup \abs{ \gamma'}},
        \;\, 
        \norm{\eta}_{\Bessel[s]}\in \intervalcc[\big]{\tfrac{1}{2} \norm{\gamma}_{\Bessel[s]}, 2 \norm{\gamma}_{\Bessel[s]}},
        \;\,
        \text{and}
        \;\,
        \Energy(\eta)\in \intervalcc[\big]{\tfrac{1}{2} \Energy(\gamma), 2\Energy(\gamma)}.
    \end{equation*}
    In particular, this also implies that each $\eta \in B_r(\gamma)$ has arc length $\ArcLength(\eta) \in \intervalcc[\big]{\tfrac{1}{2} \ArcLength(\gamma), 2\ArcLength(\gamma)}$.
    By \cref{lem:NormEquiv1} and \cref{lem:HsgammaHs}, there are nonnegative functions $F_G$ and $F_{\Bessel[s]}$ such that 
	\begin{equation*}
		\norm{u}_{G_\eta} 
		\leq 
		F_{G}(\eta) \norm{u}_{\Bessel[s](\eta)} 
        \qand 
        \norm{u}_{\Bessel[s](\eta)} 
		\leq 
		F_{\Bessel[s]}(\eta) \norm{u}_{\Bessel[s]} 
	\end{equation*}
    hold true for all $\eta \in B_r(\gamma)$ and all $u\in \HSpace$.
    Next we show that $F_G$ and $F_{\Bessel[s]}$ are bounded on $B_r(\gamma)$. 
    Recall that $F_G$ was explicitly stated in~\cref{eqn:DefinitionFG}.
    Now, a closer inspection of $F_{G}(\eta)$ reveals that it depends continuously on $\ArcLength(\eta)$, $1/\ArcLength(\eta)$, $\Energy(\eta)$, and $[\eta]_{\Bessel[s](\eta)}$.
    The functions $\ArcLength$, $1/\ArcLength$, and $\Energy$ are bounded on $B_r(\gamma)$ by choice of $r$.
    From
	\begin{align*}
		\seminorm{\eta}_{\Bessel[s](\eta)} 
        \leq
        \norm{\eta}_{\Bessel[s](\eta)} 
		\leq 
		F_{\Bessel[s]}(\eta)
		\norm{\eta}_{\Bessel[s]}
		\leq
		F_{\Bessel[s]}(\eta)
		\pars{\norm{\gamma}_{\Bessel[s]} + r}
	\end{align*}
    we infer that $F_{G}$ is bounded on $B_r(\gamma)$ if $F_{\Bessel[s]}$ is.
    By \cref{eqn:DefinitionF}, the latter is given by
	\begin{align*}
        F_{\Bessel[s]}(\eta)^2
        =
        \norm{\Speed[\eta]}_{\Lebesgue[\infty]}
        +
        \norm{\InvSpeed[\eta]}_{\Lebesgue[\infty]}
        +
        2 \, 
        \BiLip(\eta)^{2s-1} \norm{\Speed[\eta]}_{\Lebesgue[\infty]}^2 
        \pars[\Big]{
            \norm{\InvSpeed[\eta]}_{\Lebesgue[\infty]}^{2} 
            +
            C_{\Morrey,s-1} \seminorm{\InvSpeed[\eta]}_{\Bessel[s-1]}^2
        }
		.
	\end{align*}
    Again, by choice of $r$, the functions $\eta \mapsto \norm{\Speed_\eta}_{\Lebesgue[\infty]}$ and $\eta \mapsto \norm{\InvSpeed[\eta]}_{\Lebesgue[\infty]}$ are bounded on $B_r(\gamma)$.
    Because of \cref{thm:UniformBiLipschitz} and since $\ArcLength$ and $\Energy$ are bounded on $B_r(\gamma)$,
    the function $\BiLip$ is also bounded on $B_r(\gamma)$.
    Moreover, with \cref{eq:InverseSpeedSeminormBound} we may bound
    \[ 
        \seminorm{\InvSpeed[\eta]}_{\Bessel[s-1]} 
        \leq 
        2 \norm{\InvSpeed[\eta]}_{\Lebesgue[\infty]}^2 \seminorm{\Speed[\eta]}_{\Bessel[s-1]}
        \leq
        2 \norm{\InvSpeed[\eta]}_{\Lebesgue[\infty]}^2 \seminorm{\eta}_{\Bessel[s]}
        \leq
        2 \norm{\InvSpeed[\eta]}_{\Lebesgue[\infty]}^2 \pars{\norm{\gamma}_{\Bessel[s]}  + r}.
    \]
    This shows that both $F_G$ and $F_{\Bessel[s]}$ are bounded on $B_r(\gamma)$.
    Hence, we may find a $C > 0$ such that
	\begin{equation*}
		\norm{u}_{G_\eta} 
		\leq 
		C \norm{u}_{\Bessel[s]}
        \quad 
        \text{holds true for all $\eta \in B_r(\gamma)$ and all $u\in \HSpace$.}
	\end{equation*}
    This uniform norm bound allows us to tackle the distance bound:
    Let $\gamma_0$, $\gamma_1 \in B_r(\gamma)$.
	Since $B_r(\gamma)\subset \Mfld \subset \HSpace$ is convex, the path 
    $\Path(t) \ceq (1-t) \, \gamma_0 + t \,\gamma_1$ is a path in $ B_r(\gamma) \subset \Mfld$.
    By construction, $\Path$ is the shortest path from $\gamma_0$ to $\gamma_1$ with respect to $\dist[\Bessel[s]]$. Thus, we get
    \begin{align*}
        \dist[G](\gamma_1,\gamma_0)
        \leq 
       \textstyle \int_0^1 \!
            \norm{\dot \Path}_{G_{\Path}}
        \dd t
        \leq
        C \textstyle\int_0^1 \!
			\norm{ 
				\dot
					\Path
				}_{\Bessel[s]}
		\dd t
        =
		C \norm{\gamma_1 - \gamma_0}_{\Bessel[s]}
        .
	\end{align*}
\end{proof}

\begin{corollary}\label{cor:SameTopology}
The identity map $\id:(\Mfld, \dist[G]) \to (\Mfld, \dist[\Bessel[s]])$ is locally bi-Lipschitz continuous.
\end{corollary}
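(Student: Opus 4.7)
The proof is essentially a combination of the two preceding lemmata; the only subtlety is to arrange that a single neighborhood witnesses both inequalities.

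The plan is as follows. Fix $\gamma \in \Mfld$. First apply \cref{lem:GDistanceLocalDistanceBound} to obtain $r = r(\gamma) > 0$ and $C > 0$ such that
\[
    \dist[G](\gamma_1,\gamma_0) \leq C \, \dist[\Bessel[s]](\gamma_1,\gamma_0)
    \quad
    \text{for all $\gamma_0, \gamma_1 \in B_r(\gamma)$,}
\]
where $B_r(\gamma)$ is the $\dist[\Bessel[s]]$-ball. In particular, every $\eta \in B_r(\gamma)$ satisfies $\dist[G](\eta,\gamma) \leq C r$, so $B_r(\gamma)$ is $G$-bounded. This is the pivotal observation that allows us to invoke \cref{lem:GLocalDistanceBound} on $B_r(\gamma)$, yielding a constant $K > 0$ with
\[
    \dist[\Bessel[s]](\gamma_1,\gamma_0) \leq K \, \dist[G](\gamma_1,\gamma_0)
    \quad
    \text{for all $\gamma_0, \gamma_1 \in B_r(\gamma)$.}
\]
Combining both estimates gives the bi-Lipschitz equivalence
\[
    \tfrac{1}{K} \dist[\Bessel[s]](\gamma_1,\gamma_0)
    \leq \dist[G](\gamma_1,\gamma_0)
    \leq C \dist[\Bessel[s]](\gamma_1,\gamma_0)
    \quad
    \text{for all $\gamma_0, \gamma_1 \in B_r(\gamma)$.}
\]

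Finally, to verify that $B_r(\gamma)$ qualifies as a neighborhood of $\gamma$ in the source $(\Mfld, \dist[G])$ as well, invoke \cref{cor:GeodesicDistanceGeneratesTopology}: the identity $(\Mfld,\dist[G]) \to (\Mfld,\dist[\Bessel[s]])$ is (sequentially) continuous, so the $\Bessel[s]$-open set $B_r(\gamma)$ is also open with respect to $\dist[G]$ and contains $\gamma$. Hence $\id$ restricted to the $\dist[G]$-neighborhood $B_r(\gamma)$ of $\gamma$ is bi-Lipschitz, which is precisely local bi-Lipschitz continuity. There is no real obstacle here since the two preceding lemmata have already done all the analytic work; the only item worth emphasizing is the short argument that any $\Bessel[s]$-ball around $\gamma$ is automatically $G$-bounded, which makes the two lemmata applicable to the very same neighborhood.
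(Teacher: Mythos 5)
Your proof is correct and takes the approach the paper leaves implicit (the corollary is stated without proof, directly after \cref{lem:GDistanceLocalDistanceBound}). The one nontrivial observation—that the $\Bessel[s]$-ball $B_r(\gamma)$ produced by \cref{lem:GDistanceLocalDistanceBound} is automatically $G$-bounded, precisely because of the estimate that lemma provides, so that \cref{lem:GLocalDistanceBound} may be applied on the very same neighborhood—is exactly the bridge that makes the two lemmata compose, and you spell it out cleanly. The closing remark about $B_r(\gamma)$ being $\dist[G]$-open via \cref{cor:GeodesicDistanceGeneratesTopology} is a nice touch, although it also follows directly from the inequality $\dist[\Bessel[s]]\leq K\,\dist[G]$ you just established.
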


We would like to close this section with the following observation on the quotient metric with respect to the action of the reparameterization group.
In view of \cite{MR2148075}, it is by no means obvious that the quotient metric is definite.
This is why we deem the following affirmative result noteworthy. However, we will deal solely with \emph{parametrized} embeddings in the remainder of the paper; so this result is not required for the forthcoming sections.

\begin{proposition}\label{prop:QuotientIsMetricSpace}
    Denote by $\cG$ the group of orientation-preserving diffeomorphisms $\Circle \to \Circle$ of class $\Bessel[s]$ and by $\cB \ceq \cM \slash \cG$ the \emph{shape space} of oriented unparameterized embeddings.
    Then the quotient metric $\dist[\cB]$ given by
    \[
        \dist[\cB]([\gamma],[\eta]) 
        \ceq 
        \inf_{\varphi, \, \psi \in \cG} \, \dist[G](\gamma \circ \psi, \eta \circ \varphi)
        =
        \inf_{\varphi \in \cG} \,  \dist[G](\gamma, \eta \circ \varphi)
        \quad 
        \text{for $\gamma,\eta \in \Mfld$}
    \]
    is definite.
\end{proposition}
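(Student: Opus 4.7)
The plan is to extract a convergent subsequence of a minimizing sequence of reparameterizations and show that its limit lies in $\cG$. Assume $\dist[\cB]([\gamma],[\eta]) = 0$ and pick a minimizing sequence $\varphi_n \in \cG$, setting $\gamma_n \ceq \eta \circ \varphi_n$ so that $\dist[G](\gamma,\gamma_n) \to 0$. Then $\{\gamma_n\}_n$ is trivially $G$-bounded, hence by \cref{cor:GeodesicDistanceGeneratesTopology} we have $\gamma_n \to \gamma$ in $\HSpace$ (in particular in $\Holder[1]$), and \cref{lem:BiLipBound} yields uniform upper and lower bounds on $\abs{\gamma_n'}$. Since the $\varphi_n$ are orientation-preserving and $\abs{\eta'}$ is bounded above and below, the pointwise identity $\varphi_n' = \abs{\gamma_n'}/\abs{\eta' \circ \varphi_n}$ then supplies uniform upper and lower bounds on $\varphi_n'$.

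With this uniform bi-Lipschitz control, Arzelà--Ascoli delivers a subsequence converging uniformly to a bi-Lipschitz homeomorphism $\varphi \colon \Circle \to \Circle$, which is necessarily orientation-preserving. Since $\eta$ is a homeomorphism onto its image, passing to the limit in $\eta \circ \varphi_n = \gamma_n \to \gamma$ gives $\eta \circ \varphi = \gamma$. To conclude, it suffices to upgrade the regularity of $\varphi$ from bi-Lipschitz to $\Bessel[s]$: once this is done, $\varphi \in \cG$ and hence $[\gamma] = [\eta \circ \varphi] = [\eta]$.

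The main obstacle is precisely this last regularity upgrade. My approach would be to exploit that $\Bessel[s-1](\Circle)$ is a Banach algebra (since $s-1 > 1/2$) and that the composition operator $h \mapsto h \circ \psi$ is bounded on $\Bessel[s-1]$ for every $\Holder[1]$-diffeomorphism $\psi$ with uniformly bounded distortion (an elementary estimate via the Gagliardo seminorm and the change-of-variables formula, using $s-1 \in \intervaloo{1/2,1}$). Together with the uniform $\Bessel[s-1]$-bound on $\abs{\gamma_n'}$ provided by \cref{lem:HIsBounded} and the uniform $\Holder[1]$-bound on $\varphi_n$ just established, the factorization $\varphi_n' = \abs{\gamma_n'} \cdot \bigl(1/\abs{\eta' \circ \varphi_n}\bigr)$ yields a uniform $\Bessel[s]$-bound on the $\varphi_n$. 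Extracting a further subsequence weakly convergent in $\Bessel[s]$ and invoking uniqueness of limits forces the weak limit to coincide with $\varphi$, so $\varphi \in \Bessel[s]$. The strict positivity of $\varphi'$ that was already established then promotes $\varphi$ to an element of $\cG$, finishing the proof.
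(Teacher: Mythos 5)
Your proof is correct but takes a genuinely different route from the paper's. The paper argues via the Hausdorff distance between the image sets: combining the Morrey embedding $\norm{\cdot}_{\Lebesgue[\infty]} \lesssim \norm{\cdot}_{\Bessel[s]}$ with \cref{lem:GLocalDistanceBound}, it bounds $\dist[\Hsd](\gamma(\Circle),\eta(\Circle))$ by $C \cdot \dist[\cB]([\gamma],[\eta]) = 0$, concludes $\gamma(\Circle) = \eta(\Circle)$, and then simply asserts that $\varphi \ceq \eta^{-1}\circ\gamma$ is a well-defined $\Bessel[s]$-diffeomorphism — no minimizing sequence of reparameterizations is ever extracted. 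Your approach instead extracts the candidate $\varphi$ as a limit: uniform bi-Lipschitz bounds on $\varphi_n$ via \cref{lem:BiLipBound}, Arzelà--Ascoli for a $C^0$-limit, and then a regularity bootstrap (Banach-algebra property of $\Bessel[s-1]$ plus boundedness of composition with uniformly bi-Lipschitz maps, applied to the identity $\varphi_n' = \abs{\gamma_n'}\cdot(1/\abs{\eta'})\circ\varphi_n$, then weak compactness in $\Bessel[s-1]$). This is considerably more work than the paper's argument, but it buys you something the paper glosses over: an explicit verification that the limiting reparameterization really lies in $\Bessel[s]$ (the paper asserts this for $\eta^{-1}\circ\gamma$ without proof — though one could supply essentially the same composition-operator estimate you use), and an automatic check of orientation-preservation by passing it through the limit, rather than having to argue separately that $\eta^{-1}\circ\gamma$ cannot be orientation-reversing. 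Both proofs are sound; the paper's is shorter, yours fills in the regularity step more carefully.
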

\begin{proof}
    Let $\gamma$, $\eta \in \Mfld$ be two curves satisfying $\dist[\cB]([\gamma],[\eta])  = 0$.
    Choose an arbitrarily $0 < R < \infty$.
    Let $\BoundedSet = B_R^{G}(\gamma)$ and let $0 < K < \infty$ as in \cref{lem:GLocalDistanceBound}.
    Because of $\dist[\cB]([\gamma],[\eta])  = 0$, the set $\cG_R \ceq \myset{ \varphi \in \cG }{ \dist[G]( \gamma, \eta \circ \varphi) < R }$ must be nonempty.
    By $\dist[\Hsd]$ we denote the \emph{Hausdorff distance} between compact subsets of $\AmbSpace$.
    With \cref{thm:MorreyInequality} and \cref{lem:GLocalDistanceBound} we obtain
    \[
        \dist[\Hsd]( \gamma(\Circle), \eta(\Circle) )
        \leq 
        \inf_{\varphi \in \cG_R} \norm{\gamma - \eta \circ \varphi}_{\Lebesgue[\infty]}
        \leq
        C_\Morrey 
        \inf_{\varphi \in \cG_R} \norm{\gamma - \eta \circ \varphi}_{\Bessel[s]}
        \leq 
        C_\Morrey  \, K \, \inf_{\varphi \in \cG_R}  \dist[G]( \gamma, \eta \circ \varphi)
        .
    \]
    By the construction of $\cG_R$, the infimum on the right-hand side is equal to $\dist[\cB]( [\gamma], [\eta] ) = 0$.
    This implies that $\dist[\Hsd]( \gamma(\Circle), \eta(\Circle) ) = 0$ and thus that $\gamma(\Circle) = \eta(\Circle)$.
    Hence, the map $\varphi \ceq \eta^{-1} \circ \gamma$ is a well-defined diffeomorphism and of class $\Bessel[s]$.
    Thus, $[\eta] = [\gamma]$, showing that $\dist[\cB]$ is definite.   
\end{proof}

We do not investigate the quotient metric $\dist[\cB]$ any further here. In particular, we do not attempt to answer the very interesting question whether $\dist[\cB]([\gamma],[\eta])$ is realized by a particular reparameterization $\varphi$, nor do we try to give a characterization for such $\varphi$.


\section{Banach--Alaoglu property}
\label{sec:BanachAlaoglu}

As we have mentioned in the introduction, Riesz's lemma tells us that we cannot expect the Heine--Borel property to hold  true in our infinite-dimensional Riemannian manifold $(\Mfld,G)$. 
The best we can hope for is some form of \emph{weak} compactness. 
And indeed, our goal in this section is to show the
Banach--Alaoglu property which is the infinite dimensional analogue of the Heine--Borel property.

\begin{theorem}[Banach-Alaoglu property]
    \label{thm:BanachAlaoglu}
    Bounded sets in $(\Mfld,\dist[G])$ are relatively compact with respect to the weak topology in $\HSpace$.
\end{theorem}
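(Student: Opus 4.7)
The plan is to combine reflexivity of $\HSpace$ with the uniform geometric bounds that $G$-boundedness yields. By \cref{cor:BoundedIsBounded}, any $G$-bounded set $\BoundedSet \subset \Mfld$ is bounded in the Hilbert space $\HSpace$; by reflexivity, every sequence $(\gamma_k)_{k \in \N} \subset \BoundedSet$ admits a subsequence (still written $(\gamma_k)$) converging weakly to some $\gamma_\infty \in \HSpace$. Since on norm-bounded subsets of the separable reflexive space $\HSpace$ the weak topology is metrizable, relative compactness coincides with relative sequential compactness, so it suffices to verify that the weak limit $\gamma_\infty$ actually lies in $\Mfld$, that is, that $\gamma_\infty$ is an embedding.

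To upgrade the weak convergence, I use the compact embedding $\HSpace \embeds \Holder[1][][\Circle][\AmbSpace]$, which follows from the continuous Morrey embedding $\HSpace \embeds \Holder[1,s-3/2][][\Circle][\AmbSpace]$ composed with the Arzelà--Ascoli compact inclusion $\Holder[1,s-3/2] \embeds \Holder[1]$. Thus $\gamma_k \to \gamma_\infty$ strongly in $\Holder[1][][\Circle][\AmbSpace]$, so in particular $\gamma_k \to \gamma_\infty$ and $\gamma_k' \to \gamma_\infty'$ uniformly on $\Circle$.

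The critical step is to show that $\gamma_\infty$ is an embedding. Immersedness is straightforward: by \cref{lem:BiLipBound}, the map $\gamma \mapsto \norm{\InvSpeed[\gamma]}_{\Lebesgue[\infty]}$ is bounded on $\BoundedSet$, so $\abs{\gamma_k'} \geq c > 0$ pointwise for some constant $c$, and this bound passes to the uniform limit $\gamma_\infty'$. For injectivity I combine \cref{thm:EnergyLipschitz}, \cref{lem:ArcLengthLipschitz}, and \cref{thm:UniformBiLipschitz} to bound $\distor(\gamma_k)$ uniformly; together with the uniform bound on $\norm{\Speed[\gamma_k]}_{\Lebesgue[\infty]}$ from \cref{lem:BiLipBound}, and the elementary inequality $\BiLip(\gamma) \leq \norm{\InvSpeed[\gamma]}_{\Lebesgue[\infty]} \, \distor(\gamma)$, I obtain a constant $\Lambda < \infty$ with
\begin{equation*}
    \dist[\Circle](y,x) \leq \Lambda \abs{\gamma_k(y) - \gamma_k(x)}
    \quad \text{for all $k \in \N$ and all $x$, $y \in \Circle$.}
\end{equation*}
Passing to the uniform limit yields the same inequality for $\gamma_\infty$, so $\gamma_\infty$ is injective and hence an embedding of class $\HSpace$, i.e., $\gamma_\infty \in \Mfld$.

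The main obstacle is precisely this last step: weak $\HSpace$-convergence alone does not preclude any of the failure modes (ii), (iii), or (vi) listed in the introduction, and the entire quantitative machinery developed in \cref{sec:GenProp}---boundedness of $\Energy$, $\ArcLength$, $1/\ArcLength$, $\abs{\gamma'}$, and $\distor$ on $G$-bounded sets---is what rules these out in the limit.
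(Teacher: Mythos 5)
Your proof is correct, and it takes a genuinely different route from the paper's for the key step---showing that the weak limit is an embedding. The preliminary reductions coincide: both arguments pass to a weakly convergent subsequence (you via metrizability of the weak topology on norm-bounded sets of the separable Hilbert space $\HSpace$; the paper via nets and then subsequence extraction), and both use the compact embedding $\HSpace \embeds \Holder[1]$ to upgrade to uniform convergence of $\gamma_k$ and $\gamma_k'$. For immersedness your argument is slightly cleaner than the paper's (which detours through weak compactness of $\InvSpeed[\gamma_n]$ in $\Bessel[s-1]$), but essentially the same idea. The real divergence is in the embeddedness step: the paper shows $\Energy(\gamma_\infty) \leq \liminf_n \Energy(\gamma_n) < \infty$ via Fatou's lemma and then invokes the self-avoidance property of the tangent-point energy (finite energy plus finite length forces embeddedness), which also yields \cref{lem:SequentialLowerSemicontinuity} as a reusable byproduct. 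You instead establish a \emph{uniform} bi-Lipschitz bound $\dist[\Circle](y,x) \leq \Lambda \abs{\gamma_k(y) - \gamma_k(x)}$ along the sequence, using \cref{thm:UniformBiLipschitz} together with the bounds on $\ArcLength$, $\Energy$, and $\InvSpeed$ on $G$-bounded sets, and pass it to the uniform limit to obtain injectivity directly. Your approach is more explicit and avoids invoking the qualitative self-avoidance result, at the cost of not producing the semicontinuity lemma as a side effect. One tiny slip: for the inequality $\BiLip(\gamma) \leq \norm{\InvSpeed[\gamma]}_{\Lebesgue[\infty]} \distor(\gamma)$ you need the uniform bound on $\norm{\InvSpeed[\gamma_k]}_{\Lebesgue[\infty]}$, not on $\norm{\Speed[\gamma_k]}_{\Lebesgue[\infty]}$ as written, but \cref{lem:BiLipBound} supplies both, so the conclusion stands.
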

\begin{proof}
    We actually proof the following, equivalent statement:
    Every net $(\gamma_i)_{i \in \mathcal{I}}$ in $\Mfld$ that is bounded with respect to $\dist[G]$ has a weakly convergent subnet with limit point $\gamma \in \Mfld$.
    Here bounded means that there is a point $\gamma_0 \in \Mfld$ and a radius $r>0$ such that each $\gamma_i$ of the net satisfies $\dist[G][\gamma_i][\gamma_0] \leq r$.
    By \cref{cor:BoundedIsBounded}, the net $(\gamma_i)_{i \in \mathcal{I}}$ is bounded also in $\norm{\cdot}_{\Bessel[s]}$.
    Hence, the Banach--Alaoglu theorem implies that there is a weakly convergent subnet with limit point $\gamma \in \HSpace$.
    We are left to show that $\gamma$ is indeed in $\Mfld$, 
    and for this we have to show two things: 
    $\gamma$ is an immersion, and $\gamma$ is an embedding.
    To this end, we extract a convergent subsequence 
    $\gamma_n \ceq \gamma_{i_n}$, $n \in \N$ from $(\gamma_i)_{i \in \mathcal{I}}$ such that
    $\gamma_n$ converges weakly to $\gamma$.

    \emph{Claim~1. $\gamma$ is an immersion.} 
    The sequence $(\gamma_n)_{n \in \N}$ is $G$-bounded.
    By \cref{lem:BiLipBound} and \cref{lem:HIsBounded}, the sequence $(\InvSpeed[\gamma_n])_{n\in \N}$, $\InvSpeed[\gamma_n](x) \ceq 1 / \abs{\gamma_n'(x)}$ is bounded in $\norm{\cdot}_{\Bessel[s-1]}$.
    By the Banach--Alaoglu theorem, there is a subsequence that weakly converges to $\InvSpeed[\gamma]$. 
    Because the embedding $\Bessel[s-1][][\Domain][\R] \hookrightarrow \Holder[0][][\Domain][\R]$ is compact, this subsequence is mapped to a sequence that converges in $\Holder[0]$ to $\InvSpeed[\gamma]$. 
    Hence, $\InvSpeed[\gamma]$ must be a bounded function, showing that $\gamma$ is indeed an immersion.

    \emph{Claim~2. $\gamma$ is an embedding.}
    We prove this by showing that $\Energy(\gamma) < \infty$.  
    We can write $\Energy(\gamma_n) = \int_\Domain \int_\Domain  f_n(x,y) \dd \lambda(y) \dd \lambda(x)$
    with integrand
    \begin{align*}
        f_n(x,y) \ceq 
        \frac{\abs{ \gamma_n(y) - \gamma_n(x) - \abs{\gamma_n'(x)}^{-2} \, \gamma_n'(x) \inner{\gamma_n'(x), \gamma_n(y) - \gamma_n(x)} }^2 }
            {
                \abs{\gamma_n(y) - \gamma_n(x)}^{2s+1}
            }
        \abs{\gamma_n'(x)} \abs{\gamma_n'(y)}
        \geq 0
        .
    \end{align*}
    Next, we chose $(\gamma_n)_{n \in \N}$ to be weakly convergent to $\gamma$ in $\HSpace$.
    Since the Morrey embedding $\HSpace \hookrightarrow \Holder[1][][\Domain][\AmbSpace]$ is compact, this sequence also converges strongly to $\gamma$ in $\Holder[1]$. 
    In particular, this means that $\gamma_n$ and $\gamma_n'$ converge pointwise to $\gamma$ and $\gamma'$, respectively.
    So $f_n$ converges pointwise to the integrand of $\Energy(\gamma)$. 
    Fatou's lemma implies
    \begin{align*}
        \Energy(\gamma)
        &=
        \textstyle \int_\Domain \!\int_\Domain
            \displaystyle \LimInf_{n \to \infty} f_n(x,y)
        \dd \lambda(y) \dd \lambda(x)
        \\
        &
        \leq 
        \displaystyle \LimInf_{n \to \infty} 
        \textstyle \int_\Domain \int_\Domain
            f_n(x,y)
        \dd \lambda(y) \dd \lambda(x)   
        =    
        \displaystyle
        \LimInf_{n \to \infty} \Energy(\gamma_n).
    \end{align*}
    Because $\Energy$ is bounded on bounded sets (see \cref{thm:EnergyLipschitz})
    and since $\set{\gamma_n | n \in \N}$ is bounded, this completes the proof.
\end{proof}

In passing, the Fatou argument also shows the following, which will be exploited in our proofs of metric completeness (see \cref{thm:MetricCompleteness}) and geodesic completeness (see \cref{thm:LongTimeExistence}):

\begin{lemma}\label{lem:SequentialLowerSemicontinuity}
    The tangent-point energy $\Energy \colon \Mfld \to \R$ is 
    lower semicontinuous in $\Holder[1][][\Domain][\AmbSpace]$ and
    sequentially lower semicontinuous with respect to the weak convergence in $\HSpace$.
\end{lemma}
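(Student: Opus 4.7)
The plan is to reduce the weak $\Bessel[s]$-statement to the $\Holder[1]$-statement via compact embedding, and then handle the $\Holder[1]$-case by a direct Fatou argument --- essentially distilling the reasoning that already appears in the proof of \cref{thm:BanachAlaoglu}.

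First I would invoke the compact embedding $\HSpace \hookrightarrow \Holder[1][][\Domain][\AmbSpace]$ (available by the Rellich--Kondrachov theorem together with \cref{thm:MorreyInequality}, since $s > 3/2$). Consequently, any sequence $(\gamma_n)_{n\in\N} \subset \Mfld$ converging weakly to $\gamma \in \Mfld$ in $\HSpace$ also converges strongly to $\gamma$ in $\Holder[1]$. Thus the second assertion follows from the first, and it suffices to establish lower semicontinuity with respect to $\Holder[1]$-convergence.

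For the $\Holder[1]$-case, assume $\gamma_n \to \gamma$ in $\Holder[1][][\Domain][\AmbSpace]$ with $\gamma_n, \gamma \in \Mfld$. I would write $\Energy(\gamma_n) = \int_\Domain \int_\Domain f_n(x,y) \dd \lambda(y) \dd \lambda(x)$ with the nonnegative density
\[
    f_n(x,y) \ceq \abs[\big]{ \Op{\gamma_n}{s}\gamma_n(x,y)}^2 \, \varLambda^1(\gamma_n)(x,y) \, \abs{\gamma_n'(x)} \abs{\gamma_n'(y)}
\]
and observe that the uniform convergences $\gamma_n \to \gamma$ and $\gamma_n' \to \gamma'$, combined with the embeddedness of the limit curve $\gamma$ (which forces $\abs{\gamma(y)-\gamma(x)}$ to stay uniformly positive on any compact subset of $(\Domain \times \Domain) \setminus \{x=y\}$), yield pointwise convergence $f_n \to f$ almost everywhere on $\Domain\times\Domain$, where $f$ denotes the corresponding density for $\gamma$. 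Fatou's lemma then immediately gives $\Energy(\gamma) \leq \liminf_{n\to\infty} \Energy(\gamma_n)$.

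There is no substantive obstacle: the only point requiring care is the pointwise convergence of the densities, which is guaranteed by the hypothesis $\gamma\in\Mfld$, since embeddedness keeps the denominators $\abs{\gamma(y)-\gamma(x)}^{2s+1}$ bounded away from zero on the relevant compact subsets. If $\liminf_{n\to\infty}\Energy(\gamma_n) = \infty$ the claim is trivial; otherwise one passes to a subsequence realising the $\liminf$ and applies Fatou as above.
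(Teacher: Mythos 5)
Your proposal is correct and follows the paper's approach essentially verbatim: the paper derives this lemma ``in passing'' from the Fatou argument inside the proof of \cref{thm:BanachAlaoglu}, which is exactly your reduction (compact Morrey embedding $\HSpace\hookrightarrow\Holder[1][][\Domain][\AmbSpace]$ to upgrade weak $\Bessel[s]$-convergence to strong $\Holder[1]$-convergence, then Fatou on the nonnegative, pointwise-convergent integrand). One small bookkeeping slip: with your choice of $f_n$, the factor $\varLambda^1(\gamma_n)$ already carries the density of $\dd\mu$ with respect to $\dd\lambda(y)\dd\lambda(x)$, so one has $\Energy(\gamma_n)=\int_\Domain\int_\Domain f_n\,\dd\mu(x,y)$ rather than $\int_\Domain\int_\Domain f_n\,\dd\lambda(y)\dd\lambda(x)$; equivalently, replace $\varLambda^1(\gamma_n)(x,y)$ by $1/\abs{\gamma_n(y)-\gamma_n(x)}$ to recover the Lebesgue density the paper writes out explicitly in its Banach--Alaoglu proof---either way, the Fatou argument goes through unchanged.
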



\section{Metric completeness}
\label{sec:MetricCompleteness}
We are now able to move on to the next of the Hopf--Rinow properties. 
Thanks to our careful investigation before, this is less intricate than the other properties.

\begin{theorem}[Metric completeness]
    \label{thm:MetricCompleteness}
    With $\dist[G] \colon \Mfld \times \Mfld \to \intervalcc{0,\infty}$, the geodesic distance function from \cref{eq:GeodesicDistance}, the space $(\Mfld,\dist[G])$ is a complete (extended) metric space.
\end{theorem}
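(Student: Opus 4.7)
\medskip
\noindent
\textbf{Proof plan.}
The natural strategy is to pass from $\dist[G]$ to the Hilbert distance $\dist[\Bessel[s]]$ via the two bounds established in \cref{sec:DistanceBounds}. Given a $\dist[G]$-Cauchy sequence $(\gamma_n)_{n\in\N}$, I would first note that such a sequence is automatically $G$-bounded, so \cref{lem:GLocalDistanceBound} furnishes a constant $K > 0$ with
\[
    \dist[\Bessel[s]](\gamma_n,\gamma_m) \leq K \, \dist[G](\gamma_n,\gamma_m) \qquad \text{for all $n, m \in \N$.}
\]
Hence $(\gamma_n)$ is also Cauchy in the Hilbert space $\HSpace$ and possesses a limit $\gamma \in \HSpace$.

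Next I would verify that $\gamma$ actually lies in $\Mfld$. The compact Morrey embedding $\HSpace \hookrightarrow \Holder[1][][\Domain][\AmbSpace]$ gives $\gamma_n \to \gamma$ in $\Holder[1]$, and in particular $\abs{\gamma_n'} \to \abs{\gamma'}$ uniformly. Because $\norm{\InvSpeed[\gamma_n]}_{\Lebesgue[\infty]}$ is uniformly bounded by \cref{lem:BiLipBound}, the uniform limit $\abs{\gamma'}$ stays bounded away from zero, so $\gamma$ is an immersion. For embeddedness, combining the uniform bound on $\Energy(\gamma_n)$ from \cref{thm:EnergyLipschitz} with the $\Holder[1]$-lower semicontinuity of $\Energy$ from \cref{lem:SequentialLowerSemicontinuity} yields
\[
    \Energy(\gamma) \leq \LimInf_{n \to \infty} \Energy(\gamma_n) < \infty,
\]
which by the regularity theory of the tangent-point energy forces $\gamma$ to be an embedding; in other words, $\gamma \in \Mfld$.

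To finish, I would invoke the converse local bound \cref{lem:GDistanceLocalDistanceBound}: since $\gamma \in \Mfld$, there exist $r, C > 0$ such that $\dist[G](\eta_0, \eta_1) \leq C \, \dist[\Bessel[s]](\eta_0, \eta_1)$ for all $\eta_0, \eta_1$ in the $\Bessel[s]$-ball $B_r(\gamma)$. Because $\gamma_n \to \gamma$ in $\HSpace$, eventually $\gamma_n \in B_r(\gamma)$, whence $\dist[G](\gamma_n, \gamma) \leq C \, \dist[\Bessel[s]](\gamma_n, \gamma) \to 0$, as required. The main subtlety lies in the middle step, namely in ensuring that the $\Bessel[s]$-limit has not lost embeddedness; this is precisely the place where the decision to incorporate the repulsive terms $B_\gamma^1$, $B_\gamma^2$, $B_\gamma^3$ into $G$ pays off, since \cref{thm:EnergyLipschitz} then supplies the uniform energy bound along $G$-bounded sets that rules out pull-tights or self-intersections of the limit.
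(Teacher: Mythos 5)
Your proof matches the paper's argument step for step: deduce $\Bessel[s]$-Cauchyness from \cref{lem:GLocalDistanceBound}, extract the $\Bessel[s]$-limit $\gamma$, establish immersion via \cref{lem:BiLipBound} and the uniform $\Holder[1]$-convergence, establish embeddedness via the uniform energy bound from \cref{thm:EnergyLipschitz} together with \cref{lem:SequentialLowerSemicontinuity}, and finally close the loop with the local reverse bound from \cref{lem:GDistanceLocalDistanceBound}. This is precisely the proof in the paper; no genuine difference in route or technique.
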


\begin{proof}
    Let $(\gamma_n)_{n\in \N}$ be a Cauchy sequence in $\Mfld$ with respect to $\dist[G]$,
    so $(\gamma_n)_{n\in \N}$ must be $G$-bounded.
    Hence, we are allowed to use \cref{lem:GLocalDistanceBound}, which now implies that $(\gamma_n)_{n\in \N}$ is Cauchy in $\Bessel[s]$.
    Thus, the sequence converges strongly in $\Bessel[s]$ to some $\gamma \in \HSpace$.
    Next we show that $\gamma$ is an embedding.
    Since the sequence is contained in a $G$-bounded set, \cref{lem:BiLipBound} implies that there are $0 < c \leq C < \infty$ such that $c \leq \abs{\gamma_n'(x)} \leq C$ for all $x \in \Circle$ and all $n \in \N$.
    Since convergence in $\Bessel[s]$ implies convergence in $\Holder[1]$, we also have
    $c \leq \abs{\gamma'(x)} \leq C$ for all $x \in \Circle$.
    Hence, $\gamma$ is an immersion.
    By \cref{thm:EnergyLipschitz}, there is $0 < E_0 < \infty$ such that $\Energy(\gamma_n) \leq E_0$ for all $n \in \N$. Since $\Energy$ is weakly sequentially lower semi-continuous (see \cref{lem:SequentialLowerSemicontinuity}), we deduce that $\Energy(\gamma) \leq E_0 < \infty$. Hence, $\gamma$ is an embedding.
    We now have to show that $(\gamma_n)_{n\in \N}$ converges to $\gamma$ also in $G$.
    We may apply \cref{lem:GDistanceLocalDistanceBound} to find an $r>0$ and a $K > 0$ such that
    \begin{equation*}
    	\dist[G](\eta,\xi) 
	    \leq 
        K \norm{\eta-\xi }_{\Bessel[s]} \text{ for all } \eta, \xi \in B_r(\gamma).
    \end{equation*}
    Since $(\gamma_n)_{n\in \N}$ converges to $\gamma$ with respect to $\dist[\Bessel[s]]$, we know that there is a $N\in \N$ such that
    $\gamma_n \in B_r(\gamma)$ for all $n\geq N$.
    Therefore,
    \begin{equation*}
    	\lim_{n\rightarrow \infty} 
		\dist[G](\gamma,\gamma_n)
        =
        \lim_{N\leq n\rightarrow \infty} 
            \dist[G](\gamma,\gamma_n)
        \leq 
        K 
        \lim_{N\leq n\rightarrow \infty} 
        \norm{\gamma_n-\gamma}_{\Bessel[s]}
        =0.
    \end{equation*}
    Thus, the sequence converges strongly in $\dist[G]$ to $\gamma\in \Mfld$.
   \end{proof}


\section{Geodesic completeness}
\label{sec:GeodesicCompleteness}

This chapter consists of the derivation of the geodesic equation and of showing
short-time and long-time existence of the initial value problem for geodesics, also referred to by \emph{geodesic shooting}.

\subsection{The geodesic equation}

Let $\gamma_0 \in \Mfld$ and $\gamma_1 \in \Mfld$ be two embeddings. 
Our aim is to find the shortest path between them, i.e., we consider the problem:
\begin{align}
	\text{Minimize}
	\;
	\PathLength(\Path) 
	\;
	\text{among all}
	\;
	\Path \in \Holder[1][][\UnitInterval][\Mfld]
	\;
	\text{s.t.}
	\;
	\Path(0) = \gamma_0,
	\;
	\Path(1) = \gamma_1.
	\label{eq:MinPathLength}
\end{align}
As it is well-known, it is easier to use the
\emph{Dirichlet} or \emph{path energy} $\PathEnergy$ 
\begin{align*}
	\PathEnergy(\Path) 
	\ceq
	\frac{1}{2} \int_0^1 \norm{\dot{\Path}(t)}_{G_{\Path(t)}}^2 \dd t
\end{align*}
and to solve the following variational problem:
\begin{align}
	\text{Minimize}
	\;
	\PathEnergy(\Path) 
	\;
	\text{among all}
	\;
	\Path \in \Bessel[1](\UnitInterval;\Mfld)
	\;
	\text{s.t.}
	\;
	\Path(0) = \gamma_0
	,
	\;
	\Path(1) = \gamma_1.
	\label{eq:MinPathEnergy}
\end{align}
Here we already relaxed the feasible set from the non-reflexive space $\Holder[1][][\UnitInterval][\Mfld]$ to the larger Hilbert space
\begin{align*}
	\Bessel[1](\UnitInterval;\Mfld) 
	\ceq 
	\braces[\Big]{ \Path \in  \Bessel[1][][\UnitInterval][\HSpace]
		\, \big| \,
		\text{$\Path(t) \in \Mfld$ for almost all $t \in \UnitInterval$}
	}
\end{align*}
that we equip with the norm
\begin{equation*}
	\norm{U}_{\Bessel[1]}
	\ceq
	\pars[\Big]{
		\textstyle
		\int_0^1 
		\pars[\big]{
			\norm{U(t)}_{\Bessel[s]}^2 
			+
			\norm{\dot{U}(t)}_{\Bessel[s]}^2
		}
	\dd t
	}^{1/2}
	.
\end{equation*}
Minimizing the path energy will essentially lead to the same results for two reasons:
The Hölder inequality implies
\begin{align}
	\ArcLength(\Path)
	=
	\textstyle\int_0^1 1 \cdot \norm{\dot{\Path}(t)}_{G_{\Path(t)}} \dd t 
	\leq 
	\pars[\Big]{\int_0^1 1^2 \dd t}^{1/2}
	\pars[\Big]{\int_0^1 \norm{\dot{\Path}(t)}_{G_{\Path(t)}}^2 \dd t}^{1/2}
	=
	\sqrt{2\, \PathEnergy(\Path) }
	\label{eq:DirichletConstrollsArcLength}
\end{align}
with equality if and only if $\norm{\dot{\Path}(t)}_{G_{\Path(t)}}$ is constant, i.e., if the path $\Path$ has constant speed.
And as we will see in the next paragraph, each minimizer of $\PathLength$ is indeed has constant speed.
So each minimizer of Problem~\cref{eq:MinPathLength}, once reparameterized to constant speed, will be a minimizer of Problem~\cref{eq:MinPathEnergy}. 
And thus very minimizer of Problem~\cref{eq:MinPathEnergy} will be a minimizer of Problem~\cref{eq:MinPathLength}.

We want to study the criticality equation of $\PathEnergy$ and thus have to differentiate the map $\gamma \mapsto G_\gamma$. To this end it will be helpful to write $G(\gamma)$ instead stead of $G_\gamma$.
Let $\Path \in \Bessel[1](\intervalcc{0,1};\Mfld)$ be a critical point of Problem~\cref{eq:MinPathEnergy}.
Then for every admissible variation $W \in \Holder[1][][\intervalcc{0,1}][\Mfld]$,
i.e., $W(0) = 0$ and $W(1) = 0$, has to satisfy $D\PathEnergy(\Path) \, W = 0$.
Abbreviating $\gamma_t \ceq \Path(t)$ and $w_t \ceq W(t)$,
we may use integration by parts as follows:
\begin{align}
    0&= D\PathEnergy(\Path) \, W
	= 
	\int_0^1 
    \pars[\Big]{
    	G(\gamma_t) \!\pars[\big]{ \dot{\gamma}_t, \dot{w}_t }
		+ 
		\fdfrac{1}{2} \pars[\big]{D G(\gamma_t) \, w_t} \!\pars[\big]{ \dot{\gamma}(t), \dot{\gamma}(t) }
	}
	\dd t
	\notag
	\\
    &=\int_0^1
    \pars[\Big]{
         - G(\gamma_t) \!\pars[\big]{ \ddot{\gamma}_t, w_t }
		 - 
		 \pars[\big]{ DG(\gamma_t) \, \dot{\gamma}_t } \!\pars[\big]{\dot{\gamma}_t, w_t }
		 +
		 \fdfrac{1}{2}  \pars[\big]{ D G(\gamma_t) \, w_t } \!\pars[\big]{ \dot{\gamma}_t, \dot{\gamma}_t }
	} \dd t
	.
    \label{eq:GeodesicEq0}
\end{align}
Thus, by virtue of the fundamental theorem of the calculus of variations, the following equation must hold  for almost all $t \in \intervaloo{0,1}$:
\begin{align*}
	G(\gamma_t) \!\pars[\big]{ \ddot{\gamma}_t, w }
	+
	\pars[\big]{ DG(\gamma_t) \, \dot{\gamma}_t } \!\pars[\big]{\dot{\gamma}_t, w }
	-
	\fdfrac{1}{2}  \pars[\big]{ D G(\gamma_t) \, w } \!\pars[\big]{ \dot{\gamma}_t, \dot{\gamma}_t }
	=
	0
	\quad
	\text{for all $w \in T_{\gamma_t} \Mfld$.}
\end{align*}
With the linear map $A_\gamma \colon T_\gamma \Mfld \to L( T_\gamma \Mfld; T_\gamma \Mfld )$,
implicitly given by
the \emph{Koszul formula}
\begin{align}
	G(\gamma)\!\pars[\big]{
		A_\gamma (u) \, v, w 
	}
	=
	\fdfrac{1}{2} 
	\pars[\Big]{
		\pars[\big]{DG(\gamma) \, u }(v,w)
		+
		\pars[\big]{DG(\gamma) \, v }(u,w)
		- 
		\pars[\big]{DG(\gamma) \, w }(u,v)
	},
	\label{eq:Koszul}
\end{align}
this can be simplified to 
\begin{equation}
	G(\gamma_t) \!\pars[\big]{
		\ddot{\gamma}_t + A_{\gamma_t} (\dot{\gamma}_t) \, \dot{\gamma}_t, w 
	}
	=
	0
	\quad
	\text{for all $w \in T_{\gamma_t} \Mfld$.}
	\label{eq:GeodesicEqI}
\end{equation}
In Riemannian geometry, the linear map $A_\gamma$ called the \emph{Christoffel symbol} of the so-called  Levi--Civita connection of $G$ at point $\gamma$.
Equation~\cref{eq:Koszul} is referred to as \emph{Koszul formula}.

Since $G(\gamma_t)$ is a nondegenerate bilinear form and since $w$ runs through all tangent vectors, \cref{eq:GeodesicEqI} implies:
$
	\ddot{\gamma}_t + A_{\gamma_t} (\dot{\gamma}_t) \, \dot{\gamma}_t = 0.
$
This must hold for almost all $t \in \intervaloo{0,1}$, hence the critical point~$\Path$ must satisfy
the following \emph{geodesic equation}:
\begin{equation}
	\nabla_{\dot \Path} \dot \Path
	\ceq
	\ddot{\Path} + A_{\Path} (\dot{\Path}) \, \dot{\Path} = 0,
	\label{eq:GeodesicEqII}
\end{equation}
where $\nabla$ denotes the \emph{covariant derivative} of the so-called Levi--Civita connection associated to~$G$.
Finally, we can also show that a critical path $\Path$ must be parameterized with constant speed:
\begin{align}
	\begin{split}
	\frac{\dd}{\dd t} G(\Path(t)) \!\pars[\big]{ \dot{\Path}(t), \dot{\Path}(t) }
	&=
	\pars[\big]{DG( \gamma_t ) \, \dot{\gamma}_t } \!\pars[\big]{ \dot{\gamma}_t, \dot{\gamma}_t}
	+
	2\, G( \gamma_t ) \!\pars[\big]{ \ddot{\gamma}_t, \dot{\gamma}_t }
	\\
	&=
	\pars[\big]{DG( \gamma_t ) \, \dot{\gamma}_t } \!\pars[\big]{ \dot{\gamma}_t, \dot{\gamma}_t}
	+
	2\, G( \gamma_t ) \!\pars[\big]{ - A_{\gamma_t}(\dot \gamma_t) \, \dot{\gamma}_t, \dot{\gamma}_t }
	\\
	&=
	\pars[\big]{DG( \gamma_t ) \, \dot{\gamma}_t } \!\pars[\big]{ \dot{\gamma}_t, \dot{\gamma}_t}
	-
	2 \cdot \fdfrac{1}{2} \pars[\big]{DG( \gamma_t ) \, \dot{\gamma}_t } \!\pars[\big]{ \dot{\gamma}_t, \dot{\gamma}_t}
	=0.
	\end{split}
	\label{eq:GeodesicsHaveConstantSpeed}
\end{align}
So, indeed, the solutions of problems~\cref{eq:MinPathLength,eq:MinPathEnergy} coincide up to reparameterization.

\subsection{Short-time existence}
\label{sec:ShortTimeExistence}

Not only the boundary value problem \cref{eq:MinPathEnergy} for geodesics is of interest.
The \emph{geodesic initial value problem} or the \emph{geodesic shooting problem} refers to
finding a path $\Path \colon \intervalcc{0,T} \to \Mfld$ satisfying
\begin{align}
	\ddot{\Path} + A_{\Path} (\dot{\Path}) \, \dot{\Path} = 0,
	\quad 
	\Path(0) = \gamma_0,
	\qand 
	\dot{\Path}(0) = v_0
	\label{eq:GeodesicShooting}
\end{align}
for some time $T>0$, 
some starting point $\gamma_0 \in \Mfld$, and some starting velocity $v_0 \in T_{\gamma_0} \Mfld$. 

\begin{btheorem}[Short-time existence]\label{thm:ShortTimeExistence}
	For every $\gamma_0 \in \Mfld$ and every $v_0 \in T_{\gamma_0} \Mfld = \HSpace$ there is a $T>0$ so that the initial value problem
	\cref{eq:GeodesicShooting} has a unique solution.
\end{btheorem}
\begin{proof}
	The differential equation \cref{eq:GeodesicShooting}
	is of second order. Introducing $V(t) \ceq \dot{\Path}(t)$, we can turn it into a system of differential equations of first order:
	\begin{equation}
		\begin{pmatrix}
			\dot{\Path}(t)\\
			\dot{V}(t)
		\end{pmatrix}
		=
		\cF(\Path(t),V(t))
		\ceq
		\begin{pmatrix}
			V(t)\\
			-\pars[\big]{A_{\Path(t)}  V(t)} \, V(t)
		\end{pmatrix}
		\qand
		\begin{pmatrix}
			\Path(0)\\
			V(0) 
		\end{pmatrix}
		=
		\begin{pmatrix}
			\gamma_0\\
			v_0
		\end{pmatrix}
		.
		\label{eq:GeodesicSpray}
	\end{equation}
	With the help of the Riesz isomorphism $\Riesz_\gamma \colon T_\gamma \Mfld \to T\dual_\gamma \Mfld$ from \cref{thm:GIsSmooth}, we can write down the Christoffel symbol \cref{eq:Koszul} a bit more explicitly:
	\begin{equation}
		A_{\Path}(\dot{\Path}) \, \dot{\Path}
		=
		\Riesz_{\Path}^{-1} 
		\pars[\Big]{
			\pars[\big]{DG(\Path) \, \dot{\Path} }(\dot{\Path},\cdot) 
			- 
			\fdfrac{1}{2} \pars[\big]{DG(\Path) \, (\cdot) } (\dot{\Path},\dot{\Path})
		}.
		\label{eq:ChristoffelExplicit}
	\end{equation}
	So the forcing term $\cF \colon T \Mfld = \Mfld \times \HSpace \to \HSpace \times \HSpace$ looks as follows:
	\begin{equation}
		\cF(\gamma,v)	
		=
		\begin{pmatrix}
			v\\
			\Riesz_{\Path}^{-1} 
			\pars[\Big]{
				\tfrac{1}{2} \pars[\big]{DG_\gamma \, (\cdot) } (v,v)
				-
				\pars[\big]{DG_\gamma \, v }(v,\cdot) 
			}
		\end{pmatrix}
		.
		\label{eq:GeodesicForcing}
	\end{equation}
	Because the metric $G$ is smooth (see \cref{thm:GIsSmooth}), the term $\cF$ is smooth as well. 
	In particular, $D\cF$ exists and it is continuous.
	Hence $\cF$ is locally Lipschitz-continuous.
	Now the Picard--Lindelöff theorem on Banach spaces applies and shows that this system of ordinary differential equations gives rise to a unique short-time solution.
\end{proof}

We would also like to point out that this can be generalized to submanifolds. 
This fact is not required for the further line of argument.
Therefore, we conclude this subsection with the following remark, which may be skipped on first reading.

\begin{remark}\label{rem:Summanifolds}
Suppose that $\ConstraintMap \colon \Mfld \to \cX$ is some sufficiently smooth submersion with values in some Banach space~$\cX$.
We denote the constraint submanifold by
$
	\ConstraintMfld \ceq \braces{ \gamma \in \Mfld \mid \ConstraintMap(\gamma) = 0}. 
$
At a point $\gamma \in \ConstraintMfld$ the tangent space is given be $T_\gamma \ConstraintMfld = \ker( D \ConstraintMap(\gamma))$.
If one tests \cref{eq:GeodesicEq0} only by admissable infinitesimal variations, i.e., by mappings $W \colon \intervalcc{0,1} \to \HSpace$ satisfying $W(t) \in T_{\Path(t)} \ConstraintMfld$ for all $t \in \intervaloo{0,1}$,
$W(0) = 0$ and $W(1)=0$, then we are led to the \emph{constrained geodesic equation}
\begin{equation}
	\pars[\big]{\id - Q(\Path) } \, \nabla_{\dot \Path} \dot \Path = 0
	.
	\label{eq:ConstraintGeodesicEquation1}
\end{equation}
Here, the linear operator $Q(\gamma) \colon T_\gamma \Mfld \to T_\gamma \Mfld$ is the $G$-orthogonal projection operator onto the $G_\gamma$-orthogonal complement $\ker( D \ConstraintMap(\gamma))^\perp$ of $T_\gamma \ConstraintMfld$.
Our aim is to express this constrained geodesic equation as an unconstrained ordinary differential equation of second order, so that we can apply the Picard--Lindelöff theorem once more.
To this end, we recall the definition of the \emph{second fundamental form} $\II$ of the submanifold 
$\ConstraintMfld \subset \Mfld$:
For $\gamma \in \ConstraintMfld$, it is a bilinear map
$
    \II_\gamma \colon 
	T_\gamma \ConstraintMfld\times T_\gamma \ConstraintMfld 
	\to 
	\pars{T_\gamma \ConstraintMfld}^\perp \subset T_\gamma \Mfld
$ 
defined by
\begin{equation*}
	\II_\gamma(u,v) 
	\ceq -(\nabla_u Q) \, v
	\ceq -\pars[\big]{D Q(\gamma) \, u} \,v + Q(\gamma) A_\gamma(u) \, v
	\quad 
	\text{$u$, $v \in T_\gamma \ConstraintMfld$.}
\end{equation*}
The fact that $\II_\gamma(u,v) \in \pars{T_\gamma \ConstraintMfld}^\perp$ results from the projector property $Q \, Q = Q$:
\[
	\pars{\id - Q(\gamma)} \pars[\big]{D Q(\gamma) \, u} \,v
	=
	D\!\pars[\big]{ \cancel{\pars{\id - Q} \, Q}}  (\gamma) \, u
	-
	\pars[\big]{D\pars{\id - Q} (\gamma) \, u} \, \cancel{Q(\gamma) \, v}
	=
	0
	.
\] 
It is now remarkable that 
\begin{align*}
	Q(\Path) \, \nabla_{\dot \Path} \dot \Path
	&=
	Q(\Path) \ddot \Path + Q(\Path) \, A_{\Path}(\dot \Path) \dot \Path
	\\
	&=
	\frac{\dd}{\dd t} \pars[\Big]{ \cancel{Q(\Path) \, \dot \Path} }
	-
	\pars[\Big]{\frac{\dd}{\dd t}  Q(\Path)} \, \dot \Path
	+ 
	Q(\Path) \, A_{\Path}(\dot \Path) \, \dot \Path
	=
	\II_{\Path} \!\pars{\dot \Path,\dot \Path}.
\end{align*}
This allows us to rewrite \cref{eq:ConstraintGeodesicEquation1} as
\begin{equation}
	\nabla_{\dot \Path} \dot \Path = \II_{\Path}(\dot{\Path},\dot{\Path}) 
	\quad 
	\text{or, equivalently, as}
	\quad
	\ddot \Path
	=
	\II_{\Path} \!\pars{\dot \Path,\dot \Path}
	-
	A_{\Path}(\dot \Path) \, \dot \Path
	.
	\label{eq:ConstraintGeodesicEquation2}
\end{equation}
We have expressed $\II_{\gamma}$ already in terms of $D Q(\gamma)$.
Next we express $Q(\gamma)$ in terms of $D\ConstraintMap(\gamma)$.
Since $\ConstraintMap$ is a submersion, the differential $D \ConstraintMap(\gamma) \colon T_\gamma \Mfld \to \cX$ is surjective.
Recall from \cref{thm:GIsStrong} that we denoted the Riesz isomorphism of $G_\gamma$ by $\Riesz_\gamma \colon T_\gamma \Mfld \to T_\gamma'\Mfld$.
Now it is straight-forward to check that 
\begin{equation}
	D\ConstraintMap(\gamma)^\dagger
	\ceq
	\Riesz_\gamma^{-1} D\ConstraintMap(\gamma)' 
	\pars[\big]{ 
		D\ConstraintMap(\gamma) \, \Riesz_\gamma^{-1} D\ConstraintMap(\gamma)'
	}^{-1}
\end{equation}
is a bounded right-inverse of $D\ConstraintMap(\gamma)$. 
(In fact, if $\cX$ is a Hilbert space, then $D\ConstraintMap(\gamma)^\dagger$ is the Moore--Penrose pseudoinverse of $D\ConstraintMap(\gamma)$. Note that $D\ConstraintMap(\gamma)$ is assumed to be surjective, not injective; so, this formula for the pseudoinverse here differs from the one we mentioned earlier in~\cref{eq:E}.)
In any case, we have the following identity:
\begin{equation*}
	Q(\gamma) = D\ConstraintMap(\gamma)^\dagger \, D\ConstraintMap(\gamma).
\end{equation*}
It allows us to compute the derivative of $Q$ via product rule and the rule for the derivate of the inversion in a Banach algebra. For $u$, $v \in T_\gamma \ConstraintMfld = \ker(D\ConstraintMap(\gamma))$, it can be simplified to
\begin{equation*}
	\pars[\big]{ D Q(\gamma) \, u } \, v
	= 
	\pars[\big]{ D( \eta \mapsto D\ConstraintMap(\eta)^\dagger )(\gamma) \, u }  \cancel{D\ConstraintMap(\gamma) \, v}
	+
	D\ConstraintMap(\gamma)^\dagger \, D^2\ConstraintMap(\gamma) \!\pars{u,v}.
\end{equation*}
Hence, \cref{eq:ConstraintGeodesicEquation2} can be reformulated to
\begin{equation}
	\ddot \Path
	=
	-
	D\ConstraintMap(\Path)^\dagger \, D^2\ConstraintMap(\Path) \!\pars{\dot \Path,\dot \Path}
	-
	\pars{\id - Q(\Path) } \, A_{\Path}(\dot \Path) \, \dot \Path
	.
	\label{eq:ConstraintGeodesicEquation3}
\end{equation}
The analogue of \cref{eq:GeodesicSpray}
in this submanifold setting is
\begin{equation}
	\begin{pmatrix}
		\dot \Path\\ \dot V
	\end{pmatrix}
	=
	\begin{pmatrix}
		V \\ 
		\II_{\Path}(V,V)
		-
		A_{\Path}(V) \, V
	\end{pmatrix}
	=
	\begin{pmatrix}
		V \\ 
		-
		D\ConstraintMap(\Path)^\dagger \, D^2\ConstraintMap(\Path) \!\pars{V,V}
		-
		\pars{\id - Q(\Path) } \, A_{\Path}(V) \, V
	\end{pmatrix}
	.
	\label{eq:ConstraintGeodesicSpray}
\end{equation}
If we suppose that $\ConstraintMap$ is of class $\Holder[2,1][\loc]$, then the right-hand side is still locally Lipschitz-continuous. 
Thus, we obtain short-time existence for the geodesic initial value problem in the constrained manifold $\ConstraintMfld$ as well.
\end{remark}

\subsection{Long-time existence}
\label{sec:LongTimeExistence}

\begin{theorem}[Geodesic completeness]\label{thm:LongTimeExistence}
    The Riemannian manifold $(\Mfld,G)$ is geodesically complete.
\end{theorem}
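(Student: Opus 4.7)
The plan is to argue by contradiction: suppose that some geodesic cannot be extended to all of $\R$, extract from this a maximal forward-time interval $\intervalco{0,T_{\max}}$ with $T_{\max}<\infty$, and show that $\pars{\Path(t),\dot\Path(t)}$ has a limit in $T\Mfld$ as $t\nearrow T_{\max}$, which will enable a glue-on via \cref{thm:ShortTimeExistence} and contradict maximality.

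\textbf{Step 1 (Constant-speed confinement and metric limit).} By \cref{eq:GeodesicsHaveConstantSpeed}, along any geodesic $\Path$ the quantity $c \ceq \norm{\dot\Path(t)}_{G_{\Path(t)}}$ is independent of $t$. Consequently the path length of $\Path$ restricted to $\intervalcc{t_1,t_2}\subset\intervalco{0,T_{\max}}$ equals $c\pars{t_2-t_1}$, so by the very definition \cref{eq:GeodesicDistance} of $\dist[G]$
\[
    \dist[G]\pars[\big]{\Path(t_1),\Path(t_2)} \leq c\abs{t_2-t_1}.
\]
Hence $\Path(\intervalco{0,T_{\max}})$ is $G$-bounded and $\pars{\Path(t)}_{t<T_{\max}}$ is Cauchy with respect to $\dist[G]$. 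By metric completeness (\cref{thm:MetricCompleteness}), there exists $\gamma_\infty\in\Mfld$ with $\Path(t)\to\gamma_\infty$ in $\dist[G]$ as $t\nearrow T_{\max}$.

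\textbf{Step 2 (Uniform $\Bessel[s]$-bounds on the velocity).} By \cref{lem:NormEquivOnBoundedSets} and \cref{lem:HsgammaHsOnBoundedSets}, there is a constant $K>0$ so that $\norm{u}_{\Bessel[s]}\leq K\norm{u}_{G_{\Path(t)}}$ for all $t\in\intervalco{0,T_{\max}}$ and $u\in\HSpace$. Applied to $u=\dot\Path(t)$ this yields $\norm{\dot\Path(t)}_{\Bessel[s]}\leq Kc$, uniformly in $t$. In particular the pair $\pars{\Path(t),\dot\Path(t)}$ stays in a set of the form $\BoundedSet\times\overline{B^{\Bessel[s]}_{Kc}(0)}$, where $\BoundedSet$ is $G$-bounded and relatively $\dist[G]$-compact at~$\gamma_\infty$ in the sense that $\gamma_\infty$ is its $\dist[G]$-closure point.

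\textbf{Step 3 (Convergence of the velocity).} Rewrite the geodesic equation as the first-order system \cref{eq:GeodesicSpray} with right-hand side~$\cF$ given by \cref{eq:GeodesicForcing}. The key point is that $\cF$ is smooth on~$T\Mfld$: $G$ is smooth (\cref{thm:GIsSmooth}), so is $DG$; the Riesz isomorphism $\Riesz_\gamma$ depends smoothly on~$\gamma$ (the norm equivalences \cref{lem:NormEquivOnBoundedSets} guarantee that $\Riesz^{-1}_\gamma$ stays uniformly bounded as an operator $T_\gamma\dual\Mfld\to T_\gamma\Mfld$ on $G$-bounded sets). Choosing an open $\dist[G]$-neighborhood $U$ of~$\gamma_\infty$ in~$\Mfld$ and applying smoothness of~$\cF$ on the bounded set $U\times\overline{B^{\Bessel[s]}_{2Kc}(0)}$ gives a constant $N<\infty$ with $\norm{\cF(\gamma,v)}_{\Bessel[s]\times\Bessel[s]}\leq N$ on this set. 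For $t_0$ close enough to $T_{\max}$ so that $\Path(\intervalco{t_0,T_{\max}})\subset U$, the differential equation $\dot V=\pr_2\!\cF(\Path,V)$ together with the bound $\norm{\dot\Path(t)}_{\Bessel[s]}\leq Kc$ implies
\[
    \norm{\dot\Path(t_2)-\dot\Path(t_1)}_{\Bessel[s]} \leq N\abs{t_2-t_1}
    \quad\text{for } t_0\leq t_1\leq t_2<T_{\max}.
\]
Hence $\dot\Path(t)$ is Cauchy in $\Bessel[s]$ and converges to some $v_\infty\in\Bessel[s]=T_{\gamma_\infty}\Mfld$.

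\textbf{Step 4 (Gluing).} By \cref{thm:ShortTimeExistence}, there is $\varepsilon>0$ and a geodesic $\tilde\Path\colon\intervaloo{-\varepsilon,\varepsilon}\to\Mfld$ with $\tilde\Path(0)=\gamma_\infty$ and $\dot{\tilde\Path}(0)=v_\infty$. Since $\pars{\Path(t),\dot\Path(t)}\to\pars{\gamma_\infty,v_\infty}$ in $\Bessel[s]\times\Bessel[s]$, the local uniqueness guaranteed by the Picard--Lindelöf argument implies that $\Path$ and $t\mapsto\tilde\Path(t-T_{\max})$ agree on their common overlap. Thus $\Path$ extends to a geodesic on $\intervalco{0,T_{\max}+\varepsilon}$, contradicting maximality of $T_{\max}$.

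\textbf{Main obstacle.} Steps~1 and~2 are immediate from the preceding machinery. The crux is Step~3: one must turn the \emph{geometric} a priori bounds (constant $G$-speed plus $G$-boundedness of the trajectory) into a \emph{uniform} operator-norm bound on~$\Riesz_\gamma^{-1}$ and a uniform $\Bessel[s]$-bound on~$DG$ along the curve, so that the spray $\cF$ satisfies a uniform Lipschitz/boundedness estimate on the appropriate tube in~$T\Mfld$. Both are furnished by the uniform equivalence $\norm{\cdot}_{\Bessel[s]}\sim\norm{\cdot}_{G_\gamma}$ on $G$-bounded sets (\cref{lem:NormEquivOnBoundedSets}) and by smoothness of~$G$ on a chart around~$\gamma_\infty$ (\cref{thm:GIsSmooth}); the care taken in \cref{sec:NormEquivOnBoundedSets,sec:DistanceBounds} is precisely what makes this uniform control available.
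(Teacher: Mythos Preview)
Your approach is correct and takes a genuinely different (and arguably slicker) route than the paper.

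\textbf{What the paper does.} The paper invokes the escape lemma (\cref{lem:EscapeLemma}) directly. To verify its hypotheses it (i) shows that any accumulation point of $\xi(\intervalco{0,T})$ lies in $\Mfld$ by using the uniform bounds on $\Energy$ and $\abs{\partial_x\Path}$ together with lower semicontinuity of $\Energy$---without invoking metric completeness; and (ii) bounds $\cF$ along the trajectory by carrying out an explicit, term-by-term estimate of $\pars{DG(\gamma)\,w}(u,v)$ in terms of the $B^k$, leading to a bound of the form $F_3(\ArcLength(\gamma),\Energy(\gamma))\,\norm{u}_{G_\gamma}\norm{v}_{G_\gamma}\norm{w}_{G_\gamma}$, which is uniform on the $G$-bounded tube.

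\textbf{What you do differently.} You replace (i) by appealing to metric completeness (\cref{thm:MetricCompleteness}), which directly hands you $\gamma_\infty\in\Mfld$; and you replace (ii) by localizing at $\gamma_\infty$ and using only continuity of the Christoffel map there, rather than a global estimate on the whole $G$-ball. You then close by a hand-rolled gluing via \cref{thm:ShortTimeExistence}. This is shorter and more conceptual; the paper's route, by contrast, is self-contained (it does not lean on \cref{thm:MetricCompleteness}) and its explicit $DG$-bound has independent interest (e.g.\ it quantifies how the Christoffel symbol depends only on $\ArcLength$ and $\Energy$).

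\textbf{One imprecision to tighten.} In Step~3 you write that ``smoothness of $\cF$ on the bounded set $U\times\overline{B^{\Bessel[s]}_{2Kc}(0)}$'' gives a uniform bound $N$. Smoothness alone does not guarantee boundedness on non-compact bounded sets in infinite dimensions. What you actually need (and have) is the specific structure of the spray: $\cF(\gamma,v)$ is polynomial---in fact quadratic---in $v$ via \cref{eq:ChristoffelExplicit}, with coefficients $\gamma\mapsto \Riesz_\gamma^{-1}$ and $\gamma\mapsto DG(\gamma)$ that are continuous and hence locally bounded near $\gamma_\infty$. Stating it this way makes Step~3 airtight.
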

\begin{proof}
	We prove this theorem by contradiction. 
	Let $\gamma_0 \in \Mfld$ and let $v_0 \in T_{\gamma_0} \Mfld = \HSpace$ be a vector with $\norm{v_0}_{G(\gamma_0)} = 1$.
	Let $T>0$ and $\Path \colon \intervalco{0,T} \to \Mfld \subset \HSpace$ be a maximal geodesic
	starting at $\Path(0) = \gamma_0$ in direction $\dot{\Path}(0) = v_0$.
	In particular, $\xi(t) \ceq (\Path(t),\dot \Path(t))$ is a solution of the geodesic equation
	\begin{equation*}
		\xi(0) = (\gamma_0,v_0) 
		\qand
		\dot \xi(t) 
		= 
		\cF( \xi(t) ) \quad \text{for all $t \in \intervalco{0,T}$},
	\end{equation*}
	where $\cF$ is the forcing term from \cref{eq:GeodesicForcing}.
	
	Now \emph{assume} $T<\infty$.
	\newline
	We are using the ``escape lemma'' \cref{lem:EscapeLemma}
	with 
	$X \ceq \HSpace \times \HSpace$ and $\varOmega \ceq \Mfld \times \HSpace$ 
	to lead this to a contradiction.
	To this end, we have to check two things: 
	\begin{enumerate}
		\item \label{item:GeodesicCompleteness1} $\overline{\xi(\intervalco{0,T})} \subset \varOmega$ and 
		\item \label{item:GeodesicCompleteness2} $t \mapsto \cF(\xi(t))$ is uniformly bounded on $\intervalco{0,T}$.
	\end{enumerate}

	As we have checked in \cref{eq:GeodesicsHaveConstantSpeed}, solutions to the geodesic equations are parameterized by constant speed, i.e., $\norm{ \dot{\Path}(t)}_{G_{\Path(t)}} = 1$ for all $t\in \intervalco{0,T}$.
	Hence, we have
	\begin{equation*}
		\dist[G](\Path(t) , \Path(0)) \leq t \leq T < \infty,
		\quad \text{for every $t \in \intervalco{0,T}$ and all $x \in \Domain$,}
	\end{equation*}
	showing that $\Path(\intervalco{0,T}) \subset \Mfld$ is $G$-bounded.
	By \cref{thm:EnergyLipschitz}, \cref{lem:BiLipBound}, and \cref{lem:HIsBounded}, there are $0 < E_0 < \infty$ and $0 < c < C < 0$ such that
	\begin{align}
		\Energy(\Path(t)) \leq E_0
		\quad \text{and} \quad
		c < \abs{ \partial_x  \Path(t,x) } < C
		\quad 
		\text{for all $t \in \intervalco{0,T}$ and all $x \in \Domain$.}
		\label{eq:LongTimeExistenceBounds}
	\end{align}
	To show the first claim, let $(\gamma,v) \in \overline{\xi(\intervalco{0,T})}$.
	Then there is a sequence $(t_k)_{k \in \N}$ in $\intervalco{0,T}$ such that $\xi(t_k)$ converges in norm to $(\gamma,v) \in X$ as $k \to \infty$. 
	In particular, we have $\Path(t_k) \to \gamma$ in $\HSpace$ and $\Holder[1][][\Domain][\AmbSpace]$ as $k \to \infty$.
	In particular, we have pointwise convergence of $\partial_x \Path(t_k,x)$ to $\partial_x \gamma(x)$, hence
	\begin{align*}
		\abs{ \partial_x  \gamma(x) } = \lim_{k \to \infty} \abs{\partial_x \Path(t_k,x)} \in \intervalcc{c,C} \subset \intervaloo{0,\infty}
		\quad\text{for each $x \in \Domain$.}
	\end{align*}
	This shows that $\gamma$ is an immersion.
	Moreover, since $\Energy$ is sequentially weakly lower semi-continuous (see \cref{lem:SequentialLowerSemicontinuity}), we also have
	\begin{equation*}
		\Energy(\gamma) \leq \liminf_{k \to \infty} \Energy( \Path(t_k) ) \leq E_0
		.
	\end{equation*}
	Thus, we have shown that $\gamma \in \Mfld$, hence $(\gamma,v) \in \varOmega$.

	\medskip 

	Next we show the second claim. For this we have to look more closely into the forcing term:
	\begin{equation*}
		\cF( \xi(t) )
		=
		\cF( \Path(t), \dot \Path(t) )
		=
		\begin{pmatrix}
			\dot \Path(t) 
			\\ 
			A_{\Path(t)}( \dot \Path(t)) \, \dot \Path(t)
		\end{pmatrix}
		.
	\end{equation*}
	With \cref{lem:HsgammaHs} and \cref{lem:NormEquiv2} we find a $C > 0$ such that 
	$
		\norm{\cdot}_{\Bessel[s]}  \leq C \norm{\cdot}_{G_{\Path(t)}}
	$ for all $t\in  \intervalco{0,T}$.
	Hence, we have 
	\begin{align*}
		\norm{ \cF( \xi(t) ) }_X 
		&\leq
		\norm{\dot \Path(t)}_{\Bessel[s]}
		+
		\norm{A_{\Path(t)}( \dot \Path(t)) \, \dot \Path(t)}_{\Bessel[s]}
		\\
		&\leq 
		C \pars[\Big]{
			\norm{\dot \Path(t)}_{G_{\Path(t)}}
			+
			\norm{A_{\Path(t)}( \dot \Path(t)) \, \dot \Path(t)}_{G_{\Path(t)}}
		}
		.
	\end{align*}
	We already know that $\norm{\dot \Path(t)}_{G_{\Path(t)}} = 1$ for all  $t \in \intervalco{0,T}$.
	By \cref{thm:GIsStrong}, the map $\Riesz_{\Path(t)}$ is an isometry with respect to $\norm{\dot \Path(t)}_{G_{\Path(t)}}$ and its dual norm.
	By the Koszul formula \cref{eq:Koszul}, we have
	\begin{align*}
		\norm{A_{\Path(t)}( \dot \Path(t) ) \, \dot \Path(t) }_{G_{\Path(t)}}
		&\leq 
		\frac{3}{2} \, \sup_{u,\,v,\,w \in \HSpace \setminus \set{0}}
			\frac{
				\abs[\big]{\pars[\big]{DG(\Path(t)) \, w}(u,v)}
			}{ 
				\norm{u}_{G_{\Path(t)}} \norm{v}_{G_{\Path(t)}} \norm{w}_{G_{\Path(t)}} 
			}
		.
	\end{align*}
	Next we bound $\abs{\pars{DG(\gamma) \, w}(u,v)}$.
	Using the recursion formula \cref{eq:RsGateaux}, we can compute $ \pars{D B^{k}(\gamma) \, w} (u,v)$, $k \in \set{1,2,3}$ in a straight-forward, but lengthy computation. 
	Then we apply Hölder's inequality and simplify using the following:
	\begin{gather*}
		\norm{\Op{\gamma}{s} \gamma}_{\Lebesgue[2](\mu_\gamma)} = \sqrt{\Energy(\gamma)}, 	
		\quad
		\norm{\Op{\gamma}{s} u}_{\Lebesgue[2](\mu_\gamma)} 
		=
		\SmashedSqrt{B^{1}_\gamma(u,u)} \leq \norm{u}_{G_\gamma}
		\quad \text{for $u \in \HSpace$,}
		\\ 
		\norm{D_\gamma \gamma}_{\Lebesgue[\infty]} = 1,
		\qand
		\norm[\big]{\tfrac{ \gamma(y)-\gamma(y)}{\abs{ \gamma(y)-\gamma(x)}}}_{\Lebesgue[\infty]}  = 1.
	\end{gather*}
	For the sake brevity we skip the details and just sketch the results.
	For $k=1$ and $u$, $v$, $w \in \HSpace$ we get:
	\begin{align*}
		\abs{ \pars{ D B^1(\gamma) \, w }(u,v)}
		&\leq
		(2s+1)
		\norm{u}_{G_\gamma} 
		\norm{v}_{G_\gamma} 
		\norm[\big]{ \tfrac{ w(y)-w(x)}{\abs{ \gamma(y)-\gamma(x)}}}_{\Lebesgue[\infty]}
		\\
		&\qquad
		+
		C \!\!\!\!
		\sum_{(\psi_1,\psi_2,\psi_3,\psi_4)} \!\!\!
			\norm{\psi_{1}}_{G_\gamma} 
			\norm{\psi_{2}}_{G_\gamma} 
			\norm{D_\gamma \psi_{3}}_{\Lebesgue[\infty]}
			\norm{D_\gamma \psi_{4}}_{\Lebesgue[\infty]}
		,
	\end{align*}
	where the sums run over all permutations of $\set{\gamma,u,v,w}$.
	For $k = 2$ and $k = 3$, we obtain:
	\begin{align*}
		\abs{ \pars{DB^2(\gamma)\,w }(u,v)}
		&\leq 
		C \, C'(\gamma,w)
		\norm[\big]{\tfrac{ u(y)-u(x)}{\abs{ \gamma(y)-\gamma(x)}}}_{\Lebesgue[\infty]} 
		\norm[\big]{\tfrac{ v(y)-v(x)}{\abs{ \gamma(y)-\gamma(x)}}}_{\Lebesgue[\infty]}
		\quad \text{and}
		\\
		\abs{ \pars{ DB^3(\gamma) \, w} (u,v)}
		&\leq 
		C \, C'(\gamma,w)
		\norm{D_\gamma u}_{\Lebesgue[\infty]} 
		\norm{D_\gamma v}_{\Lebesgue[\infty]}
		,
		\quad \text{where}
		\\
		C'(\gamma,w)	
		&= \pars[\Big]{
			\!\sqrt{\Energy(\gamma)} 
			\norm{w}_{G_\gamma} 
			+
			\Energy(\gamma) 
			\norm{D_\gamma w}_{\Lebesgue[\infty]}
			+
			\Energy(\gamma)
			\norm[\big]{\tfrac{ w(y)-w(x)}{\abs{ \gamma(y)-\gamma(x)}}}_{\Lebesgue[\infty]} 
		}
		.
	\end{align*}
	For the lower order terms, we compute
	\begin{align*}
		\abs{
			D
				(\gamma \mapsto \inner{u,v}_{\Lebesgue[2](\gamma)}
				)(w)
		}
		&\leq 
		\norm{D_\gamma w}_{\Lebesgue[\infty]}
		\norm{u}_{\Lebesgue[2](\gamma)}
		\norm{v}_{\Lebesgue[2](\gamma)} 
		\quad \text{and}
		\\
		\abs{
		D
			(\gamma \mapsto \inner{D_\gamma u,D_\gamma v}_{\Lebesgue[2](\gamma)}
			)(w)
		}
		&\leq
		\norm{D_\gamma w}_{\Lebesgue[\infty]}
		\norm{D_\gamma u}_{\Lebesgue[2](\gamma)}
		\norm{D_\gamma v}_{\Lebesgue[2](\gamma)}.
	\end{align*}
	From \cref{lem:MorreyInequalityGeometric} and \cref{cor:MorreyUniformBiLipschitz} we know that
	\begin{align*}
		\norm{D_\gamma u}_{\Lebesgue[\infty]}
		&\leq 
			C_{\Morrey,s-1} \, 
			\ArcLength(\gamma)^{\alpha-1}
		\seminorm{ u }_{\Bessel[s](\gamma)}	
		\quad \text{and}
		\\
		\norm[\big]{\tfrac{u(y)-u(x)}{\abs{ \gamma(y)-\gamma(x)}}}_{\Lebesgue[\infty]} 
        &\leq 
        C_{\Morrey,s-1} \, C_{\distor}
        \,
        \ArcLength(\gamma)^{\alpha + 1/ \alpha} \, \Energy(\gamma)^{(\alpha + 1)/(2 \alpha^2)}
        \seminorm{ u }_{\Bessel[s](\gamma)}
		,
		\quad
		\alpha = s - 3/2
        .	
	\end{align*}
	Combining these with \cref{lem:NormEquiv2}, which states
	$
		\seminorm{u}_{\Bessel[s](\gamma)}
		\leq 
		\sqrt{32 + C \, \ArcLength(\gamma) \, \Energy(\gamma)^{(\alpha+1)/\alpha}} 
		\norm{u}_{G_\gamma}
	$,
	we see that there are continuous functions $F_1$ and $F_2$ such that
	\begin{align*}
		\norm{D_\gamma u}_{\Lebesgue[\infty]} 
		\leq 
		F_1( \ArcLength(\gamma), \Energy(\gamma)) \norm{u}_{G_\gamma}
		\qand
		\norm[\big]{\tfrac{ u(y)-u(x)}{\abs{ \gamma(y)-\gamma(x)}}}_{\Lebesgue[\infty]} 
		\leq 
		F_2( \ArcLength(\gamma), \Energy(\gamma)) \norm{u}_{G_\gamma}
		.
	\end{align*}
	All together, we find a continuous function such that
	\begin{align*}
		\abs{ \pars{DG(\gamma) \, w} (u,v)}
		\leq 
		F_3( \ArcLength(\gamma), \Energy(\gamma)) 
		\norm{u}_{G_\gamma}
		\norm{v}_{G_\gamma}
		\norm{w}_{G_\gamma}
		.
	\end{align*}
	Due to $c<\abs{\partial_x \Path(t,x)} \leq C$ and $\Energy(\Path(t))\leq E_0$, we can bound the arc length of $\Path(t)$ and its energy uniformly from above and below for all $t\in \intervalco{0,T}$.
	Since $F_3( \ArcLength(\gamma), \Energy(\gamma))$ is uniformly bounded for all $\gamma = \Path(t)$, $t \in \intervalco{0,T}$, this proves our second claim. 
	Finally \cref{lem:EscapeLemma} implies that the solution $\xi$ can be extended a bit beyond $T$. 
	This is a \emph{contradiction} to the assumption that $T<\infty$. 
\end{proof}

\begin{remark}
	In view of \cref{rem:Summanifolds} and in particular of \cref{eq:ConstraintGeodesicSpray}, it is now straight-forward to identify some mild and sufficient conditions
	for geodesic completeness of $\Holder[2,1]$-submanifolds $\ConstraintMfld \subset \Mfld$.
	For example, it suffices that the second fundamental form $\II$ is bounded on each $G$-bounded subset of $\ConstraintMfld$.
	In particular, this is the case for constraints $\ConstraintMap$ with $D\ConstraintMap^\dagger$ and $D^2\ConstraintMap$ being globally $G$-Lipschitz continuous.
\end{remark}


\section{Minimal geodesics}
\label{sec:ExistenceMinimizingGeodesics}

In this section we consider many one-parameter families in functions spaces on the domains $\Domain$ and $\Domain \times \Domain$. 
So, for the sake of brevity, we write $I \ceq \intervalcc{0,1}$ and use the following abbreviations
for general $1 \leq p, q \leq \infty$, $\sigma$, $\tau \in \R$ and a finite-dimensional Euclidean space $Z$:
\begin{align*}
    \Lebesgue[p]\Lebesgue[q]
    &\ceq 
    \Lebesgue[p][][I][\Lebesgue[q][][\Domain][Z]],
    &
    \Bessel[\sigma]\Bessel[\tau]
    &\ceq 
    \Bessel[\sigma][][I][\Bessel[\tau][][\Domain][Z]],
    &
    \Lebesgue[p]\Lebesgue[q][\mu]
    &\ceq 
    \Lebesgue[p][][I][\Lebesgue[q][\mu][\Domain \times \Domain][Z]],
    \\
    \Lebesgue[p]\Bessel[\tau]
    &\ceq 
    \Lebesgue[p][][I][\Bessel[\tau][][\Domain][Z]],
    &
    \Bessel[\sigma]\Lebesgue[q]
    &\ceq 
    \Bessel[\sigma][][I][\Lebesgue[q][][\Domain][Z]],
    &
    \Bessel[\sigma]\Lebesgue[q][\mu]
    &\ceq 
    \Bessel[\sigma][][I][\Lebesgue[q][\mu][\Domain \times \Domain][Z]]
    .
\end{align*}
Most of the time we will have $Z = \AmbSpace$ or $Z = \R$. 
Occasionally, we will also use $Z = \Hom(\AmbSpace;\AmbSpace)$ (the space of linear maps $\AmbSpace \to \AmbSpace$) with the Frobenius inner product.
The concrete choice will be clear from the context.

\begin{theorem}[Existence of minimal geodesics]\label{thm:ExistenceMinimizingGeodesics}
    Let $\gamma_0 \in \Mfld$ and $\gamma_1 \in \Mfld$ lie in the same path component. 
    Then there is a length-minimizing $G$-geodesic $\Path$ from $\gamma_0$ to $\gamma_1$.
\end{theorem}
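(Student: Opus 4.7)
My plan is a direct-method argument in the spirit of the calculus of variations, exchanging the length functional $\PathLength$ for the path energy $\Dirichlet$ and then extracting a weak limit using the Banach--Alaoglu property from \cref{sec:BanachAlaoglu} combined with the weak lower semicontinuity (\cref{thm:DirichletEnergyIsLowerSemicontinuous}). Since $\gamma_0$ and $\gamma_1$ are in the same path component, $d \ceq \dist[G](\gamma_0,\gamma_1) < \infty$. First I would observe that by the Cauchy--Schwarz estimate \cref{eq:DirichletConstrollsArcLength} and the standard reparameterization-to-constant-speed trick, any path from $\gamma_0$ to $\gamma_1$ of length close to $d$ can be reparameterized so that $\PathLength(\Path)^2 = 2\,\Dirichlet(\Path)$; hence minimizing $\Dirichlet$ over paths with fixed endpoints on $\intervalcc{0,1}$ gives the same infimum value $\tfrac{1}{2}d^2$, and minimizers of the energy problem are automatically length-minimizers (parameterized at constant speed).

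Then I would fix a minimizing sequence $(\Path_n)_{n \in \N}$ in $\Bessel[1](\intervalcc{0,1};\Mfld)$ with $\Path_n(0)=\gamma_0$, $\Path_n(1)=\gamma_1$, and $\Dirichlet(\Path_n) \to \tfrac{1}{2}d^2$. By reparameterization we may assume $\PathLength(\Path_n) \leq d+1$ for all $n$, so each trajectory $\Path_n(\intervalcc{0,1})$ lies inside the $G$-ball of radius $d+1$ about $\gamma_0$. By \cref{thm:EnergyLipschitz}, \cref{lem:ArcLengthLipschitz}, \cref{lem:LengthBoundedFromBelow}, \cref{lem:BiLipBound}, and \cref{lem:HIsBounded}, the quantities $\Energy$, $\ArcLength$, $1/\ArcLength$, $\norm{\Speed[\gamma]}_{\Lebesgue[\infty]}$, $\norm{\InvSpeed[\gamma]}_{\Lebesgue[\infty]}$, and $\seminorm{\Speed[\gamma]}_{\Bessel[s-1]}$ are uniformly bounded on this $G$-bounded set. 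Combined with \cref{lem:NormEquivOnBoundedSets} and \cref{lem:HsgammaHsOnBoundedSets}, this yields a uniform constant $C>0$ with $\norm{\dot\Path_n(t)}_{\Bessel[s]} \leq C \norm{\dot\Path_n(t)}_{G_{\Path_n(t)}}$; integrating gives a uniform bound on $\norm{\dot\Path_n}_{\Lebesgue[2][][\intervalcc{0,1}][\Bessel[s]]}$ and on $\norm{\Path_n}_{\Lebesgue[\infty][][\intervalcc{0,1}][\Bessel[s]]}$. Thus $(\Path_n)_{n \in \N}$ is bounded in the Hilbert space $\Bessel[1][][\intervalcc{0,1}][\HSpace]$, so a subsequence converges weakly there to some $\Path_\infty$.

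Next I would verify that $\Path_\infty$ is admissible. By the Aubin--Lions lemma, the weak convergence in $\Bessel[1][][\intervalcc{0,1}][\HSpace]$ with compact embedding $\HSpace \embeds \Holder[1]$ upgrades to strong convergence in $\Holder[0][][\intervalcc{0,1}][\Holder[1]]$. In particular $\Path_\infty(0)=\gamma_0$, $\Path_\infty(1)=\gamma_1$, and for every $t$ the value $\Path_\infty(t)$ is the weak-$\Bessel[s]$ limit of $\Path_n(t)$; by the same argument as in \cref{thm:BanachAlaoglu} (uniform lower bound on $\abs{\gamma'}$ plus weak lower semicontinuity of $\Energy$ from \cref{lem:SequentialLowerSemicontinuity}) we get $\Path_\infty(t) \in \Mfld$ for all $t$. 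Finally, invoking weak sequential lower semicontinuity of the path energy (\cref{thm:DirichletEnergyIsLowerSemicontinuous}) produces
\[
    \tfrac{1}{2} \, d^2
    \leq
    \Dirichlet(\Path_\infty)
    \leq
    \LimInf_{n \to \infty} \Dirichlet(\Path_n)
    =
    \tfrac{1}{2} \, d^2,
\]
so $\Path_\infty$ is a minimizer of $\Dirichlet$ and therefore a length-minimizing geodesic.

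The main obstacle is the last step, namely weak lower semicontinuity of $\gamma \mapsto G_\gamma(\dot\gamma,\dot\gamma)$ along the path, because $G_\gamma$ depends nonlinearly on $\gamma$ through the nonlocal operator $\Op{\gamma}{s}$ and the measures $\mu_\gamma$, $\nu_\gamma$. I would handle this by combining the strong $\Holder[1]$-convergence $\Path_n \to \Path_\infty$ (which makes the coefficient fields $\varLambda^\beta(\Path_n)$, $\abs{\Path_n'}$, etc.\ converge uniformly) with a Mazur-type convex-combination argument applied to $\dot\Path_n$, so that the linear operator $\Op{\Path_n(t)}{s}\dot\Path_n(t)$ converges weakly in $\Lebesgue[2](\mu)$ to $\Op{\Path_\infty(t)}{s}\dot\Path_\infty(t)$; similarly for $B_\gamma^2$ and $B_\gamma^3$ using \cref{cor:DifferenceOpIsSmooth}. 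The lower order terms $\inner{\cdot,\cdot}_{\Bessel[1](\gamma)}$ and $\inner{\cdot,\cdot}_{\Lebesgue[2](\gamma)}$ are straightforward.
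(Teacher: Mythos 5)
Your argument is correct and follows essentially the same route as the paper's proof: pass from $\PathLength$ to the Dirichlet energy $\Dirichlet$, take a minimizing sequence, use the $G$-boundedness lemmas and the norm equivalences to obtain a uniform $\Bessel[1]\Bessel[s]$ bound, extract a weak limit, upgrade to strong $\Holder[0]\Holder[1]$ convergence via a compact embedding, preserve embeddedness through the uniform two-sided bounds on $\abs{\gamma'}$ and lower semicontinuity of $\Energy$, and close with weak sequential lower semicontinuity of $\Dirichlet$ (\cref{thm:DirichletEnergyIsLowerSemicontinuous}). (The paper works directly with $\cE_0\ceq\inf\Dirichlet$ rather than identifying it with $\tfrac{1}{2}\dist[G](\gamma_0,\gamma_1)^2$, and cites a generalized Arzel\`a--Ascoli theorem instead of Aubin--Lions, but these are immaterial differences.)

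One caution about your closing paragraph, where you sketch how you would establish \cref{thm:DirichletEnergyIsLowerSemicontinuous}. A Mazur-type (Tonelli-type) argument is tailored to local integrands $\int f(t,\Path,\dot\Path)\,\dd t$ that are convex in the velocity and depend pointwise and continuously on $\Path$; here the quadratic form $G_\gamma$ involves the genuinely nonlocal operator $\Op{\gamma}{s}$, so there is no pointwise integrand to freeze, and $\Dirichlet$ is certainly not jointly convex in $(\Path,\dot\Path)$. Strong $\Holder[0]\Holder[1]$ convergence of $\Path[n]$ plus Mazur on $\dotPath[n]$ does not by itself yield weak $\Lebesgue[2]\Lebesgue[2][\mu]$ convergence of $\Op{\Path[n]}{s}\dotPath[n]$ or of products like $(\Op{\Path[n]}{s}\Path[n])\,(\cD_{\Path[n]}\dotPath[n])^\transp$, because the operator coefficients change with $n$ while the argument only converges weakly. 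What the paper actually does is rewrite $2\Dirichlet$ as a sum of squared $L^2$-norms (\cref{eq:SumOfSquares}) and prove weak $\Lebesgue[2]\Lebesgue[2][\mu]$-convergence of each $A^{(i)}_{\Path[n]}\dotPath[n]$; the decisive device is the ``borrowing a time derivative'' trick, where one passes to primitives such as $\tilde U_n(t)=\int_0^t \cD_{\Path[n]}u_n\,\dd r$ to gain compactness in time and then runs an integration-by-parts argument (\cref{lem:WeakConvergenceHelper}, via \cref{lem:WeakConvergenceDgamma} and \cref{lem:WeakConvergenceRs}). Since you do cite \cref{thm:DirichletEnergyIsLowerSemicontinuous} as a black box, the proof of the theorem at hand is complete; just do not rely on the sketched Mazur alternative if you ever need to supply that lemma yourself.
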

\begin{proof}
    We use the direct method of calculus of variations to show that there exists a minimizer of the variational problem \cref{eq:MinPathEnergy} for the Dirichlet energy  $\Dirichlet$.
    Let $\Path[n]$, $n \in \N$, be paths such that $\Path[n](0) = \gamma_0$ and $\Path[n](1) = \gamma_1$ and 
    \begin{equation*}
        \Dirichlet(\Path[n]) \leq \cE_0+ 1/n
        \quad \text{with} \quad
        \cE_0 \ceq \inf_{\Path} \Dirichlet(\Path),
    \end{equation*}
    where the infimum is taken over all paths $\Path \in \Bessel[1][][I][\Mfld] \subset \Bessel[1]\Bessel[s]$ satisfying $\Path(0) = \gamma_0$ and $\Path(1) = \gamma_1$.
    The Hölder inequality \cref{eq:DirichletConstrollsArcLength} implies the following bound on the path lengths:
    \begin{equation}
        \PathLength(\Path[n]) 
        \leq 
        \cL_0 
        \ceq 
        \sqrt{2 \, \cE_0 + 2}.
        \label{eq:LengthBoundedByDirichlet}
    \end{equation}
    So for each $n \in \N$ and each $t \in I$ we have
    $
        \dist[G]( \Path[n](t), \gamma_0 )
        \leq \PathLength(\Path[n]) \leq \cL_0
    $,
    showing that $\myset{ \Path[n](t) }{ \text{$n \in \N$, $t \in I$}} $ is $G$-bounded.
    By combining \cref{lem:NormEquivOnBoundedSets,lem:HsgammaHsOnBoundedSets,lem:GLocalDistanceBound}
    we can find $0 < K < \infty$ such that
    \begin{align*}
        \norm{ \dotPath[n](t)}_{\Bessel[s]}
        \leq 
        K \norm{ \dotPath[n](t)}_{G(\Path[n](t))}
        \qand 
        \dist[\Bessel[s]](\Path[n](t),\gamma_0)
        \leq
        K \, \dist[G](\Path[n](t),\gamma_0)
    \end{align*}
    hold true for all $n \in \N$ and all $t \in I$.
    We claim that $(\Path[n])_{n \in \N}$ is bounded in $\Bessel[1]\Bessel[s]$. 
    Indeed, we have
    \[
        \norm{\Path[n](t) - \gamma_0}_{\Bessel[s]}
        = 
        \dist[\Bessel[s]](\Path[n](t),\gamma_0)
        \leq 
        K \, \dist[G](\Path[n](t),\gamma_0)
        \leq 
        K \, \PathLength(\Path[n]) 
        \leq 
        K  \, \cL_0
    \]
    and
    \begin{align*}
        \norm{\Path[n]}_{\Bessel[1]\Bessel[s]}
        &=
        \pars[\big]{
            \textstyle
            \int_0^1 
                \norm{\Path[n](t)}_{\Bessel[s]}^2 
            \dd t
        }^{1/2}
        +
        \pars[\big]{
            \textstyle
            \int_0^1 
                \norm{\dotPath[n](t)}_{\Bessel[s]}^2 
            \dd t
        }^{1/2}
        \\
        &\leq 
        \sup_{t \in I}  \norm{\Path[n](t)}_{\Bessel[s]}
        +
        \,K \pars[\big]{ 
            \textstyle
            \int_0^1 
                \norm{ \dotPath[n](t)}_{G(\Path[n](t)) }^2                
            \dd t 
        }^{1/2}
        \\
        &\leq 
        \norm{\gamma_0}_{\Bessel[s]}
        +
        \sup_{t \in I}  
        \norm{\Path[n](t) - \gamma_0}_{\Bessel[s]}
        +
        K \, \cL_0
        \leq
        \norm{\gamma_0}_{\Bessel[s]}
        +
        2 \, K \, \cL_0.
    \end{align*}
    Since the $\Bessel[1]\Bessel[s]$ is a Hilbert space and thus reflexive, and because of the Banach--Alaoglu theorem, there is a subsequence $(\Path[n_k])_{k \in \N}$ that converges weakly to some $\Path$ in $\Bessel[1]\Bessel[s]$.
    We have to show that $\Path(t) \in \Mfld$ for all $t \in I$ and that $\Path$ is a minimizer of~$\Dirichlet$.

    Due to a generalization of the Arzelà--Ascoli theorem~\cite[Thm.~47.1]{MunkresTopo}, the embedding
    \begin{equation*}
        \Bessel[1]\Bessel[s]
        \hookrightarrow
        \Holder[0]\Holder[1] \ceq \Holder[0][][I][\Holder[1][][\Domain][\AmbSpace]]
    \end{equation*}
    is compact, hence $(\Path[n_k])_{k \in \N}$ converges in $\Holder[0]\Holder[1]$. 
    This implies 
    \begin{equation*}
        \Path(0) = \lim_{k \to \infty} \Path[n_k](0) = \gamma_0,
        \quad 
        \Path(1) = \lim_{k \to \infty} \Path[n_k](1) = \gamma_0 
        ,
        \quad \text{and}
        \quad \partial_x \Path(t,x) = \lim_{k \to \infty}  \partial_x \Path[n_{k}](t,x)
        .
    \end{equation*}
    By \cref{lem:BiLipBound}, there are $0 < c \leq C < \infty$ such that 
    \begin{equation*}
        c \leq \abs{\partial_x \Path[n](t,x)} \leq C 
        \quad \text{for all $n \in \N$, $t \in I$, $x \in \Domain$.}
    \end{equation*}
    So we obtain $c \leq \abs{\partial_x \Path(t,x)} \leq  C$ for all $t \in I$ and $x \in \Domain$, showing that $\Path$ is a path in the space of immersions.
    By \cref{thm:EnergyLipschitz}, there is an $E_0 < \infty $ such that $\Energy(\Path[n]) \leq E_0$ for all $n \in \N$.
    Because $\Energy$ is lower semi-continuous in $\Holder[1]$ (see \cref{lem:SequentialLowerSemicontinuity}) and since $\Path[n_k](t)$ converges to $\Path(t)$ in $\Holder[1]$,
    we conclude 
    \begin{equation*}
        \Energy(\Path(t))
        \leq \liminf_{k \to \infty} \Energy( \Path[n_k](t))
        \leq E_0 < \infty.
    \end{equation*}
    This shows that $\Path$ is indeed a path in the space $\Mfld$ of embeddings.

    Finally, we show that $\Path$ is a minimizer of the path energy $\Dirichlet$.
    To this end, we employ \cref{thm:DirichletEnergyIsLowerSemicontinuous} below. It states that  $\Dirichlet$ is weakly sequentially lower semicontinuous, hence
    \begin{equation*}
        \Dirichlet(\Path) 
        \leq 
        \liminf_{k \to \infty} \Dirichlet(\Path[n_k]) 
        =
        \liminf_{k \to \infty} \pars*{\Dirichlet_0 + 1/n_k }
        =
        \Dirichlet_0.
    \end{equation*}
    Thus, $\Path$ is indeed a minimal geodesic.
\end{proof}

\begin{remark}\label{rem:ConstraintMinimalGeodesics}
    One can extend this proof also to the case of minimal geodesics in a differentiable submanifold $\ConstraintMfld \subset \Mfld$ (cf.~\cref{rem:Summanifolds}).
    The only point of failure here is that the weak limit $\Path$ of the minimizing subsequence $\Path_{n_k}$ may not be contained in $\ConstraintMfld$ anymore.
    So one needs some requirement that guarantees that $\Bessel[1][][I][\ConstraintMfld]$ is weakly closed in $\Bessel[1][][I][\Bessel[s]]$.
    We give an example construction that often occurs in practice. (This can be skipped on first reading.)

    Suppose that $\ConstraintMfld$ is given as a zero set of a submersion $\ConstraintMap \colon \Mfld \to \cX$ into some Banach space $\cX$.
    Moreover, suppose that $\varPhi$ factors through some $\Bessel[s-\varepsilon] \ceq \Bessel[s-\varepsilon][][\Circle][\AmbSpace]$, $\varepsilon > 0$, i.e., there is a continuous map $\varPsi \colon \Bessel[s-\varepsilon] \to \cY$ into some Banach space $\cY$ and that there is some continuous embedding $\iota \colon \cX \hookrightarrow \cY$ such that the following diagram commutes:
    \begin{equation*}
        \begin{tikzcd}[]
            \Mfld
                \ar[r, "{\ConstraintMap}"]
                \ar[d,hook, "{}"]
            &\cX
                \ar[d,hook, "{\iota}"]	
            \\
            \Bessel[s-\varepsilon]
                \ar[r, "{\varPsi}"]	
            &\cY
            \nospaceperiod
        \end{tikzcd}	
    \end{equation*}
    Then the minimizing subsequence satisfies
    \[
        \iota ( \varPsi(\Path_{n_k}(t)) )
        = 
        \ConstraintMap(\Path_{n_k}(t))
        =
        0
        \quad 
        \text{for all $t \in I$ and all $k \in \N$.}
    \]
    Since $\iota$ is injective, this implies $\varPsi(\Path_{n_k}(t)) = 0$.
    Because the embedding $\Bessel[1]\Bessel[s] \hookrightarrow \Holder[0]\Bessel[s-\varepsilon]$ is compact, and because $\varPsi$ is continuous, this results in
    \[
        \varPsi(\Path(t)) = \lim_{k \to \infty} \varPsi(\Path_{n_k}(t)) = 0
        \quad 
        \text{hence}
        \quad 
        \Path(t) \in \ConstraintMfld
        \quad \text{for all $t \in I$.}
    \]
    For example, one can use $\varPsi \colon \Bessel[1] \to \cY = \R$, $\varPsi(\gamma) = \ArcLength(\gamma) - L_0$
    to constrain the arc length of curves.
    Or one may require that curves are parameterized by arc length by using 
    $\varPsi \colon \Bessel[s-\varepsilon] \hookrightarrow \Holder[1] \to \cY = \Holder[0][][\Circle][\R]$,
    $\varPsi(\gamma) \ceq \abs{\gamma'}^2-1$.
    Moreover, we may constrain our curves to lie on a submanifold $\varSigma \subset \AmbSpace$ by representing $\varSigma$ as a level set of a function $\psi$ and put $\varPsi(\gamma) \ceq \psi \circ \gamma - c$, where $c$ is a regular value of $\psi$ such that $\varSigma = \psi^{-1}(\braces{c})$.
    In particular, $\AmbDim = 4$ and $\varSigma = \Sphere^3 \subset \R^4$ can be realized this way.
\end{remark}

\subsection{Weak sequential lower semicontinuity of path energy}

In the following we write $u_n \wconverges[n \to \infty][X] u_\infty$ if the sequence $(u_n)_{n \in \N}$ converges weakly to $u_\infty$ in the Banach space $X$.

\begin{theorem}
    \label{thm:DirichletEnergyIsLowerSemicontinuous}
    Let $\gamma_0$, $\gamma_1 \in \Mfld$ and let $\Path[n]$, $n \in \N \cup \braces{\infty}$ be paths in $\Mfld$ such that $\Path[n](0) = \gamma_0$ and
    $\Path[n](1) = \gamma_1$.
    Moreover, suppose that $\Path[n] \wconverges[n \to \infty] \Path[\infty]$ in $\Bessel[1]\Bessel[s]$.
    Then 
    \begin{align*}
        \Dirichlet(\Path[\infty]) \leq \liminf_{n \to \infty} \Dirichlet(\Path[n]).
    \end{align*}
\end{theorem}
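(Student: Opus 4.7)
The plan is to combine compactness extraction with a parallelogram-identity argument. After passing to a subsequence realizing $\liminf \Dirichlet(\Path[n])$ and extracting further via the compact embedding $\Bessel[1]\Bessel[s] \hookrightarrow \Holder[0]\Holder[1]$ (as used in the proof of \cref{thm:ExistenceMinimizingGeodesics}), I may assume $\Path[n] \to \Path[\infty]$ strongly in $\Holder[0]\Holder[1]$. Boundedness of $\Dirichlet(\Path[n])$ makes the set $\cK \ceq \myset{\Path[n](t), \Path[\infty](t)}{n \in \N, t \in I}$ a $G$-bounded subset of $\Mfld$; hence \cref{lem:NormEquivOnBoundedSets,lem:HsgammaHsOnBoundedSets,lem:BiLipBound,lem:HIsBounded} show that the norms $\norm{\cdot}_{G_\gamma}$ are uniformly equivalent to $\norm{\cdot}_{\Bessel[s]}$ for $\gamma \in \cK$, that $(\dotPath[n])$ is uniformly bounded in $\Lebesgue[2]\Bessel[s]$, and after another extraction $\dotPath[n] \wconverges \dotPath[\infty]$ weakly there.

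The parallelogram identity in the inner product $G_{\Path[n](t)}$ yields
\begin{equation*}
G_{\Path[n](t)}(\dotPath[n](t), \dotPath[n](t)) \geq 2\, G_{\Path[n](t)}(\dotPath[n](t), \dotPath[\infty](t)) - G_{\Path[n](t)}(\dotPath[\infty](t), \dotPath[\infty](t)).
\end{equation*}
Integrating in $t$ gives $2 \Dirichlet(\Path[n]) \geq 2 I_n - J_n$ with $I_n \ceq \int_0^1 G_{\Path[n](t)}(\dotPath[n], \dotPath[\infty]) \dd t$ and $J_n \ceq \int_0^1 G_{\Path[n](t)}(\dotPath[\infty], \dotPath[\infty]) \dd t$. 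Taking $\liminf$, the theorem reduces to showing that both $I_n$ and $J_n$ converge to $2\Dirichlet(\Path[\infty])$, since $2 \cdot 2\Dirichlet(\Path[\infty]) - 2\Dirichlet(\Path[\infty]) = 2\Dirichlet(\Path[\infty])$.

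The convergence $J_n \to 2\Dirichlet(\Path[\infty])$ follows by dominated convergence: for fixed $v \in \Bessel[s]$, the integrands in each of the five summands of \cref{eq:G} defining $G_\gamma(v,v)$ depend continuously on $\gamma \in \cK$ under the $\Holder[1]$-topology (immediate for the $\Lebesgue[2]$ and $\Bessel[1]$ terms; for the $B^k$ terms via pointwise continuity of the $\Op{\gamma}{s}$-kernels combined with \cref{thm:UniformBiLipschitz}), and \cref{lem:RsEquiBounded} supplies a uniform $L^1$-dominating majorant. Splitting $I_n = I^{(1)}_n + I^{(2)}_n$ with $I^{(1)}_n \ceq \int_0^1 G_{\Path[\infty](t)}(\dotPath[n], \dotPath[\infty]) \dd t$, the map $w \mapsto \int_0^1 G_{\Path[\infty](t)}(w(t), \dotPath[\infty](t)) \dd t$ is a bounded linear functional on $\Lebesgue[2]\Bessel[s]$ by Cauchy--Schwarz and the uniform norm equivalence of $G_{\Path[\infty]}$ and $\Bessel[s]$ on $\cK$; hence $I^{(1)}_n \to 2\Dirichlet(\Path[\infty])$ by weak convergence of $\dotPath[n]$.

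The hard part is showing that the cross-difference $I^{(2)}_n \ceq \int_0^1 (G_{\Path[n](t)} - G_{\Path[\infty](t)})(\dotPath[n], \dotPath[\infty]) \dd t$ tends to zero. Cauchy--Schwarz in time, combined with the operator norm $\norm{\cdot}_{op}$ on bilinear forms on $\Bessel[s] \times \Bessel[s]$, yields
\begin{equation*}
|I^{(2)}_n| \leq \pars[\Big]{\textstyle\int_0^1 \norm{G_{\Path[n](t)} - G_{\Path[\infty](t)}}_{op}^2 \norm{\dotPath[\infty](t)}_{\Bessel[s]}^2 \dd t}^{1/2} \norm{\dotPath[n]}_{\Lebesgue[2]\Bessel[s]}.
\end{equation*}
The second factor is bounded; the first tends to zero via dominated convergence once one establishes the pointwise-in-$t$ operator-norm continuity of $\gamma \mapsto G_\gamma$ under $\Holder[1]$-convergence on $\cK$. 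For the $\Lebesgue[2](\gamma)$, $\Bessel[1](\gamma)$, and $B^3$ summands this continuity is straightforward, as they depend on $\gamma$ only through pointwise composition with $\Holder[1]$-continuous functions of $\gamma$ and $\gamma'$. For $B^1$ and $B^2$ the decomposition \cref{eq:RDecomposition} is essential: it isolates the genuine $\gamma$-dependence of $\Op{\gamma}{s}$ into the multiplicative factors $\varLambda^s(\gamma)$ and $\cD_\gamma$, both continuous in $\Holder[1]$ with all constants uniform on $\cK$ thanks to $G$-boundedness (via \cref{thm:UniformBiLipschitz,lem:BiLipBound}), while factoring out the fixed reference operator $\Op{\eta}{s}$. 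Verifying this uniform continuity of the bilinear form $\gamma \mapsto G_\gamma$ is where the quantitative control accumulated in Sections 2--4 pays off, and it is the principal technical obstacle in the proof.
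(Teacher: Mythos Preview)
Your polarization strategy hinges on two continuity claims that both fail: neither $J_n \to 2\Dirichlet(\Path[\infty])$ nor the operator-norm convergence $\norm{G_{\Path[n](t)} - G_{\Path[\infty](t)}}_{op} \to 0$ can be obtained from the available $\Holder[0]\Holder[1]$-convergence of $\Path[n]$. The decomposition \cref{eq:RDecomposition} does \emph{not} isolate all the $\gamma$-dependence of $\Op{\gamma}{s}$ into $\varLambda^s(\gamma)$ and $\cD_\gamma$: it also contains the factor $\Op{\eta}{s}\gamma$, and this depends on $\gamma$ at the full $\Bessel[s]$-level (since $\Op{\eta}{s}\colon \Bessel[s] \to \Lebesgue[2][\mu]$ is bounded but not compact). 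From weak $\Bessel[1]\Bessel[s]$-convergence you only get $\Path[n](t) \rightharpoonup \Path[\infty](t)$ \emph{weakly} in $\Bessel[s]$ for each $t$, so $\Op{\eta}{s}\Path[n](t)$ converges only weakly in $\Lebesgue[2][\mu]$. Consequently $\Op{\Path[n](t)}{s} v$ converges only weakly in $\Lebesgue[2][\mu]$ even for fixed $v$, and $B^1_{\Path[n](t)}(v,v) = \norm{\Op{\Path[n](t)}{s} v}^2_{\Lebesgue[2](\mu_{\Path[n](t)})}$ is merely lower semicontinuous, not continuous. This gives $\liminf J_n \geq 2\Dirichlet(\Path[\infty])$, which is the wrong inequality for your argument; and the same obstruction kills the operator-norm estimate you use for $I^{(2)}_n$.

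This is exactly the difficulty the paper's proof is designed around. Instead of separating the base point from the velocity, the paper writes $2\Dirichlet(\Path) = \sum_i \norm{A^{(i)}_\Path \dotPath}^2$ as a sum of squared norms of quantities that \emph{couple} $\Path$ and $\dotPath$, and then shows that each $A^{(i)}_{\Path[n]}\dotPath[n]$ converges \emph{weakly} to $A^{(i)}_{\Path[\infty]}\dotPath[\infty]$. The problematic product $(\cD_{\Path[n]}\dotPath[n])\,\Op{\eta}{s}\Path[n]$---two factors each only weakly convergent in their natural spaces---is handled by an integration-by-parts in $t$ (\cref{lem:WeakConvergenceHelper}): one writes $\cD_{\Path[n]}\dotPath[n] = \partial_t \tilde U_n$ with $\tilde U_n$ \emph{strongly} convergent in $\Lebesgue[\infty]\Lebesgue[\infty]$, and transfers the $\partial_t$ onto $\Op{\eta}{s}\Path[n]$, which lies in the better space $\Bessel[1]\Lebesgue[2][\mu]$. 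This ``borrowing of a time derivative'' is the missing idea in your approach.
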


\begin{proof}
    The key idea is to write the Dirichlet energy $\Dirichlet(\Path)$ of a general path $\Path$ as
    \begin{equation}
        2 \, \Dirichlet(\Path)
        =
        \sum_{i=1}^4
        \norm{ A^{(i)}_{\Path} \dotPath}_{\Lebesgue[2]\Lebesgue[2][\mu]}^2
            +
            \norm{ A^{(5)}_{\Path} \dotPath }_{\Lebesgue[2]\Lebesgue[2]}^2
            +
            \norm{ A^{(6)}_{\Path} \dotPath }_{\Lebesgue[2]\Lebesgue[2]}^2
            ,
            \label{eq:SumOfSquares}
    \end{equation}
    which is motivated by \cite[Remark 5.4]{zbMATH06502944}.
    The expressions $\smash{A^{(i)}_{\Path} \dotPath}$ will be defined below. 
    Since the squared norms are weakly lower semicontinuous, this reduces the problem of showing weak lower semicontinuity of $\Dirichlet$ to that of showing weak continuity of $\Path \mapsto \smash{A^{(i)}_{\Path} \dotPath}$.

    There is nothing to show if $\Dirichlet_0 \ceq \liminf_{n \to \infty} \Dirichlet(\Path[n])$ is infinite.
    So let us assume that $\Dirichlet_0 < \infty$.
    Hence, all curves $\Path[n](t)$, $n \in \N$, $t \in I$ lie in a common $G$-bounded set
    (cf. \cref{eq:LengthBoundedByDirichlet}).
    By \cref{thm:EnergyLipschitz} and \cref{lem:HIsBounded}, there are $0 < E_0 < \infty$, $0 < L_0 < \infty$, and $0 < C < \infty$ such that 
    \begin{align}\label{eq:BoundsAlongPath}
        \Energy(\Path[n](t)) \leq E_0,
        \;
        L_0^{-1} \leq \ArcLength(\Path[n](t)) \leq L_0,
        \;\text{and}\;
        C^{-1} \leq \abs{\partial_x \Path[n](t)} \leq C
        \;
        \text{for all $n \in \N$, $t \in I$.}
    \end{align}
    By \cref{thm:UniformBiLipschitz} there is also a $0 < K < \infty$ such that $\distor(\Path[n](t)) \leq K$ for all $n \in \N$, $t \in I$.
    Analogously to \cref{lem:LambdaIsSmooth}, we define time-dependent functions
    \begin{align*}
        \varLambda^\beta( \Path[n] )(t,x,y)
        \ceq
        \pars[\Big]{ \sdfrac{\dist[\Domain](y,x) }{ \abs{\Path[n](t,y) - \Path[n](t,x)}  } }^\beta
        =
        \pars[\Big]{ \sdfrac{\dist[\Domain](y,x) }{ \dist[ \Path[n](t)](y,x)  } }^\beta
        \pars[\Big]{ \sdfrac{\dist[ \Path[n](t)](y,x) }{ \abs{\Path[n](t,y) - \Path[n](t,x)}  } }^\beta
        .
    \end{align*}
    Combining the bounds from \cref{eq:BoundsAlongPath} with the above identity, we obtain
    \begin{align}
        \varLambda^\beta( \Path[n] )(t,x,y)
        \leq C^\beta \, K^\beta
        \quad
        \text{for all $n \in \N$, $t \in I$, $x,y \in \Domain$}
        \label{eq:DirichletEnergyIsLowerSemicontinuousLambdaBounds}
        .
    \end{align}
    In order to use standard results from the calculus of variations, we model our energy as the sum of stationary $L^2$-norms squared and put all the $\Path$-dependencies into the integrand.
    Therefore, we define
    \begin{align*}
        A^{(1)}_{\Path} u (t,x,y) 
        &\ceq 
        \varXi_{\Path}(t,x,y)
        \;
        \Op{\Path}{s} u(t,x,y),
        \\
        A^{(2)}_{\Path} u (t,x,y) 
        &\ceq 
        \varXi_{\Path}(t,x,y)
        \;
        \Op{\Path}{s} \Path(t,x,y)
        \pars[\Big]{
            \fdfrac{
                u(t,y) - u(t,x)
            }{
                \abs{\Path(t,y) - \Path(t,x)}
            }
        }^\transp
        ,
        \\
        A^{(3)}_{\Path} u (t,x,y) 
        &\ceq 
        \varXi_{\Path}(t,x,y)
        \;
        \Op{\Path}{s} \Path(t,x,y)
        \pars{D_{\Path} u(t,x)}^\transp
        ,
        \\
        A^{(4)}_{\Path} u (t,x,y) 
        &\ceq 
        \varXi_{\Path}(t,x,y)
        \;
        \Op{\Path}{s} \Path(t,x,y)
        \pars{D_{\Path} u(t,y)}^\transp
        ,
        \\
        A^{(5)}_{\Path} u (t,x) 
        &\ceq 
        \SmashedSqrt{\abs{\partial_x \Path(t,x)}}
        \,
        D_{\Path} u(t,x)
        ,
        \\
        A^{(6)}_{\Path} u (t,x) 
        &\ceq 
        \SmashedSqrt{\abs{\partial_x \Path(t,x)}}
        \, 
        u(x),
    \end{align*}
    with
    $
        \varXi_{\Path}(t,x,y)
        \ceq 
        \SmashedSqrt{\abs{\partial_x \Path(t,x)}}
        \SmashedSqrt{\abs{\partial_x \Path(t,y)}}
        \,
        \varLambda^{1/2}(\Path(t))(x,y)
    $.
    Since 
    $\norm{ \cdot }_{\Lebesgue[2]\Lebesgue[\infty][\mu]}^2$ and $\norm{ \cdot }_{\Lebesgue[2]\Lebesgue[2]}^2$
    are weakly lower semicontinuous, it suffices to show that 
    \[
        A^{(k)}_{\Path[n]} \dotPath[n] 
        \wconverges[n \to \infty][\Lebesgue[2]\Lebesgue[2][\mu]] 
        A^{(k)}_{\Path[\infty]} \dotPath[\infty]
        \;\;\text{ for $k \in \set{1,2,3,4}$}
        \qqand
        A^{(k)}_{\Path[n]} \dotPath[n] 
        \wconverges[n \to \infty][\Lebesgue[2]\Lebesgue[2]]
        A^{(k)}_{\Path[\infty]} 
        \dotPath[\infty]
        \;\;\text{for $k \in \set{5,6}$}
        .
    \]
    Recall that the embedding $\Bessel[1]\Bessel[s] \hookrightarrow \Holder[0]\Holder[1]$ is compact due to the Arzelà--Ascoli theorem (see \cite[Thm.~47.1]{MunkresTopo}).
    This yields strong convergence
    $\SmashedSqrt{\abs{\partial_x \Path[n]}} \converges[n \to \infty] \SmashedSqrt{\abs{\partial_x \Path[\infty]}}$ 
    in $\Lebesgue[\infty]\Lebesgue[\infty]$.
    Together with the uniform distortion bound \cref{eq:DirichletEnergyIsLowerSemicontinuousLambdaBounds}, this implies that
    \begin{align*}
        \varLambda(\Path[n]) \converges[n \to \infty][\Lebesgue[\infty]\Lebesgue[\infty][\mu]] \varLambda(\Path[\infty])
        \qand 
        \varXi_{\Path[n]} \converges[n \to \infty][\Lebesgue[\infty]\Lebesgue[\infty][\mu]] \varXi_{\Path[\infty]}
        .
    \end{align*}
    So, in light of \cref{lem:StronglyConvergentTimesWeaklyConvergent}, it suffices to show that 
    \begin{align*}
        \Op{\Path[n]}{s} \dotPath[n]
        \wconverges[n \to \infty][\Lebesgue[2]\Lebesgue[2][\mu]]
        \Op{\Path[\infty]}{s} \dotPath[\infty]
        \qqand
        \pars{\Op{\Path[n]}{s} \Path[n]} \, w_n^\transp
        \wconverges[n \to \infty][\Lebesgue[2]\Lebesgue[2][\mu]]
        \pars{\Op{\Path[\infty]}{s} \Path[\infty]} \, w_\infty^\transp
        ,
    \end{align*}
    where $w_n$ can be any of the following:
    \[
        w_n(x,y,t) = D_{\Path[n]} \dotPath[n](t,x)
        ,
        \quad
        w_n(x,y,t) = D_{\Path[n]} \dotPath[n](t,y)
        ,
        \qor 
        w_n(x,y,t) = \sdfrac{
            \dotPath[n](t,y) - \dotPath[n](t,x)
        }{
            \abs{\Path[n](t,y) - \Path[n](t,x)}
        }
        .
    \]
    This is quite technical, so we delegate this to \cref{lem:WeakConvergenceDgamma} and \cref{lem:WeakConvergenceRs} below.
\end{proof}

\subsection{Supplement}

\begin{lemma}\label{lem:WeakConvergenceDgamma}
    Let
    $\Path[n] \in \Bessel[1][][I][\Mfld] \subset \Bessel[1]\Bessel[s]$,
    $v_n \in \Bessel[1]\Bessel[s]$,
    and 
    $u_n \in \Lebesgue[2]\Bessel[s]$, 
    $n \in \N \cup \braces{\infty}$
    satisfy
    \[
        \Path[n] \wconverges[n \to \infty][\Bessel[1]\Bessel[s]] \Path[\infty]
        ,
        \quad
        v_n \wconverges[n \to \infty][\Bessel[1]\Bessel[s]] v_\infty
        ,
        \qand 
        u_n \wconverges[n \to \infty][\Lebesgue[2]\Bessel[s]] v_\infty
        .
    \]
    We denote by $\triangle u(t,x,y) \ceq u(t,y) - u(t,x)$ the difference operator in space.
    Then we have
    \begin{align}
        \cD_{\Path[n]} v_n 
        \converges[n \to \infty][\Lebesgue[\infty][]\Lebesgue[\infty]] 
        \cD_{\Path[\infty]} v_\infty
        , 
        \quad
        D_{\Path[n]} v_n 
        \converges[n \to \infty][\Lebesgue[\infty][]\Lebesgue[\infty]] 
        D_{\Path[\infty]} v_\infty
        ,
        \qand
        \sdfrac{\triangle v_n}{\abs{\triangle \Path[n]}}
        \converges[n \to \infty][\Lebesgue[\infty]\Lebesgue[\infty][\mu]] 
        \sdfrac{\triangle v_\infty}{\abs{\triangle \Path[\infty]}}
        \label{eq:WeakConvergenceDgamma2}
        .
    \end{align}
    Moreover, for every $\tau \in \intervaloo{1/2,s-1}$ we have
    \begin{align}
        \cD_{\Path[n]} u_n 
        \wconverges[n \to \infty][\Lebesgue[2]\Bessel[\tau]]
        \cD_{\Path[\infty]} u_\infty
        \qand
        D_{\Path[n]} u_n 
        \wconverges[n \to \infty][\Lebesgue[2]\Bessel[\tau]] 
        D_{\Path[\infty]} u_\infty
        .
        \label{eq:WeakConvergenceDgamma1}
    \end{align}
\end{lemma}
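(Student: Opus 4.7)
The claims naturally split by strength of conclusion: the strong convergences in \cref{eq:WeakConvergenceDgamma2} follow from compactness arguments that reduce to pointwise-in-time convergence in $\Holder[1]$, while the weak convergences in \cref{eq:WeakConvergenceDgamma1} reduce to showing that the multipliers $1/\abs{\Path[n]'}$ and $\Path[n]'/\abs{\Path[n]'}$ converge in a Sobolev space in which $\Bessel[\tau]$ is a multiplication algebra.

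For the strong part, I would first establish the compact embedding $\Bessel[1]\Bessel[s] \hookrightarrow \Holder[0]\Holder[1]$ via an Arzelà--Ascoli argument: pointwise boundedness of $\Path[n](t)$ in $\Bessel[s]$ together with $1/2$-Hölder equicontinuity of $t \mapsto \Path[n](t)$ in $\Bessel[s]$ (coming from $\dotPath[n] \in \Lebesgue[2]\Bessel[s]$), combined with the compact Morrey embedding $\Bessel[s] \hookrightarrow \Holder[1]$, yield $\Path[n] \to \Path[\infty]$ and $v_n \to v_\infty$ strongly in $\Holder[0]\Holder[1]$. The uniform lower bound $\abs{\partial_x \Path[n](t,x)} \geq c > 0$ from \cref{lem:BiLipBound} and the uniform distortion bound $\abs{\Path[n](t,y) - \Path[n](t,x)} \geq c \, \dist[\Domain][y][x]$ from \cref{thm:UniformBiLipschitz} then let us pass to the limit uniformly in $D_{\Path[n]} v_n = v_n'/\abs{\Path[n]'}$ and $\cD_{\Path[n]} v_n = D_{\Path[n]} v_n \cdot (D_{\Path[n]} \Path[n])\trans$. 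For the difference quotient we split
\begin{equation*}
    \frac{\triangle v_n}{\abs{\triangle \Path[n]}}
    =
    \frac{\triangle v_n}{\dist[\Domain][y][x]}
    \cdot
    \frac{\dist[\Domain][y][x]}{\abs{\triangle \Path[n]}},
\end{equation*}
where the first factor converges uniformly on $\Domain \times \Domain$ by the fundamental theorem of calculus applied to $v_n \in \Holder[0]\Holder[1]$, and the second by continuity of $\varLambda^1$ (\cref{lem:LambdaIsSmooth}) together with the uniform distortion bound.

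For the weak part, I would refine the Aubin--Lions argument to obtain $\Path[n] \to \Path[\infty]$ strongly in $\Holder[0]\Bessel[s-\varepsilon]$ for any $\varepsilon > 0$. Choose $\varepsilon \in \intervaloo{0, s-1-\tau}$, so that $\Bessel[s-\varepsilon-1]$ is a Banach algebra (since $s-\varepsilon-1 > \tau > 1/2$) embedding continuously into $\Bessel[\tau]$. Because $\abs{\Path[n]'}$ stays in a compact subset of $\intervaloo{0,\infty}$, the chain rule in $\Bessel[s-\varepsilon-1]$ (exactly as in \cref{lem:AuxAreSmooth}) shows that $1/\abs{\Path[n]'} \to 1/\abs{\Path[\infty]'}$ and $D_{\Path[n]} \Path[n] \to D_{\Path[\infty]} \Path[\infty]$ strongly in $\Lebesgue[\infty]\Bessel[\tau]$. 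Meanwhile, $u_n \wconverges u_\infty$ in $\Lebesgue[2]\Bessel[s]$ gives $u_n' \wconverges u_\infty'$ in $\Lebesgue[2]\Bessel[s-1] \subset \Lebesgue[2]\Bessel[\tau]$. Splitting
\begin{equation*}
    D_{\Path[n]} u_n - D_{\Path[\infty]} u_\infty
    =
    \pars[\Big]{\tfrac{1}{\abs{\Path[n]'}} - \tfrac{1}{\abs{\Path[\infty]'}}} u_n'
    +
    \tfrac{1}{\abs{\Path[\infty]'}} (u_n' - u_\infty'),
\end{equation*}
the first summand vanishes strongly in $\Lebesgue[2]\Bessel[\tau]$ by the algebra property applied pointwise in $t$, while the second vanishes weakly because multiplication by a fixed element of $\Lebesgue[\infty]\Bessel[\tau]$ is a bounded linear operator on $\Lebesgue[2]\Bessel[\tau]$, which preserves weak convergence. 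The analogous decomposition for $\cD_{\Path[n]} u_n = D_{\Path[n]} u_n \cdot (D_{\Path[n]} \Path[n])\trans$ also uses the strong convergence of $D_{\Path[n]} \Path[n]$.

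The principal technical obstacle is balancing regularity: $\tau > 1/2$ is needed for $\Bessel[\tau]$ to be a multiplication algebra, while $\tau < s-1$ is needed so that there is enough fractional smoothness to spare when differentiating $\Path[n]$ and still gaining compactness via Aubin--Lions. The hypothesis $\tau \in \intervaloo{1/2, s-1}$ is tailored exactly to this window, and the standing assumption $s \in \intervaloo{3/2, 2}$ guarantees that this window is non-empty.
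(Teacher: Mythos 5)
Your decomposition and overall strategy match the paper's: compact time-space embeddings give the strong convergences in~\cref{eq:WeakConvergenceDgamma2}, and a strong-times-weak product argument relying on the multiplier structure of $\Bessel[\tau]$ (with $\tau \in \intervaloo{1/2,s-1}$) gives the weak convergences in~\cref{eq:WeakConvergenceDgamma1}. Your observation that the embedding $\Bessel[s-1] \hookrightarrow \Bessel[\tau]$ together with the algebra property of $\Bessel[\tau]$ already delivers the needed bilinear map $\Bessel[\tau] \times \Bessel[s-1] \to \Bessel[\tau]$ is apt — the paper cites a general product-rule theorem, but for $\tau < s-1$ the two routes coincide. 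The explicit detour through $\Bessel[s-\varepsilon-1]$ is an unnecessary intermediate step; you can compose with $U$ and $V$ directly in $\Bessel[\tau]$ as the paper does.

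The one flaw worth flagging is the justification of the uniform bounds. You obtain $\abs{\partial_x \Path[n]} \geq c > 0$ and the uniform bi-Lipschitz bound $\abs{\triangle \Path[n]} \geq c \, \dist[\Domain]$ by citing \cref{lem:BiLipBound} and \cref{thm:UniformBiLipschitz}, but both of those results presuppose that the family lies in a $G$-bounded set. That is not among the hypotheses of this lemma — only weak $\Bessel[1]\Bessel[s]$-convergence and $\Path[n] \in \Bessel[1][][I][\Mfld]$ are given. The paper instead derives these bounds from the compactness it has just extracted: uniform convergence $\partial_x \Path[n] \to \partial_x \Path[\infty]$ in $\Holder[0]\Holder[0]$ together with compactness of $I \times \Domain$ and the fact that each $\Path[n]$ (including $n = \infty$) avoids the zero section, so the limit's minimum speed is positive, which forces a uniform positive lower bound for all but finitely many $n$, and each remaining $n$ has its own positive minimum. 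Since this lemma is only ever applied to families that \emph{are} $G$-bounded (in \cref{thm:DirichletEnergyIsLowerSemicontinuous}), your citation causes no downstream error, but as a proof of the lemma as actually stated it is a misattributed justification rather than a derivation from the hypotheses.
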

\begin{proof}
    The embeddings 
    $\Bessel[1]\Bessel[s] \hookrightarrow \Holder[0]\Holder[1]$ 
    and 
    $\Bessel[1]\Bessel[s-1] \hookrightarrow \Holder[0]\Bessel[\tau] \subset \Lebesgue[\infty]\Lebesgue[\infty]$ 
    are compact (see \cite[Thm.~47.1]{MunkresTopo}).
    Thus, we have
    \begin{equation}
        \Path[n]
        \converges[n \to \infty][\Holder[0]\Holder[1]]
        \Path[\infty]
        ,
        \quad 
        \partial_x \Path[n] \converges[n \to \infty][\Holder[0]\Holder[0]] \partial_x \Path[\infty]
        ,
        \quad
        \sdfrac{\dist[\Domain]}{\abs{\triangle \Path[n]}}
        \converges[n \to \infty][\Lebesgue[\infty]\Lebesgue[\infty]]
        \sdfrac{\dist[\Domain]}{\abs{\triangle \Path[\infty]}}
        ,
        \qand
        \partial_x \Path[n] \converges[n \to \infty][\Lebesgue[\infty]\Bessel[\tau]] \partial_x \Path[\infty]  
        .
        \label{eq:WeakConvergenceResults1}
    \end{equation}
    We are going to use the identities
    \begin{equation}
        \cD_{\Path[n]} v_n = \pars{\partial_x v_n} \, U(\partial_x \Path[n])\trans
        \qand
        D_{\Path[n]} v_n = \pars{\partial_x v_n} \, V(\partial_x \Path[n])
        ,
        \label{eq:WeakConvergenceDefU}
    \end{equation}
    where $U \colon \varOmega \to \varOmega$, $X \mapsto X / \abs{X}^2$
    and 
    $V \colon \varOmega \to \R$, $X \mapsto 1 / \abs{X}$
    are defined on the compact set $\varOmega \ceq \set{ X \in \AmbSpace | (2\,C)^{-1} \leq \abs{X} \leq  2 \, C }$.
    These maps are smooth all of their derivatives are uniformly bounded on $\varOmega$. 
    The second statement in \cref{eq:WeakConvergenceResults1} implies that there is a constant $0 < C < \infty$ such that 
    \[
        C^{-1} \leq \abs{\partial_x \Path[n](t,x)} \leq C 
        \quad 
        \text{for all $t \in I$ and $x \in \Domain$.}
    \]
    This implies norm convergence
    \begin{align*}
        U(\partial_x \Path[n])
        \converges[n \to \infty] 
        U(\partial_x \Path[\infty])
        \;\;
        \text{in $\Lebesgue[\infty]\Bessel[\tau] \cap \Lebesgue[\infty]\Lebesgue[\infty]$}
        \;\;\text{and}\;\;
        V(\partial_x \Path[n])
        \converges[n \to \infty] 
        V(\partial_x \Path[\infty])
        \;\;
        \text{in $\Lebesgue[\infty]\Bessel[\tau] \cap \Lebesgue[\infty]\Lebesgue[\infty]$.}
    \end{align*}
    Now \cref{eq:WeakConvergenceDgamma2} follows from \cref{eq:WeakConvergenceDefU} and from the fact that the embedding
    $\Bessel[1]\Bessel[s]  \hookrightarrow \Holder[0]\Holder[1]$ is compact.

    Next, we show the first statement of \cref{eq:WeakConvergenceDgamma1}. 
    By assumption, we have
    $\partial_x u_n \wconverges[n \to \infty] \partial_x u_\infty$
    in $\Lebesgue[2]\Bessel[s-1]$.
    Again, we write 
    \begin{equation*}
        \cD_{\Path[n]} u_n
        = 
        \pars{\partial_x u_n}
        \,
        U(\partial_x \Path[n](t,x))\trans
        .
    \end{equation*}
    Since $1/2 < \tau < s-1$, the bilinear map
    \begin{align*}
        B \colon 
        \Lebesgue[\infty]\Bessel[\tau] \times \Lebesgue[2]\Bessel[s-1]
        \to
        \Lebesgue[2]\Bessel[\tau]
        =
        \Lebesgue[2][][I][\Bessel[\tau][][\Domain][\Hom(\AmbSpace;\AmbSpace)]],
        \quad 
        (\psi,\varphi) \mapsto \varphi \, \psi \trans
    \end{align*}
    is well-defined and continuous. 
    (For the product rule $\Bessel[\tau] \times \Bessel[s-1] \to \Bessel[\tau]$ on bounded Lipschitz domains, see,  e.g., \cite[Theorem 7.4]{zbMATH07447116} for $s_1 = s = \tau$, $s_2 = s-1$, and $p_1 = p_2 = p = 2$.)
    Hence, \cref{lem:StronglyConvergentTimesWeaklyConvergent} below shows that 
    \begin{align*}
        \cD_{\Path[n]} u_n 
        = 
        B \pars[\big]{ \partial_x u_n , U(\partial_x \Path[n]) }
        \wconverges[n \to \infty][\Lebesgue[2]\Bessel[\tau]]
        B \pars[\big]{ \partial_x u_\infty , U(\partial_x \Path[\infty]) }
        =
        \cD_{\Path[\infty]} u_\infty
        .
    \end{align*}
    The proof of the other statement in \cref{eq:WeakConvergenceDgamma1} is analogous.
\end{proof}

\begin{lemma}\label{lem:WeakConvergenceRs}
    Let
    $\Path[n] \in \Bessel[1][][I][\Mfld] \subset \Bessel[1]\Bessel[s]$, 
    $v_n \in \Bessel[1]\Bessel[s]$, 
    and 
    $u_n \in \Lebesgue[2]\Bessel[s]$, 
    $n \in \N \cup \braces{\infty}$
    satisfy
    \[
        \Path[n] \wconverges[n \to \infty][\Bessel[1]\Bessel[s]] \Path[\infty]
        ,
        \quad
        v_n \wconverges[n \to \infty][\Bessel[1]\Bessel[s]] v_\infty
        ,
        \qand
        u_n \wconverges[n \to \infty][\Lebesgue[2]\Bessel[s]] u_\infty
        .
    \]
    Then the following statements hold true:
    \begin{claims}
        \item \label{claim:RsvnL2L2mu}%
        $
            \displaystyle 
            \Op{\Path[n]}{s} v_n 
            \wconverges[n \to \infty][\Lebesgue[2]\Lebesgue[2][\mu]] 
            \Op{\Path[\infty]}{s} v_\infty
        $.
        \item \label{claim:RsunL2L2mu}%
        $
        \displaystyle 
            \Op{\Path[n]}{s} u_n 
            \wconverges[n \to \infty][\Lebesgue[2]\Lebesgue[2][\mu]] 
            \Op{\Path_\infty}{s} u_\infty
        $.%
        \item \label{claim:RsvnunL2L2mu1}%
        $
            \displaystyle 
            \pars{\Op{\Path[n]}{s} v_n} \, w_n^\transp
            \wconverges[n \to \infty][\Lebesgue[2]\Lebesgue[2][\mu]]
            \pars{\Op{\Path[\infty]}{s} v_\infty} \, w_\infty^\transp
        $ 
        \quad 
        for 
        \quad
        $w_n(t,x,y) \ceq D_{\Path[n]} u_n(t,x)$.
        \item \label{claim:RsvnunL2L2mu2}%
        $
        \displaystyle 
            \pars{\Op{\Path[n]}{s} v_n} \, w_n^\transp
            \wconverges[n \to \infty][\Lebesgue[2]\Lebesgue[2][\mu]]
            \pars{\Op{\Path[\infty]}{s} v_\infty} \, w_\infty^\transp
        $ 
        \quad 
        for 
        \quad
        $w_n(t,x,y) \ceq D_{\Path[n]} u_n(t,y)$.
        \item \label{claim:RsvnunL2L2mu3}%
        $
            \displaystyle 
            \pars{\Op{\Path[n]}{s} v_n} \, \sdfrac{\pars{\triangle u_n}^\transp}{\abs{\triangle \Path[n]}}
            \wconverges[n \to \infty][\Lebesgue[2]\Lebesgue[2][\mu]]
            \pars{\Op{\Path[\infty]}{s} v_\infty} \, \sdfrac{\pars{\triangle u_\infty}^\transp}{\abs{\triangle \Path[\infty]}}
            .
        $                      
    \end{claims}
\end{lemma}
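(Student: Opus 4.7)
The plan is to reduce each of the five claims to the fixed bounded linear operator $\Op{\eta}{s} \colon \Bessel[s] \to \Lebesgue[2][\mu]$ (Lemma \ref{lem:RsEquiBoundedII}) by applying the decomposition formula \eqref{eq:RDecomposition} with a fixed reference embedding $\eta \in \Mfld$ (e.g., the round circle). This rewrites each $\Op{\Path[n]}{s} z$ as a sum of products, each consisting of coefficient factors built from $\Path[n]$ (such as $\varLambda^s(\Path[n])/\varLambda^s(\eta)$ and $\cD_{\Path[n]} z \circ \pr_1$) and terms of the form $\Op{\eta}{s} w$ for $w \in \braces{v_n, \Path[n], u_n}$, where the latter is a fixed bounded linear operator applied to a weakly convergent sequence.

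For claims \ref{claim:RsvnL2L2mu} and \ref{claim:RsunL2L2mu}, I first use that $\Op{\eta}{s}$ is bounded linear to obtain $\Op{\eta}{s} v_n \wconverges \Op{\eta}{s} v_\infty$, $\Op{\eta}{s} \Path[n] \wconverges \Op{\eta}{s} \Path[\infty]$, and $\Op{\eta}{s} u_n \wconverges \Op{\eta}{s} u_\infty$ in $\Lebesgue[2]\Lebesgue[2][\mu]$ (exploiting $\Bessel[1]\Bessel[s] \subset \Lebesgue[2]\Bessel[s]$). By Lemma \ref{lem:WeakConvergenceDgamma}, the coefficient factors $\varLambda^s(\Path[n])/\varLambda^s(\eta)$ and $\cD_{\Path[n]} v_n \circ \pr_1$ converge strongly in $\Lebesgue[\infty]\Lebesgue[\infty][\mu]$ and $\Lebesgue[\infty]\Lebesgue[\infty]$ respectively, so the strong-times-weak principle (Lemma \ref{lem:StronglyConvergentTimesWeaklyConvergent}) closes both claims.

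For claims \ref{claim:RsvnunL2L2mu1}--\ref{claim:RsvnunL2L2mu3}, I would expand $\Op{\Path[n]}{s} v_n$ via \eqref{eq:RDecomposition} and pair each resulting term with $w_n^\transp$, then use Lemma \ref{lem:WeakConvergenceDgamma} to decide which factor converges strongly and which weakly. In \ref{claim:RsvnunL2L2mu1} and \ref{claim:RsvnunL2L2mu2}, the factor $w_n = D_{\Path[n]} u_n$ converges only weakly in $\Lebesgue[2]\Bessel[\tau]$ for $\tau \in \intervaloo{1/2, s-1}$; it must be paired with a factor whose constituents involve $v_n$ and $\Path[n]$ (which enjoy the stronger temporal regularity $\Bessel[1]$) to gain strong convergence in a compatible dual space via the compact embedding $\Bessel[1]\Bessel[s] \hookrightarrow \Holder[0]\Holder[1]$. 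In \ref{claim:RsvnunL2L2mu3}, writing $\triangle u_n/\abs{\triangle \Path[n]} = \varLambda^1(\Path[n]) \cdot \triangle u_n/\dist[\Domain]$ separates a strongly convergent prefactor (by Lemma \ref{lem:WeakConvergenceDgamma}) from a weakly convergent image of the bounded linear operator $u \mapsto \triangle u/\dist[\Domain]$.

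The main obstacle will be meticulous bookkeeping of function spaces. Each claim is morally a ``strong-times-weak $=$ weak'' argument, but the actual spaces differ because $u_n$ and $v_n$ have different temporal regularity, and products of three factors $f_n \cdot g_n \cdot h_n$ arise for which at most one factor may converge merely weakly, with the remaining two strongly convergent in compatible dual topologies. For the products involving $D_{\Path[n]} u_n$ in \ref{claim:RsvnunL2L2mu1} and \ref{claim:RsvnunL2L2mu2}, additional use of the Banach-algebra product rule $\Bessel[\tau] \times \Bessel[s-1] \to \Bessel[\tau]$ (cf.\ the proof of Lemma \ref{lem:WeakConvergenceDgamma}) will likely be required to obtain the correct regularity after multiplication.
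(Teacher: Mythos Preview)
Your overall strategy---linearize via the decomposition \eqref{eq:RDecomposition} against a fixed reference embedding and then apply the strong-times-weak principle---is exactly what the paper does, and it dispatches Claim~\ref{claim:RsvnL2L2mu} cleanly. However, there is a genuine gap in your treatment of Claim~\ref{claim:RsunL2L2mu} (and consequently Claims~\ref{claim:RsvnunL2L2mu1}--\ref{claim:RsvnunL2L2mu3}). For Claim~\ref{claim:RsunL2L2mu} the decomposition reads
\[
    \Op{\Path[n]}{s} u_n
    =
    \varTheta_{n} \, \Op{\eta}{s} u_n
    -
    \varTheta_{n} \, \pars[\big]{\cD_{\Path[n]} u_n \circ \pr_1} \, \Op{\eta}{s} \Path[n],
\]
and the coefficient in the second summand is $\cD_{\Path[n]} u_n$, not $\cD_{\Path[n]} v_n$. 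Since $u_n$ has only $\Lebesgue[2]$ temporal regularity, Lemma~\ref{lem:WeakConvergenceDgamma} gives you merely \emph{weak} convergence $\cD_{\Path[n]} u_n \rightharpoonup \cD_{\Path[\infty]} u_\infty$ in $\Lebesgue[2]\Bessel[\tau]$---no uniform convergence. At the same time, $\Op{\eta}{s} \Path[n]$ converges only weakly in $\Bessel[1]\Lebesgue[2][\mu]$, and the embedding $\Bessel[1]\Lebesgue[2][\mu] \hookrightarrow \Lebesgue[2]\Lebesgue[2][\mu]$ is \emph{not} compact (no compactness is available on the spatial side $\Lebesgue[2][\mu]$). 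You therefore face a product of two merely weakly convergent factors, which Lemma~\ref{lem:StronglyConvergentTimesWeaklyConvergent} cannot handle. The same obstruction recurs in Claims~\ref{claim:RsvnunL2L2mu1}--\ref{claim:RsvnunL2L2mu3}: the pairing of $\Op{\eta}{s} v_n$ (weak in $\Bessel[1]\Lebesgue[2][\mu]$) against $D_{\Path[n]} u_n$ or $\triangle u_n / \dist[\Domain]$ (weak in $\Lebesgue[2]\Lebesgue[\infty]$) is again weak-times-weak, and the Banach-algebra product rule you invoke concerns only spatial regularity and does nothing for the temporal mismatch.

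The missing idea is an integration-by-parts in $t$ that ``borrows'' one time derivative: set $\tilde U_n(t,\cdot) \ceq \int_0^t \cD_{\Path[n]} u_n(r,\cdot)\,\dd r$, so that $\tilde U_n \rightharpoonup \tilde U_\infty$ weakly in $\Bessel[1]\Bessel[\tau]$ and hence \emph{strongly} in $\Lebesgue[\infty]\Lebesgue[\infty]$ by the compact embedding (here $\tau>1/2$ is essential). Testing against smooth compactly supported $\chi$ and integrating by parts in $t$ then transfers the time derivative from $\partial_t \tilde U_n = \cD_{\Path[n]} u_n$ onto $\Op{\eta}{s}\Path[n]$ and $\chi$, after which every pairing is strong-times-weak. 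The paper formalizes this device as Lemma~\ref{lem:WeakConvergenceHelper} and applies it systematically to the problematic terms in Claims~\ref{claim:RsunL2L2mu}--\ref{claim:RsvnunL2L2mu3}; without it the argument does not close.
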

\begin{proof}
    First we pick a smooth embedding $\xi \colon \Domain \to \AmbSpace$,
    and define the following functions:
    \begin{align*}
        \varTheta_{n}(t,x,y)
        \ceq
        \sdfrac{ 
            \abs{\xi(y) - \xi(x) }^s
        }{ 
            \abs{\Path[n](t,y) - \Path[n](t,x)}^s
        }
        =
        \sdfrac{ \varLambda^s(\Path[n](t))(x,y) }{ \varLambda^s(\xi)(x,y) }
        \quad 
        \text{for $t \in I$, $x$, $y \in \Domain$, $n \in \N \cup \braces{\infty}$.}
    \end{align*}
    Using \cref{eq:RDecomposition}, we may write
    \begin{alignat}{3}
        \Op{\Path[n]}{s} v_n
        &=
        \varTheta_{n} \, \Op{\xi}{s} \, v_n
        -
        \varTheta_{n} \, V_n \, \Op{\xi}{s} \, \Path[n]
        ,
        &
        &\qwhere
        &
        V_n(t,x,y) &\ceq \cD_{\Path[n]} v_n(t,x)
        ,
        \label{eq:WeakConvergenceRs2}
        \\
        \Op{\Path[n]}{s} u_n
        &=
        \varTheta_{n} \, \Op{\xi}{s} \, u_n
        -
        \varTheta_{n} \, U_n \, \Op{\xi}{s} \, \Path[n]
        ,
        &
        &\qwhere
        &
        U_n(t,x,y) &\ceq \cD_{\Path[n]} u_n(t,x)
        .
        \label{eq:WeakConvergenceRs1}
    \end{alignat}
    Since $\Op{\xi}{s}$ is a bounded linear operator, we have
    \begin{align*}
        \Op{\xi}{s} \, v_n \wconverges[n \to \infty][\Bessel[1]\Lebesgue[2][\mu]] \Op{\xi}{s} \, v_\infty,
        \quad
        \Op{\xi}{s} \, \Path[n] \wconverges[n \to \infty][\Bessel[1]\Lebesgue[2][\mu]] \Op{\xi}{s} \, \Path[\infty]
        ,
        \qand 
        \Op{\xi}{s} \, u_n \wconverges[n \to \infty][\Lebesgue[2]\Lebesgue[2][\mu]] \Op{\xi}{s} \, u_\infty 
        .
    \end{align*}
    Compactness of $\Bessel[1]\Bessel[s] \hookrightarrow \Holder[0]\Holder[1]$ (see \cite[Thm.~47.1]{MunkresTopo})
    implies
    \begin{align}
        \varTheta_{n} \converges[n \to \infty][\Lebesgue[\infty]\Lebesgue[\infty]] \varTheta_\infty 
        \qand 
        \cD_{\Path[n]} v_n \converges[n \to \infty][\Lebesgue[\infty]\Lebesgue[\infty]] \cD_{\Path[\infty]} v_\infty
        .
        \label{eq:WeakConvergenceRs3}
    \end{align}
    Using the continuity of the embedding $\Bessel[1]\Lebesgue[2][\mu] \hookrightarrow \Lebesgue[2]\Lebesgue[2][\mu]$
    and 
    applying
    \cref{lem:StronglyConvergentTimesWeaklyConvergent} to \cref{eq:WeakConvergenceRs2} 
    and to the first summand in \cref{eq:WeakConvergenceRs1},
    we are lead to
    \begin{equation*}
        \Op{\Path[n]}{s}  v_n 
        \wconverges[n \to \infty][\Lebesgue[2]\Lebesgue[2][\mu]]
        \Op{\Path[\infty]}{s}  v_\infty
        \qand
        \varTheta_n \, \Op{\xi}{s} \, u_n 
        \wconverges[n \to \infty][\Lebesgue[2]\Lebesgue[2][\mu]] 
        \varTheta_\infty \, \Op{\xi}{s} \, u_\infty
        .
    \end{equation*}
    In particular, this implies \cref{claim:RsvnL2L2mu}.
    However, a more sophisticated technique is required to deal with the second summand of \cref{eq:WeakConvergenceRs1} as $\cD_{\Path[n]} u_n$ lacks smoothness in the time variable.
    By \cref{eq:WeakConvergenceRs3} it suffices to show that 
    \begin{equation}
        U_n \, \Op{\xi}{s} \, \Path[n]
        \wconverges[n \to \infty][\Lebesgue[2]\Lebesgue[2][\mu]]
        U_\infty \, \Op{\xi}{s} \, \Path[\infty]
        .
        \label{eq:WeakConvergenceRs4}
    \end{equation}
    To this end, we employ \cref{lem:WeakConvergenceDgamma} which allows us to temporarily ``borrow'' this smoothness from $\Op{\xi}{s} \, \Path[n]$.
	More concretely, we define
    \[
        \tilde U_n(t,x) \ceq \textstyle \int_0^t \cD_{\Path[n]} u_n(r,x) \dd r
        ,
        \quad
        \varPhi_n(t,x,y) \ceq \tilde U_n(t,x)
        ,
        \qand
        \psi_n \ceq \Op{\xi}{s} \, \Path[n]
    \]
    so that
    \[
        U_n \, \Op{\xi}{s} \, \Path[n] = \pars{\partial_t \varPhi_n} \, \psi_n.
    \]
    Because of \cref{lem:WeakConvergenceDgamma} 
    and because
    we gained one derivative in the time direction, we have
    \[
        U_n \wconverges[n \to \infty][\Lebesgue[2]\Bessel[\tau]] U_\infty
        \qand
        \tilde U_n \wconverges[n \to \infty][\Bessel[1]\Bessel[\tau]] \tilde U_\infty
        \quad 
        \text{for every $\tau \in \intervaloo{1/2,s-1}$}
        .
    \]
    Since $\tau > 1/2$, the embedding $\Bessel[1]\Bessel[\tau] \embeds \Lebesgue[\infty]\Lebesgue[\infty]$ is compact, so we get the norm convergence
    \[
        \tilde U_n \converges[n \to \infty][\Lebesgue[\infty]\Lebesgue[\infty]] \tilde U_\infty
        ,
        \quad \text{hence} \quad
        \varPhi_n \converges[n \to \infty][\Lebesgue[\infty]\Lebesgue[\infty][\mu]] \varPhi_\infty
        .
    \]
    Now we can apply \cref{lem:WeakConvergenceHelper} to show \cref{eq:WeakConvergenceRs4}. This concludes the proof of \cref{claim:RsunL2L2mu}.

    To show \cref{claim:RsvnunL2L2mu1} and \cref{claim:RsvnunL2L2mu2}, we expand
    \[
        \pars{\Op{\Path[n]}{s} v_n} \, w_n^\transp
        =
        \varTheta_{n} \pars{\Op{\xi}{s} \, v_n} \, w_n^\transp
        -
        \varTheta_{n} \, V_n \pars{\Op{\xi}{s} \, \Path[n]} \, w_n^\transp
        .
    \]
    We know already that $\varTheta_{n}$ and $V_n$ converge uniformly to 
    $\varTheta_{\infty}$ and $V_\infty$, so it suffices to show that 
    \[
        \pars{\Op{\xi}{s} \, v_n} \, w_n^\transp 
        \wconverges[n \to \infty][\Lebesgue[2]\Lebesgue[2][\mu]]
        \pars{\Op{\xi}{s} \, v_\infty} \, w_\infty^\transp 
        \qand
        \pars{\Op{\xi}{s} \, \Path[n]} \, w_n^\transp
        \wconverges[n \to \infty][\Lebesgue[2]\Lebesgue[2][\mu]]
        \pars{\Op{\xi}{s} \, \Path[\infty]} \, w_\infty^\transp
        .
    \]
    To this end, we apply \cref{lem:WeakConvergenceHelper} once more, this time to
    \[
        \varPhi_n(t,x,y) \ceq \textstyle \int_0^t w_n(r,x,y) \dd r
        \qand
        \psi_n
        \in  
        \braces[\big]{ 
            \Op{\xi}{s} \, v_n,  
            \Op{\xi}{s} \, \Path[n]
        }
        .
    \]
    Notice that we get $\varPhi_n \converges[n \to \infty] \varPhi_\infty$ in $\Lebesgue[\infty]\Lebesgue[\infty][\mu]$ from \cref{lem:StronglyConvergentTimesWeaklyConvergent} by the same reasoning as above.

    For showing \cref{claim:RsvnunL2L2mu3} we have to argue slightly differently.
    This time we expand
    \[
        \pars{\Op{\Path[n]}{s} v_n} \, 
        \left(
            \dfrac{\Delta u_n}{\abs{\Delta \Path_n}}
        \right)^\transp
        =
        \varTheta_{n} \, \sdfrac{\dist[\Circle]}{\abs{\triangle \Path[n]}} \pars{\Op{\xi}{s} \, v_n} \, \sdfrac{\pars{\triangle u_n}^\transp}{\dist[\Circle]}
        -
        \varTheta_{n} \, \sdfrac{\dist[\Circle]}{\abs{\triangle \Path[n]}} \, V_n \pars{\Op{\xi}{s} \, \Path[n]} \, \sdfrac{\pars{\triangle u_n}^\transp}{\dist[\Circle]}
    \]
    and observe that 
    \[
        \sdfrac{\dist[\Circle]}{\abs{\triangle \Path[n]}} 
        \converges[n \to \infty][\Lebesgue[\infty]\Lebesgue[\infty][\mu]]
        \sdfrac{\dist[\Circle]}{\abs{\triangle \Path[\infty]}} 
        .
    \]
    Hence, it suffices to show that 
    \begin{equation}
        \pars{\Op{\xi}{s} \, v_n} \, \frac{\pars{\triangle u_n}^\transp}{\dist[\Circle]}
        \wconverges[n \to \infty][\Lebesgue[2]\Lebesgue[2][\mu]]
        \pars{\Op{\xi}{s} \, v_\infty} \, \frac{\pars{\triangle u_\infty}^\transp}{\dist[\Circle]}
        \qand
        \pars{\Op{\xi}{s} \, \Path[n]} \, \frac{\pars{\triangle u_n}^\transp}{\dist[\Circle]}
        \wconverges[n \to \infty][\Lebesgue[2]\Lebesgue[2][\mu]]
        \pars{\Op{\xi}{s} \, \Path[\infty]} \, \frac{\pars{\triangle u_\infty}^\transp}{\dist[\Circle]}        
        .
        \label{eq:WeakConvergenceRs5}
    \end{equation}
    Now, we may employ \cref{lem:WeakConvergenceHelper} for a last time with
    \[
        \tilde u_n(t,x) \ceq {\textstyle \int_0^t u_n(r,x) \dd r}
        ,
        \quad
        \varPhi_n \ceq \smash{\sdfrac{\pars{\triangle\tilde{u}_n}^\transp}{\dist[\Circle]}}
        \qand
        \psi_n
        \in  
        \braces[\big]{ 
            \Op{\xi}{s} \, v_n,  
            \Op{\xi}{s} \, \Path[n]
        }
        .
    \]
    Note that 
    \[
        \tilde u_n \wconverges[n \to \infty][\Bessel[1]\Bessel[s]] \tilde u_\infty,
        \quad\text{thus}\quad
        \tilde u_n \converges[n \to \infty][\Holder[0]\Holder[1]] \tilde u_\infty
        \qand 
        \varPhi_n \converges[n \to \infty][\Lebesgue[\infty]\Lebesgue[\infty][\mu]] \varPhi_\infty
        .
    \]
    So, \cref{lem:WeakConvergenceHelper} implies \cref{eq:WeakConvergenceRs5} and thus \cref{claim:RsvnunL2L2mu3}.
\end{proof}

The following abstracts the trick of ``borrowing one time derivative'' that we use in \cref{lem:WeakConvergenceRs}. Our use cases involve the bilinear maps
$\beta \colon \AmbSpace \times \AmbSpace \to \Hom(\AmbSpace;\AmbSpace)$, $(\xi,\eta) \mapsto \xi \, \eta^\transp$, and $\beta \colon \Hom(\AmbSpace;\AmbSpace) \times \AmbSpace \to \AmbSpace$, $(A,\xi) \mapsto A \, \xi$.

\begin{lemma}\label{lem:WeakConvergenceHelper}
    Let $X$, $Y$, $Z$ be finite-dimensional inner product spaces and let $\beta \colon X \times Y \to Z$ be a bilinear map.
    Let $\varPhi_n \in \Sobo[1,\infty]\Lebesgue[\infty][\mu] = \Sobo[1,\infty][][I][\Lebesgue[\infty][\mu][\Domain \times \Domain][Y]] $, $n \in \Ninfty$
    and 
    $\psi_n \in \Bessel[1]\Lebesgue[2][\mu] = \Bessel[1][][I][\Lebesgue[2][\mu][\Domain \times \Domain][Y]]$ 
    satisfy
    \[
        \psi_n \wconverges[n \to \infty][\Bessel[1]\Lebesgue[2][\mu]] \psi_\infty
        \qand 
        \varPhi_n \converges[n \to \infty][\Lebesgue[\infty]\Lebesgue[\infty][\mu]] \varPhi_\infty
        .
    \]
    Then we have weak convergence
    \[
        \beta\!\pars{\partial_t \varPhi_n,\psi_n}
        \wconverges[n \to \infty]
        \beta\!\pars{\partial_t \varPhi_\infty,\psi_\infty}
        \;\;
        \text{in}
        \;\;
        \Lebesgue[2]\Lebesgue[2][\mu] = \Lebesgue[2][][I][\Lebesgue[2][\mu][\Domain \times \Domain][Z]]
        .
    \]
\end{lemma}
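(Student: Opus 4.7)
The plan is to shift the time derivative off $\varPhi_n$ and onto $\psi_n$ via the Bochner product rule
\begin{equation*}
    \beta(\partial_t \varPhi_n, \psi_n)
    =
    \partial_t \beta(\varPhi_n, \psi_n)
    -
    \beta(\varPhi_n, \partial_t \psi_n)
    ,
\end{equation*}
which holds in the sense of Bochner--Sobolev time derivatives because $\varPhi_n$ is Lipschitz in time with values in $\Lebesgue[\infty][\mu]$ and $\psi_n$ lies in $\Bessel[1]\Lebesgue[2][\mu]$; a standard time-mollification argument justifies the identity and simultaneously shows that $\beta(\varPhi_n, \psi_n) \in \Bessel[1]\Lebesgue[2][\mu]$. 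After this reformulation, the task reduces to passing to the weak limit in each of the two terms on the right-hand side separately.

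For the second term, $\beta$ induces a continuous bilinear map $\Lebesgue[\infty]\Lebesgue[\infty][\mu] \times \Lebesgue[2]\Lebesgue[2][\mu] \to \Lebesgue[2]\Lebesgue[2][\mu]$. Combined with the strong convergence $\varPhi_n \converges \varPhi_\infty$ in $\Lebesgue[\infty]\Lebesgue[\infty][\mu]$ and the weak convergence $\partial_t \psi_n \wconverges \partial_t \psi_\infty$ in $\Lebesgue[2]\Lebesgue[2][\mu]$ (which is immediate from the hypothesis $\psi_n \wconverges \psi_\infty$ in $\Bessel[1]\Lebesgue[2][\mu]$), \cref{lem:StronglyConvergentTimesWeaklyConvergent} yields $\beta(\varPhi_n, \partial_t \psi_n) \wconverges \beta(\varPhi_\infty, \partial_t \psi_\infty)$ in $\Lebesgue[2]\Lebesgue[2][\mu]$.

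For the first term, I would first check that $(\beta(\varPhi_n, \psi_n))_{n \in \N}$ is uniformly bounded in $\Bessel[1]\Lebesgue[2][\mu]$: the function values are bounded in $\Lebesgue[\infty]\Lebesgue[2][\mu]$ (using boundedness of $\varPhi_n$ in $\Lebesgue[\infty]\Lebesgue[\infty][\mu]$ together with boundedness of $\psi_n$ in $\Lebesgue[\infty]\Lebesgue[2][\mu]$, which in turn follows from the continuous embedding $\Bessel[1]\Lebesgue[2][\mu] \hookrightarrow \Lebesgue[\infty]\Lebesgue[2][\mu]$), while the distributional time derivative $\beta(\partial_t \varPhi_n, \psi_n) + \beta(\varPhi_n, \partial_t \psi_n)$ is bounded in $\Lebesgue[2]\Lebesgue[2][\mu]$ by the same kind of bilinear estimate. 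By reflexivity of $\Bessel[1]\Lebesgue[2][\mu]$, a subsequence converges weakly there, and \cref{lem:StronglyConvergentTimesWeaklyConvergent} applied in the coarser topology $\Lebesgue[2]\Lebesgue[2][\mu]$ identifies the weak limit as $\beta(\varPhi_\infty, \psi_\infty)$. Uniqueness of the limit promotes this to weak convergence of the full sequence in $\Bessel[1]\Lebesgue[2][\mu]$, so that in particular $\partial_t \beta(\varPhi_n, \psi_n) \wconverges \partial_t \beta(\varPhi_\infty, \psi_\infty)$ in $\Lebesgue[2]\Lebesgue[2][\mu]$.

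The hard part of the argument is justifying the Bochner product rule in this mixed-regularity setting; this is standard but requires a careful time-mollification of $\psi_n$ together with the Lipschitz regularity of $\varPhi_n$ to pass to the limit in the approximate product rule. Everything else reduces to bookkeeping with \cref{lem:StronglyConvergentTimesWeaklyConvergent}, the workhorse of the whole weak lower semicontinuity argument.
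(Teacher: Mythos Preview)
Your approach and the paper's share the same core idea—shift the time derivative off $\varPhi_n$ via integration by parts—but the paper executes it more directly and avoids the detour through a uniform $\Bessel[1]\Lebesgue[2][\mu]$ bound. The paper simply tests $\beta(\partial_t\varPhi_n,\psi_n)$ against smooth $\chi$ compactly supported in $(0,1)\times\Domain\times\Domain$, integrates by parts in~$t$, and rewrites the result via the adjoint bilinear form $\tilde\beta\colon X\times Z\to Y$ as
\[
    -\inner[\big]{\partial_t\psi_n,\tilde\beta(\varPhi_n,\chi)}_{\Lebesgue[2]\Lebesgue[2][\mu]}
    -\inner[\big]{\psi_n,\tilde\beta(\varPhi_n,\partial_t\chi)}_{\Lebesgue[2]\Lebesgue[2][\mu]}.
\]
After this step $\partial_t\varPhi_n$ has vanished entirely: one only needs strong convergence of $\varPhi_n$ in $\Lebesgue[\infty]\Lebesgue[\infty][\mu]$ (yielding strong convergence of $\tilde\beta(\varPhi_n,\chi)$ and $\tilde\beta(\varPhi_n,\partial_t\chi)$ in $\Lebesgue[2]\Lebesgue[2][\mu]$) together with weak convergence of $\psi_n$ and $\partial_t\psi_n$ in $\Lebesgue[2]\Lebesgue[2][\mu]$. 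A single appeal to \cref{lem:StronglyConvergentTimesWeaklyConvergent} with scalar target finishes the job.

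Your route, by contrast, has a genuine gap at the point where you assert that $\beta(\varPhi_n,\psi_n)$ is uniformly bounded in $\Bessel[1]\Lebesgue[2][\mu]$. The time derivative of this product is $\beta(\partial_t\varPhi_n,\psi_n)+\beta(\varPhi_n,\partial_t\psi_n)$, and controlling the first summand in $\Lebesgue[2]\Lebesgue[2][\mu]$ requires a uniform bound on $\partial_t\varPhi_n$ in some $\Lebesgue[p]\Lebesgue[\infty][\mu]$. The hypotheses, however, give only $\varPhi_n\to\varPhi_\infty$ in $\Lebesgue[\infty]\Lebesgue[\infty][\mu]$ and say nothing uniform about $\partial_t\varPhi_n$. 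So the ``same kind of bilinear estimate'' you invoke is simply unavailable, and the argument becomes circular: you are assuming a uniform $\Lebesgue[2]\Lebesgue[2][\mu]$ bound on precisely the sequence $\beta(\partial_t\varPhi_n,\psi_n)$ whose behaviour is in question. The paper's formulation sidesteps this entirely by throwing the derivative onto the smooth test function $\chi$, so that $\partial_t\varPhi_n$ never reappears after the integration by parts.
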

\begin{proof}
    To check weak convergence in $\Lebesgue[2]\Lebesgue[2][\mu]$,
    it suffices to test against elements $\chi$ from the dense subset $\Holder[\infty][0][I \times\Domain \times \Domain][Z]$, the set of smooth functions with support in $\intervaloo{0,1} \times \Domain \times \Domain$.
    Let $\tilde{\beta} \colon X \times Z \to Y$ be the unique bilinear map defined by
    \[
        \inner{\eta,\tilde{\beta}\!\pars{\xi,\zeta}}_Y
        =
        \inner{\beta\!\pars{\xi,\eta},\zeta}_Z
        \quad 
        \text{for all $\xi \in X$, $\eta \in Y$, $\zeta \in Z$.}
    \]
    By integration by parts in $t$ we obtain
    \begin{align*}
        \inner[\big]{ \beta\!\pars{\partial_t \varPhi_n,\psi_n},\chi}_{\smash{\Lebesgue[2]\Lebesgue[2][\mu]}}
        &=
        \textstyle
        \int_0^1 \!\! \int_\Domain \int_\Domain 
            \inner[\big]{ \beta\!\pars{\partial_t \varPhi_n,\psi_n}, \chi }_Z
        \dd \mu \dd t
        \\                
        &=
        -
        \textstyle
        \int_0^1 \!\! \int_\Domain \int_\Domain 
            \pars[\Big]{
                \inner[\big]{ \beta\!\pars{\varPhi_n,\partial_t \psi_n}, \chi }_Z
                +
                \inner[\big]{ \beta\!\pars{\varPhi_n, \psi_n}, \partial_t \chi }_Z
            }
        \dd \mu \dd t
        \\
        &=
        -
        \textstyle
        \int_0^1 \!\! \int_\Domain \int_\Domain 
            \pars[\Big]{
                \inner[\big]{ \partial_t \psi_n, \tilde{\beta}\!\pars{\varPhi_n,\chi} }_Y
                +
                \inner[\big]{ \psi_n, \tilde{\beta}\!\pars{\varPhi_n,\partial_t  \chi } }_Y
            }
        \dd \mu \dd t
        .
    \end{align*}
    Now we have weak convergence
    \[
        \partial_t \psi_n \wconverges[n \to \infty][\Lebesgue[2]\Lebesgue[2][\mu]] \partial_t \psi_\infty
        \qand
        \psi_n \wconverges[n \to \infty][\Lebesgue[2]\Lebesgue[2][\mu]] \psi_\infty 
    \]
    and norm convergence 
    \[
        \tilde{\beta}\!\pars{\varPhi_n,\chi}
        \converges[n \to \infty][\Lebesgue[2]\Lebesgue[2][\mu]] 
        \tilde{\beta}\!\pars{\varPhi_\infty, \chi} 
        \qand
        \tilde{\beta}\!\pars{\varPhi_n,\partial_t \chi}
        \converges[n \to \infty] [\Lebesgue[2]\Lebesgue[2][\mu]] 
        \tilde{\beta}\!\pars{\varPhi_\infty, \partial_t \chi}
        .
    \]
    By \cref{lem:StronglyConvergentTimesWeaklyConvergent} (with $\cZ = \R$, where weak convergence is equivalent to norm convergence) the above integral converges to 
   \[
        -
        \textstyle\int_0^1 \!\! \int_\Domain \int_\Domain 
            \pars[\Big]{
                \inner[\big]{ \partial_t \psi_\infty, \tilde{\beta}\!\pars{\varPhi_\infty, \chi} }_Y
                +
                \inner[\big]{ \psi_\infty , \tilde{\beta}\!\pars{\varPhi_\infty, \partial_t \chi} }_Y
            }
        \dd \mu \dd t
        =
        \inner[\big]{ \beta\!\pars{\partial_t \varPhi_\infty, \psi_\infty}, \chi}_{\Lebesgue[2]\Lebesgue[2][\mu]}
        .
    \]
    This shows that $\beta\!\pars{\partial_t \varPhi_n,\psi_n} \wconverges[n \to \infty] \beta\!\pars{\partial_t\varPhi_\infty,\psi_\infty}$ in $\Lebesgue[2]\Lebesgue[2][\mu]$, which completes the proof.
\end{proof}

We conclude this section with the following general and well-known convergence statement for products of weakly convergent sequences with norm convergent sequences.

\begin{lemma}\label{lem:StronglyConvergentTimesWeaklyConvergent}
    Let $\cX$, $\cY$, $\cZ$ be Banach spaces and let $B \colon \cX \times \cY \to \cZ$ be a bounded, bilinear map.
    Let $\varphi_n \wconverges[n \to \infty] \varphi_\infty$ in $\cX$ and $\psi_n \converges[n \to \infty] \psi_\infty$ in $\cY$.
    Then $B(\varphi_n,\psi_n) \converges[n \to \infty] B(\varphi_\infty,\psi_\infty)$ in $\cZ$.
\end{lemma}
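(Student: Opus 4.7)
The plan is the standard bilinear splitting combined with the uniform boundedness principle. Since $(\varphi_n)$ converges weakly in $\cX$, Banach--Steinhaus applied to the canonical embedding $\cX \embeds \cX''$ yields $M \ceq \sup_n \norm{\varphi_n}_{\cX} < \infty$. With this at hand I would write
\begin{equation*}
B(\varphi_n,\psi_n) - B(\varphi_\infty,\psi_\infty)
=
B(\varphi_n,\,\psi_n-\psi_\infty)
+
B(\varphi_n-\varphi_\infty,\,\psi_\infty)
\end{equation*}
and control the two summands separately.

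First, bilinearity and boundedness of $B$ together with the uniform bound give
$\norm{B(\varphi_n,\psi_n-\psi_\infty)}_{\cZ} \le \norm{B}\,M\,\norm{\psi_n - \psi_\infty}_{\cY} \to 0$
by the hypothesized norm convergence $\psi_n \converges \psi_\infty$ in $\cY$, so this term is already norm-null in $\cZ$. For the second summand, the assignment $T \ceq B(\cdot,\psi_\infty) \colon \cX \to \cZ$ is a bounded linear operator; every such $T$ is weak-to-weak sequentially continuous, since each $\zeta' \in \cZ'$ pulls back along $T$ to the continuous linear functional $T' \zeta' \in \cX'$. Hence $T(\varphi_n - \varphi_\infty) \wconverges 0$ in $\cZ$, and the entire difference $B(\varphi_n,\psi_n) - B(\varphi_\infty,\psi_\infty)$ is the sum of a norm-null and a weakly-null sequence; in particular it is weakly null.

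The hard part—and indeed the only place where the argument does not close up cleanly under the stated hypotheses—is upgrading the weakly-null second summand to a norm-null one. In full generality this upgrade is false: take $\cX = \cZ = \ell^2$, $\cY = \R$, $B(\varphi,t) \ceq t\,\varphi$, $\varphi_n = e_n$ (so $\varphi_n \wconverges 0$) and $\psi_n = 1 = \psi_\infty$; then $B(\varphi_n,\psi_n) = e_n \wconverges 0$ but $e_n \not\to 0$ in norm. However, every invocation of the lemma within \cref{sec:ExistenceMinimizingGeodesics} and \cref{lem:WeakConvergenceDgamma}, \cref{lem:WeakConvergenceRs}, \cref{lem:WeakConvergenceHelper} only consumes the conclusion as weak convergence in $\cZ$ (or as scalar convergence, where weak equals strong). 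I would therefore write the lemma with a $\wconverges$ arrow on the right-hand side and use exactly the splitting above. Recovering the literal $\converges$ conclusion written in the paper requires an additional compactness hypothesis on $T = B(\cdot,\psi_\infty)$, or finite-dimensionality of $\cZ$, neither of which is assumed.
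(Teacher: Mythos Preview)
Your analysis is correct, and in fact the paper's own proof does exactly what you do: it tests against an arbitrary $\zeta \in \cZ'$, uses the same bilinear splitting, bounds the first summand by $C\,\norm{\zeta}\,\norm{B}\,\norm{\psi_n-\psi_\infty}$, and handles the second by pulling $\zeta$ back to a functional $\xi \in \cX'$. So the paper's proof, like yours, establishes only \emph{weak} convergence in~$\cZ$, despite the arrow in the statement. Your counterexample with the standard basis of $\ell^2$ confirms that the norm conclusion is genuinely false in this generality.

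Your observation that every invocation of the lemma in \cref{sec:ExistenceMinimizingGeodesics} uses only the weak conclusion (or has $\cZ = \R$, where the distinction evaporates) is also accurate and matches how the paper deploys the result. So there is no gap in your argument; rather, you have correctly diagnosed a minor misstatement in the paper, and your proposed fix---replacing $\converges$ by $\wconverges$ in the conclusion---is the right one.
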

\begin{proof}
    The uniform boundedness principle implies that weakly convergent sequences are bounded.
    Hence, $C \ceq \sup_{n \in \N} \norm{\varphi_n}$ is finite.
    Thus, for every continuous and linear functional $\zeta \in Z\dual$ we have
    \begin{align*}
        \abs{ \inner{\zeta,B(\varphi_n,\psi_n)} - \inner{\zeta,B(\varphi_\infty,\psi_\infty)} }
        &\leq 
        \abs{ \inner{\zeta,B(\varphi_n,\psi_n - \psi_\infty)} }
        +
        \abs{\inner{\zeta,B(\varphi_n - \varphi_\infty,\psi_\infty)} }
        \\
        &\leq 
        C \norm{\zeta} \norm{B} \norm{\psi_n-\psi_\infty}
        +
        \abs{\inner{\xi,\varphi_n-\varphi_\infty} }
        \converges[n \to \infty] 0,
    \end{align*}
    where $\xi \in \cX'$ is the linear functional defined by $\inner{\xi,\varphi} \ceq \inner{\zeta,B(\varphi,\psi_\infty)}$.
\end{proof}

\section*{Acknowledgments}

We would like to thank Cy Maor for stimulating discussions and for bringing the papers~\cite{zbMATH07845668,MR4142275,MR4702745,MR3264258} to our attention.
Our thanks go also to Jason Cantarella for vivid discussions and for a never-ending supply of inspiration for numerical simulations and illustrations.

\section*{Funding}

Elias~Döhrer gratefully acknowledges the funding support from the European Union
and the Free State of Saxony (ESF).
Henrik Schumacher gratefully acknowledges support by the Research Training Group ``Energy, Entropy, and Dissipative Dynamics'', funded by the Deutsche Forschungsgemeinschaft (DFG, German Research Foundation), project no.{} 320021702/GRK2326.

\newpage

\appendix


\section{Grönwall inequality, Morrey inequality, and Escape lemma}

For the reader's convenience, we provide precise statements of some well-known results that are applied in this paper.

\begin{theorem}[Grönwall inequality, differential version]\label{thm:GronwallDifferential}
	Let $I\subset \R$ be an interval of the form 
	$\intervalco{a,\infty}$, $\intervalcc{a,b}$, or $\intervalco{a,b}$ with $a<b$.
	Furthermore, let $\beta \in C(I,\R)$.
	Suppose that $u \in \Holder[0][][I][\R]$ is differentiable on the interior $I^\circ$ of $I$ and that it satisfies
	\begin{equation*}
		\fdfrac{\dd}{\dd t} u(t) \leq \beta(t) \, u(t) \quad \text{for all } t\in I^\circ.
	\end{equation*}
	\mednegskip%
	Then 
	\begin{equation*}
		u(t) 
		\leq
		u(a)
		\exp \pars[\Big]{
			\textstyle 
			\int_a^t \beta(s) \dd s
		 } 
		 \quad 
		 \text{for all $t\in I$.}
	\end{equation*}
\end{theorem}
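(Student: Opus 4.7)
The plan is to reduce the differential inequality to a monotonicity statement by multiplying with a suitable integrating factor. Concretely, I would define
\[
    \varphi(t) \ceq \exp\!\pars[\Big]{ -\textstyle \int_a^t \beta(s) \dd s }
    \qand
    v(t) \ceq \varphi(t) \, u(t) \qfor t \in I.
\]
Since $\beta$ is continuous on $I$, the function $\varphi$ is continuously differentiable on $I^\circ$ (and continuous on $I$) with $\varphi'(t) = -\beta(t)\,\varphi(t)$, and in particular $\varphi > 0$ everywhere on $I$.

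The next step is to compute $v'$ on $I^\circ$ by the product rule and to exploit the hypothesis:
\[
    v'(t)
    =
    \varphi'(t)\,u(t) + \varphi(t)\,u'(t)
    =
    \varphi(t) \pars[\big]{ u'(t) - \beta(t)\,u(t) }
    \leq 0
    \qfor t \in I^\circ
    .
\]
Hence $v$ is non-increasing on $I^\circ$. Since $v$ is continuous on all of $I$, this monotonicity extends to $I$ itself, yielding $v(t) \leq v(a) = u(a)$ for every $t \in I$. Dividing by $\varphi(t) > 0$ and using $1/\varphi(t) = \exp\!\pars{\int_a^t \beta(s) \dd s}$ then gives the claimed bound.

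There is essentially no serious obstacle here: the argument is entirely elementary once the integrating factor is introduced. The only minor care is (i) to confirm that $v$ is indeed differentiable on $I^\circ$ (which follows from differentiability of $u$ and smoothness of $\varphi$ there), and (ii) to propagate the monotonicity from the open interval $I^\circ$ to the possibly half-open $I$, which is immediate from continuity of $v$ at the endpoint $a$. No further reference to earlier results in the paper is required.
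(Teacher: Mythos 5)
The paper does not include a proof of this theorem; it is stated in the appendix as a classical reference result provided for the reader's convenience. Your integrating-factor argument is the standard textbook proof of the differential Grönwall inequality, and it is correct: defining $v = \varphi\,u$ with $\varphi(t) = \exp\!\pars[\big]{-\int_a^t \beta(s)\,\dd s}$, observing $v' \leq 0$ on $I^\circ$, and using continuity to extend the monotonicity of $v$ from $I^\circ$ to all of $I$ yields exactly the stated bound. Your two flagged points of care (differentiability of $v$ on $I^\circ$ and propagation of monotonicity to the endpoint) are handled correctly and are indeed the only places requiring any attention.
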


 \begin{theorem}[Grönwall inequality, integral version]\label{thm:GronwallIntegral}
 	Let $I\subset \R$ be an interval of the form $\intervalco{a,\infty}, \intervalcc{a,b},\intervalco{a,b}$ with $a<b$.
	Furthermore, let $\beta \in \Holder[0][][I][\R]$ be nonnegative and $\alpha \colon I \to \R$ be non-decreasing and such that $\min(0,\alpha) \in L^1_{\loc}$.
	Suppose that $u \in \Holder[0][][I][\R]$ satisfies
	\begin{equation*}
 		u(t)
 		\leq 
 		\alpha(t)
 		+
 		\textstyle \int_a^t
 			\beta(s) \, u(s)
 		\dd s
 		\quad \text{ for all $t\in I$.}
 	\end{equation*}
	\mednegskip%
	Then
	\begin{equation*}
 		u(t) 
 		\leq
 		\alpha(t)
 		\exp
		\pars[\Big]{
		\textstyle
			\int_a^t 
				\beta(s)
			\dd s
		}
 		\quad \text{for all $t\in I$.} 
 	\end{equation*}
 \end{theorem}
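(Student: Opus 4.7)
The plan is to convert the integral inequality into a differential inequality for the auxiliary function
\[
v(t) \ceq \int_a^t \beta(s) \, u(s) \dd s,
\]
and then exploit an integrating factor. By hypothesis, $u(t) \leq \alpha(t) + v(t)$ on $I$, and since $\beta \geq 0$, multiplying by $\beta(t)$ preserves the inequality, giving $\dot v(t) = \beta(t) \, u(t) \leq \beta(t) \, \alpha(t) + \beta(t) \, v(t)$ for almost every $t$. The regularity hypothesis $\min(0,\alpha) \in \Lebesgue[1][\loc]$ ensures that the term $\beta \alpha$ is locally integrable, so the function $v$ is absolutely continuous and the manipulations below are justified.

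Next I would introduce the integrating factor $E(t) \ceq \exp\!\pars[\big]{-\int_a^t \beta(s) \dd s}$, so that $\dot E(t) = -\beta(t) \, E(t)$. The product rule yields
\[
\tfrac{\dd}{\dd t}\!\pars[\big]{ E(t) \, v(t) }
= E(t) \pars[\big]{ \dot v(t) - \beta(t) \, v(t) }
\leq E(t) \, \beta(t) \, \alpha(t).
\]
Integrating from $a$ to $t$ and using $v(a) = 0$ produces
\[
E(t) \, v(t) \leq \int_a^t E(s) \, \beta(s) \, \alpha(s) \dd s.
\]
Because $\alpha$ is non-decreasing, we can estimate $\alpha(s) \leq \alpha(t)$ for $s \in \intervalcc{a,t}$ (treating the cases $\alpha(t) \geq 0$ and $\alpha(t) < 0$ separately if need be; in the latter case $\alpha(s) \leq \alpha(t) \leq 0$, and nonnegativity of $\beta E$ still preserves the bound). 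Hence
\[
E(t) \, v(t) \leq \alpha(t) \int_a^t E(s) \, \beta(s) \dd s = \alpha(t) \pars[\big]{ 1 - E(t) },
\]
where we recognised the antiderivative $\tfrac{\dd}{\dd s}\pars{-E(s)} = \beta(s) \, E(s)$.

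Finally, dividing by $E(t) > 0$ gives $v(t) \leq \alpha(t) \pars[\big]{ E(t)^{-1} - 1 }$, and plugging this back into $u(t) \leq \alpha(t) + v(t)$ yields exactly
\[
u(t) \leq \alpha(t) \, \exp\!\pars[\Big]{ \textstyle \int_a^t \beta(s) \dd s },
\]
as required. The only mild obstacle is bookkeeping the sign of $\alpha$ when replacing $\alpha(s)$ by $\alpha(t)$ inside the integral; this is handled by the monotonicity hypothesis, which guarantees the estimate regardless of sign since $\beta \, E \geq 0$. Alternatively, one can bypass this issue entirely by applying the standard differential form of Grönwall (\cref{thm:GronwallDifferential}) to $E(t) \, v(t)$ directly, which avoids any case distinction.
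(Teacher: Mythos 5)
The paper states \cref{thm:GronwallIntegral} in its appendix as a well-known result and does not supply a proof, so there is no in-paper argument to compare against; your task here is simply to give a correct proof, and you do. The strategy — set $v(t) \ceq \int_a^t \beta(s)\,u(s)\,\dd s$, derive the differential inequality $\dot v \leq \beta\,\alpha + \beta\,v$ from nonnegativity of $\beta$, multiply by the integrating factor $E(t) = \exp\!\pars[\big]{-\int_a^t \beta}$, integrate, and use monotonicity of $\alpha$ to pull $\alpha(t)$ out of the resulting integral — is the standard textbook argument and every step checks out.

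Two small remarks. First, the worry about ``bookkeeping the sign of $\alpha$'' is a non-issue: $\alpha(s) \leq \alpha(t)$ for $s \leq t$ together with $E(s)\,\beta(s) \geq 0$ gives $E(s)\,\beta(s)\,\alpha(s) \leq E(s)\,\beta(s)\,\alpha(t)$ directly, with no case distinction. Second, the closing aside — that one could instead apply \cref{thm:GronwallDifferential} to $E(t)\,v(t)$ — does not quite work as stated, since the inequality you obtain for $(E\,v)'$ is not of the homogeneous form $\frac{\dd}{\dd t} w \leq \beta\,w$ required by that lemma; the inhomogeneous term $E\,\beta\,\alpha$ has to be dealt with explicitly, which is exactly what your main argument does. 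Also, the regularity hypothesis $\min(0,\alpha) \in \Lebesgue[1][\loc]$ is not what makes $v$ absolutely continuous — continuity of $\beta$ and $u$ already makes $v$ of class $\Holder[1]$; the hypothesis on $\alpha$ is in any case automatic here, since a real-valued non-decreasing $\alpha$ on $\intervalco{a,\cdot}$ is bounded below by $\alpha(a)$ on every compact subinterval.
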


A proof of the following result can be found, e.g., in~\cite[Thm.~8.2]{hitchhiker}.
A careful inspection of the cited argument reveals that $C_{\Morrey,\sigma}=1+2^{1+\sigma}$.

\begin{theorem}[Morrey inequality]\label{thm:MorreyInequality}
	For every $\sigma \in \intervaloo{1/2,1}$ there is a $C_{\Morrey,\sigma} >0$ such that
	the following holds true:
	\bignegskip%
	\begin{equation*}
		\norm{u}_{\Lebesgue[\infty]}
		\leq 
		C_{\Morrey,\sigma} \,
		\pars[\bigg]{
			\int_\Domain \int_\Domain
				\abs*{
					\frac{u(y) - u(x)}{\dist[\Domain][y][x]^\sigma}
				}^2
			\frac{\dLebesgueM[y] \dLebesgueM[x]}{\dist[\Domain][y][x]}
		}^{1/2}
		\quad 
		\text{for all $u \in \Bessel[\sigma][][\Domain][\AmbSpace]$.}
	\end{equation*}
\end{theorem}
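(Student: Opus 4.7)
My plan is to follow the classical Campanato–Morrey argument adapted to the circle, which is essentially the one-dimensional periodic version of \cite[Thm.~8.2]{hitchhiker}. The idea is to obtain a pointwise Hölder-type estimate for $u$ by telescoping the averages of $u$ over dyadically shrinking arcs, and then pass from that oscillation bound to an $L^\infty$ bound.

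Concretely, for $x\in\Circle$ and $0<r\leq 1/2$ let $I_r(x)$ denote the arc of length $2r$ centred at $x$ and write $u_{I_r(x)} \ceq \fint_{I_r(x)} u\,\dd\lambda$. First I would establish the Campanato oscillation estimate
\begin{equation*}
    |u_{I_{r/2}(x)} - u_{I_r(x)}|^2
    \leq
    \fint_{I_{r/2}(x)} |u(z)-u_{I_r(x)}|^2\,\dd\lambda(z)
    \leq
    C\,r^{2\sigma-1}
    \int_{I_r(x)}\!\int_{I_r(x)} \frac{|u(y)-u(z)|^2}{\dist[\Circle](y,z)^{2\sigma+1}}\,\dd\lambda(y)\,\dd\lambda(z).
\end{equation*}
The first inequality is Jensen's; the second is a Poincaré-type bound obtained by writing $u(z)-u_{I_r(x)} = \fint_{I_r(x)}(u(z)-u(y))\,\dd\lambda(y)$, using Cauchy–Schwarz, and absorbing the factor $\dist[\Circle](y,z)^{2\sigma+1}\leq (2r)^{2\sigma+1}$ into the constant.

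Next, for any Lebesgue point $x$ of $u$, I would telescope $u(x) = u_{I_{1/2}(x)} + \sum_{k\geq 0}(u_{I_{2^{-k-2}}(x)}-u_{I_{2^{-k-1}}(x)})$, apply the previous estimate to each term, and sum the geometric series
\begin{equation*}
    \sum_{k\geq 0} 2^{-k(\sigma-1/2)} = \frac{1}{1-2^{-(\sigma-1/2)}},
\end{equation*}
which converges precisely because $\sigma>1/2$. This yields $|u(x)-u_{I_{1/2}(x)}|\leq C_\sigma\,[u]_{H^\sigma}$ for almost every $x$. A symmetric comparison (telescoping from $x$ and $y$ to a common ball of radius $\dist[\Circle](x,y)$) upgrades this to the Hölder estimate $|u(x)-u(y)| \leq C_\sigma\,[u]_{H^\sigma}\,\dist[\Circle](y,x)^{\sigma-1/2}$, which in particular shows $u$ is continuous. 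The $L^\infty$ bound then follows because in all applications within this paper the Morrey inequality is invoked with $u$ replaced by a derivative of a periodic function (cf.~the proof of \cref{lem:HsgammaHs}), for which the mean $u_{I_{1/2}(x)}$ vanishes and the leftover term in the telescoping is absent.

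The main obstacle I anticipate is bookkeeping: keeping the explicit constant down to the claimed value $C_{\Morrey,\sigma}=1+2^{1+\sigma}$. The geometric-series constant $1/(1-2^{-(\sigma-1/2)})$ blows up as $\sigma \downarrow 1/2$, and matching it to $1+2^{1+\sigma}$ requires choosing the base radius and the comparison scheme optimally (for instance, Jensen's inequality with a carefully calibrated exponent, or comparing $u(x)$ to a single reference average rather than a nested chain). Since the paper only cites the constant and does not use its sharp value anywhere critical, for the proof sketch I would simply absorb all absolute factors into a single $C_{\Morrey,\sigma}$ depending continuously on $\sigma\in(1/2,1)$, and rely on \cite[Thm.~8.2]{hitchhiker} for the precise value.
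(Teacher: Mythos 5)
The paper does not prove this result itself; it simply cites \cite[Thm.~8.2]{hitchhiker} and remarks on the constant, so there is no ``paper's own proof'' to match. Your Campanato--Morrey dyadic telescoping is a standard and correct alternative route to the oscillation estimate: the Poincar\'e bound on nested dyadic arcs and the geometric series (convergent precisely because $\sigma>1/2$) do yield the H\"older bound $\abs{u(x)-u(y)}\leq C_\sigma\,\seminorm{u}_{\Bessel[\sigma]}\,\dist[\Circle](y,x)^{\sigma-1/2}$.

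The gap is in your last step, and it in fact exposes an imprecision in the statement as written. The telescoping controls $\abs{u(x)-u_{I_{1/2}(x)}}$, and since $I_{1/2}(x)$ is the whole unit-length circle, $u_{I_{1/2}(x)}$ is the global mean $\bar u$; so your argument proves the (correct, sharp) estimate $\norm{u-\bar u}_{\Lebesgue[\infty]}\leq C_\sigma\,\seminorm{u}_{\Bessel[\sigma]}$. The inequality as stated, with only the Gagliardo \emph{seminorm} on the right, is literally false for $u\equiv c\neq 0$, and the cited \cite[Thm.~8.2]{hitchhiker} avoids this by keeping $\norm{u}_{\Lebesgue[p]}$ on the right-hand side. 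Appealing to ``how the theorem is used in this paper'' is not an admissible step inside a self-contained proof of the theorem, and it is also not entirely accurate: the proof of \cref{prop:QuotientIsMetricSpace} invokes the Morrey constant for the difference $\gamma-\eta\circ\varphi$, which is not a derivative of a periodic function (there the full norm $\norm{\cdot}_{\Bessel[s]}$ appears, so no harm results, but it shows the blanket claim is off). The clean fix is to prove and state the mean-zero version---exactly what you obtain---and note separately that in the places where only the seminorm appears the argument of Morrey is a derivative, hence mean-zero. Finally, your worry about the constant is substantive, not just bookkeeping: the embedding constant must diverge as $\sigma\downarrow 1/2$ because $\Bessel[1/2][][\Circle][\AmbSpace]$ does not embed into $\Lebesgue[\infty][][\Circle][\AmbSpace]$, so the bounded expression $1+2^{1+\sigma}$ cannot be the true constant. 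The paper never uses this explicit value, so nothing downstream breaks, but the remark in the appendix is also suspect.
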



Finally, we provide the escape lemma from \cite[Theorem 10.5.5]{zbMATH03280851}, paraphrased for autonomous systems:
\begin{theorem}[Extension of ODEs: Escape lemma]\label{lem:EscapeLemma}
    Let $\varOmega$ be an open set in the Banach space $X$, and let $F \colon \varOmega \to X$ be locally Lipschitz-continuous.
    Let $T>0$ and let $\xi \colon \intervalco{0,T} \to \varOmega$ be a solution of $\frac{\dd}{\dd t} \xi(t) = F(\xi(t))$ for $t \in \intervalco{0,T}$ that satisfies
    \begin{enumerate}
        \item $\overline{ \xi( \intervalco{0,T} ) } \subset \varOmega$;
        \item there is a $0 < C < \infty$ such that $\norm{F(\xi(t))} \leq C$ for all $t \in \intervalco{0,T}$.
    \end{enumerate}
    Then there is an $\varepsilon > 0$ and an extension $\tilde \xi \colon \intervalco{0,T+\varepsilon} \to \varOmega$ of $\xi$ such that 
    \begin{align*}
        \tilde \xi(t) = \xi(t) 
        \quad 
        \text{for all $t \in \intervalco{0,T}$}
        \qand
        \fdfrac{\dd}{\dd t} \tilde{\xi}(t) = F( \tilde \xi(t))
        \quad 
        \text{for all $t \in \intervalco{0,T+\varepsilon}$.}
    \end{align*}
\end{theorem}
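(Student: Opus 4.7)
The plan is to first use the uniform bound on $F \circ \xi$ to continuously extend $\xi$ to the closed interval $\intervalcc{0,T}$, and then invoke the classical Picard--Lindelöff theorem on Banach spaces at the newly defined endpoint to obtain a short-time extension past $T$. I treat Picard--Lindelöff as a black box; the work is just in setting up its hypotheses at $t=T$ and in gluing the two pieces into a single solution of the ODE.

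For $0 \leq s \leq t < T$ the Banach-space fundamental theorem of calculus together with hypothesis~(ii) gives
\begin{equation*}
    \norm{\xi(t) - \xi(s)}
    =
    \norm[\Big]{\textstyle\int_s^t F(\xi(r)) \dd r}
    \leq
    C \, (t-s)
    ,
\end{equation*}
so $\xi$ is $C$-Lipschitz, hence uniformly Cauchy as $t \nearrow T$.  Define $\xi_T \ceq \lim_{t \nearrow T} \xi(t)$; by hypothesis~(i) we have $\xi_T \in \overline{\xi(\intervalco{0,T})} \subset \varOmega$.  Extending $\xi$ to $\hat{\xi} \colon \intervalcc{0,T}\to\varOmega$ by $\hat{\xi}(T) \ceq \xi_T$ yields a continuous map, and taking limits $t \nearrow T$ in the integral identity $\xi(t) = \xi(0) + \int_0^t F(\xi(r)) \dd r$ (using continuity of $F$ on the compact set $\overline{\xi(\intervalco{0,T})} \subset \varOmega$) shows that the same identity persists at $t = T$; in particular $\hat{\xi}$ is still a solution of the ODE on $\intervalcc{0,T}$.

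Since $\varOmega$ is open and $F$ is locally Lipschitz continuous, there is a neighborhood of $\xi_T$ on which $F$ is Lipschitz.  The Picard--Lindelöff theorem on Banach spaces then produces an $\varepsilon > 0$ and a solution $\eta \colon \intervalco{T,T+\varepsilon} \to \varOmega$ of $\dot{\eta}(t) = F(\eta(t))$ with $\eta(T) = \xi_T$.  I then define $\tilde{\xi} \colon \intervalco{0,T+\varepsilon} \to \varOmega$ by $\tilde{\xi}|_{\intervalcc{0,T}} \ceq \hat{\xi}$ and $\tilde{\xi}|_{\intervalco{T,T+\varepsilon}} \ceq \eta$.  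Continuity of $\tilde{\xi}$ is immediate from $\hat{\xi}(T) = \xi_T = \eta(T)$.

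Finally, to verify that $\tilde{\xi}$ satisfies the ODE on all of $\intervalco{0,T+\varepsilon}$, I rewrite both restrictions in integral form and observe that for every $t \in \intervalco{0,T+\varepsilon}$
\begin{equation*}
    \tilde{\xi}(t)
    =
    \xi(0) + \textstyle\int_0^t F(\tilde{\xi}(r)) \dd r
    ,
\end{equation*}
since the identity holds separately on $\intervalcc{0,T}$ and $\intervalco{T,T+\varepsilon}$ and the integrals are additive.  The integrand $r \mapsto F(\tilde{\xi}(r))$ is continuous, so differentiation yields $\dot{\tilde{\xi}}(t) = F(\tilde{\xi}(t))$ for every $t \in \intervalco{0,T+\varepsilon}$, including the junction $t = T$.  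The only mildly delicate point---and the obstacle worth flagging---is precisely this handling of the junction: one must resist the temptation to differentiate the two pieces separately and instead use the global integral equation together with continuity of $F \circ \tilde{\xi}$ to recover differentiability at $t = T$.  Everything else is routine bookkeeping around the cited Picard--Lindelöff theorem.
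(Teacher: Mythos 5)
Your proof is correct. Note, however, that the paper does not prove this lemma at all: it states it in the appendix ``for the reader's convenience'' and cites \cite[Theorem~10.5.5]{zbMATH03280851} as the source, so there is no argument in the paper to compare against---you have supplied the standard proof that the paper defers to the literature. The steps are all in order: the uniform bound $\norm{F(\xi(t))}\leq C$ forces $C$-Lipschitz continuity and hence a limit $\xi_T\in\overline{\xi(\intervalco{0,T})}\subset\varOmega$, Picard--Lindel\"off restarts at $(T,\xi_T)$, and the two pieces glue. Your treatment of the junction is the clean one: pass to the integral equation $\tilde{\xi}(t)=\xi(0)+\int_0^t F(\tilde{\xi}(r))\,\dd r$, observe it holds on all of $\intervalco{0,T+\varepsilon}$ by additivity of the integral and agreement of the pieces at $t=T$, and differentiate using continuity of $F\circ\tilde{\xi}$ to recover the ODE everywhere, including at $T$. (The more pedestrian alternative---checking that the left derivative of $\hat{\xi}$ and the right derivative of $\eta$ at $T$ both equal $F(\xi_T)$---also works, since both restrictions are $C^1$ up to the endpoint; but the integral route avoids any separate bookkeeping of one-sided derivatives.)
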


\addtolength{\textheight}{11ex}

\newpage

\let\oldthebibliography\thebibliography
\let\endoldthebibliography\endthebibliography
\renewenvironment{thebibliography}[1]{
  \begin{oldthebibliography}{#1}
	\footnotesize
    \setlength{\itemsep}{0em}
    \setlength{\parskip}{0em}
}
{
  \end{oldthebibliography}
}

\interlinepenalty=10000 

\bibliographystyle{abbrvhref}
\bibliography{bibliothek}

\end{document}